\DeclareFontFamily{U}{rsfs}{} \DeclareFontShape{U}{rsfs}{n}{it}{<->
rsfs10}{} \DeclareSymbolFont{mscr}{U}{rsfs}{n}{it}
\DeclareSymbolFontAlphabet{\scr}{mscr}
\def\mathscr{\scr}
\begin{document}
\def\e#1\e{\begin{equation}#1\end{equation}}
\def\ea#1\ea{\begin{align}#1\end{align}}
\def\eq#1{{\rm(\ref{#1})}}
\theoremstyle{plain}
\newtheorem{thm}{Theorem}[section]
\newtheorem{qthm}[thm]{``Theorem''}
\newtheorem{lem}[thm]{Lemma}
\newtheorem{prop}[thm]{Proposition}
\newtheorem{cor}[thm]{Corollary}
\newtheorem{conj}[thm]{Conjecture}
\newtheorem{quest}[thm]{Question}
\newtheorem{prob}[thm]{Problem}
\theoremstyle{definition}
\newtheorem{dfn}[thm]{Definition}
\newtheorem{ex}[thm]{Example}
\newtheorem{rem}[thm]{Remark}
\newtheorem{conv}[thm]{Convention}
\newtheorem{property}[thm]{Property}
\numberwithin{equation}{section}
\numberwithin{figure}{section}
\def\Crit{\mathop{\rm Crit}}
\def\Ker{\mathop{\rm Ker}}
\def\Coker{\mathop{\rm Coker}}
\def\Hess{\mathop{\rm Hess}\nolimits}
\def\GL{\mathop{\rm GL}}
\def\Im{\mathop{\rm Im}}
\def\inc{\mathop{\rm inc}}
\def\Hom{\mathop{\rm Hom}\nolimits}
\def\Ext{\mathop{\rm Ext}\nolimits}
\def\qcoh{\mathop{\rm qcoh}}
\def\Spec{\mathop{\rm Spec}}
\def\Sets{{\mathop{\bf Sets}}}
\def\SSets{{\mathop{\bf SSets}}}
\def\id{{\mathop{\rm id}\nolimits}}
\def\Sch{{\mathop{\rm Sch}\nolimits}}
\def\Iso{{\mathop{\rm Iso}\nolimits}}
\def\fIso{{\mathop{\mathfrak{Iso}}\nolimits}}
\def\Sh{{\mathop{\rm Sh}\nolimits}}
\def\Art{{\mathop{\rm Art}\nolimits}}
\def\Le{{\mathop{\text{\rm Lis-\'et}}\nolimits}}
\def\Kalg{{\text{\rm $\K$-alg}}}
\def\Kvect{{\text{\rm $\K$-vect}}}
\def\red{{\rm red}}
\def\bs{\boldsymbol}
\def\ge{\geqslant}
\def\pr{\preceq}
\def\le{\leqslant\nobreak}
\def\O{{\mathcal O}}
\def\bX{{\bs X}}
\def\bY{{\bs Y}}
\def\bA{{\mathbin{\mathbb A}}}
\def\bG{{\mathbin{\mathbb G}}}
\def\K{{\mathbin{\mathbb K}}}
\def\bL{{\mathbin{\mathbb L}}}
\def\Z{{\mathbin{\mathbb Z}}}
\def\Q{{\mathbin{\mathbb Q}}}
\def\C{{\mathbin{\mathbb C}}}
\def\CP{{\mathbin{\mathbb{CP}}}}
\def\cA{{\mathbin{\cal A}}}
\def\cB{{\mathbin{\cal B}}}
\def\cC{{\mathbin{\cal C}}}
\def\cD{{\mathbin{\cal D}}}
\def\cE{{\mathbin{\cal E}}}
\def\cF{{\mathbin{\cal F}}}
\def\cG{{\mathbin{\cal G}}}
\def\cK{{\mathbin{\cal K}}}
\def\cL{{\mathbin{\cal L}}}
\def\cM{{\mathbin{\cal M}}}
\def\oM{{\mathbin{\smash{\,\,\overline{\!\!\mathcal M\!}\,}}}}
\def\cS{{\mathbin{\cal S}\kern -0.1em}}
\def\cSz{{\mathbin{\cal S}\kern -0.1em}^{\kern .1em 0}}
\def\al{\alpha}
\def\be{\beta}
\def\ga{\gamma}
\def\de{\delta}
\def\io{\iota}
\def\ep{\epsilon}
\def\la{\lambda}
\def\ka{\kappa}
\def\th{\theta}
\def\ze{\zeta}
\def\up{\upsilon}
\def\vp{\varphi}
\def\si{\sigma}
\def\om{\omega}
\def\De{\Delta}
\def\La{\Lambda}
\def\Om{\Omega}
\def\Up{\Upsilon}
\def\Ga{\Gamma}
\def\Si{\Sigma}
\def\Th{\Theta}
\def\pd{\partial}
\def\ts{\textstyle}
\def\st{\scriptstyle}
\def\sst{\scriptscriptstyle}
\def\w{\wedge}
\def\sm{\setminus}
\def\lt{\ltimes}
\def\bu{\bullet}
\def\sh{\sharp}
\def\op{\oplus}
\def\od{\odot}
\def\op{\oplus}
\def\ot{\otimes}
\def\ov{\overline}
\def\bigop{\bigoplus}
\def\bigot{\bigotimes}
\def\boxt{\boxtimes}
\def\iy{\infty}
\def\es{\emptyset}
\def\ra{\rightarrow}
\def\rra{\rightrightarrows}
\def\Ra{\Rightarrow}
\def\Longra{\Longrightarrow}
\def\ab{\allowbreak}
\def\longra{\longrightarrow}
\def\hookra{\hookrightarrow}
\def\dashra{\dashrightarrow}
\def\t{\times}
\def\ci{\circ}
\def\ti{\tilde}
\def\d{{\rm d}}
\def\ha{{\ts\frac{1}{2}}}
\def\md#1{\vert #1 \vert}
\def\ms#1{\vert #1 \vert^2}
\def\ban#1{\bigl\langle #1 \bigr\rangle}
\def\an#1{\langle #1 \rangle}
\title{A classical model for derived critical loci}
\author{Dominic Joyce}
\date{}
\maketitle
\begin{abstract} Let $f:U\ra\bA^1$ be a regular function on a smooth
scheme $U$ over a field $\K$. Pantev, To\"en, Vaqui\'e and Vezzosi
\cite{PTVV,Vezz} define the `derived critical locus' $\bs\Crit(f)$,
an example of a new class of spaces in derived algebraic geometry,
which they call `$-1$-shifted symplectic derived schemes'.

They show that intersections of algebraic Lagrangians in a smooth
symplectic $\K$-scheme, and stable moduli schemes of coherent
sheaves on a Calabi--Yau 3-fold over $\K$, are also $-1$-shifted
symplectic derived schemes. Thus, their theory may have applications
in algebraic symplectic geometry, and in Donaldson--Thomas theory of
Calabi--Yau 3-folds.

This paper defines and studies a new class of spaces we call
`algebraic d-critical loci', which should be regarded as classical
truncations of the $-1$-shifted symplectic derived schemes of
\cite{PTVV}. They are simpler than their derived analogues. We also
give a complex analytic version of the theory, and an extension to
Artin stacks.

In the sequels \cite{Buss,BBBJ,BBJ,BBDJS,BJM} we will apply
d-critical loci to motivic and categorified Donaldson--Thomas
theory, and to intersections of complex Lagrangians in complex
symplectic manifolds. We will show that the important structures one
wants to associate to a derived critical locus --- virtual cycles,
perverse sheaves, $\cD$-modules, and mixed Hodge modules of
vanishing cycles, and motivic Milnor fibres --- can be defined for
oriented d-critical loci.
\end{abstract}

\tableofcontents

\section{Introduction}
\label{dc1}

Pantev, To\"en, Vaqui\'e and Vezzosi \cite{PTVV,Vezz} defined a new
notion of {\it derived critical locus}. It is set in the context of
To\"en and Vezzosi's theory of derived algebraic geometry
\cite{Toen,ToVe1,ToVe2}, and consists of a quasi-smooth derived
scheme $\bX$ equipped with a $-1$-shifted symplectic structure
$\om$. In fact Pantev et al.~\cite{PTVV} define $k$-shifted
symplectic structures on derived stacks for $k\in\Z$, but the case
relevant to this paper is $k=-1$, and derived schemes rather than
derived stacks.

The following are examples of $-1$-shifted symplectic derived
schemes:
\begin{itemize}
\setlength{\itemsep}{0pt}
\setlength{\parsep}{0pt}
\item[(a)] The critical locus $\Crit(f)$ of a regular function
$f:U\ra\bA^1$ on a smooth $\K$-scheme $U$.
\item[(b)] The intersection $L\cap M$ of smooth Lagrangians $L,M$ in
an algebraic symplectic manifold $(S,\om)$.
\item[(c)] A moduli scheme $\cM$ of stable coherent sheaves on
a Calabi--Yau 3-fold.
\end{itemize}
Parts (b),(c) are the beginning of applications of these structures
in symplectic geometry and in Donaldson--Thomas theory of
Calabi--Yau 3-folds.

This paper will define and study a new class of geometric objects we
call {\it d-critical loci} $(X,s)$. They are much simpler than
$-1$-shifted symplectic derived schemes, and are entirely
`classical', by which we mean they are defined up to isomorphism in
an ordinary category using classical algebraic geometry in the style
of Hartshorne \cite{Hart}, rather than being defined up to
equivalence in an $\iy$-category using homotopy theory and derived
algebraic geometry as in~\cite{Toen,ToVe1,ToVe2}.

In fact we give two versions of the theory, {\it complex analytic
d-critical loci} $(X,s)$ in which $X$ is a complex analytic space,
and {\it algebraic d-critical loci\/} $(X,s)$ in which $X$ is a
scheme over a field $\K$. In both cases $s\in H^0(\cSz_X)$ is a
global section of a certain sheaf $\cSz_X$ on $X$, satisfying some
local conditions. When we can we give results and/or proofs for both
complex analytic and algebraic versions simultaneously, or just
briefly indicate the differences between the two.

In the algebraic case there are several topologies we could work
with --- the Zariski topology, the \'etale topology, and for an
embedding $X\hookra U$ of a $\K$-scheme $X$ into a smooth
$\K$-scheme $U$, it may be natural to consider the formal completion
$\hat U$ of $U$ along $X$, and work Zariski or \'etale locally on
$\hat U$. Whenever we can, we will use the Zariski topology. One
reason is that Theorem \ref{dc1thm4} below, proved in \cite{BJM},
requires the Zariski rather than the \'etale topology, as distinct
motives in $\oM^{\hat\mu}_X[\bL^{-1/2}]$ can become equal on an
\'etale open cover of~$X$.

To persuade the reader that d-critical loci are a useful idea, we
quote four results from the sequels \cite{Buss,BBBJ,BBJ,BBDJS,BJM}
to this paper:

\begin{thm}[Bussi, Brav and Joyce \cite{BBJ}] Suppose\/ $(\bX,\om)$
is a $-1$-shifted symplectic derived scheme in the sense of Pantev
et al.\ {\rm\cite{PTVV}} over an algebraically closed field\/ $\K$
of characteristic zero, and let\/ $X=t_0(\bX)$ be the associated
classical\/ $\K$-scheme of\/ $\bX$. Then $X$ extends naturally to an
algebraic d-critical locus\/ $(X,s)$. The canonical bundle $K_{X,s}$
from\/ {\rm\S\ref{dc24}} is naturally isomorphic to the determinant
line bundle $\det(\bL_\bX)\vert_{X^\red}$ of the cotangent complex\/
$\bL_\bX$ of\/ $\bX$. Zariski locally on $X$ one can reconstruct\/
$(\bX,\om)$ from $(X,s)$ up to equivalence, but this may not be
possible globally.
\label{dc1thm1}
\end{thm}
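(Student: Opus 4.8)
I need to prove Theorem \ref{dc1thm1}, which says that a $-1$-shifted symplectic derived scheme $(\bX,\om)$ over an algebraically closed field of characteristic zero gives rise to an algebraic d-critical locus $(X,s)$ on the classical truncation $X = t_0(\bX)$, with a canonical bundle identification $K_{X,s} \cong \det(\bL_\bX)|_{X^{\mathrm{red}}}$, and local (but not global) reconstruction.

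The key ingredient must be a "Darboux theorem" for $-1$-shifted symplectic derived schemes: Zariski-locally, $(\bX,\om)$ is equivalent to a derived critical locus $\bs\Crit(f)$ for some regular function $f: U \to \bA^1$ on a smooth scheme $U$. This is presumably proven in the sequel \cite{BBJ} itself (the "Darboux theorem" of Brav-Bussi-Joyce), so I can use it.

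**My proof plan:**

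1. **Recall the local structure theorem (Darboux).** Start by stating: Zariski-locally on $X$, the derived scheme $(\bX, \om)$ is equivalent (as a $-1$-shifted symplectic derived scheme) to a derived critical locus $\bs\Crit(f)$ for $f: U \to \bA^1$ regular on smooth $U$. Over $X$ this means we have a Zariski open cover $\{R_i\}$ of $X$ with embeddings $R_i \hookrightarrow U_i$ into smooth schemes and regular functions $f_i: U_i \to \bA^1$ such that $R_i \cong \Crit(f_i)$ as schemes, compatibly with the derived/symplectic data.

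2. **Define the section $s$ locally.** On each chart, $\Crit(f_i) \hookrightarrow U_i$ with $f_i|_{\Crit(f_i)^{\mathrm{red}}}$ locally constant, so (by the presentation of $\cSz_X$ from the earlier sections of the paper — which I should cite by section number) $f_i$ determines a section $s_i \in H^0(\cSz_{R_i})$.

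3. **Glue the sections.** The main point: show the $s_i$ agree on overlaps. On $R_i \cap R_j$ we have two different "critical chart" presentations of the same piece of $(\bX, \om)$; I need a comparison result showing that the $s_i$ defined from them coincide in $H^0(\cSz_X)$. This should follow from a uniqueness/naturality statement: the section $s$ is canonically determined by the $-1$-shifted symplectic structure, so any two presentations give the same $s$. This is the step I expect to do most carefully — it reduces to understanding when two critical charts for the same $(\bX,\om)$ are related (via "stabilization" — embedding into larger $U$ — and isomorphisms of charts), and checking $\cSz_X$ is insensitive to these moves. Having glued, $(X,s)$ is an algebraic d-critical locus.

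4. **The canonical bundle.** From the section 2.4 construction, $K_{X,s}$ is locally $K_{U_i}^{\otimes 2}|_{X^{\mathrm{red}} \cap R_i}$. On the derived side, for $\bs\Crit(f_i)$ the cotangent complex is $[T U_i|_{\Crit(f_i)} \xrightarrow{\Hess f_i} T^* U_i|_{\Crit(f_i)}]$ in degrees $-1,0$, whose determinant is $K_{U_i}^{\otimes 2}|_{\Crit(f_i)}$. So $\det(\bL_\bX)|_{R_i^{\mathrm{red}}} \cong K_{U_i}^{\otimes 2}|_{X^{\mathrm{red}} \cap R_i} \cong K_{X,s}|_{R_i^{\mathrm{red}}}$. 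I then need the local isomorphisms to be compatible with the gluing data of both line bundles — again a naturality check, parallel to step 3.

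5. **Local reconstruction.** Zariski-locally $(X,s)$ determines the critical chart $(U_i, f_i)$ up to the standard equivalences (stabilization and isomorphism), hence determines $\bs\Crit(f_i)$, hence $(\bX,\om)|_{R_i}$ up to equivalence. Globally this fails because the local equivalences need not glue — the obstruction lives in some cohomology group (e.g. of a sheaf of automorphisms), and I'd give or cite an explicit counterexample, or simply remark the gluing is not canonical.

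**The main obstacle** will be step 3 (and its analogue in step 4): establishing that the locally-defined section $s$ is genuinely independent of the choice of critical chart, which requires the "comparison of critical charts" machinery for $-1$-shifted symplectic derived schemes — understanding morphisms of critical charts, stabilization, and why $\cSz_X$ is built precisely to be invariant under these. This is really the heart of why the theorem is true: the d-critical structure $s$ is the "shadow" of $\om$ that survives passage to the classical truncation.
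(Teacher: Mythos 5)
This statement is not proved in the present paper: it is Theorem~\ref{dc1thm1}, one of four results explicitly quoted from the sequels to motivate the d-critical-locus framework, with the proof deferred to \cite{BBJ} (see also Remark~\ref{dc2rem2}(b), which cites \cite[Th.~6.6]{BBJ} for the truncation functor). There is therefore no ``paper's own proof'' to compare against.

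That said, your five-step outline does match the known strategy of \cite{BBJ} at a high level: a Zariski-local Darboux theorem putting $(\bX,\om)$ in the form $\bs\Crit(f)$, construction of local sections of $\cSz_X$ via Example~\ref{dc2ex1}, gluing via a comparison of Darboux charts (the crucial step, and the one you correctly flag as the heart of the matter), identification of $K_{X,s}$ with $\det(\bL_\bX)\vert_{X^\red}$ using the two-term presentation of $\bL_{\bs\Crit(f)}$, and the observation that local reconstruction works but globalization is obstructed (the paper's Example~\ref{dc2ex5} exhibits exactly such an obstruction for the symmetric-obstruction-theory analogue). The genuine gap in your proposal is that the entire weight rests on the Darboux theorem and its chart-comparison corollaries, which are the substantial content of \cite{BBJ}; your step~3 gestures at ``stabilization and isomorphism of charts'' but gives no argument for why two Darboux presentations of the same $(\bX,\om)$ yield the same element of $H^0(\cSz_X)$. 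Proving that requires showing that a symplectic equivalence of derived critical charts induces, at the classical level, precisely an embedding in the sense of Definition~\ref{dc2def3} followed by the splitting of Propositions~\ref{dc2prop7}--\ref{dc2prop9}, under which $\cSz_X$ is invariant by Theorem~\ref{dc2thm1}(ii). Without that bridge from derived equivalences to classical embeddings, the gluing step does not close.
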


That is, there is a (non-full) truncation functor from $-1$-shifted
symplectic derived schemes to algebraic d-critical loci. The theorem
implies that examples (a)--(c) above have the structure of
d-critical loci.

\begin{thm}{\bf(a) \rm(Bussi, Brav and Joyce \cite[Cor.~6.8]{BBJ})}
Suppose $(S,\om)$ is an algebraic symplectic manifold over $\K,$
and\/ $L,M$ are smooth algebraic Lagrangians in $S$. Then the
intersection $X=L\cap M,$ as a $\K$-subscheme of\/ $S,$ extends
naturally to an algebraic d-critical locus\/ $(X,s)$. The canonical
bundle $K_{X,s}$ from\/ {\rm\S\ref{dc24}} is isomorphic to\/
$K_L\vert_{X^\red}\ot K_M\vert_{X^\red}$.
\smallskip

\noindent{\bf(b) \rm(Bussi \cite[\S 3]{Buss})} Suppose $(S,\om)$ is
a complex symplectic manifold, and\/ $L,M$ are complex Lagrangian
submanifolds in $S$. Then the intersection $X=L\cap M,$ as a complex
analytic subspace of\/ $S,$ extends naturally to a complex analytic
d-critical locus\/ $(X,s),$ with canonical bundle $K_{X,s}\cong
K_L\vert_{X^\red}\ot K_M\vert_{X^\red}$.
\label{dc1thm2}
\end{thm}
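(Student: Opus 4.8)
The plan is to reduce the statement, Zariski- or \'etale-locally on $X=L\cap M$, to the critical locus case~(a) of~\S\ref{dc1}, and then to glue the local presentations into a global d-critical structure. Fix $x\in X$. The geometric heart of the argument is a Darboux-type normal form for the pair $(L,M)$ near $x$: using the algebraic (resp.\ complex analytic) analogue of Weinstein's Lagrangian neighbourhood theorem, identify a neighbourhood of $L$ in $S$ with a neighbourhood of the zero section $0_L$ in $(T^*L,\om_{T^*L})$, carrying $L$ to $0_L$. In the analytic case this is classical; in the algebraic case it holds \'etale-locally on $L$, or after passing to the formal completion of $S$ along $L$, and one checks the rest of the argument uses only such local data. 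When building this identification one is free to choose the Lagrangian complement $F\subset T_xS$ to $T_xL$ that becomes the vertical subspace $T^*_xL$; since the Lagrangian subspaces of $T_xS$ transverse to $T_xL$, and those transverse to $T_xM$, each form a dense Zariski-open subset of the Lagrangian Grassmannian of $T_xS$, one may take $F$ transverse to both $T_xL$ and $T_xM$.

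Granting this, and shrinking, the projection $\pi\colon T^*L\ra L$ restricts to an \'etale morphism $M\ra L$ near $x$ (because $T_xM\cap F=0$), so $M$ is the image of a section $\si$ of $\pi$ over a (Zariski or \'etale) neighbourhood $U$ of $x$ in $L$. Since $x\in 0_L\cap M$ the $1$-form $\si$ vanishes at $x$, and since $M$ is Lagrangian $\si$ is closed; passing if necessary to the formal completion of $U$ along $X$ (cf.\ the discussion of topologies in~\S\ref{dc1}), $\si=\d f$ for a regular function $f\colon U\ra\bA^1$. Then $X\cap U=0_L\cap\Ga_{\si}=\{\d f=0\}=\Crit(f)\subset U$, so $(U,f)$ is a critical chart for $X$ near $x$, and $X$ is covered by critical charts. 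By the comparison results for critical charts established earlier in this paper, these local data determine a well-defined section $s\in H^0(\cSz_X)$ (the sheaf property of $\cSz_X$) making $(X,s)$ an algebraic (resp.\ complex analytic) d-critical locus. The substantive point in the gluing is that the functions $f$ arising from two different Weinstein neighbourhoods agree to second order along $X$ on overlaps; this is where the freedom in the choice of $F$ and the canonicity of the d-critical structure on a critical locus are used.

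For the canonical bundle, \S\ref{dc24} gives $K_{X,s}\vert_{\Crit(f)^\red}\cong K_U^{\ot2}\vert_{\Crit(f)^\red}$ for the chart $(U,f)$. As $U\ra L$ is \'etale, $K_U\cong K_L\vert_U$; and $\si=\d f$ realises a neighbourhood of $x$ in $M$ as the image of $U$ under $\pi$, so $\pi$ restricts to an isomorphism onto its image and $K_M\vert_{X^\red}\cong K_U\vert_{X^\red}\cong K_L\vert_{X^\red}$ locally, whence $K_{X,s}\cong K_L\vert_{X^\red}\ot K_M\vert_{X^\red}$ locally. To obtain the global, manifestly symmetric statement, use that the symplectic form trivialises $K_S=\La^{\dim S}\Om_S\cong\O_S$ via the volume form $\om^{\w n}$ (with $\dim S=2n$): one checks that under the transition data between Weinstein charts the local bundles $K_U^{\ot2}$ glue, compatibly with this trivialisation, to $K_L\vert_{X^\red}\ot K_M\vert_{X^\red}$. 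Equivalently, when $\K$ is algebraically closed of characteristic zero one may invoke the derived Lagrangian intersection $\bX=L\t^{\bf R}_S M$, which is $-1$-shifted symplectic by~\cite{PTVV} with $t_0(\bX)=X$: Theorem~\ref{dc1thm1} gives $K_{X,s}\cong\det\bL_\bX\vert_{X^\red}$, while $\det\bL_\bX\cong K_L\vert_X\ot K_M\vert_X\ot K_S^{-1}\vert_X\cong K_L\vert_X\ot K_M\vert_X$, and one identifies the resulting d-critical structure with the one built above by the canonicity of the critical-locus structure.

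The main obstacle is the first step in both settings: establishing the Weinstein normal form with the required flexibility and in a form that descends. In the analytic case the normal form is standard and the remaining work is bookkeeping; in the algebraic case one must either prove the Lagrangian neighbourhood theorem \'etale-locally or formally --- using that the obstruction to straightening a Lagrangian subscheme lies in cohomology vanishing on affine or formal bases --- or bypass it by an Artin-approximation argument, and then establish the independence of the glued section $s$ and of the canonical-bundle isomorphism from all choices. This choice-independence, i.e.\ the gluing, is the real content of the proof.
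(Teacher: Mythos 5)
Theorem~\ref{dc1thm2} is not proved in this paper; it is quoted from the sequels \cite{BBJ} (part~(a)) and \cite{Buss} (part~(b)), and the paper adds only the remark that (a) follows from Theorem~\ref{dc1thm1} together with \cite[Th.~2.10]{PTVV}, while (b) is proved directly. Your proposal can therefore only be assessed on its own merits and against that outline.

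For (b), the route you sketch --- analytic Weinstein neighbourhood $S\supset L\leftrightarrow T^*L$, choosing a Lagrangian complement $F$ transverse to both $T_xL$ and $T_xM$, writing $M$ locally as $\Ga_{\d f}$, and reading off the critical chart $(U,f)$ --- is a legitimate direct argument, and each local step is unproblematic analytically. What you correctly flag but do not carry out is the choice-independence: showing that the local sections of $\cSz_X$ arising from two Weinstein identifications agree on overlaps, i.e.\ that the potentials agree modulo $I_{\d f}^2$ and locally constant functions, requires an argument via the comparison machinery of \S\ref{dc23}, and this is the core of the proof rather than bookkeeping.

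For (a), the direct route has a genuine gap: the Weinstein Lagrangian neighbourhood theorem does not hold Zariski-locally in the algebraic category, and an \'etale or formal version must be established; moreover a formal potential must then be upgraded to a regular (or \'etale-local) $f$ to satisfy Definition~\ref{dc2def1}, since the algebraic Poincar\'e lemma fails even \'etale-locally (cf.\ the discussion of $z^{-1}\d z$ in \S\ref{dc33}). This is precisely why the paper deduces (a) from the derived statement: $\bX=L\t^{\mathbf{R}}_SM$ is $-1$-shifted symplectic with $t_0(\bX)=X$ by \cite[Th.~2.10]{PTVV}, and Theorem~\ref{dc1thm1} then produces both the d-critical structure and the identification $K_{X,s}\cong\det\bL_\bX\vert_{X^\red}\cong K_L\vert_{X^\red}\ot K_M\vert_{X^\red}\ot K_S^{-1}\vert_{X^\red}\cong K_L\vert_{X^\red}\ot K_M\vert_{X^\red}$, using $\om^n$ to trivialise $K_S$. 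You offer this alternative at the end, and it is the route that actually works (under the hypotheses $\K$ algebraically closed of characteristic zero required by Theorem~\ref{dc1thm1}).

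Finally, your canonical bundle computation in the direct version reads, locally, $K_{X,s}\cong K_U^{\ot 2}\vert_{X^\red}\cong K_L^{\ot 2}\vert_{X^\red}$, and you then use a chart-dependent isomorphism $K_L\vert_{X^\red}\cong K_M\vert_{X^\red}$ from the graph projection to rewrite this as $K_L\vert_{X^\red}\ot K_M\vert_{X^\red}$. Since $K_L\vert_{X^\red}$ and $K_M\vert_{X^\red}$ need not be globally isomorphic, the symmetric global formula requires showing the chart-by-chart identifications glue; you diagnose this correctly in your last paragraph but leave it unargued, and here too the derived route makes the global statement immediate.
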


Theorem \ref{dc1thm2}(a) is a corollary of Theorem \ref{dc1thm1} and
\cite[Th.~2.10]{PTVV}, but Theorem \ref{dc1thm2}(b) is proved
directly, without going via derived algebraic geometry.

In \S\ref{dc24}, for a d-critical locus $(X,s)$ we construct a line
bundle $K_{X,s}$ on the reduced complex analytic subspace or
subscheme $X^\red$, called the {\it canonical bundle\/} of $(X,s)$,
and in \S\ref{dc25} we define an {\it orientation\/} on $(X,s)$ to
be a choice of square root $K_{X,s}^{1/2}$ of $K_{X,s}$ on $X^\red$.
If $X=\Crit(f)$ for $U$ a complex manifold and $f:U\ra\C$
holomorphic then $K_{X,s}\cong K_U^{\ot^2}\vert_{X^\red}$, so there
is a natural square root $K_U\vert_{X^\red}$ for $K_{X,s}$. Examples
in \S\ref{dc25} show that orientations need not exist, or be unique.
Here are two results on oriented d-critical loci:

\begin{thm}[Bussi, Brav, Dupont, Joyce and Szendr\H oi
\cite{BBDJS}] Let\/ $(X,s)$ be a complex analytic d-critical locus
with orientation $K_{X,s}^{1/2}$. Then we construct a $\Z$-perverse
sheaf\/ $P_{X,s}^\bu,$ a $\scr D$-module $D_{X,s},$ and a mixed
Hodge module $H_{X,s}^\bu$ on $X$. If\/
$(X,s),\smash{K_{X,s}^{1/2}}$ are locally modelled on
$\Crit(f),K_U\vert_{\Crit(f)^\red}$ for $U$ a complex manifold and\/
$f:U\ra\C$ holomorphic, then $P_{X,s}^\bu,D_{X,s},H_{X,s}^\bu$ are
modelled on the perverse sheaf, $\scr D$-module and mixed Hodge
module of vanishing cycles of\/~$U,f$.

Analogues hold for oriented algebraic d-critical loci $(X,s),$
yielding an algebraic $\Z$-perverse sheaf\/ $P_{X,s}^\bu,$ $\scr
D$-module $D_{X,s},$ and mixed Hodge module $H_{X,s}^\bu$ on $X$
if\/ $X$ is a $\C$-scheme, and a $\Z_l$-perverse sheaf\/
$P_{X,s}^\bu$ and a $\scr D$-module $D_{X,s}$ on $X$ if\/ $X$ is a
$\K$-scheme and\/ $l\ne \mathop{\rm char}\K$ a prime.
\label{dc1thm3}
\end{thm}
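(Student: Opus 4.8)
\noindent\textbf{Proof proposal} (cf.\ \cite{BBDJS}).
The plan is to build $P_{X,s}^\bu$, $D_{X,s}$ and $H_{X,s}^\bu$ by gluing local models on $X$, with the orientation $K_{X,s}^{1/2}$ supplying exactly the data needed to make the gluing consistent. By the local structure theory for d-critical loci developed in this paper, every complex analytic d-critical locus $(X,s)$ is, near each point, isomorphic as a d-critical locus to $\Crit(f)$ for some holomorphic $f:U\ra\C$ on a complex manifold $U$ (Zariski-locally to $\Crit(f)$ for a regular $f:U\ra\bA^1$ on a smooth $\K$-scheme $U$ in the algebraic case); moreover any two such d-critical charts agree on their overlap up to \emph{stabilization}, i.e.\ up to adding a nondegenerate quadratic form in extra variables --- concretely, both admit closed embeddings $U\hookra U'$ into a common chart $(U',f')$ with $f=f'\vert_U$. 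To each chart attach the normalised perverse sheaf of vanishing cycles $\mathcal{PV}^\bu_{U,f}:=\phi^p_f\bigl(\Q_U[\dim U]\bigr)$ (or its $\Z$- or $\Z_l$-coefficient analogue), which is perverse and supported on $\Crit(f)$; likewise the nearby/vanishing-cycle mixed Hodge module of $(U,f)$, and, via its de Rham realisation or Riemann--Hilbert, the associated regular holonomic $\scr D$-module. The difficulty is that these local objects agree on overlaps only up to a canonical isomorphism carrying a $\Z_2$ ambiguity, and the theorem amounts to rigidifying that ambiguity using an orientation.

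The key local ingredient is a \emph{stabilization (Thom--Sebastiani) comparison}: for a stabilization embedding $\Phi:(U,f)\hookra(U',f')$ as above there is a canonical isomorphism
\e
\mathcal{PV}^\bu_{U,f}\;\cong\;\mathcal{PV}^\bu_{U',f'}\vert_{\Crit(f')}\ot_{\Z_2}Q_\Phi,
\e
where $Q_\Phi$ is the principal $\Z_2$-bundle on $\Crit(f)\cong\Crit(f')$ of square roots of the determinant of the Hessian of $f'$ in the directions normal to $U$, and $\ot_{\Z_2}$ is the operation of twisting a perverse sheaf by a $\Z_2$-bundle. One proves this by writing $f'=f\boxplus q$ after further \'etale or analytic localisation, with $q$ a nondegenerate quadratic form on the normal bundle, then applying the Thom--Sebastiani isomorphism $\phi_{f\boxplus q}\cong\phi_f\boxtimes\phi_q$ (Massey for perverse sheaves, M.~Saito for mixed Hodge modules) and computing that the vanishing cycles of $q$ form a one-dimensional object twisted precisely by $\sqrt{\det q}$, with the appropriate Tate twist in the Hodge-theoretic case. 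Transporting through de Rham gives the corresponding isomorphism for $D_{X,s}$, and checking compatibility with the weight filtration upgrades the perverse-sheaf isomorphism to one of mixed Hodge modules.

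To globalise, recall from \S\ref{dc24} that on an overlap of two charts $K_{X,s}$ restricts to $K_U^{\ot 2}\vert_{\Crit(f)^\red}$, so the local line bundles $K_U\vert_{\Crit(f)^\red}$ are local square roots of $K_{X,s}$ whose pairwise discrepancy on double overlaps is precisely $Q_\Phi$; a short computation with the comparison isomorphisms shows they satisfy the cocycle condition modulo this discrepancy. Hence the local $\mathcal{PV}^\bu_{U,f}$ glue to a perverse sheaf $\hat P^\bu$ on the principal $\Z_2$-bundle $\pi:\hat X\ra X$ over $X^\red$ parametrising square roots of $K_{X,s}$, equivariant for the deck involution --- equivalently, to a $\Z_2$-twisted perverse sheaf on $X$. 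An orientation $K_{X,s}^{1/2}$ is by definition a trivialisation of $\pi$, hence splits the twist and descends $\hat P^\bu$ to a genuine global perverse sheaf $P_{X,s}^\bu$ on $X$; the identical recipe produces $D_{X,s}$ and $H_{X,s}^\bu$. When $(X,s),K_{X,s}^{1/2}$ are globally $\Crit(f),K_U\vert_{\Crit(f)^\red}$, the chosen orientation picks out the canonical local square root, so the glued objects are tautologically the vanishing-cycle perverse sheaf, $\scr D$-module and mixed Hodge module of $(U,f)$.

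The main obstacle is the second step: showing the stabilization isomorphism is \emph{canonical} --- independent of the auxiliary splitting $f'=f\boxplus q$ --- and functorial for compositions of stabilizations, so that the bundles $Q_\Phi$ assemble into $\hat X$ and the cocycle identity on triple overlaps holds on the nose. This is in effect a functoriality statement for the assignment $(U,f)\mapsto\mathcal{PV}^\bu_{U,f}$ along the category of d-critical charts and quadratic stabilizations, and will rest on the uniqueness and comparison results for d-critical charts proved in this paper. Two further technical points: in the Hodge-theoretic case the weight filtration and Tate twists must be carried through every comparison, so that all gluing maps are morphisms of mixed Hodge modules and not merely of underlying perverse sheaves; and, as stressed in \S\ref{dc1}, the construction must be run Zariski-locally, using the Zariski-local d-critical charts furnished by this paper, both to match the motivic statements of the later papers and because \'etale descent can identify a priori distinct local data.
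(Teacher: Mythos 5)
This theorem is quoted from the sequel \cite{BBDJS} and is not actually proved in the present paper; what the present paper supplies is the chart-comparison and orientation machinery (Theorem \ref{dc2thm2}, Propositions \ref{dc2prop7}--\ref{dc2prop11}, Definitions \ref{dc2def5}--\ref{dc2def6}, Proposition \ref{dc2prop13}) together with a template for how such gluing constructions should run, spelled out in Remark \ref{dc2rem4}. Your proposal follows that template faithfully: attach the vanishing-cycle object to each critical chart, produce a stabilization comparison isomorphism for each embedding of charts using Thom--Sebastiani, show the comparison is independent of the embedding and compatible with composition (which is exactly where Propositions \ref{dc2prop7}--\ref{dc2prop11} and the $\Z_2$-bundles $P_\Phi$ of Definition \ref{dc2def6} enter), and then use the orientation, via Proposition \ref{dc2prop13} and Remark \ref{dc2rem7}, to cancel the residual $\Z_2$-twist and glue. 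So the strategy is the right one and matches the paper's outline; the analytic input (Massey/Saito Thom--Sebastiani) is external, as it must be.

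One concrete misstatement in your last paragraph: the insistence that \emph{this} construction must be run Zariski-locally is not what the paper says. The Zariski constraint in \S\ref{dc1} is flagged for Theorem \ref{dc1thm4} (motivic Milnor fibres, \cite{BJM}), because motives can become equal after \'etale pullback. For the perverse sheaf/$\scr D$-module/mixed Hodge module construction of Theorem \ref{dc1thm3}, \'etale (or smooth) descent is available and is what \cite{BBDJS} uses; indeed the paper says explicitly that the \'etale local model Proposition \ref{dc2prop8} is the one ``used in \cite{BBDJS}'', while the Zariski version Proposition \ref{dc2prop9} is ``used in \cite{BJM}''. Also a small notational collision to be aware of: your $Q_\Phi$ (square roots of the normal Hessian determinant) is the paper's $P_\Phi$; the paper reserves $Q_{R,U,f,i}$ for the bundle of square roots of $\io_{R,U,f,i}$, i.e.\ local comparisons of the orientation with $K_U$.
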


\begin{thm}[Bussi, Joyce and Meinhardt \cite{BJM}] Let\/ $(X,s)$ be
an algebraic d-critical locus over $\K$ with an orientation
$K_{X,s}^{1/2}$. Then we construct a motive $MF_{X,s}$ in a certain
ring of motives $\oM^{\hat\mu}_X$ over $X$. If\/ $(X,s),
\smash{K_{X,s}^{ 1/2}}$ are Zariski locally modelled on
$\Crit(f),K_U\vert_{\Crit(f)^\red}$ for $U$ a smooth\/ $\K$-scheme
and\/ $f:U\ra\bA^1$ regular, then $MF_{X,s}$ is locally modelled on
$\bL^{-\dim U/2}\bigl([X]-MF^{\rm mot}_{U,f}\bigr),$ where $MF^{\rm
mot}_{U,f}$ is the motivic Milnor fibre of\/~$f$.
\label{dc1thm4}
\end{thm}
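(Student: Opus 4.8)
The plan is to construct $MF_{X,s}$ by gluing local motivic vanishing cycles over a Zariski open cover of $X$ by critical charts, using the comparison theory for critical charts established in \S2 together with a Thom--Sebastiani theorem for motivic Milnor fibres; the argument runs in close parallel with the construction of the perverse sheaf $P_{X,s}^\bu$ of \cite{BBDJS}. By definition, an algebraic d-critical locus $(X,s)$ admits a cover by Zariski open $R_i\subseteq X$ carrying \emph{critical charts} $(R_i,U_i,f_i,i_i)$, with $U_i$ a smooth $\K$-scheme, $f_i:U_i\ra\bA^1$ regular, and $i_i:R_i\hookra U_i$ a closed embedding identifying $R_i$ with $\Crit(f_i)$. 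By \S\ref{dc24} there is a canonical isomorphism $K_{X,s}\vert_{R_i^\red}\cong i_i^*\bigl(K_{U_i}^{\ot 2}\bigr)\vert_{R_i^\red}$, hence a principal $\Z_2$-bundle $\Pi_i\ra R_i^\red$ comparing the chosen square root $K_{X,s}^{1/2}$ with the tautological square root $i_i^*(K_{U_i})$ of the right-hand side. One sets
\[
MF_i\;=\;\Upsilon(\Pi_i)\od\bL^{-\dim U_i/2}\bigl([R_i]-MF^{\rm mot}_{U_i,f_i}\bigr)\ \in\ \oM^{\hat\mu}_{R_i},
\]
where $\Upsilon$ converts a principal $\Z_2$-bundle into a $\hat\mu$-monodromic motive, $\od$ is the product of $\oM^{\hat\mu}_{R_i}$, and a square root of $\bL$ is available by construction of the ring. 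When $K_{X,s}^{1/2}$ is Zariski locally modelled on $K_U\vert_{\Crit(f)^\red}$ the bundle $\Pi_i$ is trivial, so $\Upsilon(\Pi_i)=1$ and the local-model assertion of the theorem holds on such $R_i$ by construction; the whole content is therefore that the $MF_i$ agree on overlaps and glue.

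On a double overlap $R_{ij}=R_i\cap R_j$ one compares the two charts near an arbitrary point $x$. By the comparison/embedding theory for critical charts in \S2 --- the classical shadow of the ``Darboux theorem'' for $-1$-shifted symplectic structures --- after shrinking $R_i,R_j$ to a common Zariski neighbourhood of $x$ and \emph{stabilizing} each chart by a nondegenerate quadratic form, i.e.\ replacing $(U_i,f_i)$ by $(U_i\t\bA^{m_i},f_i\boxplus q_i)$ and likewise for $j$, the stabilized charts become isomorphic via some $\Phi$ with $f_i\boxplus q_i=(f_j\boxplus q_j)\ci\Phi$. I would then invoke two facts about motivic Milnor fibres: the motivic Thom--Sebastiani theorem (Denef--Loeser, Looijenga, Guibert--Loeser--Merle), giving $MF^{\rm mot}_{U\t\bA^m,\,f\boxplus q}\cong MF^{\rm mot}_{U,f}\od\La_q$ in $\oM^{\hat\mu}$; and the explicit value of the motivic Milnor fibre $\La_q$ of a nondegenerate rank-$m$ quadratic form, which is $\bL^{m/2}$ times $\Upsilon$ of the principal $\Z_2$-bundle associated to the discriminant of $q$. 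It is precisely at this point that the \emph{monodromic} ring $\oM^{\hat\mu}_X$, rather than $\oM_X$, is forced on us, as the sign ambiguity in $\sqrt{\det q}$ is recorded by a $\mu_2\subseteq\hat\mu$-action. Feeding these in, the normalizing power $\bL^{-\dim(U_i\t\bA^{m_i})/2}$ absorbs the $\bL^{m_i/2}$ coming from $\La_{q_i}$ (and similarly for $j$), and the only remaining difference between $MF_i$ and $MF_j$ on $R_{ij}$ is a product of the $\Z_2$-bundles $\Pi_i,\Pi_j$ with the discriminant bundles of $q_i,q_j$.

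To conclude the gluing one checks that this residual $\Z_2$-bundle is canonically trivial. The isomorphism $\Phi$, combined with the transformation law for $K_{X,s}\vert_{R^\red}\cong i^*(K_U^{\ot 2})\vert_{R^\red}$ from \S\ref{dc24}, identifies $i_i^*(K_{U_i})\ot i_j^*(K_{U_j})^{-1}$ --- twisted by $\det q_i,\det q_j$ --- with the trivial $\Z_2$-bundle, provided the \emph{same} global square root $K_{X,s}^{1/2}$ is used in both $\Pi_i$ and $\Pi_j$; so $\Upsilon(\Pi_i)$ and $\Upsilon(\Pi_j)$ carry exactly the twists needed for $MF_i\vert_{R_{ij}}=MF_j\vert_{R_{ij}}$. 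Since $R\mapsto\oM^{\hat\mu}_R$ is a Zariski sheaf on $X$, the $MF_i$ then glue to a unique $MF_{X,s}\in\oM^{\hat\mu}_X$, and independence of the cover, the charts and the stabilizations, together with the stated functoriality, follows formally from uniqueness in the comparison theory of \S2.

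The hard part is the $\Z_2$-bundle bookkeeping underlying the previous paragraph: one must show that the bundle discrepancy produced by Thom--Sebastiani and by $\La_q$ agrees \emph{on the nose}, not merely up to isomorphism, with the canonical-bundle datum of \S\ref{dc24}, which means carefully tracking the Jacobian $\det(\d\Phi)$ against the discriminants $\det q_i,\det q_j$ and matching all signs; and one must arrange the shrinkings and stabilizations compatibly over triple overlaps $R_i\cap R_j\cap R_k$ so the gluing isomorphisms satisfy a cocycle condition --- which itself reduces to the analogous compatibility for critical charts in \S2. All of this has to be carried out $\hat\mu$-equivariantly, since both Thom--Sebastiani and $\La_q$ live naturally in the monodromic Grothendieck ring. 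Finally we record, as flagged in \S\ref{dc1}, that the Zariski topology is essential here: distinct classes in $\oM^{\hat\mu}_X[\bL^{-1/2}]$ can become equal on an \'etale cover of $X$, so the construction cannot be performed \'etale-locally.
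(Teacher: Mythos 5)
This theorem is not proved in the present paper: it is quoted verbatim from the sequel \cite{BJM}, and here the paper only develops the infrastructure that \cite{BJM} then applies --- Theorem \ref{dc2thm2} (embedding two critical charts into a common larger one), the Zariski local model of Proposition \ref{dc2prop9}, the quadratic form $q_{\sst UV}$ and its compatibilities in Proposition \ref{dc2prop10}, the $\Z_2$-bundles $P_\Phi,Q_{R,U,f,i},\La_\Phi$ of \S\ref{dc25}, and the gluing recipe sketched in Remark \ref{dc2rem4}. Your proposal tracks that infrastructure and recipe quite faithfully: local models over critical charts, comparison via embeddings, motivic Thom--Sebastiani to match stabilizations, $\Z_2$-bundle bookkeeping against the orientation datum, and Zariski gluing. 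So as a blind reconstruction of what \cite{BJM} does, the overall shape is right, and you correctly flag why the Zariski topology is forced.

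Two points deserve correction, because they affect what ``the hard part'' actually looks like. First, the comparison of two charts on an overlap is not done by stabilizing each until the two become isomorphic: Theorem \ref{dc2thm2} produces a \emph{third} chart $(T,W,h,k)$ with embeddings $\Phi:(R',U',f',i')\hookra(T,W,h,k)$ and $\Psi:(S',V',g',j')\hookra(T,W,h,k)$, and the local motives attached to the two small charts are each compared with that of the big one, then the two identifications composed; there is in general no isomorphism $(U_i,f_i)\t\bA^{m_i}\cong(U_j,f_j)\t\bA^{m_j}$ commuting with the $f$'s. Second, and more substantively, the Zariski local model you must use is Proposition \ref{dc2prop9}, not the ``fixed nondegenerate quadratic form $q$'' picture (that is the \'etale model, Proposition \ref{dc2prop8}, used in \cite{BBDJS}). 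In the Zariski model one has
\[
g\ci\jmath \;=\; f\ci\al \;+\; (q_1\ci\al)\cdot(z_1^2\ci\be)+\cdots+(q_n\ci\al)\cdot(z_n^2\ci\be)
\]
with \emph{non-constant} units $q_1,\ldots,q_n:U'\ra\bA^1\sm\{0\}$. So the ``extra'' factor is a family of rank-one quadratic forms with varying coefficients, not $z_1^2+\cdots+z_n^2$, and the principal $\Z_2$-bundle recording the sign ambiguity of $\sqrt{q_1\cdots q_n}$ is a genuinely non-constant bundle over $R'$. Consequently Thom--Sebastiani must be invoked in a form robust to these twists, and the $\Z_2$-bundle that emerges is exactly the one identified in Remark \ref{dc2rem7} as the bundle of orientations of $q_{\sst UV}$ on $i^*(N_{\sst UV})$; matching it against the orientation data $Q_{R,U,f,i}$ via $\La_\Phi$ in \eqref{dc2eq37} is the precise content of the gluing step, and the cocycle condition on triple overlaps is \eqref{dc2eq38}. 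With these corrections your sketch lines up with the approach the paper sets up for \cite{BJM}.
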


In \cite{BBBJ} we will generalize Theorems \ref{dc1thm1},
\ref{dc1thm3} and \ref{dc1thm4} from $\K$-schemes to Artin
$\K$-stacks. Theorems \ref{dc1thm3} and \ref{dc1thm4} (and their
extension to stacks \cite{BBBJ}) also have applications to
extensions of Donaldson--Thomas theory of Calabi--Yau 3-folds, as in
\cite{Thom,JoSo,KoSo1,KoSo2}. Theorem \ref{dc1thm3} is important for
{\it categorification\/} of Donaldson--Thomas invariants ---
defining a graded vector space (the hypercohomology ${\mathbb
H}^*(P_{X,s}^\bu)$ of the perverse sheaf) whose dimension is the
Donaldson--Thomas invariant, as proposed by Dimca and Szendr\H oi
\cite{DiSz} --- and hence for constructing {\it cohomological Hall
algebras}, following Kontsevich and Soibelman \cite{KoSo2}. Theorem
\ref{dc1thm4} is helpful for defining {\it motivic\/}
Donaldson--Thomas invariants, as in~\cite{KoSo1}.

We have explained that d-critical loci are classical truncations of
$-1$-shifted symplectic derived schemes in \cite{PTVV}. There is
another geometric structure which is a semiclassical truncation of
$-1$-shifted symplectic derived schemes: Behrend's schemes with
symmetric obstruction theories \cite{Behr}, which we now define.

\begin{dfn} Let $X$ be a $\K$-scheme. A {\it perfect obstruction
theory\/} on $X$ in the sense of Behrend and Fantechi \cite{BeFa} is
a morphism $\phi:\cE^\bu\ra \bL_X$ in the derived category
$D(\mathop{\rm qcoh}(X))$, where $\bL_X$ is the cotangent complex of
$X$, satisfying:
\begin{itemize}
\setlength{\itemsep}{0pt}
\setlength{\parsep}{0pt}
\item[(i)] $\cE^\bu$ is quasi-isomorphic locally on $X$ to a
complex $[\cF^{-1}\ra\cF^0]$ of vector bundles in
degrees~$-1,0$;
\item[(ii)] $h^0(\phi):h^0(\cE^\bu)\ra h^0(\bL_X)$ is an isomorphism; and
\item[(iii)] $h^{-1}(\phi):h^{-1}(\cE^\bu)\ra h^{-1}(\bL_X)$ is surjective.
\end{itemize}
Following Behrend \cite{Behr}, we call $\phi:\cE^\bu\ra\bL_X$ a {\it
symmetric obstruction theory\/} if we also are given an isomorphism
$\th:\cE^\bu\ra \cE^{\bu\vee}[1]$ with~$\th^{\vee}[1]=\th$.
\label{dc1def1}
\end{dfn}

If $U$ is a smooth $\K$-scheme and $f:U\ra\bA^1$ a regular function
then $X=\Crit(f)$ has a natural symmetric obstruction theory
$\phi:\cE^\bu\ra\bL_X$ with
\e
\smash{\xymatrix@C=35pt{\cE^\bu=\bigl[TU\vert_X
\ar[r]^(0.52){\pd^2f\vert_X} & T^*U\vert_X\bigr].}}
\label{dc1eq1}
\e
But Pandharipande and Thomas \cite{PaTh} give examples of schemes
$X$ with symmetric obstruction theories with $X$ not locally
isomorphic to a critical locus. Schemes with symmetric obstruction
theories are the basis of Joyce and Song's theory of
Donaldson--Thomas invariants of Calabi--Yau 3-folds \cite{JoSo}. If
$(\bX,\om)$ is a $-1$-shifted symplectic derived scheme in the sense
of Pantev et al.\ \cite{PTVV}, then the classical scheme
$X=t_0(\bX)$ has a symmetric obstruction theory
$\phi:\cE^\bu\ra\bL_X$ with $\cE^\bu=i^*(\bL_\bX)$ and
$\th=i^*(\om_0)$, where $i:X\hookra\bX$ is the inclusion.

We illustrate the relations between these structures in Figure
\ref{dc1fig1}. The two dotted arrows `$\dashra$' indicate a
construction which works locally, but not globally. That is, given
an algebraic d-critical locus $(X,s)$, then Zariski locally on $X$
we can construct both a $-1$-shifted symplectic derived scheme
$(\bX,\om)$, and a symmetric obstruction theory
$\phi:\cE^\bu\ra\bL_X$, $\th$, uniquely up to equivalence, but we
cannot combine these local models to make $(\bX,\om)$ or
$\cE^\bu,\phi,\th$ globally on $X$ because of difficulties with
gluing `derived' objects on open covers.

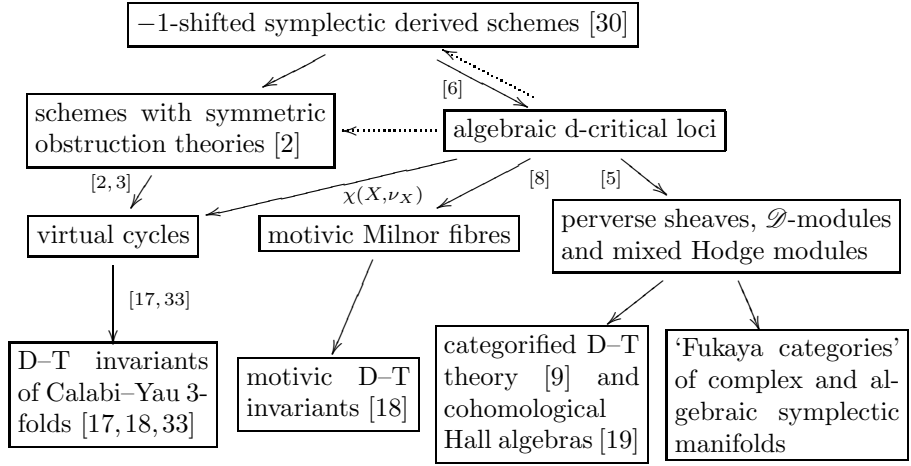
\begin{figure}[htb]
\xymatrix@!0@R=20pt@C=25.5pt{
&&&& \fbox{$-1$-shifted symplectic derived schemes \cite{PTVV}} \ar[ddlll]
\ar@<-.5ex>[ddrrr]_(0.4){\text{\cite{BBJ}}} \\ \\
& \fbox{\parbox{110pt}{schemes with symmetric obstruction theories \cite{Behr}}}
\ar[ddl]_(0.6){\text{\cite{Behr,BeFa}}} &&&&&&
\fbox{algebraic d-critical loci} \ar@<-.5ex>@{.>}[uulll] \ar@{.>}[llllll]
\ar[ddlll]^(0.3){\text{\cite{BJM}}} \ar[ddrr]_(0.3){\text{\cite{BBDJS}}}
\ar[ddlllllll]^(0.45){\chi(X,\nu_X)} \\ \\
\fbox{virtual cycles} \ar[ddd]^(0.4){\text{\cite{Thom,JoSo}}} &&&&
{}\,\,\,\framebox{motivic Milnor fibres} \ar[dddl]
&&&&& {}\,\,\,\fbox{\parbox{125pt}{perverse sheaves, $\scr D$-modules
and mixed Hodge modules}}
\ar[dddlll] \ar[dddr] \\ \\ \\
\fbox{\parbox{72pt}{D--T invariants of Calabi--Yau 3-folds
\cite{Thom,JoSo,KoSo1}}} &&&
{}\,\,\,\,\,\,\fbox{\parbox{62pt}{motivic D--T invariants \cite{KoSo1}}} &&&
{}\,\,\,\,\,\,\,\,\,\,\,\fbox{\parbox{74pt}{categorified D--T theory \cite{DiSz}
and cohomological Hall algebras \cite{KoSo2}}} &&&&
\fbox{\parbox{85pt}{`Fukaya categories' of complex and
algebraic symplectic manifolds}} }
\caption{Relations between different structures, and applications}
\label{dc1fig1}
\end{figure}

If $X$ is a proper $\K$-scheme with obstruction theory
$\phi:\cE^\bu\ra\bL_X$ then Behrend and Fantechi define a {\it
virtual cycle\/} $[X]^{\rm vir}$ in Chow homology $A_*(X)$. If the
obstruction theory is symmetric, and $\K$ algebraically closed of
characteristic zero, then Behrend \cite{Behr} (see also \cite[\S
4]{JoSo}) shows that $[X]^{\rm vir}\in A_0(X)$, and
\e
\ts\int_{[X]^{\rm vir}}1=\chi(X,\nu_X),
\label{dc1eq2}
\e
where $\nu_X$ is a $\Z$-valued constructible function on $X$ called
the {\it Behrend function}, which depends only on $X$ as a
$\K$-scheme. In particular, $[X]^{\rm vir}$ is independent of the
choice of symmetric obstruction theory on~$X$.

If $(X,s)$ is a proper algebraic d-critical locus, we define the
{\it virtual cycle\/} of $X$ to be $\chi(X,\nu_X)\in\Z$, as in
\eq{dc1eq2}. Although we will not do it in this paper, one can
define a notion of family of d-critical loci over a base $Y$, and
show that the virtual cycles of a proper family of d-critical loci
are locally constant on~$Y$.

Example \ref{dc2ex4} below shows that locally, schemes with
symmetric obstruction theories can contain strictly less information
than algebraic d-critical loci. On the other hand, Example
\ref{dc2ex5} shows that schemes with (symmetric) obstruction
theories can contain global, nonlocal information (in the form of a
class in $\Ext^2(T^*X,T^*X^\vee)$) which is forgotten by algebraic
d-critical loci.

The author and his collaborators tried for some time to construct
perverse sheaves, and motivic Milnor fibres, from a scheme with
symmetric obstruction theory, but failed, and the author now
believes this is not possible. So, one moral of Figure \ref{dc1fig1}
is that d-critical loci are more useful than schemes with symmetric
obstruction theories for various applications.
\medskip

\noindent{\bf Conventions.} Throughout $\K$ will be an algebraically
closed field with $\mathop{\rm char}\K\ne 2$. As in Theorem
\ref{dc1thm1}, the sequel \cite{BBJ} and those parts of
\cite{BBBJ,BBDJS,BJM} which depend on \cite{BBJ} also require
$\mathop{\rm char}\K=0$, but this paper does not need $\mathop{\rm
char}\K=0$. All complex analytic spaces, $\K$-schemes and Artin
$\K$-stacks $X$ will be assumed to be locally of finite type, as
this is necessary for the existence of local embeddings $X\hookra U$
with $U$ a complex manifold or smooth $\K$-scheme.

\medskip

\noindent{\bf Acknowledgements.} I would like to thank Oren
Ben-Bassat, Dennis Borisov, Chris Brav, Tom Bridgeland, Vittoria
Bussi, Stephane Guillermou, Frances Kirwan, Davesh Maulik, Sven Meinhardt, Anatoly Preygel, Pierre Schapira, Edward Segal, Bal\'azs Szendr\H oi, and Bertrand To\"en for helpful conversations. This research was supported by EPSRC Programme Grant EP/I033343/1.

\section{The main results}
\label{dc2}

This section, the heart of the paper, gives our central definitions,
the main results, and some examples. The proofs of results stated in
\S\S\ref{dc21}, \ref{dc22}, \ref{dc23}, \ref{dc24}, \ref{dc26}, and \ref{dc28}
will be deferred until sections \ref{dc3}--\ref{dc8}, respectively.

Sections \ref{dc21}--\ref{dc26} concern d-critical structures on
complex analytic spaces and $\K$-schemes. Some good background
references on complex analytic spaces and analytic coherent sheaves
upon them are Gunning and Rossi \cite{GuRo} and Grauert and Remmert
\cite{GrRe}. A good book on $\K$-schemes and sheaves in algebraic
geometry is Hartshorne \cite{Hart}. The relationship between
$\C$-schemes and complex analytic spaces is discussed in Hartshorne
\cite[App.~B]{Hart} and Serre~\cite{Serr}.

After some background material on Artin stacks and sheaves upon them
in \S\ref{dc27}, section \ref{dc28} extends parts of
\S\ref{dc21}--\S\ref{dc26} from $\K$-schemes to Artin $\K$-stacks.
Given an Artin $\K$-stack $X$, the main idea is to consider smooth
1-morphisms $t:T\ra X$ from $\K$-schemes $T$, and apply the results
of \S\ref{dc21}--\S\ref{dc26} on~$T$.

\subsection{The sheaves $\cS_X,\cSz_X$ and their properties}
\label{dc21}

The next theorem, which will be proved in
\S\ref{dc31}--\S\ref{dc33}, associates a sheaf $\cS_X$ to each
complex analytic space (or $\K$-scheme) $X$, such that (very
roughly) sections of $\cS_X$ parametrize different ways of writing
$X$ as $\Crit(f)$ for $U$ a complex manifold (or smooth $\K$-scheme)
and $f:U\ra\C$ holomorphic (or $f:U\ra\bA^1$ regular). This will be
needed in the definition of d-critical loci in~\S\ref{dc22}.

Note our convention from \S\ref{dc1} that all complex analytic
spaces and $\K$-schemes $X$ in this paper are {\it locally of finite
type}, which is necessary for the existence of embeddings
$i:X\hookra U$ for $U$ a complex manifold or smooth $\K$-scheme.

\begin{thm} Let\/ $X$ be a complex analytic space. Then there
exists a sheaf\/ $\cS_X$ of commutative\/ $\C$-algebras on $X,$
unique up to canonical isomorphism, which is uniquely characterized
by the following two properties:
\begin{itemize}
\setlength{\itemsep}{0pt}
\setlength{\parsep}{0pt}
\item[{\bf(i)}] Suppose\/ $U$ is a complex manifold, $R$ is an open
subset in\/ $X,$ and\/ $i:R\hookra U$ is an embedding of\/ $R$
as a closed complex analytic subspace of\/ $U$. Then we have an
exact sequence of sheaves of\/ $\C$-vector spaces on $R\!:$
\e
\smash{\xymatrix@C=30pt{ 0 \ar[r] & I_{R,U} \ar[r] &
i^{-1}(\O_U) \ar[r]^{i^\sh} & \O_X\vert_R \ar[r] & 0, }}
\label{dc2eq1}
\e
where $\O_X,\O_U$ are the sheaves of holomorphic functions on
$X,U,$ and\/ $i^\sh$ is the morphism of sheaves of\/
$\C$-algebras on $R$ induced by $i,$ which is surjective as $i$
is an embedding, and\/ $I_{R,U}=\Ker(i^\sh)$ is the sheaf of
ideals in $i^{-1}(\O_U)$ of functions on $U$ near $i(R)$ which
vanish on $i(R)$.

There is an exact sequence of sheaves of\/ $\C$-vector spaces on
$R\!:$
\e
\xymatrix@C=20pt{ 0 \ar[r] & \cS_X\vert_R
\ar[rr]^(0.4){\io_{R,U}} &&
\displaystyle\frac{i^{-1}(\O_U)}{I_{R,U}^2} \ar[rr]^(0.4)\d &&
\displaystyle\frac{i^{-1}(T^*U)}{I_{R,U}\cdot i^{-1}(T^*U)}\,, }
\label{dc2eq2}
\e
where $\d$ maps $f+I_{R,U}^2\mapsto \d f+I_{R,U}\cdot
i^{-1}(T^*U),$ and\/ $\io_{R,U}$ is a morphism of sheaves of
commutative\/ $\C$-algebras.
\item[{\bf(ii)}] Let\/ $R,U,i,\io_{R,U}$ and\/
$S,V,j,\io_{S,V}$ be as in {\bf(i)} with\/ $R\subseteq
S\subseteq X,$ and suppose $\Phi:U\ra V$ is holomorphic with\/
$\Phi\ci i=j\vert_{R}$ as a morphism of complex analytic spaces
$R\ra V$. Then the following diagram of sheaves on $R$ commutes:
\e
\begin{gathered}
\xymatrix@C=12pt{ 0 \ar[r] & \cS_X\vert_{R} \ar[d]^\id
\ar[rrr]^(0.4){\io_{S,V}\vert_{R}} &&&
\displaystyle\frac{j^{-1}(\O_{V})}{I_{S,V}^2}\Big\vert_{R}
\ar@<-2ex>[d]^{i^{-1}(\Phi^\sh)} \ar[rr]^(0.4)\d &&
\displaystyle\frac{j^{-1}(T^*V)}{I_{S,V}\cdot
j^{-1}(T^*V)}\Big\vert_{R} \ar@<-2ex>[d]^{i^{-1}(\d\Phi)} \\
 0 \ar[r] & \cS_X\vert_{R} \ar[rrr]^(0.4){\io_{R,U}} &&&
\displaystyle\frac{i^{-1}(\O_{U})}{I_{R,U}^2} \ar[rr]^(0.4)\d &&
\displaystyle\frac{i^{-1}(T^*U)}{I_{R,U}\cdot i^{-1}(T^*U)}\,.
}\!\!\!\!\!\!\!{}
\end{gathered}
\label{dc2eq3}
\e
Here $\Phi:U\ra V$ induces $\Phi^\sh:\Phi^{-1}(\O_{V})\ra\O_{U}$
on $U,$ so we have
\e
i^{-1}(\Phi^\sh):j^{-1}(\O_{V})\vert_{R}=i^{-1}\ci
\Phi^{-1}(\O_{V})\longra i^{-1}(\O_{U}),
\label{dc2eq4}
\e
a morphism of sheaves of\/ $\C$-algebras on $R$. As $\Phi\ci
i=j\vert_{R},$ equation \eq{dc2eq4} maps $I_{S,V}\vert_{R}\ra
I_{R,U},$ and so maps $I_{S,V}^2\vert_{R}\ra I_{R,U}^2$. Thus
\eq{dc2eq4} induces the morphism of sheaves of\/ $\C$-algebras
in the second column of\/ \eq{dc2eq3}. Similarly,
$\d\Phi:\Phi^{-1}(T^*V)\ra T^*U$ induces the third column
of\/~\eq{dc2eq3}.
\end{itemize}

These sheaves $\cS_X$ also satisfy:
\begin{itemize}
\setlength{\itemsep}{0pt}
\setlength{\parsep}{0pt}
\item[{\bf(a)}] There is a natural decomposition\/
$\cS_X=\C_X\op\cSz_X,$ where\/ $\C_X$ is the constant sheaf on
$X$ with fibre $\C,$ as a sheaf of\/ $\C$-subalgebras in
$\cS_X,$ and\/ $\cSz_X\subset\cS_X$ is a sheaf of ideals in
$\cS_X,$ the kernel of the composition of morphisms of sheaves
of commutative\/ $\C$-algebras
\e
\xymatrix@C=40pt{ \cS_X \ar[r]^{\be_X} & \O_X
\ar[r]^(0.47){i_X^\sh} & \O_{X^\red},  }
\label{dc2eq5}
\e
with\/ $X^\red$ the reduced complex analytic subspace of\/ $X,$
and\/ $i_X:X^\red\hookra X$ the inclusion.
\item[{\bf(b)}] There are natural exact sequences of sheaves of\/
$\C$-vector spaces on $X\!:$
\ea
&\xymatrix@C=13.5pt{ 0 \ar[r] & h^{-1}(\bL_X)
\ar[rr]_(0.55){\al_X} && \cS_X \ar[rr]_{\be_X} && \O_X
\ar[rr]_(0.33)\d && T^*X\cong h^0(\bL_X), }
\label{dc2eq6}\\
&\xymatrix@C=13.5pt{ 0 \ar[r] & h^{-1}(\bL_X)
\ar[rr]^(0.55){\al_X^0} && \cSz_X \ar[rr]^{\be_X^0} && \O_X
\ar[rr]^(0.33){\d\op i_X^\sh} && T^*X\op\O_{X^\red}, }
\label{dc2eq7}
\ea
where $\bL_X$ is the cotangent complex and\/ $T^*X$ the
cotangent sheaf of\/~$X$.

\item[{\bf(c)}] The sheaf\/ $\cSz_X$ is canonically isomorphic
to the cohomology of the complex
\e
\xymatrix@C=40pt{ I_{R,U}^2 \ar[r]^(0.35){\d} & I_{R,U}\cdot
i^{-1}(T^*U) \ar[r]^{\d} & i^{-1}(\La^2T^*U).  }
\label{dc2eq8}
\e
\end{itemize}

With the exception of\/ {\bf(c)\rm,} the analogue of all the above
also holds for schemes over a field\/ $\K$ in algebraic geometry,
taking $X$ to be a $\K$-scheme with structure sheaf\/ $\O_X$ and
reduced\/ $\K$-subscheme $X^\red,$ and\/ $\cS_X$ a sheaf of
commutative\/ $\K$-algebras on $X$ in either the Zariski or the
\'etale topology, and\/ $R\subseteq X$ a Zariski open
$\K$-subscheme, and\/ $U$ a smooth $\K$-scheme, and replacing $\C_X$
by $\K_X$. For {\bf(c)\rm,} we must replace $U$ by the formal
completion $\hat U$ of\/ $U$ along $i(R),$ so that the analogue of\/
\eq{dc2eq8} is
\e
\begin{gathered}
\xymatrix@C=40pt{ I_{R,\smash{\hat U}}^2 \ar[r]^(0.35){\d} &
I_{\smash{R,\hat U}}\cdot i^{-1}(T^*\hat U) \ar[r]^{\d} &
i^{-1}(\La^2T^*\hat U), }\quad\text{where}\\
i^{-1}(\O_{\smash{\hat U}})=\lim_{n\ra\iy} i^{-1}(\O_U)/I_{R,U}^n,
\quad I_{\smash{R,\hat U}}=\lim_{n\ra\iy} I_{R,U}/I_{R,U}^n\subset
i^{-1}(\O_{\smash{\hat U}}).
\end{gathered}
\label{dc2eq9}
\e
\label{dc2thm1}
\end{thm}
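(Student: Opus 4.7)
The strategy is to build $\cS_X$ locally from the formula \eq{dc2eq2}, prove independence of the embedding, and glue. I describe the complex analytic case; the algebraic case is analogous, with the usual passage to the formal completion $\hat U$ only required for property (c).

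For each local embedding $i\colon R\hookra U$ with $R\subseteq X$ open and $U$ a complex manifold, tentatively define
\[
\cS_X^{R,U,i}:=\Ker\bigl(d\colon i^{-1}(\O_U)/I_{R,U}^2\longra i^{-1}(T^*U)/I_{R,U}\cdot i^{-1}(T^*U)\bigr).
\]
Because $d$ is a derivation and the target is a module over $i^{-1}(\O_U)/I_{R,U}=\O_X|_R$, this kernel is a sheaf of commutative $\C$-subalgebras. Property (ii) is first checked at the local level: given $\Phi\colon U\to V$ with $\Phi\ci i=j|_R$, the pullback $i^{-1}(\Phi^\sh)$ intertwines $d$ with $i^{-1}(\d\Phi)$, so restricts to a morphism $\cS_X^{S,V,j}|_R\to\cS_X^{R,U,i}$ and makes the analogue of \eq{dc2eq3} commute by construction.

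The crux is a \emph{product lemma}: if $U=V\times W$, $\Phi=\pi_V$, and $i=(j,i_W)$ for an embedding $i_W\colon R\hookra W$ into a second complex manifold, then the induced morphism $\cS_X^{R,V,j}|_R\to\cS_X^{R,U,i}$ is an isomorphism. One proves this by a direct computation in local coordinates on $W$ transverse to $i_W(R)$: elements of $i^{-1}(\O_U)/I_{R,U}^2$ decompose as a part pulled back from $V$ plus terms linear in transverse coordinates, and the condition that $d$ vanish modulo $I_{R,U}\cdot T^*U$ forces the extra terms to vanish. Applied to the product embedding $i_1\times i_2\colon R\hookra U_1\times U_2$ of two arbitrary embeddings, the lemma gives canonical isomorphisms $\cS_X^{R,U_j,i_j}\overset{\cong}{\longra}\cS_X^{R,U_1\times U_2,i_1\times i_2}$ for $j=1,2$, and so a canonical identification between the two candidates for $\cS_X|_R$. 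A triple-product argument establishes the cocycle condition on triple overlaps, so the local sheaves glue to a global sheaf $\cS_X$ on $X$ (using that $X$, being locally of finite type, is covered by local embeddings). Uniqueness up to canonical isomorphism follows because any sheaf satisfying (i) and (ii) must be canonically identified with $\cS_X^{R,U,i}$ on each chart compatibly with \eq{dc2eq3}.

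For the remaining properties: the map $\be_X\colon\cS_X\to\O_X$ is induced by $i^{-1}(\O_U)/I_{R,U}^2\to i^{-1}(\O_U)/I_{R,U}=\O_X|_R$, globally well defined by (ii). The splitting $\cS_X=\C_X\op\cSz_X$ in (a) comes from the inclusion of constants $\C\subset i^{-1}(\O_U)/I_{R,U}^2$, which splits $i_X^\sh\ci\be_X$ since the constant part of a germ is recovered by evaluation at any reduced point. For (b), identify $h^{-1}(\bL_X)$ with $\Ker(I_{R,U}/I_{R,U}^2\to i^*T^*U)$ and $h^0(\bL_X)=T^*X$ with $\Coker$ of the same map, then obtain \eq{dc2eq6}--\eq{dc2eq7} from the snake lemma applied to $d$ together with $0\to I_{R,U}/I_{R,U}^2\to i^{-1}(\O_U)/I_{R,U}^2\to\O_X|_R\to 0$. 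For (c), sections of $\cSz_X$ correspond to classes in $I_{R,U}\cdot i^{-1}(T^*U)$ modulo $\d(I_{R,U}^2)$ whose further de Rham differential into $i^{-1}(\La^2T^*U)$ vanishes, giving the Koszul cohomology \eq{dc2eq8}; in the algebraic case the Nakayama-style argument isolating this cohomology class requires working in $\hat U$, which explains \eq{dc2eq9}. The main obstacles are the product lemma and the cocycle verification: the former is a concrete but delicate ideal-theoretic computation, and the latter demands either a careful quadruple-product argument or a coherence trick to ensure that the gluing data is well-defined on triple overlaps.
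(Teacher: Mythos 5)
Your proposal is correct, and it takes a genuinely different route from the paper's. The paper organizes the gluing around two lemmas: Lemma 3.1 shows that the map $\Phi^*\colon\cK_{S,V,j}\vert_R\to\cK_{R,U,i}$ induced by a morphism of embeddings depends only on the triples and not on the chosen $\Phi$, via a Taylor expansion comparing $f\ci\Phi$ with $f\ci\ti\Phi$; Lemma 3.2 then produces such a $\Phi$ locally by extending coordinates from $j(R)$ into $U$. You instead dominate any two embeddings $i_1,i_2$ by the product embedding $(i_1,i_2)\colon R\hookra U_1\times U_2$, and your product lemma --- that the projection $\pi_V\colon V\times W\to V$ induces an isomorphism of local sheaves --- is logically equivalent to Lemma 3.1: given $\Phi\colon U\to V$, the graph $u\mapsto(u,\Phi(u))$ defines a morphism of triples $U\to U\times V$, and contravariant functoriality together with $\pi_U\ci\mathrm{graph}_\Phi=\id_U$ gives $\Phi^*=(\pi_U^*)^{-1}\ci\pi_V^*$, visibly independent of $\Phi$. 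The Taylor-type computation you sketch for the product lemma (expanding in normal coordinates $z_b=w_b-\ti\phi_b\ci\pi_V$ and showing that $\d f\in I_{R,U}\cdot i^{-1}(T^*U)$ forces the coefficients $g_b$ of the linear terms into $I_{R,V}$) is correct. A genuine advantage you do not flag: the always-available projections from products entirely replace Lemma 3.2, and it is exactly Lemma 3.2 that fails Zariski-locally in the algebraic case and forces the paper's detour in \S\ref{dc33} through affine charts or formal completions. Your approach should therefore streamline the $\K$-scheme case as well, not merely reproduce it.

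A few small corrections. The cocycle verification needs only a \emph{triple}-product argument: functoriality of the projections among $U_a\times U_b$, $U_b\times U_c$, $U_a\times U_c$, and $U_a\times U_b\times U_c$ suffices, no quadruple product is required. In (a), ``evaluation at any reduced point'' is too loose; the correct observation is that $\d f\in I_{R,U}\cdot i^{-1}(T^*U)$ forces $f\vert_{i(X^\red)}$ to be locally constant (not merely constant at a point), and this locally constant function extends uniquely to a locally constant function near $i(X^\red)$ in $U$, giving the splitting $f=c+f^0$. In (c), you should name the essential ingredient: it is the Poincar\'e lemma on $U$ --- exactness of $\C\to\O_U\to T^*U\to\La^2T^*U$ --- that makes $\d\ci\io_{R,U}$ restricted to $\cSz_X$ an isomorphism onto the cohomology of \eq{dc2eq8}, and its failure in the Zariski and \'etale topologies (e.g.\ $z^{-1}\,\d z$ is closed but not exact) is what forces the passage to $\hat U$ in \eq{dc2eq9}; ``Nakayama-style'' is not the right description.
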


Here in part (b), for ({\it co}){\it tangent complexes\/} of
$\K$-schemes see Illusie \cite{Illu1,Illu2}, and of complex analytic
spaces see Palamodov~\cite{Pala1,Pala2,Pala3,Pala4}.

\begin{rem}{\bf(a)} In this paper and the sequels
\cite{Buss,BBDJS,BBBJ,BBJ,BJM} we will make no use of the fact that
$\cS_X$ is a {\it sheaf of commutative\/ $\C$-algebras}, rather than
just a sheaf of sets. Material in \cite[Th.~6.9]{BBDJS} on Verdier
duality and monodromy isomorphisms $\Si_{X,s},{\rm T}_{X,s}$ for the
perverse sheaves $P_{X,s}^\bu$ in Theorem \ref{dc1thm3} above
depends implicitly on being able to multiply $s\in H^0(\cSz_X)$ by
$-1$ or by ${\rm e}^{i\th}$, but we have not yet found an
application for the additive or multiplicative structures on
$\cS_X,\cSz_X$. Although $(X,\cS_X)$ is a locally ringed space, it
is generally far from being a scheme. The ideals $\cSz_X$ in $\cS_X$
need not be square-zero, so the multiplicative structure on
$\cS_X=\C_X\op\cSz_X$ can be nontrivial.
\smallskip

\noindent{\bf(b)} Equation \eq{dc2eq6} suggests the following
interpretation of the sheaf $\cS_X$: on a complex analytic space or
$\K$-scheme $X$ we have the {\it de Rham differential\/} $\d_{\rm
dR}:\O_X\ra\bL_X$, which is a morphism in
$D\mathop{\text{mod-}\C_X}$ or $D\mathop{\text{mod-}\K_X}$, the
derived category of complexes of sheaves of $\C$- or $\K$-vector
spaces on $X$. Write $\cD^\bu_X$ for the cone on $\d_{\rm dR}$, so
that we have a distinguished triangle
\e
\xymatrix@C=30pt{ \O_X \ar[r]^{\d_{\rm dR}} & \bL_X \ar[r] &
\cD^\bu_X \ar[r] & \O_X[1]. }
\label{dc2eq10}
\e
Comparing \eq{dc2eq6} with the long exact sequence of cohomology
sheaves of \eq{dc2eq10}, we see that~$\cS_X\cong h^{-1}(\cD^\bu_X)$.
\label{dc2rem1}
\end{rem}

There are natural {\it pullback morphisms\/} $\phi^\star$ for the
sheaves~$\cS_X,\cSz_X$:

\begin{prop} Let\/ $\phi:X\ra Y$ be a morphism of complex analytic
spaces, and\/ $\cS_X,\ab\cSz_X,\ab\io_{R,U},\ab
I_{R,U},\ab\cS_Y,\cSz_Y, \io_{S,V},I_{S,V}$ be as in Theorem\/
{\rm\ref{dc2thm1}}. Then there is a unique morphism
$\phi^\star:\phi^{-1}(\cS_Y)\ra \cS_X$ of sheaves of commutative\/
$\C$-algebras on $X,$ which maps $\phi^{-1}(\cSz_Y)\ra \cSz_X,$ such
that if\/ $R\subseteq X,$ $S\subseteq Y$ are open with\/
$\phi(R)\subseteq S,$ $U,V$ are complex manifolds, $i:R\hookra U,$
$j:S\hookra V$ are closed embeddings, and\/ $\Phi:U\ra V$ is
holomorphic with\/ $\Phi\ci i=j\ci\phi\vert_R:R\ra V,$ then as for
\eq{dc2eq3} the following diagram of sheaves on $R$ commutes:
\e
\begin{gathered}
\xymatrix@R=20pt@C=9pt{ 0 \ar[r] & \phi^{-1}(\cS_Y)\vert_R
\ar[rrr]_(0.45){\phi^{-1}(\io_{S,V})\vert_R}
\ar[d]^{\phi^\star\vert_R} &&& {\frac{\ts\phi^{-1}\ci
j^{-1}(\O_V)\vert_R}{\ts\phi^{-1}(I_{S,V}^2)\vert_R}}
\ar[d]^{i^{-1}(\Phi^\sharp)} \ar[rr]_(0.42){\phi^{-1}(\d)} &&
\frac{\ts \phi^{-1}(j^{-1}(T^*V))\vert_R}{\ts \phi^{-1}(I_{S,V}\cdot
j^{-1}(T^*V))\vert_R}
\ar[d]_{i^{-1}(\d\Phi)} \\
0 \ar[r] & \cS_X\vert_R \ar[rrr]^(0.45){\io_{R,U}} &&& {\frac{\ts
i^{-1}(\O_U)}{\ts I_{R,U}^2}} \ar[rr]^(0.42)\d && \frac{\ts
i^{-1}(T^*U)}{\ts I_{R,U}\cdot i^{-1}(T^*U)}\,.
}\!\!\!\!\!\!\!\!\!\!\!\!\!\!\!{}
\end{gathered}
\label{dc2eq11}
\e

If\/ $\psi:Y\ra Z$ is another morphism of complex analytic spaces,
then
\e
(\psi\ci\phi)^\star=\phi^\star\ci\phi^{-1}(\psi^\star):
(\psi\ci\phi)^{-1}(\cS_Z)=\phi^{-1}\ci\psi^{-1}(\cS_Z)\longra \cS_X.
\label{dc2eq12}
\e
If\/ $\phi:X\ra Y$ is $\id_X:X\ra X$ then
$\id_X^\star=\id_{\cS_X}:\id_X^{-1}(\cS_X)=\cS_X\ra\cS_X$.

If\/ $\phi:X\ra Y$ is an \'etale morphism of complex analytic
spaces, then $\phi^\star:\phi^{-1}(\cS_Y)\ra\cS_X$ is an isomorphism
of sheaves of commutative $\C$-algebras.

With the exception of the last part, the analogue of all the above
holds for schemes over a field\/ $\K$ in algebraic geometry, taking
$\phi:X\ra Y$ to be a morphism of\/ $\K$-schemes, $R\subseteq X,$
$S\subseteq Y$ to be Zariski open, and\/ $U,V$ to be smooth\/
$\K$-schemes, and taking\/ $\cS_X,\cS_Y$ to be sheaves of
commutative\/ $\K$-algebras on $X,Y$ in either the Zariski or the
\'etale topology, as in Theorem\/~{\rm\ref{dc2thm1}}.

For the last part, if\/ $\cS_X,\cS_Y$ are sheaves in the Zariski
topology, then\/ $\phi^\star$ is an isomorphism if $\phi:X\ra Y$ is
a Zariski open inclusion, and if\/ $\cS_X,\cS_Y$ are sheaves in the
\'etale topology, then\/ $\phi^\star$ is an isomorphism if\/
$\phi:X\ra Y$ is \'etale.

\label{dc2prop1}
\end{prop}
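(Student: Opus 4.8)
The plan is to build the pullback $\phi^\star$ Zariski-locally (analytically locally) from the presentations \eq{dc2eq2} of $\cS_X$ and $\cS_Y$ supplied by Theorem \ref{dc2thm1}, to prove the local construction is independent of all choices, to glue to a global morphism, and then to deduce the remaining assertions essentially formally; I describe the complex analytic case, the algebraic one being identical after the usual substitutions. Concretely, fix $x\in X$, put $y=\phi(x)$, and (using that $X,Y$ are locally of finite type) choose an open $S\ni y$ in $Y$ with a closed embedding $j:S\hookra V$ into a complex manifold, and an open $R'\ni x$ in $X$ with a closed embedding $i':R'\hookra U'$. Set $R=R'\cap\phi^{-1}(S)$, $U=U'\t V$, $i=(i'|_R,j\ci\phi|_R):R\hookra U$ (a closed embedding, after shrinking $U$), and $\Phi=\pi_V:U\ra V$, so $\Phi\ci i=j\ci\phi|_R$. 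As in \eq{dc2eq3} and \eq{dc2eq11}, $\Phi$ induces a morphism of sheaves of $\C$-algebras $i^{-1}(\Phi^\sh):\phi^{-1}(j^{-1}(\O_V))|_R\ra i^{-1}(\O_U)$ carrying $\phi^{-1}(I_{S,V})|_R$ into $I_{R,U}$, hence descending to $\phi^{-1}\bigl(j^{-1}(\O_V)/I_{S,V}^2\bigr)|_R\ra i^{-1}(\O_U)/I_{R,U}^2$. Since $\phi^{-1}$ is exact, $\phi^{-1}(\io_{S,V})$ is still injective with image $\Ker(\phi^{-1}(\d))$; combining $\d\ci i^{-1}(\Phi^\sh)=i^{-1}(\d\Phi)\ci\phi^{-1}(\d)$ (the commuting right-hand square of \eq{dc2eq11}, i.e.\ the chain rule) with $\d\ci\io_{S,V}=0$ shows the composite $\phi^{-1}(\cS_Y)|_R\ra i^{-1}(\O_U)/I_{R,U}^2$ lands in $\Ker\d=\io_{R,U}(\cS_X|_R)$; composing backwards through the injection $\io_{R,U}$ yields a morphism of sheaves of $\C$-algebras $\phi^\star|_R:\phi^{-1}(\cS_Y)|_R\ra\cS_X|_R$ making \eq{dc2eq11} commute.

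The main step, and the only real difficulty, is to show $\phi^\star|_R$ is independent of the source embedding $i:R\hookra U$, the target embedding $j:S\hookra V$, and the lift $\Phi$. Given two sets of data $(U_a,i_a,V_a,j_a,\Phi_a)$ for $a=1,2$ over a common $R,S$ (after shrinking), form $U=U_1\t U_2$, $i=(i_1,i_2)$ with projections $\Psi_a:U\ra U_a$, and $V=V_1\t V_2$, $j=(j_1,j_2)$ with projections $\Pi_a:V\ra V_a$, and take the lift $\Phi=(\Phi_1\ci\Psi_1,\Phi_2\ci\Psi_2):U\ra V$, so that $\Psi_a\ci i=i_a$, $\Pi_a\ci j=j_a$, $\Phi\ci i=j\ci\phi|_R$, and — crucially — $\Pi_a\ci\Phi=\Phi_a\ci\Psi_a$. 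Applying Theorem \ref{dc2thm1}(ii) on $Y$ to $\Pi_a$ (so that $j^{-1}(\Pi_a^\sh)\ci\io_{S,V_a}=\io_{S,V}$) and pulling back along $\phi$ identifies the map $\phi^\star|_R$ built from $(U,i,V,j,\Phi)$ with that built from $(U,i,V_a,j_a,\Pi_a\ci\Phi)$; then $\Pi_a\ci\Phi=\Phi_a\ci\Psi_a$ together with Theorem \ref{dc2thm1}(ii) on $X$ applied to $\Psi_a$ (so that $i^{-1}(\Psi_a^\sh)\ci\io_{R,U_a}=\io_{R,U}$ and $i^{-1}\bigl((\Phi_a\ci\Psi_a)^\sh\bigr)=i^{-1}(\Psi_a^\sh)\ci i_a^{-1}(\Phi_a^\sh)$) identifies the latter with that built from $(U_a,i_a,V_a,j_a,\Phi_a)$. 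Hence the two original maps agree, so $\phi^\star|_R$ depends only on $R$ and $\phi$; the local maps thus agree on overlaps and glue to a global morphism of sheaves of commutative $\C$-algebras $\phi^\star:\phi^{-1}(\cS_Y)\ra\cS_X$, unique since \eq{dc2eq11} with $\io_{R,U}$ injective forces $\phi^\star|_R$ and such $R$ cover $X$. Reducing the squares \eq{dc2eq11} modulo $I_{R,U}$ gives $\be_X\ci\phi^\star=\phi^\sh\ci\phi^{-1}(\be_Y)$ for $\be_X$ in \eq{dc2eq6}, so since $\phi(X^\red)\subseteq Y^\red$ any local section of $\cSz_Y=\Ker(i_Y^\sh\ci\be_Y)$ pulls back into $\Ker(i_X^\sh\ci\be_X)=\cSz_X$ by \eq{dc2eq5}.

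The remaining assertions are formal. For \eq{dc2eq12}, choose a closed embedding $k:W\hookra W'$ of an open $W\ni\psi(\phi(x))$ in $Z$ into a complex manifold, and compatible $R,U,i$, $S,V,j$ together with lifts $\Phi:U\ra V$, $\Psi:V\ra W'$ with $\Phi\ci i=j\ci\phi|_R$ and $\Psi\ci j=k\ci\psi|_S$ (constructed as in the first paragraph); then $(\Psi\ci\Phi)\ci i=k\ci(\psi\ci\phi)|_R$ and $i^{-1}\bigl((\Psi\ci\Phi)^\sh\bigr)=i^{-1}(\Phi^\sh)\ci\phi^{-1}\bigl(j^{-1}(\Psi^\sh)\bigr)|_R$ yield \eq{dc2eq12} on the nose. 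Taking $U=V$, $i=j$, $\Phi=\id_U$ gives $\id_X^\star=\id_{\cS_X}$. If $\phi$ is \'etale it is a local isomorphism, so has local inverses, and feeding $\phi\ci\phi^{-1}$ and $\phi^{-1}\ci\phi$ into \eq{dc2eq12} exhibits $\phi^\star$ locally, hence globally, as an isomorphism. The algebraic case runs through verbatim with smooth $\K$-schemes, Zariski- (or \'etale-) open $R,S$, $\K$ in place of $\C$, and $\phi^{-1}$ exact on the chosen topology; for $\phi$ a Zariski open inclusion the isomorphism statement follows from the previous line (or directly from $\cS_X|_R\cong\cS_R$ in the characterization of Theorem \ref{dc2thm1}), while for $\cS_X,\cS_Y$ in the \'etale topology and $\phi$ \'etale one instead invokes the \'etale-local characterization of Theorem \ref{dc2thm1}, under which $\phi^{-1}(\cS_Y)$ satisfies the defining property of $\cS_X$, so $\phi^\star$ is an isomorphism.
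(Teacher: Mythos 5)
Your proof is correct, and it takes a genuinely different route from the paper's. The paper constructs $\phi^\star$ by introducing morphisms of $X,Y$-triples $\Phi:(R,U,i)_X\ra(S,V,j)_Y$ and then proving two dedicated lemmas: a Taylor's-theorem argument (Lemma \ref{dc3lem4}, parallel to Lemma \ref{dc3lem1}) showing $\phi^\star_\Phi$ is independent of the lift $\Phi$ when $(R,U,i)$ and $(S,V,j)$ are fixed, and an existence lemma (Lemma \ref{dc3lem5}) producing such lifts by extending $j\ci\phi$ locally to $U\ra V$. You instead build all lifts as projections off products --- $U=U'\t V$, $\Phi=\pi_V$ for the initial construction, and $U_1\t U_2$, $V_1\t V_2$, $\ti\Phi=(\Phi_1\ci\Psi_1,\Phi_2\ci\Psi_2)$ for the comparison --- and derive every well-definedness statement from Theorem \ref{dc2thm1}(ii) applied purely on $X$ (to the projections $\Psi_a$) and purely on $Y$ (to the projections $\Pi_a$). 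The key identity $\Pi_a\ci\ti\Phi=\Phi_a\ci\Psi_a$ then does the work that Lemma \ref{dc3lem4} does in the paper; I checked the chain $i^{-1}(\ti\Phi^\sh)\ci\phi|_R^{-1}(j^{-1}(\Pi_a^\sh))=i^{-1}(\Psi_a^\sh)\ci i_a^{-1}(\Phi_a^\sh)$ carefully and it is correct, so $\io_{R,U}$ injective forces all three local $\phi^\star$'s to agree. What your approach buys is notable in the algebraic case: the paper's Lemma \ref{dc3lem2}/\ref{dc3lem5}-style extension of morphisms fails for general smooth $\K$-schemes $V$, which is why \S\ref{dc33} needs fixes (A) (restrict to $V\subseteq\bA^n$) or (B) (pass to formal completions). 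Your product lifts exist tautologically, so the algebraic version really does run verbatim, with no special handling needed. One should just be careful that after shrinking $U'\t V$ so that $(i'|_R, j\ci\phi|_R)$ becomes a closed embedding, and after shrinking $R,S$ so the two datasets live over common opens, all restrictions of the locally defined $\phi^\star$ agree with the shrunk-data versions; you indicate awareness of this and it is routine.
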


The next example, which is central to our theory, shows the point of
$\cS_X$.

\begin{ex} Let $U$ be a complex manifold, $f:U\ra\C$ be
holomorphic, and $X=\Crit(f)$, as a closed complex analytic subspace
of $U$. Write $i:X\hookra U$ for the inclusion, and
$I_{X,U}\subseteq i^{-1}(\O_U)$ for the sheaf of ideals vanishing on
$X\subseteq U$. Then $i^{-1}(f)\in H^0\bigl(i^{-1}(\O_U)\bigr)$ with
$\d\bigl(i^{-1}(f)\bigr)\in H^0\bigl(I_{X,U}\cdot
i^{-1}(T^*U)\bigr)\subseteq H^0\bigl(i^{-1}(T^*U)\bigr)$, so
$i^{-1}(f)+I_{X,U}^2\in H^0\bigl(i^{-1}(\O_U)/I_{X,U}^2\bigr)$ with
$\d\bigl(i^{-1}(f)+I_{X,U}^2\bigr)=0$ in
$H^0\bigl(i^{-1}(T^*U)/I_{X,U} \cdot i^{-1}(T^*U)\bigr)$. Thus by
equation \eq{dc2eq2} with $R=X$, we see there is a unique section
$s\in H^0(\cS_X)$ with~$\io_{X,U}(s)=i^{-1}(f)+I_{X,U}^2$.

Thus, if we can write $X=\Crit(f)$ for $f:U\ra\C$ holomorphic, then
we obtain a natural section $s\in H^0(\cS_X)$. Essentially
$s=f+I_{\d f}^2$, where $I_{\d f}\subseteq\O_U$ is the ideal
generated by $\d f$. Note that $f\vert_X=f+I_{\d f}$, so $s$
determines $f\vert_X$. Basically, $s$ remembers all of the
information about $f$ which makes sense intrinsically on $X$, rather
than on the ambient space~$U$.

We can also explain the decomposition $\cS_X=\C_X\op\cSz_X$ in this
example. We will see in Example \ref{dc2ex2} that if $X=\Crit(f)$
then $f$ need not be locally constant on $X$, but $f$ is locally
constant on the reduced complex analytic space $X^\red$. Since
locally constant functions on $X^\red\subseteq U$ extend uniquely to
locally constant functions on $U$ near $X^\red$, after shrinking $U$
we can uniquely write $f=c+f^0$, where $c:U\ra\C$ is locally
constant and $f^0:U\ra\C$ has $f^0\vert_{X^\red}=0$. Then $c,f^0$
correspond to the components of $s$ in $H^0(\C_X),H^0(\cSz_X)$,
and~$X=\Crit(f^0)$.

The analogue also holds in the algebraic case, with $U$ a smooth
$\K$-scheme and $f:U\ra\bA^1$ a regular function.
\label{dc2ex1}
\end{ex}

\subsection{The definition of d-critical loci, and some examples}
\label{dc22}

We can now define d-critical loci:

\begin{dfn} A ({\it complex analytic\/}) {\it d-critical locus\/}
is a pair $(X,s)$, where $X$ is a complex analytic space, and $s\in
H^0(\cSz_X)$ for $\cSz_X$ as in Theorem \ref{dc2thm1}, satisfying
the condition that for each $x\in X$, there exists an open
neighbourhood $R$ of $x$ in $X$, a complex manifold $U$, a
holomorphic function $f:U\ra\C$, and an embedding $i:R\hookra U$ of
$R$ as a closed complex analytic subspace of $U$, such that
$i(R)=\Crit(f)$ as complex analytic subspaces of $U$,
and~$\io_{R,U}(s\vert_R)=i^{-1}(f)+I_{R,U}^2$.

Similarly, for $\K$-schemes we define an ({\it algebraic\/}) {\it
d-critical locus\/} to be a pair $(X,s)$, where $X$ is a
$\K$-scheme, and $s\in H^0(\cSz_X)$ for $\cS_X$ as in Theorem
\ref{dc2thm1}, such that $X$ may be covered by Zariski open sets
$R\subseteq X$ with a closed embedding $i:R\hookra U$ into a smooth
$\K$-scheme $U$ and a regular function $f:U\ra\bA^1=\K$, such that
$i(R)=\Crit(f)$ as $\K$-subschemes of $U$,
and~$\io_{R,U}(s\vert_R)=i^{-1}(f)+I_{R,U}^2$.

In both cases we call the quadruple $(R,U,f,i)$ a {\it critical
chart\/} on~$(X,s)$.

A {\it morphism\/} $\phi:(X,s)\ra (Y,t)$ of d-critical loci
$(X,s),(Y,t)$ (either complex analytic or algebraic) is a morphism
$\phi:X\ra Y$ (of complex analytic spaces or $\K$-schemes) such that
$\phi^\star(t)=s$, for $\phi^\star$ as in Proposition
\ref{dc2prop1}. If $\phi:(X,s)\ra (Y,t)$, $\psi:(Y,t)\ra(Z,u)$ are
morphisms then equation \eq{dc2eq12} implies that
$\psi\ci\phi:(X,s)\ra(Z,u)$ is a morphism, and the last part of
Proposition \ref{dc2prop1} shows that $\id_X:(X,s)\ra (X,s)$ is a
morphism. Thus, (complex analytic or algebraic) {\it d-critical loci
form a category}.
\label{dc2def1}
\end{dfn}

\begin{rem}{\bf(a)} In Definition \ref{dc2def1}, we could instead
have defined a d-critical locus $(X,s)$ to have $s\in H^0(\cS_X)$
rather than $s\in H^0(\cSz_X)$, but with the rest of the definition
the same. The difference is this: as in Example \ref{dc2ex1}, if
$X=\Crit(f)$ for holomorphic $f:U\ra\C$, then $f\vert_{X^\red}$ is
locally constant, and we can write $f=f^0+c$ uniquely near $X$ in
$U$ for $f^0:U\ra\C$ holomorphic with $\Crit(f^0)=X=\Crit(f)$,
$f^0\vert_{X^\red}=0$, and $c:U\ra\C$ locally constant
with~$c\vert_{X^\red}=f\vert_{X^\red}$.

Defining d-critical loci using $s\in H^0(\cSz_X)$, as we have done,
corresponds to remembering only the function $f^0$ near $X$ in $U$,
and forgetting the locally constant function
$f\vert_{X^\red}:X^\red\ra\C$. Equivalently, it corresponds to
remembering the closed 1-form $\d f=\d f^0$ on $U$ near $X=(\d
f)^{-1}(0)$. In the applications the author has in mind
\cite{BBBJ,Buss,BBDJS,BBJ,BJM}, taking $s$ in $H^0(\cSz_X)$ rather
than $H^0(\cS_X)$ is more natural, as there is no canonical value
for $f\vert_{X^\red}$ other than $f\vert_{X^\red}=0$. Also
\eq{dc2eq8}--\eq{dc2eq9} give an alternative description for
$\cSz_X$ rather than~$\cS_X$.
\smallskip

\noindent{\bf(b)} As in Theorem \ref{dc1thm1}, in
\cite[Th.~6.6]{BBJ} we define a truncation functor from $-1$-shifted
symplectic derived $\K$-schemes $(\bX,\om)$ in the sense of Pantev
et al.\ \cite{PTVV} to algebraic d-critical loci $(X,s)$, so that
algebraic d-critical loci may be regarded as classical truncations
of $-1$-shifted symplectic derived $\K$-schemes.

If we define a morphism $\bs\phi:(\bX,\om)\ra(\bY,\om')$ of
$-1$-shifted symplectic derived $\K$-schemes to be a morphism
$\bs\phi:\bX\ra\bY$ of derived $\K$-schemes with
$\bs\phi^*(\om')\simeq\om$, this forces $\bs\phi$ to be \'etale.
However, the notion of morphism $\phi:(X,s)\ra (Y,t)$ of d-critical
loci in Definition \ref{dc2def1} is more general, e.g.\ $\phi:X\ra
Y$ can be smooth of positive dimension, as in
Proposition~\ref{dc2prop3}.
\smallskip

\noindent{\bf(c)} For $(X,s)$ to be a (complex analytic or
algebraic) d-critical locus places strong local restrictions on the
singularities of $X$. For example, Behrend \cite{Behr} notes that if
$X$ has reduced local complete intersection singularities then
locally it cannot be the zeroes of an almost closed 1-form on a
smooth space, and hence not locally a critical locus, and
Pandharipande and Thomas \cite{PaTh} give examples which are zeroes
of almost closed 1-forms, but are not locally critical loci.
\label{dc2rem2}
\end{rem}

On a d-critical locus $(X,s)$, any closed embedding $X\supseteq
R\,{\buildrel i\over\longra}\,U$ with $U$ smooth can be made into a
critical chart $(R',U',f',i')$, after shrinking~$R,U$.

\begin{prop} Suppose $(X,s)$ is a complex analytic d-critical
locus, $R\subseteq X$ is open, and\/ $i:R\hookra U$ is a closed
embedding, where $U$ is a complex manifold. Then for each\/ $x\in
R,$ there exist open $x\in R'\subseteq R$ and\/ $i(R')\subseteq
U'\subseteq U$ and a holomorphic function $f':U'\ra\C$ such that\/
$(R',U',f',i')$ is a critical chart on $(X,s),$ where
$i'=i\vert_{R'}:R'\hookra U'$.

Suppose also that\/ $\dim U=\dim T_xX,$ so that\/ $\d
i\vert_x:T_xX\ra T_{i(x)}U$ is an isomorphism, and\/ $f:U\ra\C$ is
holomorphic with $\io_{R,U}(s\vert_R)=i^{-1}(f)+I_{R,U}^2$. Then we
may take $f'=f\vert_{U'}$ in the critical chart\/~$(R',U',f',i')$.

The analogue holds for algebraic d-critical loci, with\/ $U$ a
smooth\/ $\K$-scheme, $R'\subseteq R\subseteq X,$ $U'\subseteq U$
Zariski open, and\/ $f:U\ra\bA^1,$ $f':U'\ra\bA^1$ regular.
\label{dc2prop2}
\end{prop}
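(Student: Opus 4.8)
The plan is to reduce everything to the local model provided by Definition \ref{dc2def1}, and then use elementary facts about critical loci of regular/holomorphic functions, namely that near a critical point one can split off the nondegenerate directions and work in a chart of minimal dimension. I shall treat the complex analytic case in detail; the algebraic case is identical after replacing ``complex manifold'' by ``smooth $\K$-scheme'', ``holomorphic'' by ``regular'', and ``open neighbourhood'' by ``Zariski open $\K$-subscheme''.

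\textbf{Step 1: Produce \emph{some} critical chart near $x$.} By Definition \ref{dc2def1}, since $(X,s)$ is a d-critical locus, there exist an open $x\in R_0\subseteq X$, a complex manifold $V$, a holomorphic $g:V\ra\C$, and a closed embedding $k:R_0\hookra V$ with $k(R_0)=\Crit(g)$ and $\io_{R_0,V}(s\vert_{R_0})=k^{-1}(g)+I_{R_0,V}^2$. We now have two closed embeddings of (a shrinking of) $X$ into smooth spaces near $x$: the given one $i:R\hookra U$, and the local critical one $k:R_0\hookra V$. The point of the first part of the proposition is to transfer the critical chart structure of $(R_0,V,g,k)$ onto a neighbourhood of $x$ in $U$.

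\textbf{Step 2: Compare the two embeddings.} Shrink so that $R'\subseteq R\cap R_0$. Because $i$ and $k$ are both closed embeddings of the same analytic space $R'$ into smooth ambient spaces, a standard argument (lift generators of the structure sheaf along the smooth maps, using that $U,V$ are smooth hence locally like polydiscs) produces, after shrinking $U,V$ to $U',V'$, holomorphic maps $\Phi:U'\ra V$ and $\Psi:V'\ra U$ with $\Phi\ci i'=k'$ and $\Psi\ci k'=i'$ on $R'$. I would then invoke the usual local structure theory: embeddings of a fixed space into smooth spaces differ, locally, by adding a smooth factor; concretely, near $i'(x)$ we may factor $U'\cong V''\t W$ (with $V''\subseteq V$ open, $W$ a polydisc) so that $i'$ becomes $k'$ followed by the zero section $V''\hookra V''\t W$. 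Set $f':=g\ci\mathrm{pr}_{V''} + Q$ on $U'=V''\t W$, where $Q$ is a nondegenerate quadratic form on the $W$-factor (e.g.\ $Q=z_1^2+\dots+z_m^2$, $m=\dim W$; here $\mathrm{char}\,\K\ne2$ is used). Then $\Crit(f')=\Crit(g)\t\{0\}=k'(R')=i'(R')$, so $(R',U',f',i')$ is a critical chart on $X$ as a subspace. It remains to check the compatibility $\io_{R',U'}(s\vert_{R'})=i'^{-1}(f')+I_{R',U'}^2$: this follows from the commuting diagram \eq{dc2eq3} (functoriality of $\cS_X$ under $\Phi:U'\ra V''\subseteq V$), since $\Phi^\sh$ sends $g+I_{R_0,V}^2$ to $(g\ci\mathrm{pr}_{V''})+I_{R',U'}^2$, and the quadratic term $Q$ lies in $I_{R',U'}^2$ because $\d Q$ vanishes on the zero section $i'(R')$ to first order --- more precisely $Q\in I_{R',U'}^2$ as $Q$ is a quadratic function in coordinates cutting out $i'(R')$. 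Hence $i'^{-1}(f')+I_{R',U'}^2 = \io_{R',U'}(s\vert_{R'})$ by uniqueness of the section in \eq{dc2eq2}.

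\textbf{Step 3: The minimal-dimension case.} Now suppose $\dim U=\dim T_xX$, so $\d i\vert_x$ is an isomorphism. Keep the given $f$ with $\io_{R,U}(s\vert_R)=i^{-1}(f)+I_{R,U}^2$. I claim that, after shrinking $R,U$ to $R',U'$, one has $i(R')=\Crit(f\vert_{U'})$, so one may simply take $f'=f\vert_{U'}$. First, $i(R)\subseteq\Crit(f)$: the condition $\d(i^{-1}(f)+I_{R,U}^2)=0$ in \eq{dc2eq2} says precisely $\d f\in I_{R,U}\cdot i^{-1}(T^*U)$, i.e.\ $\d f$ vanishes on $i(R)$, so $i(R)\subseteq\Crit(f)$ as sets, and in fact as subschemes the ideal of $\Crit(f)$ (generated by the components of $\d f$) is contained in $I_{R,U}$. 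For the reverse inclusion near $x$, I use the minimality hypothesis: $\dim U=\dim T_xX\le\dim T_x\Crit(f)$, and since $\Crit(f)\supseteq i(R)$ with $I_{R,U}$ containing the ideal of $\Crit(f)$, comparing Zariski tangent spaces at $x$ forces $\dim T_xU = \dim T_x\Crit(f)$, i.e.\ the Hessian $\Hess_x f=0$. Then the ideal of $\Crit(f)$ and $I_{R,U}$ have the same ``linear part'' at $x$; combined with the inclusion of ideals, a local-ring argument (Nakayama, after passing to the completion, or directly using that both ideals contain the same number of ``independent'' generators since $U$ is smooth of the minimal dimension) gives that the two ideals agree on a neighbourhood $U'$ of $i(x)$, i.e.\ $i(R')=\Crit(f\vert_{U'})$. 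Then $(R',U',f\vert_{U'},i')$ is the desired critical chart, and the compatibility $\io_{R',U'}(s\vert_{R'})=i'^{-1}(f\vert_{U'})+I_{R',U'}^2$ holds by restriction of the given identity (compatibility of $\io$ with shrinking $R,U$).

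\textbf{Main obstacle.} The essential difficulty, and the step I would spend the most care on, is \textbf{Step 2}: showing that two closed embeddings of the same (possibly very singular, non-reduced) space into smooth ambient spaces differ locally by a smooth ``thickening'' factor, and that adding the quadratic stabilizing term $Q$ exactly repairs the critical-locus property while staying inside $I_{R',U'}^2$ so the section $s$ is unchanged. The comparison maps $\Phi,\Psi$ exist by smoothness, but one must argue carefully that (after shrinking) $U'$ genuinely splits as $V''\t W$ with $i'$ the zero section --- this is a formal/analytic version of the tubular neighbourhood / ``embedding lemma'' and uses $\mathrm{char}\,\K\ne2$ for the quadratic form; and one must verify via diagram \eq{dc2eq3} that the resulting $f'$ induces the \emph{same} section $s\vert_{R'}$, which is where the $I^2_{R',U'}$ bookkeeping of the quadratic term matters. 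The algebraic case needs the analogue of this splitting Zariski-locally, which is available because $U$ is smooth and the embedding $i$ can be arranged (after Zariski shrinking) so that a complementary set of coordinates to those coming from $V$ cuts out $i'(R')$; the rest goes through verbatim.
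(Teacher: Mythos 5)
There are two genuine gaps, one in each of your Steps 2 and 3, and they are not just matters of writing out details.

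\textbf{Step 2.} Your claim that ``near $i'(x)$ we may factor $U'\cong V''\times W$ with $V''\subseteq V$ open, so that $i'$ becomes $k'$ followed by the zero section'' is simply false in general: such a splitting forces $\dim U\ge\dim V$, but there is no such inequality in the hypotheses. The embedding $i:R\hookra U$ in the statement is arbitrary (only $\dim U\ge\dim T_xX$ is automatic), while $V$ is whatever ambient space the d-critical structure happened to hand you, and it can be far larger than $U$. Two closed embeddings of a fixed (singular, non-reduced) germ into smooth spaces are only \emph{stably} equivalent --- $U\times\C^a\cong V\times\C^b$ after further stabilisation --- not one a product of the other. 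The paper instead compares $U$ and $V$ symmetrically via a smooth $\ti V\subseteq U\times V$ of \emph{minimal} dimension $\dim T_xX$, with both projections $\ti V\ra U$, $\ti V\ra V$ \'etale; and it first reduces $V$ to a chart of minimal dimension (the ``second part'' of the proposition, proved first in the paper) before putting the quadratic stabilisation $Q$ on the normal directions inside $U$. Your Step~2, as structured, cannot even start when $\dim U<\dim V$.

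\textbf{Step 3.} Your ``local-ring argument'' --- from $I_{\Crit(f)}\subseteq I_{R,U}$ plus equality of Zariski tangent spaces at $i(x)$, conclude $I_{\Crit(f)}=I_{R,U}$ near $i(x)$ --- does not go through. Tangent-space equality only says both ideals lie in $\mathfrak m_{i(x)}^2$; it gives no control on their second-order parts. For instance $(x^2,y^2)\subsetneq(x^2,xy,y^2)$ in $\C\{x,y\}$ are distinct ideals, both with zero Zariski tangent cone contribution, and the smaller \emph{is} a critical-locus ideal (take $f=\tfrac13(x^3+y^3)$). So no version of Nakayama or generator-counting will close the gap without using the d-critical structure. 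The hypothesis that $(X,s)$ is a d-critical locus, not just that $s\in H^0(\cSz_X)$ lifts to some $f$, is indispensable here, and your Step~3 never invokes it. What the paper actually does (\S\ref{dc41}) is: compare $f$ on $U$ with the function $\ti g$ on the auxiliary $\ti V$ pulled back from a genuine critical chart, derive $\pi_U^*(f)-\ti g\in I_{\d\ti g}^2$ from Theorem~\ref{dc2thm1}(ii), differentiate this to get $\d(\pi_U^*(f))=(\id+\al)\,\d\ti g$ with $\al$ vanishing at the base point, and conclude $\Crit(\pi_U^*(f))=\Crit(\ti g)$ because $\id+\al$ is invertible nearby. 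That differential identity, not a tangent-space count, is what makes the two ideals coincide.

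The rest of your outline (the quadratic term $Q$ lying in $I_{R',U'}^2$, the use of \eq{dc2eq3} to match sections, the reduction from the complex analytic to the algebraic case) is sound in spirit, but the two steps above are where the substance lies, and both need to be reworked along the lines just described.
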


The next result will be useful in~\S\ref{dc28}.

\begin{prop} Let\/ $\phi:X\ra Y$ be a smooth morphism of complex
analytic spaces or $\K$-schemes. Suppose $t\in H^0(\cSz_Y),$ and
set\/ $s:=\phi^\star(t)\in H^0(\cSz_X),$ for $\phi^\star$ as in
Proposition\/ {\rm\ref{dc2prop1}}. If\/ $(Y,t)$ is a d-critical
locus, then\/ $(X,s)$ is a d-critical locus, and\/
$\phi:(X,s)\ra(Y,t)$ is a morphism of d-critical loci.

Conversely, if also $\phi:X\ra Y$ is surjective, then $(X,s)$ a
d-critical locus implies $(Y,t)$ is a d-critical locus.
\label{dc2prop3}
\end{prop}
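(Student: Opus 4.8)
The plan is to prove the forward implication first, then deduce the converse, working locally and reducing everything to the behaviour of critical charts under smooth morphisms. For the forward part, suppose $(Y,t)$ is a d-critical locus and $s=\phi^\star(t)$. Fix $x\in X$ with image $y=\phi(x)\in Y$. Since $(Y,t)$ is a d-critical locus, there is a critical chart $(S,V,g,j)$ on $(Y,t)$ with $y\in S$, so $j:S\hookra V$ is a closed embedding into a smooth space with $j(S)=\Crit(g)$ and $\io_{S,V}(t\vert_S)=j^{-1}(g)+I_{S,V}^2$. Replacing $X$ by $\phi^{-1}(S)$, I may assume $\phi:X\ra Y=S$. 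Because $\phi$ is smooth, the local structure theorem for smooth morphisms (Zariski/analytic locally on $X$) lets me factor $\phi$ near $x$ as an \'etale morphism $X\supseteq R\ra S\t\bA^n$ (or $S\t\C^n$) followed by the projection, and then embed $R$ into $V\t\bA^n$ via $j\t\id$ composed with a local lift of the \'etale map: concretely, after shrinking there is a closed embedding $i:R\hookra W$ with $W\subseteq V\t\bA^n$ open smooth, such that $i$ is compatible with $\phi$, i.e.\ $\pi_V\ci i = j\ci\phi\vert_R$ where $\pi_V:W\ra V$ is induced by projection. Set $f:=g\ci\pi_V:W\ra\bA^1$. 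Then $\Crit(f)$ inside $W$ is $\Crit(g)\t\bA^n$ intersected with $W$, which equals $i(R)$ since the \'etale-then-project description identifies $R$ with (an open in) $S\t\bA^n = \Crit(g)\t\bA^n$.

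Next I have to check the crucial compatibility $\io_{R,W}(s\vert_R)=i^{-1}(f)+I_{R,W}^2$. This is where Proposition \ref{dc2prop1} does the work: the commutative diagram \eq{dc2eq11} for the morphism $\phi:R\ra S$, with the chart data $(R,W,i)$ on $X$, the chart data $(S,V,j)$ on $Y$, and $\Phi=\pi_V:W\ra V$ (which satisfies $\Phi\ci i = j\ci\phi\vert_R$), says precisely that $\io_{R,W}(\phi^\star(t)\vert_R)$ is the image of $\io_{S,V}(t\vert_S)=j^{-1}(g)+I_{S,V}^2$ under $i^{-1}(\Phi^\sharp)$, which sends $j^{-1}(g)+I_{S,V}^2\mapsto i^{-1}(g\ci\pi_V)+I_{R,W}^2 = i^{-1}(f)+I_{R,W}^2$. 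Hence $\io_{R,W}(s\vert_R)=i^{-1}(f)+I_{R,W}^2$, and combined with $i(R)=\Crit(f)$ this shows $(R,W,f,i)$ is a critical chart on $(X,s)$. As $x$ was arbitrary, $(X,s)$ is a d-critical locus; and $\phi^\star(t)=s$ by hypothesis, so $\phi:(X,s)\ra(Y,t)$ is a morphism of d-critical loci by Definition \ref{dc2def1}.

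For the converse, assume in addition $\phi$ surjective, and that $(X,s)$ is a d-critical locus; I want $(Y,t)$ a d-critical locus. Fix $y\in Y$ and pick $x\in X$ with $\phi(x)=y$ (using surjectivity). Since $(X,s)$ is a d-critical locus, after shrinking there is a critical chart $(R,U,f,i)$ on $(X,s)$ with $x\in R$. The idea is to ``descend'' this chart along $\phi$. Again using the local structure of the smooth morphism $\phi$ near $x$, after further shrinking I can arrange that $\phi\vert_R:R\ra S$ factors as $R\ra S\t\bA^n$ \'etale then project, for some open $S\subseteq Y$ containing $y$; then $R$ is locally modelled on $S\t\bA^n$, and by Proposition \ref{dc2prop2} applied to $(X,s)$ with the embedding $R\hookra U$ refined so that $\dim U = \dim T_xX = \dim T_yY + n$, I may take the chart function $f$ to restrict appropriately. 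The key point is to produce a closed embedding $j:S'\hookra V$ of a neighbourhood of $y$ into a smooth $V$ and a function $g:V\ra\bA^1$ with $j(S')=\Crit(g)$ and $\io_{S',V}(t\vert_{S'})=j^{-1}(g)+I_{S',V}^2$. I construct $V$ by slicing: choose, after shrinking, a splitting so that $U\cong V\t\bA^n$ near $i(x)$ compatibly with $\phi$, with $V$ smooth containing $j(S')$; since $f$ is pulled back along $\phi$ up to the locally-constant ambiguity, and $t=\phi^\star{}^{-1}$ of $s$ is forced once $s=\phi^\star(t)$ (here one uses that $\phi$ smooth surjective implies $\phi^\star$ injective on sections, so $t$ is determined), one shows $f$ agrees with $g\ci\pi_V$ for $g:=f\vert_V$, whence $\Crit(f) = \Crit(g)\t\bA^n$ forces $\Crit(g)=j(S')$, and the compatibility square \eq{dc2eq11} run the other way gives $\io_{S',V}(t\vert_{S'})=j^{-1}(g)+I_{S',V}^2$. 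Thus $(S',V,g,j)$ is a critical chart on $(Y,t)$ near $y$, and $(Y,t)$ is a d-critical locus.

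The main obstacle is the converse direction, specifically the ``slicing'' step: extracting a critical chart on $Y$ from one on $X$ requires knowing that the chart $(R,U,f,i)$ on $X$ can be chosen in a form adapted to the smooth fibration $\phi$ — i.e.\ that $U$ splits as $V\t\bA^n$ with $i$, $f$ and $\phi$ all compatible with the product. This needs a careful combination of Proposition \ref{dc2prop2} (to control the dimension of $U$ and the restriction of $f$) with the \'etale-local structure theorem for smooth morphisms, plus the observation that $\phi$ smooth and surjective makes $\phi^\star$ injective on global sections of $\cSz$, so that the section $t$ on $Y$ with $\phi^\star(t)=s$ is unique and its local models are genuinely obtained by slicing those of $s$. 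The forward direction, by contrast, is essentially a bookkeeping exercise once the right $\Phi$ is fed into diagram \eq{dc2eq11}.
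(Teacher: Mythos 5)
Your forward direction is essentially the paper's: pull back a chart on $Y$ through a smooth $\Phi$ and verify the chart condition via \eq{dc2eq11}. (In the algebraic case, beware that the \'etale map from the structure theorem for smooth morphisms is not a Zariski-local isomorphism, so the composite $R\to S\t\bA^n\to V\t\bA^n$ you describe need not be a closed embedding; the paper simply asserts that compatible $i,j,\Phi$ with $\dim U=\dim T_xX$, $\dim V=\dim T_yY$ exist, and one way to produce them is to embed $R$ in a product $U_0\t V$ and slice.)

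The converse direction has a genuine gap. You assert that ``$f$ is pulled back along $\phi$ up to the locally-constant ambiguity'' and that ``$f$ agrees with $g\ci\pi_V$ for $g:=f\vert_V$.'' Neither is true. The data $s=\phi^\star(t)$ together with the chart property $\io_{R,U}(s\vert_R)=i^{-1}(f)+I_{R,U}^2$ only pins down $f$ modulo $(I_{R,U}')^2$: if $g_0$ on $V$ represents $t$, what you actually get from \eq{dc2eq11} is $f-g_0\ci\Phi\in(I_{R,U}')^2$, not equality, and not a locally-constant discrepancy. So $\Crit(f)=\Crit(g)\t\bA^n$ does not follow by comparing $f$ and $g\ci\pi_V$ directly, even though the conclusion is correct. (The aside about $\phi^\star$ being injective is a red herring: $t$ is given, not something to be reconstructed.) What the paper's proof does instead is set up a single equivalence rather than two separate implications: choose compatible minimal embeddings $i:R\hookra U$, $j:S\hookra V$ with $\dim U=\dim T_xX$, $\dim V=\dim T_yY$, a smooth $\Phi:U\ra V$ with $\Phi\ci i=j\ci\phi\vert_R$, and \emph{any} $g$ on $V$ representing $t$ (no hypothesis on $(Y,t)$ needed). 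Setting $f:=g\ci\Phi$, this $f$ represents $s$ by \eq{dc2eq11}, so the second half of Proposition \ref{dc2prop2} gives: $(X,s)$ is d-critical near $x$ iff $\Crit(f)=i(R)$ near $i(x)$, and $(Y,t)$ is d-critical near $y$ iff $\Crit(g)=j(S)$ near $j(y)$. Since $\Phi$ is smooth, $\Crit(f)=\Phi^{-1}(\Crit(g))$ and $i(R)=\Phi^{-1}(j(S))$ near $i(R)$ (a closed immersion of $S$-smooth schemes of the same relative dimension), so faithful flatness of $\Phi$ near $i(x)$ makes the two conditions equivalent; both implications of the proposition drop out at once, with surjectivity of $\phi$ used only to cover every $y\in Y$. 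If you prefer to keep your slicing picture, the repair is to invoke Proposition \ref{dc2prop2} to conclude $\Crit(f)=i(R)=\Crit(g\ci\Phi)$ near $i(x)$ --- the critical loci coincide even though $f$ and $g\ci\Phi$ generally do not --- rather than asserting the functions themselves are equal.
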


As in Hartshorne \cite[App.~B]{Hart} and Serre \cite{Serr}, there is
an analytification functor from algebraic $\C$-schemes $X$ to
complex analytic spaces $X^{\rm an}$, where the points of $X^{\rm
an}$ are the $\C$-points of $X$. It is easy to show that this
extends to d-critical loci, and we leave the details to the reader:

\begin{prop} Let\/ $(X,s)$ be an algebraic d-critical locus over
the field\/ $\K=\C$. Then the complex analytic space $X^{\rm an}$
associated to the $\C$-scheme $X$ extends naturally to a complex
analytic d-critical locus $(X^{\rm an},s^{\rm an})$.
\label{dc2prop4}
\end{prop}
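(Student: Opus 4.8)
The plan is to first construct a natural comparison morphism from the algebraic sheaf $\cS_X$ on $X$ to the analytic sheaf $\cS_{X^{\rm an}}$ on $X^{\rm an}$, use it to transport $s$ to a section $s^{\rm an}\in H^0(\cSz_{X^{\rm an}})$, and then verify the local condition of Definition \ref{dc2def1} by analytifying algebraic critical charts. Write $\pi_X:X^{\rm an}\ra X$ for the analytification morphism of locally ringed spaces. I will use the standard properties of analytification for $\C$-schemes locally of finite type: $\pi_X^{-1}(\O_X)\ra\O_{X^{\rm an}}$ is flat, analytification is exact, commutes with fibre products and with passing to the reduced subscheme (so $(X^\red)^{\rm an}=(X^{\rm an})^\red$), sends smooth $\C$-schemes to complex manifolds, closed embeddings to closed embeddings and Zariski open inclusions to open inclusions, identifies $(T^*U)^{\rm an}$ with $T^*U^{\rm an}$ for $U$ smooth and intertwines the de Rham differentials, and, for a closed $\K$-subscheme $R\hookra U$, identifies the analytification of $I_{R,U}$ with $I_{R^{\rm an},U^{\rm an}}$ (hence that of $I_{R,U}^2$ with $I_{R^{\rm an},U^{\rm an}}^2$). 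In particular, since $\Crit(f)$ is the zero subscheme of the section $\d f$ of the vector bundle $T^*U$, it follows that $\Crit(f)^{\rm an}=\Crit(f^{\rm an})$ for $f:U\ra\bA^1$ regular on a smooth $\C$-scheme.

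Next I would build the comparison morphism. For a Zariski open $R\subseteq X$ with a closed embedding $i:R\hookra U$ into a smooth $\C$-scheme, analytification gives a closed embedding $i^{\rm an}:R^{\rm an}\hookra U^{\rm an}$, and the morphisms listed above induce a commutative diagram from the $\pi_X^{-1}$-pullback of the defining sequence \eq{dc2eq2} for $\cS_X\vert_R$ to the sequence \eq{dc2eq2} for $\cS_{X^{\rm an}}\vert_{R^{\rm an}}$, whose second and third vertical arrows are the evident comparison maps $\pi_X^{-1}\bigl(i^{-1}(\O_U)/I_{R,U}^2\bigr)\ra (i^{\rm an})^{-1}(\O_{U^{\rm an}})/I_{R^{\rm an},U^{\rm an}}^2$ and the analogue for the $T^*U$ term. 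Since $\cS_{X^{\rm an}}\vert_{R^{\rm an}}$ is the kernel of $\d$ in the analytic sequence, the composite $\pi_X^{-1}(\cS_X\vert_R)\ra\pi_X^{-1}\bigl(i^{-1}(\O_U)/I_{R,U}^2\bigr)\ra(i^{\rm an})^{-1}(\O_{U^{\rm an}})/I_{R^{\rm an},U^{\rm an}}^2$ lands in that kernel, giving a unique morphism $\vartheta_R:\pi_X^{-1}(\cS_X\vert_R)\ra\cS_{X^{\rm an}}\vert_{R^{\rm an}}$ compatible with the $\io$'s. The change-of-embedding compatibility \eq{dc2eq3} of Theorem \ref{dc2thm1}(ii), together with its analytic counterpart obtained from $\Phi^{\rm an}$, shows $\vartheta_R$ is independent of the choice of $i:R\hookra U$; covering $X$ by such $R$ and comparing any two embeddings on an overlap through an embedding into a product (shrinking $U$ as necessary, exactly as in the proof of Theorem \ref{dc2thm1}), the $\vartheta_R$ glue to a global $\vartheta_X:\pi_X^{-1}(\cS_X)\ra\cS_{X^{\rm an}}$. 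Because $\vartheta_X$ intertwines the projections \eq{dc2eq5} onto $\O_{X^\red}$ and $\O_{(X^{\rm an})^\red}$, it carries $\C_X$ into $\C_{X^{\rm an}}$ and so restricts to $\vartheta_X^0:\pi_X^{-1}(\cSz_X)\ra\cSz_{X^{\rm an}}$; the universal-property construction also makes $\vartheta_X$ natural in $X$ and compatible with the pullbacks $\phi^\star$ of Proposition \ref{dc2prop1}.

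Finally I would set $s^{\rm an}:=\vartheta_X^0\bigl(\pi_X^{-1}(s)\bigr)\in H^0(\cSz_{X^{\rm an}})$, using pullback of sections along the underlying continuous map $X^{\rm an}\ra X$, and check the local condition. Given $x\in X^{\rm an}$, its image in $X$ lies in a Zariski open $R$ carrying an algebraic critical chart $(R,U,f,i)$, so $i(R)=\Crit(f)$ and $\io_{R,U}(s\vert_R)=i^{-1}(f)+I_{R,U}^2$. Analytifying, $U^{\rm an}$ is a complex manifold, $f^{\rm an}:U^{\rm an}\ra\C$ is holomorphic, $i^{\rm an}(R^{\rm an})=\Crit(f)^{\rm an}=\Crit(f^{\rm an})$, and the comparison diagram for $(R,U)$ gives $\io_{R^{\rm an},U^{\rm an}}(s^{\rm an}\vert_{R^{\rm an}})=(i^{\rm an})^{-1}(f^{\rm an})+I_{R^{\rm an},U^{\rm an}}^2$, since the comparison map sends the class of $i^{-1}(f)$ to that of $(i^{\rm an})^{-1}(f^{\rm an})$. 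Hence $(R^{\rm an},U^{\rm an},f^{\rm an},i^{\rm an})$ is a critical chart on $(X^{\rm an},s^{\rm an})$ through $x$; as $x$ is arbitrary, $(X^{\rm an},s^{\rm an})$ is a complex analytic d-critical locus, and naturality of $\vartheta$ upgrades $(X,s)\mapsto(X^{\rm an},s^{\rm an})$ to a functor. I expect the only real work to be the bookkeeping that glues the local maps $\vartheta_R$ into a well-defined $\vartheta_X$ — essentially re-running the compatibility argument behind Theorem \ref{dc2thm1} through analytification — while everything else reduces to standard exactness and base-change properties of the analytification functor, which is presumably why the paper leaves it to the reader.
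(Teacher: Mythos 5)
The paper does not prove this proposition; it states immediately before Proposition \ref{dc2prop4} that the extension of analytification to d-critical loci is ``easy to show\ldots and we leave the details to the reader,'' so there is no argument to compare against. Your proposal is the natural and expected way to supply those details, and the argument is correct. You rightly identify the two pieces of work: constructing the comparison morphism $\vartheta_X:\pi_X^{-1}(\cS_X)\ra\cS_{X^{\rm an}}$ by comparing the defining sequences \eq{dc2eq2} through the flat-and-exact analytification functor, and then observing that an algebraic critical chart $(R,U,f,i)$ analytifies to an analytic one $(R^{\rm an},U^{\rm an},f^{\rm an},i^{\rm an})$ because $\Crit(f)^{\rm an}=\Crit(f^{\rm an})$ and $I_{R,U}^2$ analytifies to $I_{R^{\rm an},U^{\rm an}}^2$. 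You also correctly flag the gluing issue for $\vartheta_R$ — in the algebraic case there need not exist a morphism $U'\ra V$ between two embeddings (cf.~\S\ref{dc33}), so you compare through the product embedding $R\cap S\hookra U\t V$, which is exactly the device used in the paper's proof of Theorem \ref{dc2thm1}.

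One small stylistic slip: you write that $\vartheta_X$ ``carries $\C_X$ into $\C_{X^{\rm an}}$ and so restricts to $\vartheta_X^0:\pi_X^{-1}(\cSz_X)\ra\cSz_{X^{\rm an}}$''. The relevant fact is directly that $\vartheta_X$ intertwines the composites $\cS_X\ra\O_X\ra\O_{X^\red}$ and $\cS_{X^{\rm an}}\ra\O_{X^{\rm an}}\ra\O_{(X^{\rm an})^\red}$ of \eq{dc2eq5}, and $\cSz$ is by definition the kernel of that composite, so the map of kernels is immediate; preservation of $\C_X$ is a consequence, not the mechanism. This does not affect the correctness of the proof.
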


The proofs of the following lemma and proposition are also
more-or-less immediate, and we leave them as exercises.

\begin{lem} Let\/ $(X,s)$ be a d-critical locus, and\/ $0\ne c\in\C$
or\/ $0\ne c\in\K$. Then $(X,c\cdot s)$ is also a d-critical locus,
and if\/ $(R,U,f,i)$ is a critical chart on $(X,s)$ then
$(R,U,c\cdot f,i)$ is a critical chart on $(X,c\cdot s)$.
\label{dc2lem1}
\end{lem}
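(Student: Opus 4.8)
The plan is to verify the two assertions — that $(X,c\cdot s)$ is a d-critical locus, and that $(R,U,c\cdot f,i)$ is a critical chart on it — essentially by unwinding the definitions, using the observation that scaling a function by a nonzero constant does not change its critical locus. First I would recall from Definition \ref{dc2def1} that $(X,s)$ being a d-critical locus means $X$ is covered by Zariski open (or complex analytic open) sets $R$ admitting a closed embedding $i:R\hookra U$ into a smooth scheme (resp.\ complex manifold) $U$ and a regular (resp.\ holomorphic) function $f:U\ra\bA^1$ (resp.\ $f:U\ra\C$) with $i(R)=\Crit(f)$ and $\io_{R,U}(s\vert_R)=i^{-1}(f)+I_{R,U}^2$. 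Since $c\ne 0$, the element $c\cdot s$ lies in $H^0(\cSz_X)$ because $\cSz_X$ is a sheaf of $\C$-vector spaces (indeed a sheaf of $\C$-algebras containing the constants), so multiplication by $c$ is a well-defined endomorphism of $H^0(\cSz_X)$. It remains to exhibit, for the same cover $\{R\}$, suitable critical charts for $(X,c\cdot s)$.

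The key computation is that, keeping $U$ and $i:R\hookra U$ fixed, I replace $f$ by $c\cdot f$. The critical locus is unchanged: $\d(c\cdot f)=c\cdot\d f$ and $c\ne 0$, so $\Crit(c\cdot f)=(\d(c\cdot f))^{-1}(0)=(\d f)^{-1}(0)=\Crit(f)$ as closed subspaces of $U$ (the scheme/analytic structure is also unchanged, since the ideal generated by the components of $c\cdot\d f$ equals that generated by the components of $\d f$). Hence $i(R)=\Crit(f)=\Crit(c\cdot f)$, so the embedding condition for a critical chart still holds. For the section condition, I use that $\io_{R,U}$ is a morphism of sheaves of $\C$-algebras, and in particular is $\C$-linear: from $\io_{R,U}(s\vert_R)=i^{-1}(f)+I_{R,U}^2$ I get
\[
\io_{R,U}\bigl((c\cdot s)\vert_R\bigr)=c\cdot\io_{R,U}(s\vert_R)=c\cdot\bigl(i^{-1}(f)+I_{R,U}^2\bigr)=i^{-1}(c\cdot f)+I_{R,U}^2,
\]
where the last equality uses that $i^{-1}$ commutes with multiplication by the constant $c$. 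Thus $(R,U,c\cdot f,i)$ satisfies both conditions in Definition \ref{dc2def1}, i.e.\ it is a critical chart on $(X,c\cdot s)$.

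Finally, since the open sets $R$ in a critical-chart cover for $(X,s)$ also cover $X$ and now carry critical charts $(R,U,c\cdot f,i)$ for $(X,c\cdot s)$, the pair $(X,c\cdot s)$ satisfies Definition \ref{dc2def1} and is a d-critical locus. The argument is uniform in the complex analytic and algebraic cases, the only inputs being the $\C$- (or $\K$-) linearity of $\io_{R,U}$ from Theorem \ref{dc2thm1} and the elementary fact that scaling by a unit preserves critical loci. There is no genuine obstacle here; the one point requiring a line of care is checking that rescaling by $c$ preserves not just the support but the full scheme-theoretic (or complex analytic) structure of $\Crit(f)$, which follows since $c$ is a unit and so $(c\,\pd_1 f,\dots,c\,\pd_n f)=(\pd_1 f,\dots,\pd_n f)$ as ideal sheaves.
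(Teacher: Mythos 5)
Your proof is correct, and it is the intended argument: the paper itself states just before the lemma that this proof is ``more-or-less immediate'' and is left as an exercise, so there is no written proof to compare against. You have correctly identified the two things to check -- that $c\ne 0$ makes the ideal $(\partial_1(cf),\ldots,\partial_n(cf))=(\partial_1f,\ldots,\partial_nf)$, so $\Crit(cf)=\Crit(f)$ as subschemes, and that $\K$-linearity of $\io_{R,U}$ from Theorem \ref{dc2thm1} gives $\io_{R,U}((cs)\vert_R)=i^{-1}(cf)+I_{R,U}^2$ -- and the rest follows by covering $X$ with such charts.
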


\begin{prop} Let\/ $(X,s),(Y,t)$ be d-critical loci. Write
$\pi_X:X\t Y\ra X,$ $\pi_Y:X\t Y\ra Y$ for the projections, and
define $s\boxplus t:=\pi_X^\star(s)+\pi_Y^\star(t)$ in
$H^0(\cSz_{X\t Y}),$ for $\pi_X^\star,\pi_Y^\star$ as in
Proposition\/ {\rm\ref{dc2prop1}}. Then $(X\t Y,s\boxplus t)$ is a
d-critical locus, and if\/ $(R,U,f,i),$ $(S,V,g,j)$ are critical
charts on $(X,s),$ $(Y,t)$ respectively then $(R\t S,U\t V,f\boxplus
g,i\t j)$ is a critical chart on\/~$(X\t Y,s\boxplus t)$.
\label{dc2prop5}
\end{prop}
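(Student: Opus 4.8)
The plan is to reduce everything to the local picture of critical charts and then invoke the behaviour of $\cS_X$ under products, using the pullback morphisms $\pi_X^\star,\pi_Y^\star$ supplied by Proposition \ref{dc2prop1}. First I would note that $s\boxplus t:=\pi_X^\star(s)+\pi_Y^\star(t)$ is a well-defined element of $H^0(\cSz_{X\t Y})$, since by Proposition \ref{dc2prop1} the morphisms $\pi_X^\star:\pi_X^{-1}(\cS_X)\ra\cS_{X\t Y}$ and $\pi_Y^\star:\pi_Y^{-1}(\cS_Y)\ra\cS_{X\t Y}$ of sheaves of commutative algebras send the subsheaves $\cSz_X,\cSz_Y$ into $\cSz_{X\t Y}$; adding the two global sections gives $s\boxplus t\in H^0(\cSz_{X\t Y})$. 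So the content of the proposition is the statement about critical charts, and the d-critical locus property follows from it because the charts $(R,U,f,i)$, $(S,V,g,j)$ can be taken to cover $X$ and $Y$ respectively, whence the products $(R\t S,U\t V,f\boxplus g,i\t j)$ cover $X\t Y$.

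For the key step, let $(R,U,f,i)$ and $(S,V,g,j)$ be critical charts on $(X,s)$ and $(Y,t)$. Then $i\t j:R\t S\hookra U\t V$ is a closed embedding of the open set $R\t S\subseteq X\t Y$ into the complex manifold (or smooth $\K$-scheme) $U\t V$, and $f\boxplus g:U\t V\ra\C$ (or $\bA^1$) is holomorphic (or regular). The first thing to check is that $\Crit(f\boxplus g)=\Crit(f)\t\Crit(g)$ as complex analytic subspaces (or subschemes) of $U\t V$; this is immediate from $\d(f\boxplus g)=\pi_U^*(\d f)\op\pi_V^*(\d g)$ and the corresponding identity $\pd^2(f\boxplus g)=\pi_U^*(\pd^2f)\op\pi_V^*(\pd^2g)$, so the ideal of $\Crit(f\boxplus g)$ is generated by $\pi_U^*(\d f)$ and $\pi_V^*(\d g)$, i.e.\ is the sum of the two pulled-back ideals, which cuts out the product subscheme. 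Hence $(i\t j)(R\t S)=\Crit(f\boxplus g)$, using $i(R)=\Crit(f)$, $j(S)=\Crit(g)$. The second thing is the identity $\io_{R\t S,U\t V}\bigl((s\boxplus t)\vert_{R\t S}\bigr)=(i\t j)^{-1}(f\boxplus g)+I_{R\t S,U\t V}^2$ in $H^0\bigl((i\t j)^{-1}(\O_{U\t V})/I_{R\t S,U\t V}^2\bigr)$.

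The main obstacle, and the only place any real work is needed, is this last compatibility. My approach is to use the characterization of $\pi_X^\star,\pi_Y^\star$ via the commuting square \eq{dc2eq11}. Apply that square with $\phi=\pi_X:X\t Y\ra X$, open sets $R\t S\subseteq X\t Y$ and $R\subseteq X$, embeddings $i\t j:R\t S\hookra U\t V$ and $i:R\hookra U$, and the holomorphic map $\Phi=\pi_U:U\t V\ra U$, which satisfies $\Phi\ci(i\t j)=i\ci\pi_X\vert_{R\t S}$; this identifies $\pi_X^\star(s)$, via $\io_{R\t S,U\t V}$, with the image of $i^{-1}(f)+I_{R,U}^2$ under $i^{-1}(\pi_U^\sh):\pi_U^{-1}(\O_U)\ra\O_{U\t V}$ pulled back to $R\t S$, i.e.\ with $(i\t j)^{-1}\bigl(\pi_U^*f\bigr)+I_{R\t S,U\t V}^2$. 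Symmetrically, with $\phi=\pi_Y$ and $\Phi=\pi_V$, $\io_{R\t S,U\t V}(\pi_Y^\star(t))=(i\t j)^{-1}(\pi_V^*g)+I_{R\t S,U\t V}^2$. Since $\io_{R\t S,U\t V}$ is a morphism of sheaves of commutative algebras, hence additive, adding these and using $\pi_U^*f+\pi_V^*g=f\boxplus g$ gives $\io_{R\t S,U\t V}\bigl((s\boxplus t)\vert_{R\t S}\bigr)=(i\t j)^{-1}(f\boxplus g)+I_{R\t S,U\t V}^2$, as required. Thus $(R\t S,U\t V,f\boxplus g,i\t j)$ is a critical chart on $(X\t Y,s\boxplus t)$, and since such charts cover $X\t Y$, the pair $(X\t Y,s\boxplus t)$ is a d-critical locus. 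The algebraic case is identical, replacing $U,V$ by smooth $\K$-schemes, $\C$ by $\bA^1$, and $\C_X$ by $\K_X$ throughout, and using the algebraic versions of Theorem \ref{dc2thm1} and Proposition \ref{dc2prop1}.
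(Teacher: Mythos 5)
The paper gives no proof of this proposition---immediately above it, it declares that the proofs of Lemma~\ref{dc2lem1} and Proposition~\ref{dc2prop5} are ``more-or-less immediate, and we leave them as exercises.'' Your write-up supplies exactly the verification the author intended: $\Crit(f\boxplus g)=\Crit(f)\times\Crit(g)=(i\times j)(R\times S)$ on the nose, and the compatibility $\io_{R\times S,U\times V}\bigl((s\boxplus t)\vert_{R\times S}\bigr)=(i\times j)^{-1}(f\boxplus g)+I_{R\times S,U\times V}^2$ obtained by two applications of the commuting square \eqref{dc2eq11} (with $\Phi=\pi_U$ and $\Phi=\pi_V$ respectively) plus additivity of $\io_{R\times S,U\times V}$. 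This is correct, with no gaps, and is the natural argument.
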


\begin{rem} Note that in Proposition \ref{dc2prop5}, $\pi_X,\pi_Y$
are in general {\it not\/} morphisms of d-critical loci $(X\t
Y,s\boxplus t)\ra(X,s)$, $(X\t Y,s\boxplus t)\ra(Y,t)$ in the sense
of Definition \ref{dc2def1}, since $\pi_X^\star(s)\ne s\boxplus
t\ne\pi_Y^\star(t)$. Also $(X\t Y,s\boxplus t)$ is {\it not\/} a
product $(X,s)\t(Y,t)$ in the category of d-critical loci, in the
sense of category theory. Nonetheless, we will call $(X\t
Y,s\boxplus t)$ the {\it product\/} of~$(X,s),(Y,t)$.
\label{dc2rem3}
\end{rem}

Let $U$ be a complex manifold, $f:U\ra\C$ be holomorphic, and
$X=\Crit(f)$, as a complex analytic space. Then $f\vert_X:X\ra\C$ is
holomorphic, and $\d(f\vert_X)=0$ in $H^0(T^*X)$. Experience with
calculus on manifolds suggests that if $g:X\ra\C$ is holomorphic
with $\d g=0$ in $H^0(T^*X)$ then $g$ is locally constant on $X$.
However, this is true only for {\it reduced\/} complex analytic
spaces or $\K$-schemes $X$. Here is an example of a non-reduced
critical locus with $f\vert_X$ not locally constant:

\begin{ex} Define $f:\C^2\ra\C$ by $f(x,y)=x^5+x^2y^2+y^5$, and let
$X=\Crit(f)$, as a complex analytic space. Then $f\vert_X\in
H^0(\O_X)$. We have $\d(f\vert_X)=0\in H^0(T^*X)$, since $X=\d
f^{-1}(0)$. Suppose for a contradiction that $f$ is constant on $X$
near $(0,0)$. Then we may write
\begin{equation*}
f(x,y)=\ts A+\frac{\pd f}{\pd x}\,B(x,y)+\frac{\pd f}{\pd y}\,C(x,y)
\end{equation*}
on $\C^2$ near $(0,0)$, for some holomorphic functions $B,C$ defined
near $(0,0)$ in $\C^2$. That is, we have
\begin{align*}
x^5+x^2y^2+y^5=\ts A+(5x^4+2xy^2)\sum\limits_{i,j\ge 0}B_{i,j}x^iy^j
+(2x^2y+5y^4)\sum\limits_{i,j\ge 0}C_{i,j}x^iy^j.
\end{align*}
Comparing coefficients of $x^5,y^5,x^2y^2$ give the equations
\begin{equation*}
1=5B_{1,0},\quad 1=5C_{0,1},\quad 1=2B_{1,0}+2C_{0,1},
\end{equation*}
which have no solution. Thus in this case, $f\vert_X$ is not locally
constant on $X$.
\label{dc2ex2}
\end{ex}

If $X=\Crit(f)$ and $X^\red$ is the reduced complex analytic
subspace of $X$, then $f\vert_{X^\red}$ is always locally constant.
This is why we defined $\cSz_X$ using restriction to $X^\red$ in
Theorem \ref{dc2thm1}(a). Combining Theorem \ref{dc2thm1}(a),(b) we
deduce:

\begin{cor} Suppose $X$ is a complex analytic space, and the
following sequence of sheaves of\/ $\C$-vector spaces on $X$ is
exact:
\e
\xymatrix@C=30pt{ 0 \ar[r] & \C_X \ar[r]^{\inc} & \O_X \ar[r]^\d &
T^*X, }
\label{dc2eq13}
\e
where $\inc:\C_X\hookra\O_X$ is the inclusion of the constant
functions into the holomorphic functions. Then $\cSz_X\cong
h^{-1}(\bL_X),$ so that\/ $\cSz_X$ is a coherent sheaf on $X,$ and\/
$\cS_X\cong\C_X\op h^{-1}(\bL_X)$. The analogue also holds for
$\K$-schemes.
\label{dc2cor1}
\end{cor}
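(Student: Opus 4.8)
The plan is to read the corollary off directly from the exact sequence \eq{dc2eq7} of Theorem \ref{dc2thm1}(b), using the hypothesis only to annihilate the last nontrivial term. Recall that \eq{dc2eq7} exhibits $\cSz_X$ as an extension: $\al_X^0$ is injective with $\Im\al_X^0=\Ker\be_X^0$, and $\Im\be_X^0=\Ker\bigl(\d\op i_X^\sh:\O_X\ra T^*X\op\O_{X^\red}\bigr)$. Hence the whole statement reduces to showing that this last kernel vanishes, for then $\be_X^0=0$, and $\al_X^0$ becomes an isomorphism $h^{-1}(\bL_X)\to\cSz_X$.

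First I would note that $\Ker(\d\op i_X^\sh)=\Ker(\d:\O_X\ra T^*X)\cap\Ker(i_X^\sh:\O_X\ra\O_{X^\red})$. The hypothesised exactness of \eq{dc2eq13} identifies $\Ker(\d:\O_X\ra T^*X)$ with $\C_X$, the sheaf of locally constant functions, while $\Ker(i_X^\sh:\O_X\ra\O_{X^\red})$ is the ideal sheaf of nilpotents of $X$. A locally constant section $c\in\C_X\subseteq\O_X$ restricts over $X^\red$, which has the same underlying topological space as $X$, to the same constant $c$, so it is nilpotent only if $c=0$. Therefore $\C_X\cap\Ker(i_X^\sh)=0$, so by exactness of \eq{dc2eq7} at $\O_X$ we get $\be_X^0=0$, and $\al_X^0$ is an isomorphism $h^{-1}(\bL_X)\cong\cSz_X$.

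Second, combining this with the canonical splitting $\cS_X=\C_X\op\cSz_X$ of Theorem \ref{dc2thm1}(a) yields $\cS_X\cong\C_X\op h^{-1}(\bL_X)$. For the coherence claim I would invoke the standard fact that, since $X$ is locally of finite type, the cohomology sheaves of the cotangent complex $\bL_X$ are coherent --- Illusie \cite{Illu1,Illu2} in the algebraic case, Palamodov \cite{Pala1,Pala2,Pala3,Pala4} in the analytic case --- so $h^{-1}(\bL_X)$, and hence $\cSz_X$, is coherent. The $\K$-scheme case runs identically, with $\C_X$ replaced by $\K_X$.

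The argument is essentially formal; the hypothesis on \eq{dc2eq13} enters only in identifying $\Ker\d$ with $\C_X$, and the sole point needing a moment's thought --- the ``main obstacle'', such as it is --- is the elementary observation that a nonzero locally constant function cannot be nilpotent, i.e.\ $\C_X\cap\Ker(i_X^\sh)=0$, which holds because $X$ and $X^\red$ share the same topology. Everything else is immediate from Theorem \ref{dc2thm1}.
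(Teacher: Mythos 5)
Your proof is correct and does exactly what the paper intends: the corollary is stated with the phrase ``Combining Theorem \ref{dc2thm1}(a),(b) we deduce'', and your argument is precisely that deduction, reading the isomorphism $\cSz_X\cong h^{-1}(\bL_X)$ off the exact sequence \eq{dc2eq7} from part (b) once the hypothesis \eq{dc2eq13} identifies $\Ker\d$ with $\C_X$ and the trivial observation $\C_X\cap\Ker(i_X^\sh)=0$ kills $\be_X^0$, and then invoking the splitting $\cS_X=\C_X\op\cSz_X$ from part (a). The coherence of $h^{-1}(\bL_X)$ is the standard fact you cite.
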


Now \eq{dc2eq13} is exact if for $g:X\ra\C$ a locally defined
holomorphic function, $\d g=0$ implies $g$ is locally constant. In
Example \ref{dc2ex2} this fails, so in this example \eq{dc2eq13} is
not exact, and $\cSz_X\not\cong h^{-1}(\bL_X)$, and $\cSz_X$ is not
a coherent sheaf on $X$. Next we consider smooth complex analytic
spaces and $\K$-schemes:

\begin{ex} Suppose $X$ is a complex manifold, considered as a
complex analytic space. Then in Theorem \ref{dc2thm1} we see that
$\cS_X\cong\C_X$, the constant sheaf, and $\cSz_X=0$, the zero
sheaf. To see this, take $R=X=U$ and $i=\id_X:X\ra X$ in Theorem
\ref{dc2thm1}. Then $I_{X,X}=0$ by \eq{dc2eq1}, and
$\cS_X\cong\Ker\bigl(\d:\O_X\ra T^*X\bigr)\cong\C_X$ by \eq{dc2eq2}.
As $\cSz_X=0$ there is a unique global section $s=0\in H^0(\cSz_X)$,
and $(X,0)$ is a complex analytic d-critical locus, as it
is~$\Crit(0:X\ra\C)$.

Similarly, if $X$ is a smooth $\K$-scheme then $\cS_X\cong\K_X$, and
$\cSz_X=0$, and $(X,0)$ is an algebraic d-critical locus.
\label{dc2ex3}
\end{ex}

Our next two examples compare algebraic d-critical loci with
symmetric obstruction theories on $\K$-schemes, as defined in
Definition~\ref{dc1def1}.

\begin{ex} Let $\K$ be a field of characteristic zero, and define
$X$ to be the $\K$-scheme $X=\Spec\bigl(\K[z]/(z^n)\bigr)$ for $n\ge
2$. Then $X$ has an obvious embedding $i:X\hookra
\bA^1=\Spec\bigl(\K[z]\bigr)$ as the subscheme $z^n=0$ in $\bA^1$.
It is a non-reduced point. Using this embedding $X\hookra\bA^1$,
from Theorem \ref{dc2thm1}(i) we find that
\begin{align*}
H^0(\cS_X)=\bigl\{a_0&+a_{n+1}z^{n+1}+\cdots+
a_{2n-1}z^{2n-1}+(z^{2n}):\\
&a_0,a_{n+1},\ldots,a_{2n-1}\in\K\bigr\}\cong\K^n,
\end{align*}
and $H^0(\cSz_X)\subset H^0(\cS_X)$ is the subspace with $a_0=0$,
isomorphic to~$\K^{n-1}$.

Now let $0\in U\subseteq\bA^1$ be open, and suppose $f:U\ra\bA^1$ is
regular with $f(0)=0$ and $\Crit(f)=i(X)$. Write
$a_k=\frac{1}{k!}\frac{\pd^kf}{\pd z^k}(0)$ for $k=0,1,\ldots.$ Then
$f(0)=0$ gives $a_0=0$, and $\Crit(f)=X$ is equivalent to
$a_1=\cdots=a_n=0$ and $a_{n+1}\ne 0$. The section $s\in
H^0(\cSz_X)$ corresponding to $f$ is $f+(z^{2n})=a_{n+1}z^{n+1}+
\cdots+ a_{2n-1}z^{2n-1}+(z^{2n})$. From this we see that if
$s=a_{n+1}z^{n+1}+\cdots+ a_{2n-1}z^{2n-1}+(z^{2n})\in H^0(\cSz_X)$,
then $(X,s)$ is an algebraic d-critical locus if and only if
$a_{n+1}\ne 0$.

By equation \eq{dc1eq1}, the natural symmetric obstruction theory on
$X=\Crit(f)$ is determined by $\frac{\pd^2f}{\pd z^2}\big\vert_X$,
that is, by $(n+1)na_{n+1}z^{n-1}+\cdots$ modulo $(z^n)$. Thus, in
this example, the d-critical locus $(X,s)$ associated to $\Crit(f)$
records the first $n-1$ coefficients $a_{n+1},a_{n+2},\ldots,
a_{2n-1}$ in the power series expansion of
$f(z)=a_{n+1}z^{n+1}+a_{n+2}z^{n+2}+\cdots$ at 0, but the symmetric
obstruction theory $\phi:\cE^\bu\ra\bL_X$, $\th$ records only the
first coefficient $a_{n+1}$. Hence (at least in this case), {\it the
algebraic d-critical locus remembers more information, locally, than
the symmetric obstruction theory}.
\label{dc2ex4}
\end{ex}

\begin{ex} Let $t:U\ra\bA^1$ be a smooth morphism of
$\K$-schemes of relative dimension 2, whose fibres $U_t$ for $t$ at
and near 0 in $\bA^1$ are $K3$ surfaces. Set $X=U_0\subset U$, and
regard $U,t$ as a 1-parameter family of deformations of $X$.

We wish to compare $\Crit(t^2:U\ra\bA^1)$ and $\Crit(0:X\ra\bA^1)$
in the categories (or higher categories) of:
\begin{itemize}
\setlength{\itemsep}{0pt}
\setlength{\parsep}{0pt}
\item[(i)] classical $\K$-schemes;
\item[(ii)] algebraic d-critical loci;
\item[(iii)] schemes with perfect obstruction theories
\cite{BeFa};
\item[(iv)] schemes with symmetric obstruction theories
\cite{Behr}; and
\item[(v)] $-1$-shifted symplectic derived schemes \cite{PTVV}.
\end{itemize}
We will see that the two are isomorphic in (i),(ii), but not
equivalent in (iii)--(v). For (i), $\Crit(t^2:U\ra\bA^1)$ and
$\Crit(0:X\ra\bA^1)$ are both $X$ as classical schemes. For (ii), as
$X$ is smooth $\cSz_X=0$, so $\Crit(t^2:U\ra\bA^1)$ and
$\Crit(0:X\ra\bA^1)$ are both $(X,0)$ as algebraic d-critical loci.

For (iii), write $\phi:\cE^\bu\ra\bL_X$ and $\psi:\cF^\bu\ra\bL_X$
for the obstruction theories from $\Crit(t^2:U\ra\bA^1)$ and
$\Crit(0:X\ra\bA^1)$. Then \eq{dc1eq1} gives
\begin{equation*}
\smash{\xymatrix@C=16pt{\cE^\bu=\bigl[TU\vert_X
\ar[rr]^(0.52){\pd^2(t^2)\vert_X} && T^*U\vert_X\bigr], &
\cF^\bu=\bigl[TX \ar[rr]^(0.52){0} && T^*X\vert_X\bigr]. }}
\end{equation*}
We want to know whether $\cE^\bu\cong\cF^\bu$ in $D(\qcoh(X))$. Now
$\tau_{\le -1}(\cE^\bu)\cong TX[1]$ and $\tau_{\ge 0}(\cE^\bu)\cong
T^*X$, so we have a distinguished triangle in $D(\qcoh(X))$:
\begin{equation*}
\xymatrix{ \cdots \ar[r] & T^*X[-1] \ar[r]^\al  & TX[1] \ar[r]
& \cE^\bu \ar[r] & T^*X \ar[r] & \cdots. }
\end{equation*}
That is, $\cE^\bu$ is the cone on $\al:T^*X[-1]\ra TX[1]$ in
$D(\qcoh(X))$ for some $\al$ in $\Ext^2(T^*X,TX)$. Hence
$\cE^\bu\cong\cF^\bu$ if and only if~$\al=0$.

The normal bundle $\nu$ of $X$ in $U$, and its dual $\nu^*$, are
both isomorphic to $\O_X$ as $t:U\ra\bA^1$ induces isomorphisms
$\nu\cong t^*(T_0\bA^1)$, $\nu^*\cong t^*(T^*_0\bA^1)$. Hence we
have exact sequences
\begin{equation*}
\xymatrix@C=30pt@R=7pt{ 0 \ar[r] & TX \ar[r] & TU\vert_X \ar[r] &
\O_X \ar[r] & 0, \\
0 \ar[r] & \O_X \ar[r] & T^*U\vert_X \ar[r] & T^*X \ar[r] & 0. }
\end{equation*}
Let these correspond to elements $\be'\in\Ext^1(\O_X,TX)$ and
$\be''\in\Ext^1(T^*X,\O_X)$. Then $\al=\be'\ci\be''\in
\Ext^2(T^*X,TX)\cong H^2(TX\ot TX)$.

Under the isomorphisms $\Ext^1(\O_X,TX)\cong H^1(TX)\cong
\Ext^1(T^*X,\O_X)$, we see that $\be',\be''$ are both identified
with $\be\in H^1(TX)$, which parametrizes the infinitesimal
deformation of $\{U_t:t\in\bA^1\}$ at $t=0$, so that informally
$\be\sim \frac{\d}{\d t}U_t\vert_{t=0}$. The projection of $\al$
from $H^2(TX\ot TX)$ to $H^2(\La^2TX)\cong\K$ is~$\be^2$.

Let us choose the deformation $t:U\ra\bA^1$ of $X=U_0$ so that the
infinitesimal deformation $\be\in H^1(TX)$ at $t=0$ satisfies
$\be^2\ne 0$ in $H^2(\La^2TX)\cong\K$, which is possible by well
known facts about $K3$ surfaces. Then $\al\ne 0$ in
$\Ext^2(T^*X,TX)$, and $\cE^\bu\not\cong\cF^\bu$. Hence
$\Crit(t^2:U\ra\bA^1)$ and $\Crit(0:X\ra\bA^1)$ are not equivalent
as schemes with perfect obstruction theories, as in (iii), and so a
fortiori they are also not equivalent as schemes with symmetric
obstruction theories, as in (iv), or as $-1$-shifted symplectic
derived schemes, as in~(v).

Observe that $\al\in\Ext^2(T^*X,TX)$ which distinguishes the
obstruction theories is {\it global\/} information, which is {\it
locally trivial\/}: if $Y\subset X$ is any affine open subset then
$\al\vert_Y=0$ as $\Ext^2(T^*Y,TY)=0$. Thus (at least in this case),
{\it the (symmetric) obstruction theory remembers global, non-local
information which is forgotten by the algebraic d-critical locus}.

This example shows that the dotted arrows `$\dashra$' in Figure
\ref{dc1fig1}, which indicate local constructions, cannot be made to
work globally.
\label{dc2ex5}
\end{ex}

Using related ideas, the author expects that if $(X,s)$ is an
algebraic d-critical locus, then there is an obstruction in
$\Ext^3\bigl(T^*X,(T^*X)^\vee\bigr)$ to finding a symmetric
obstruction theory $\phi:\cE^\bu\ra\bL_X$, $\th$ on $X$ which is
locally modelled on \eq{dc1eq1} when $(X,s)$ is locally modelled on
$\Crit(f:U\ra\C)$. But the author does not know of an example in
which this obstruction is nonzero. Finding such an example would
show that the truncation functor from $-1$-shifted symplectic
derived schemes to algebraic d-critical loci in \cite{BBJ} is not
essentially surjective.

\subsection{Comparing critical charts $(R,U,f,i)$}
\label{dc23}

In \S\ref{dc22} we defined a d-critical locus $(X,s)$ to admit an
open cover by {\it critical charts\/} $(R,U,f,i)$, which write
$(X,s)$ as $\Crit(f)$ in an open set $R\subset X$. We will treat
critical charts like coordinate charts on a manifold. Our analogues
of transition functions between coordinate charts are called {\it
embeddings}.

\begin{dfn} Let $(X,s)$ be a d-critical locus (either complex
analytic or algebraic), and $(R,U,f,i)$ be a critical chart on
$(X,s)$. Let $U'\subseteq U$ be (Zariski) open, and set
$R'=i^{-1}(U')\subseteq R$, so that $R'\subseteq R\subseteq X$ are
(Zariski) open, and $i'=i\vert_{R'}:R'\hookra U'$, and
$f'=f\vert_{U'}:U'\ra\C$ or $\bA^1$. Then $(R',U',f',i')$ is also a
critical chart on $(X,s)$, and we call it a {\it subchart\/} of
$(R,U,f,i)$. As a shorthand we write~$(R',U',f',i')\subseteq
(R,U,f,i)$.

Let $(R,U,f,i)$ and $(S,V,g,j)$ be critical charts on $(X,s)$, with
$R\subseteq S\subseteq X$. An {\it embedding\/} of $(R,U,f,i)$ in
$(S,V,g,j)$ is a locally closed embedding $\Phi:U\hookra V$ of
complex manifolds or $\K$-schemes such that $\Phi\ci i=j\vert_R:R\ra
V$ and $f=g\ci\Phi:U\ra\C$ or $\bA^1$. As a shorthand we write
$\Phi: (R,U,f,i)\hookra(S,V,g,j)$ to mean $\Phi$ is an embedding of
$(R,U,f,i)$ in~$(S,V,g,j)$.

Clearly, if $\Phi:(R,U,f,i)\hookra(S,V,g,j)$ and
$\Psi:(S,V,g,j)\hookra(T,W,h,k)$ are embeddings, then $\Psi\ci\Phi:
(R,U,f,i)\hookra(T,W,h,k)$ is also an embedding.
\label{dc2def3}
\end{dfn}

If $\Phi:(R,U,f,i)\hookra(S,V,g,k)$ is an embedding then $\dim
U\le\dim V$. Thus, embeddings between critical charts
$(R,U,f,i),(S,V,g,j)$ usually go in only one direction, and do not
have inverses which are embeddings. The author drew some inspiration
for these ideas from the theory of Kuranishi spaces in the work of
Fukaya, Oh, Ohta and Ono \cite[\S A]{FOOO} in symplectic geometry:
critical charts $(R,U,f,i)$ are like Kuranishi neighbourhoods on a
topological space, and embeddings are like coordinate changes
between Kuranishi neighbourhoods.

In the algebraic case, it is sometimes convenient to work with
critical charts $(S,V,g,j)$ in which $V\subseteq\bA^n$ is Zariski
open in an affine space $\bA^n$. Every critical chart $(R,U,f,i)$
locally admits embeddings into such a $(S,V,g,j)$. The proof of the
next proposition in \S\ref{dc51} uses the assumption $\mathop{\rm
char}\K\ne 2$ from~\S\ref{dc1}.

\begin{prop} Let\/ $(R,U,f,i)$ be a critical chart on an algebraic
d-critical locus\/ $(X,s)$. Then for each\/ $x\in R$ there exists a
subchart\/ $(R',U',f',i')\subseteq (R,U,f,i)$ with\/ $x\in R'$ and
an embedding $\Phi:(R',U',f',i')\hookra (S,V,g,j)$ into a critical
chart\/ $(S,V,g,j)$ with\/ $V\subseteq\bA^n$ Zariski open for
some\/~$n\ge 0$.
\label{dc2prop6}
\end{prop}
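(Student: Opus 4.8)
The plan is to take the given critical chart $(R,U,f,i)$, shrink $U$ Zariski locally around $i(x)$ so that it embeds as a locally closed subscheme of some $\bA^n$, and then modify the coordinates to make the regular function coming from $X=\Crit(f)$ match up, using Proposition \ref{dc2prop2} to promote the resulting embedding data to a genuine critical chart. First I would observe that $U$ is a smooth $\K$-scheme locally of finite type, so near the point $i(x)$ there is a Zariski open $i(x)\in U''\subseteq U$ and a locally closed embedding $U''\hookra\bA^m$ for some $m$; replacing $U$ by a subchart $(R'',U'',f'',i'')$ we may assume $U$ itself is Zariski open in $\bA^m$, with $f:U\ra\bA^1$ regular. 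The issue is that this exhibits $R$ as $\Crit(f)\subseteq U\subseteq\bA^m$, but $f$ need not extend to a regular function on $\bA^m$, so this is not yet a critical chart of the desired form; we need an honest embedding $\Phi:(R',U',f',i')\hookra(S,V,g,j)$ with $V\subseteq\bA^n$ Zariski open and $g:V\ra\bA^1$ (automatically the restriction of a regular function on $\bA^n$) matching $f$.

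The key step is a graph/normal-form construction. Shrinking $U$ again around $i(x)$, choose étale coordinates, i.e.\ a regular map $\pi=(u_1,\dots,u_m):U\ra\bA^m$ which is étale near $i(x)$ (possible since $U$ is smooth of dimension $m$; or use the coordinates already present from $U\subseteq\bA^m$). Consider the embedding
\[
\Phi:U'\longhookrightarrow \bA^{m+1},\qquad \Phi = (u_1,\dots,u_m,\,f),
\]
defined on a Zariski open $U'\subseteq U$ containing $i(x)$. Near $i(x)$ this is a locally closed embedding: its differential is injective because the $u_j$ already give an immersion (indeed an étale coordinate system) and we have only added one more coordinate. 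Let $V\subseteq\bA^{m+1}$ be a Zariski open neighbourhood of $\Phi(i(x))$ in which $\Phi(U')$ is closed, shrinking $U'$ if necessary so that $\Phi:U'\hookra V$ is a closed embedding, and let $g:=w\vert_V$, where $w$ is the last coordinate on $\bA^{m+1}$, so that $g\ci\Phi=f\vert_{U'}=f'$ by construction. Set $j=\Phi\ci i'$, $S=R'=i'^{-1}(U')$. Then $\Phi\ci i'=j\vert_{R'}$ and $f'=g\ci\Phi$, so $\Phi:(R',U',f',i')\hookra(S,V,g,j)$ is an embedding of the data in Definition \ref{dc2def3}, provided $(R',U',f',i')$ and $(S,V,g,j)$ are both critical charts.

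That $(R',U',f',i')$ is a critical chart is immediate (it is a subchart of $(R,U,f,i)$). The remaining point — and the main obstacle — is to check that $(S,V,g,j)$ is a critical chart on $(X,s)$, i.e.\ that $j(S)=\Crit(g)$ as $\K$-subschemes of $V$ and that $\io_{S,V}(s\vert_S)=j^{-1}(g)+I_{S,V}^2$. Here is where Proposition \ref{dc2prop2} does the work: $j:S\hookra V$ is a closed embedding into the smooth $\K$-scheme $V$, and $(X,s)$ is a d-critical locus, so by Proposition \ref{dc2prop2}, after shrinking $S$ (hence shrinking $R'$, $U'$, and $V$ compatibly so that $\Phi$ still restricts to a closed embedding of the shrunk charts) there is a Zariski open $j(S)\subseteq V'\subseteq V$ and a regular $g':V'\ra\bA^1$ making $(S,V',g',j)$ a critical chart. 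It then remains to identify $g'$ with $g\vert_{V'}$: both satisfy $\io_{S,V'}(s\vert_S)=j^{-1}(g')+I_{S,V'}^2$ and, by the compatibility $g\ci\Phi=f'$ together with $\io_{R',U'}(s\vert_{R'})=i'^{-1}(f')+I_{R',U'}^2$ and the functoriality diagram \eqref{dc2eq3} for $\Phi$, also $\io_{S,V'}(s\vert_S)=j^{-1}(g)+I_{S,V'}^2$; hence $g-g'\in I_{S,V'}^2\subseteq I_{S,V'}$, so $g\vert_{j(S)}=g'\vert_{j(S)}$, and since $j(S)=\Crit(g')$ one concludes $\d g$ vanishes on a neighbourhood of $\Crit(g')$ precisely there, giving $\Crit(g)=\Crit(g')=j(S)$ after a final shrink of $V'$. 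I expect the bookkeeping of these nested shrinkings — ensuring all four charts $(R',U',f',i')$, $(S,V,g,j)$ stay compatible under $\Phi$ through each application of Propositions \ref{dc2prop2} and the identity \eqref{dc2eq3} — to be the only real subtlety; the construction of $\Phi$ via the graph of $f$ and the use of $\operatorname{char}\K\ne2$ (which enters, as in \S\ref{dc51}, when normalising the Hessian of $g$ near $\Crit(g)$ to verify the critical-chart condition cleanly) are routine by comparison.
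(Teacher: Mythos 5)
Your proposal diverges from the paper's proof and contains a fatal gap. The central construction does not work: with $\Phi=(u_1,\dots,u_m,f):U'\hookra V\subseteq\bA^{m+1}$ and $g:=w\vert_V$ the last coordinate, the differential $\d g=\d w$ is a nowhere-vanishing $1$-form on $\bA^{m+1}$, so $\Crit(g)=\emptyset$. But a critical chart requires $\Crit(g)=j(S)$, which is nonempty (it contains $j(x)$). So $(S,V,g,j)$ is never a critical chart with your choice of $g$, no matter how one shrinks. This is exactly why the paper's proof (\S\ref{dc51}) does not take $g$ to be an obvious extension of $f$: it takes a closed embedding $\ti\Phi:\ti U\hookra\bA^n$, \'etale coordinates $(z_1,\dots,z_n)$ so that $\ti\Phi(\ti U)\cap V=\{z_{m+1}=\cdots=z_n=0\}$, an extension $h$ of $f'\ci\Phi^{-1}$, and then the crucial correction
\[g=h-\sum_{a=m+1}^nz_a\frac{\pd h}{\pd z_a}+\tfrac12\sum_{a,b=m+1}^nz_az_b\frac{\pd^2h}{\pd z_a\pd z_b}+\sum_{a=m+1}^nz_a^2,\]
where the final term $\sum z_a^2$ is precisely what produces a nondegenerate transverse Hessian and forces $\Crit(g)=\Phi(\Crit(f'))$. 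You flag ``normalising the Hessian'' and the use of $\mathop{\rm char}\K\ne2$ as ``routine,'' but in fact that step is the entire content of the proposition, and your construction omits it.

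Your fallback argument does not repair this. The claimed identity $\io_{S,V'}(s\vert_S)=j^{-1}(g)+I_{S,V'}^2$ is false: by the exact sequence \eqref{dc2eq2}, an element $j^{-1}(g)+I_{S,V'}^2$ lies in the image of $\io_{S,V'}$ only if $\d g$ vanishes along $j(S)$, i.e.\ $\d g\in I_{S,V'}\cdot j^{-1}(T^*V')$, which fails for $g=w$. The functoriality diagram \eqref{dc2eq3} only yields $i'^{-1}(\Phi^\sh)[\io_{S,V'}(s\vert_S)]=i'^{-1}(\Phi^\sh)[j^{-1}(g)+I_{S,V'}^2]$, and $i'^{-1}(\Phi^\sh)$ is not injective when $\dim V'>\dim U'$, so you cannot cancel it. And even setting that aside, $g-g'\in I_{S,V'}^2$ would not give $\Crit(g)=\Crit(g')$: in your setting it gives a contradiction, since $\d g'\vert_{j(x)}=0$ while $\d g\vert_{j(x)}=\d w\ne0$. (Two smaller points: a smooth affine $m$-dimensional $\K$-scheme is not in general Zariski open in $\bA^m$, and $\Phi=(\pi,f)$ with $\pi$ merely \'etale need not be injective, hence need not be a locally closed embedding, even after Zariski shrinking.) In short, the missing idea is the quadratic normalization of $g$ in the normal directions to $\Phi(U')$; without it, nothing in the proposal forces $g$ to have the critical locus it needs.
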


Given two critical charts $(R,U,f,i),(S,V,g,j)$ on $(X,s)$, there
need not exist embeddings between them (or their subcharts) in
either direction. So to compare $(R,U,f,i),(S,V,g,j)$, we construct
embeddings of subcharts $(R',U',f',i'),\ab(S',V',g',j')$ into a
third critical chart $(T,W,h,k)$ with $\dim W\ge \dim U,\dim V$.

\begin{thm} Let\/ $(X,s)$ be a d-critical locus (either complex
analytic or algebraic), and\/ $(R,U,f,i),(S,V,g,j)$ be critical
charts on $(X,s)$. Then for each\/ $x\in R\cap S\subseteq X$ there
exist subcharts $(R',U',f',i')\subseteq(R,U,f,i),$
$(S',V',g',j')\subseteq (S,V,g,j)$ with\/ $x\in R'\cap S'\subseteq
X,$ a critical chart\/ $(T,W,h,k)$ on $(X,s),$ and embeddings
$\Phi:(R',U',f',i')\hookra (T,W,h,k),$
$\Psi:(S',V',g',j')\hookra(T,W,h,k)$.
\label{dc2thm2}
\end{thm}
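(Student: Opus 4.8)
The plan is to reduce the general statement to the situation where $f$ and $g$ are each expressed in coordinates adapted to the Hessian, and then to glue the two ambient spaces along a common bigger model. First I would use Proposition \ref{dc2prop2} to shrink $U,V$ near $i(x),j(x)$ so that, after replacing $U$ by an open neighbourhood of $i(x)$, we have $\dim U=\dim T_xX$ at the chosen point — more precisely, I would first split off the quadratic part: by the holomorphic (or formal/\'etale algebraic) Morse--Bott lemma applied to $f$ near the critical point $i(x)$, there is a subchart in which $U\cong U_0\t\bA^m$ (or a polydisc factor) with $f=f_0\boxplus q$, where $q$ is a nondegenerate quadratic form on the $\bA^m$ factor and $f_0:U_0\ra\C$ has a critical point with vanishing Hessian, and $\dim U_0=\dim T_xX$. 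Do the same for $(S,V,g,j)$, getting $V\cong V_0\t\bA^{m'}$, $g=g_0\boxplus q'$ with $\dim V_0=\dim T_xX$. Since $\d i|_x,\d j|_x$ are both isomorphisms onto the respective reduced tangent spaces, $U_0$ and $V_0$ are canonically identified near the base point via $j\ci i^{-1}$ extended to a biholomorphism $U_0\cong V_0$ (using that the embeddings agree on $R\cap S$ and the minimality of dimension); under this identification $f_0$ and $g_0$ must agree, because the section $s$ determines the function $f_0+I_{R,U_0}^2$ by Example \ref{dc2ex1} and Proposition \ref{dc2prop2}, and $f_0,g_0$ vanish on $X^\red$ so they are genuinely equal as holomorphic functions near the point, not just modulo $I^2$.

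The heart of the argument is then the stabilization step. Having arranged $(R',U',f',i')$ with $f'=f_0\boxplus q$ on $U_0\t\bA^m$ and $(S',V',g',j')$ with $g'=f_0\boxplus q'$ on $U_0\t\bA^{m'}$ (same $f_0$, same $U_0$), I set $W=U_0\t\bA^m\t\bA^{m'}$ with $h=f_0\boxplus q\boxplus q'$ and $k:R'\cap S'\ra W$ the composite of $i'$ (or $j'$) with the obvious inclusions. Then $\Phi:U_0\t\bA^m\hookra W$, $x\mapsto(x,0)$ and $\Psi:U_0\t\bA^{m'}\hookra W$, $x\mapsto(x',x'')\mapsto(x',0,x'')$ are the required embeddings: one checks $\Phi\ci i'=k|_{R'}$, $\Psi\ci j'=k|_{S'}$, $f'=h\ci\Phi$, $g'=h\ci\Psi$ directly, and that $\Crit(h)=\Crit(f_0)\t\{0\}\t\{0\}=i'(R')$ (respectively the $\Psi$-image), so $(T,W,h,k)$ with $T=R'\cap S'$ shrunk so that $k(T)=\Crit(h)$ is a genuine critical chart on $(X,s)$ — using Proposition \ref{dc2prop2} once more to check the section condition $\io_{T,W}(s|_T)=k^{-1}(h)+I_{T,W}^2$, which holds because it holds after pulling back along $\Phi$ and $\Phi$ is an embedding.

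The main obstacle I anticipate is the algebraic case, where one cannot directly invoke the holomorphic Morse--Bott lemma; there one must work with the formal completion $\hat U$ along $i(R)$ as in Theorem \ref{dc2thm1}(c) and \eqref{dc2eq9}, split off the quadratic part formally, and then either descend the splitting to an \'etale or Zariski neighbourhood — which uses that $\operatorname{char}\K\ne2$ for completing the square, exactly as flagged before Proposition \ref{dc2prop6} — or else feed the formal model through Proposition \ref{dc2prop6} to replace $W$ by a Zariski open in affine space. A secondary subtlety is arranging that $U_0$ (the reduced-dimensional core) is literally the \emph{same} space in both charts rather than merely abstractly isomorphic: this requires checking that the identification $U_0\cong V_0$ is compatible with the embeddings $i',j'$ on the overlap $R'\cap S'$, which follows because both restrict to the same locally closed embedding of $R'\cap S'$ and both are closed embeddings of minimal-dimensional smooth models, so they differ by a unique (near the point) isomorphism of $U_0$ with $V_0$. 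Once these points are handled, everything else is the bookkeeping of unwinding Definitions \ref{dc2def1} and \ref{dc2def3}.
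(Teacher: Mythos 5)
The central step of your proposal---that after normalizing both charts to the minimal dimension $\dim T_xX$ via a Morse--Bott splitting, the residual functions $f_0$ and $g_0$ are \emph{literally equal} as holomorphic (or regular) functions---is false, and the justification offered does not repair it. You argue that since $s$ determines $f_0+I^2$ and $f_0,g_0$ vanish on $X^\red$, the two must coincide. But every element of $I^2\subset I$ already vanishes on $X^\red$, so the condition $f_0|_{X^\red}=g_0|_{X^\red}=0$ adds nothing to the relation $f_0-g_0\in I^2$, and $I^2\ne 0$ whenever $X$ is singular. Example \ref{dc2ex4} in the paper makes this concrete: for $X=\Spec(\K[z]/(z^n))$ with the minimal-dimensional chart $U=\bA^1$, two functions $f,g$ defining the same $s$ may differ by any element of $(z^{2n})=I^2$; e.g.\ $f_0=z^4$ and $g_0=z^4+z^7$ have the same critical locus $\Spec(\K[z]/(z^3))$ near $0$ and give the same section $s$, yet $f_0\ne g_0$. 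Consequently, if $f_0\ne g_0$ the candidate chart $W=U_0\times\bA^m\times\bA^{m'}$ with $h=f_0\boxplus q\boxplus q'$ fails to satisfy $g'=h\circ\Psi$, so $\Psi$ is not an embedding of critical charts.

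The paper's proof (in \S\ref{dc52}) handles exactly this discrepancy without any preliminary Morse--Bott normalization: it extends $j\circ i^{-1}$ to $\Theta:U'\to V$ with $\Theta\circ i'=j|_{R'}$, deduces from $\io_{R',U'}(s|_{R'})$ that $f'-g\circ\Theta\in(I'_{R',U'})^2$, writes $f'-g\circ\Theta=\sum_{a=1}^n r_as_a$ with $r_a,s_a\in I'_{R',U'}$, and then stabilizes into $W=V\times\C^{2n}$ with $h=g\boxplus(y_1z_1+\cdots+y_nz_n)$, mapping $\Phi(u)=(\Theta(u),r(u),s(u))$ and $\Psi(v)=(v,0)$. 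This extra $y\cdot z$ stabilization is precisely what absorbs the element of $I^2$ by which the two presentations disagree; your construction omits it. One could graft that mechanism onto your approach (after the Morse--Bott reduction, write $f_0-g_0=\sum r_as_a$ and add a further $\bA^{2n}$ factor with a $y\cdot z$ term), but then the Morse--Bott step is doing no work---you would be reproducing the paper's argument with an unnecessary detour, and you would still need to address the algebraic case, where extending $j\circ i^{-1}$ Zariski-locally requires routing through Proposition \ref{dc2prop6} to a chart in affine space, as the paper does.
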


\begin{rem} To see the point of the definition and theorem, we
explain how they will be used. Often, given a d-critical locus
$(X,s)$, we want to construct some global object $\cG$ on $X$ by
gluing together local data by isomorphisms. Examples include the
canonical bundle $K_{X,s}$ in \S\ref{dc24}, perverse sheaves, $\scr
D$-modules and mixed Hodge modules on oriented d-critical loci
$(X,s)$ in \cite{BBDJS}, and motivic Milnor fibres on oriented
algebraic d-critical loci $(X,s)$ in \cite{BJM}. For each of these
constructions, we use the following method:
\begin{itemize}
\setlength{\itemsep}{0pt}
\setlength{\parsep}{0pt}
\item[(i)] For each critical chart $(R,U,f,i)$ on $(X,s)$, we
define a geometric structure $\cG_{R,U,f,i}$ on $R$, with
$\cG_{R',U',f',i'}=\cG_{R,U,f,i}\vert_{R'}$
for~$(R',U',f',i')\subseteq(R,U,f,i)$.
\item[(ii)] For each embedding $\Phi:(R,U,f,i)\hookra
(S,V,g,j)$, we define a canonical isomorphism $\Phi_*:
\cG_{R,U,f,i}\ra \cG_{S,V,g,j}\vert_R$. We show $\Phi_*$ is
independent of $\Phi$, that is, if $\Phi,\ti\Phi:(R,U,f,i)
\hookra(S,V,g,j)$ are embeddings then~$\Phi_*=\ti\Phi_*$.
\item[(iii)] For embeddings $\Phi\!:\!(R,U,f,i)\!\hookra\!(S,V,g,j)$,
$\Psi\!:\!(S,V,g,j)\!\hookra\!(T,W,h,k)$, we show that
$(\Psi\ci\Phi)_*=\Psi_*\vert_R\ci\Phi_*:\cG_{R,U,f,i}\ra
\cG_{T,W,h,k}\vert_R$.
\item[(iv)] Choose critical charts $\bigl\{(R_a,U_a,f_a,i_a):
a\in A\bigr\}$ with $\{R_a:a\in A\}$ an open cover of $X$. Using
(ii),(iii) and Theorem \ref{dc2thm2} we obtain isomorphisms
$\io_{ab}:\cG_{R_a,U_a,f_a,i_a} \vert_{R_a\cap
R_b}\ra\cG_{R_b,U_b,f_b,i_b}\vert_{R_a\cap R_b}$ for $a,b\in A$,
with $\io_{aa}=\id$ and $\io_{bc}\ci\io_{ab}\vert_{R_a\cap
R_b\cap R_c}=\io_{ac}\vert_{R_a\cap R_b\cap R_c}$ for $a,b,c\in
A$. Thus, provided the geometric structures concerned form a
sheaf, there exists $\cG$ on $X$, unique up to canonical
isomorphism, with $\cG\vert_{R_a}\cong\cG_{R_a,U_a,f_a,i_a}$ for
all~$a\in A$.
\end{itemize}
\label{dc2rem4}
\end{rem}

Our next three results say roughly that if $\Phi:(R,U,f,i)\hookra
(S,V,g,j)$ is an embedding of critical charts on $(X,s)$, then
locally near $j(R)$ in $V$ we have $V\cong U\t\C^n$ or $V\cong
U\t\bA^n$ and $g\cong f\boxplus z_1^2+\cdots+z_n^2$, where $n=\dim
V-\dim U$. But in the algebraic case we have to be careful about
which topology we mean when we say `locally'. The complex analytic
case is straightforward:

\begin{prop} Let\/ $(X,s)$ be a complex analytic d-critical locus,
and\/ $\Phi:(R,U,f,i)\ab\hookra(S,V,g,j)$ an embedding of critical
charts on $(X,s)$. Then for each\/ $x\in R$ there exist open
neighbourhoods\/ $U',V'$ of\/ $i(x),j(x)$ in $U,V$ with\/
$\Phi(U')\subseteq V',$ and holomorphic\/ $\al:V'\ra U,$
$\be:V'\ra\C^n$ for $n=\dim V-\dim U,$ such that\/ $\al\t\be:V'\ra
U\t\C^n$ is a biholomorphism with an open subset of\/ $U\t\C^n,$
and\/ $\al\ci\Phi\vert_{U'}=\id_{U'},$ $\be\ci\Phi\vert_{U'}=0,$
$g\vert_{V'}=f\ci\al+(z_1^2+\cdots+z_n^2)\ci\be$.
\label{dc2prop7}
\end{prop}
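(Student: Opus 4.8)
The plan is to reduce the statement to a standard splitting lemma for holomorphic functions with nondegenerate Hessian in the normal directions. First I would work near a fixed point $x\in R$. Since $\Phi:U\hookra V$ is a locally closed embedding with $\Phi\ci i=j\vert_R$ and $f=g\ci\Phi$, after shrinking $U,V$ I may assume $\Phi$ is a closed embedding of $U$ as a submanifold of $V$, and choose holomorphic coordinates $(u_1,\dots,u_m,w_1,\dots,w_n)$ on $V$ near $j(x)$, with $m=\dim U$ and $n=\dim V-\dim U$, in which $\Phi(U)=\{w_1=\cdots=w_n=0\}$ and $(u_1,\dots,u_m)$ restrict to coordinates on $U$ pulling back $f$; thus $g(u,0)=f(u)$. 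The projection $(u,w)\mapsto u$ gives a first approximation to $\al$, and $(u,w)\mapsto w$ a first approximation to $\be$; the content is to correct these so that $g$ becomes exactly $f\ci\al+(z_1^2+\cdots+z_n^2)\ci\be$.

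The key input is that the ``extra'' critical directions carry a nondegenerate quadratic form. Since $(R,U,f,i)$ and $(S,V,g,j)$ are critical charts, $i(R)=\Crit(f)$ and $j(S)=\Crit(g)$, so along $\Phi(U)$ the function $g$ has $\d g=0$ exactly on $\Crit(f)\subseteq\Phi(U)$; differentiating the relation $\Crit(g)\cap\Phi(U)=\Phi(\Crit(f))$ one finds that the normal Hessian $\bigl(\pd^2 g/\pd w_a\pd w_b\bigr)\vert_{\Phi(U)}$ is a nondegenerate symmetric $n\times n$ matrix of holomorphic functions near $j(x)$ — this is the crucial nondegeneracy, and I expect verifying it carefully (that the Hessian transverse to $\Phi(U)$ is invertible, using that both $f$ and $g$ define critical loci and $\ch\K\ne2$ is not needed here but the characteristic-zero/field-$\C$ setting is) to be the main obstacle, since it is where the hypothesis ``critical chart'' is really used rather than just ``embedding''. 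Granting this, a holomorphic version of the parametrized Morse/splitting lemma (Morse--Bott along $\Phi(U)$, or equivalently completing the square over the local ring using the holomorphic implicit function theorem) yields, after a further shrinking, new holomorphic coordinates $z_1,\dots,z_n$ in the normal directions and a holomorphic retraction onto $U$ such that $g=f\ci(\text{retraction})+(z_1^2+\cdots+z_n^2)$; one first removes the linear-in-$w$ terms by shifting $w$ by a holomorphic function of $u$ vanishing on $\Crit(f)$ (possible since $\d_w g=0$ there), then diagonalizes and rescales the resulting nondegenerate quadratic form holomorphically.

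Finally I would package the output: set $\al:V'\to U$ to be the retraction just constructed and $\be=(z_1,\dots,z_n):V'\to\C^n$, so $\al\t\be:V'\to U\t\C^n$ is biholomorphic onto an open set by construction (its derivative at $j(x)$ is an isomorphism, after shrinking to an open neighbourhood). By construction the normal coordinates vanish on $\Phi(U)$ and the retraction restricts to $\id_U$ there, giving $\al\ci\Phi\vert_{U'}=\id_{U'}$ and $\be\ci\Phi\vert_{U'}=0$, and the splitting identity is precisely $g\vert_{V'}=f\ci\al+(z_1^2+\cdots+z_n^2)\ci\be$. I would also remark that all shrinkings are to open neighbourhoods of $i(x),j(x)$ as required, and that no use of the algebraic structure is made, so this is the purely complex-analytic statement; the subtler algebraic analogues with their topologies are handled in the subsequent results.
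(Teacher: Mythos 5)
Your high-level plan is right — choose coordinates $(u,w)$ on $V$ near $j(x)$ with $\Phi(U)=\{w=0\}$, kill the linear-in-$w$ part of $g$, establish nondegeneracy of the normal part of the Hessian, then complete the square holomorphically and read off $\al,\be$ as projections. This is also the route of the paper's \S\ref{dc53}. But there is a genuine gap in your preparatory step, and it is not cosmetic.

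You propose to ``remove the linear-in-$w$ terms by shifting $w$ by a holomorphic function of $u$ vanishing on $\Crit(f)$''. Try it: write $g(u,w)=f(u)+\sum_b a_b(u)w_b+\sum_{b,c}Q_{bc}w_bw_c+O(w^3)$. A shift $\ti w_b=w_b-\phi_b(u)$ (completing the square, or applying the implicit function theorem to $\pd_w g=0$) does kill the $\ti w$-linear term, but it does two bad things. First, $\ti w\vert_{\Phi(U)}=-\phi(u)$, which vanishes only on $\Crit(f)$ and not on all of $\Phi(U)$, so if $\be$ is built from $\ti w$ then $\be\ci\Phi\vert_{U'}\ne 0$ — one of the three required identities fails. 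Second, the constant (in $\ti w$) term becomes $g(u,\phi(u))$, which is not of the form $f\ci\al$ with $\al\ci\Phi=\id$: it differs from $f(u)$ by $-\tfrac14\,a\cdot Q^{-1}a+\cdots$, which is nonzero off $\Crit(f)$. So a $w$-shift cannot yield $g=f\ci\al+(z_1^2+\cdots+z_n^2)\ci\be$ with the stated compatibilities.

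What the paper does instead is shift $u$ by a $w$-dependent amount. The key input you are missing is that $\Crit(g)\subseteq\Phi(U)$ as \emph{complex analytic subspaces} (not just sets), equivalently the ideal $I_{S,V}'$ contains the ideal $(w_1,\ldots,w_n)$ near $j(x)$; together with $I_{(\d f)}\cong I_{(\d g)}\vert_{w=0}$ this lets one write $a_b(u)=\sum_a A_{ab}(u)\,\frac{\pd f}{\pd u_a}(u)$ for holomorphic $A_{ab}$. Changing coordinates by $\ti u_a=u_a+\sum_b A_{ab}(u)w_b$, $\ti w_b=w_b$ then kills $\pd g/\pd\ti w_b$ on $\{\ti w=0\}$ while keeping $\Phi(U)=\{\ti w=0\}$ and $g\vert_{\ti w=0}=f$, so the splitting starts from the correct base. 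Indeed, if you match coefficients in $g=f\ci\al+\sum\be_b^2$, the linear-in-$w$ term forces $\frac{\pd\al_a}{\pd w_b}(u,0)=A_{ab}(u)$, which is exactly the $u$-shift and cannot be achieved with $\al(u,w)=u$ plus a $w$-shift. The subsequent nondegeneracy of $Q_{bc}$ at $j(x)$ is also proved in the paper via this ideal inclusion (writing $\ti z_d$ in terms of $\pd\ti g/\pd\ti y_a,\pd\ti g/\pd\ti z_b$, differentiating, and evaluating at $j(x)$); your alternative via the intrinsic Hessian and the exact sequence $0\to N_{\sst XU}\to N_{\sst XV}\to N_{\sst UV}\to 0$ can be made to work at $j(x)$, and extends to a neighbourhood by openness, but as written (``differentiating the relation'') it does not spell out why the normal block is invertible rather than merely the full Hessian modulo $T_xX$. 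I'd encourage you to redo the computation with the $u$-shift and the ideal containment made explicit; once those are in place, the inductive holomorphic diagonalization and the definition of $\al,\be$ as projections go through exactly as you describe.
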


For the algebraic case, we give two statements. The first is a
direct analogue of Proposition \ref{dc2prop7} for the \'etale
topology, regarding $\io:U'\ra U$, $\jmath:V'\ra V$ as \'etale
neighbourhoods of $i(x),j(x)$ in $U,V$, and $\Phi':U'\ra V'$ with
$\Phi\ci\io=\jmath\ci\Phi'$ as the analogue of $\Phi(U')\subseteq
V'$. It will be used in \cite{BBDJS}.

Recall our convention in \S\ref{dc1} that the base field $\K$ of $X$
is algebraically closed with $\mathop{\rm char}\K\ne 2$. Both these
assumptions will be used in the proofs of Propositions
\ref{dc2prop8} and \ref{dc2prop9}, since to define $V'$ we need to
take square roots in $\K$, and we need $\mathop{\rm char}\K\ne 2$ to
diagonalize quadratic forms over~$\K$.

\begin{prop} Let\/ $(X,s)$ be an algebraic d-critical locus,
and\/ $\Phi\!:\!(R,U,f,i)\ab\hookra(S,V,g,j)$ an embedding of
critical charts on $(X,s)$. Then for each\/ $x\in R$ there exist
smooth\/ $\K$-schemes $U',V',$ a point\/ $u'\in U',$ and morphisms
$\io:U'\ra U,$ $\jmath:V'\ra V,$ $\Phi':U'\ra V',$ $\al:V'\ra U,$
and\/ $\be:V'\ra\bA^n$ for $n=\dim V-\dim U,$ such that\/
$\io(u')=i(x),$ and\/ $\io:U'\ra U,$ $\jmath:V'\ra V,$
$\al\t\be:V'\ra U\t\bA^n$ are \'etale, and\/
$\Phi\ci\io=\jmath\ci\Phi',$ $\al\ci\Phi'=\io,$ $\be\ci\Phi'=0,$
and\/~$g\ci\jmath=f\ci\al+(z_1^2+\cdots +z_n^2)\ci\be$.

\label{dc2prop8}
\end{prop}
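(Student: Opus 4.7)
I would prove the result in two phases: first, construct an \'etale-local product structure $V'\cong U\t\bA^n$ near $j(x)$ identifying $\Phi(U)$ with the zero section; second, use an \'etale splitting lemma, underwritten by the non-degeneracy of the transverse Hessian of $g$, to normalize $g$ in this structure to $f+z_1^2+\cdots+z_n^2$.

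For Phase 1, I shrink $U,V$ to affine Zariski neighbourhoods of $i(x),j(x)$, choose a map $u=(u_1,\ldots,u_m)\colon U\to\bA^m$ that is \'etale at $i(x)$, and pick generators $\tilde z_1,\ldots,\tilde z_n\in\O_V$ for the ideal of $\Phi(U)$ near $j(x)$ whose differentials span the conormal space (such generators exist since $\Phi$ is a closed embedding of smooth schemes of codimension $n$). Using that $\Phi^\sh\colon\O_V\to\Phi_*\O_U$ is surjective near $j(x)$, I Zariski-locally lift each $u_k$ to $\tilde u_k\in\O_V$ with $\tilde u_k\ci\Phi=u_k$; by the Jacobian criterion $(\tilde u,\tilde z)\colon V\to\bA^{m+n}$ is then \'etale at $j(x)$. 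I set $V':=V\t_{\bA^{m+n}}(U\t\bA^n)$ using $(\tilde u,\tilde z)$ and $u\t\id$; the projections yield \'etale maps $\jmath\colon V'\to V$ and $\al\t\be\colon V'\to U\t\bA^n$, and the universal property produces $\Phi'\colon U\to V'$ satisfying $\jmath\ci\Phi'=\Phi$, $\al\ci\Phi'=\id_U$, $\be\ci\Phi'=0$.

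For Phase 2, I first record the key algebraic fact that $\Hess(g)|_{j(x)}$ restricts non-degenerately to any complement of $d\Phi(T_{i(x)}U)$ in $T_{j(x)}V$. Since $\Crit(f)=i(R)$ and $\Crit(g)=j(S)$ scheme-theoretically, the kernels of the Hessians are the Zariski tangent spaces $T_{i(x)}i(R)=di(T_xX)$ and $T_{j(x)}j(S)=dj(T_xX)$, and the relation $dj=d\Phi\ci di$ forces $\ker\Hess(g)|_{j(x)}=d\Phi(\ker\Hess(f)|_{i(x)})\subseteq d\Phi(T_{i(x)}U)$. Now set $h:=g\ci\jmath-f\ci\al$ on $V'$; in \'etale-local coordinates $(u,z)$ on $V'\cong U\t\bA^n$ we have $h(u,0)=0$, so $h=\sum_k z_k h_k(u,z)$, and the coefficients $h_k(u,0)=\pd_{z_k}(g\ci\jmath)|_{z=0}$ vanish on $\io^{-1}(\Crit(f))$ because $\Phi$ maps $i(R)$ into $\Crit(g)$. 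Writing $h_k(u,0)=\sum_\ell a_{k\ell}(u)\,\pd_{u_\ell}f$ via the identification $I_{\Crit(f)}=(\pd_{u_\ell}f)$, I absorb the linear-in-$z$ term of $h$ into $f\ci\al$ by composing $\al$ with the $z$-dependent shift $\xi_\ell(u,z):=\sum_k z_ka_{k\ell}(u)$, which preserves $\al\ci\Phi'=\io$ (since $\xi(u,0)=0$) and the \'etaleness of $\al\t\be$. The residual $h':=g\ci\jmath-f\ci\al_{\text{new}}$ then lies in $(z_1,\ldots,z_n)^2$ and has the form $\sum z_kz_\ell B_{k\ell}(u,z)$ with $B(i(x),0)$ equal to the non-degenerate transverse Hessian. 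Finally, using that $\K$ is algebraically closed with $\mathop{\rm char}\K\ne 2$, I diagonalize this parametric quadratic form as $z_1^2+\cdots+z_n^2$ by a further \'etale change of variables $z\mapsto A(u,z)z$, with $A$ a Hensel-constructed matrix square root of $B$ starting from a diagonalization at $(i(x),0)$.

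The main obstacle is Phase 2: each normalization step (absorbing the linear-in-$z$ term and diagonalizing the quadratic form) requires passing to a finer \'etale neighbourhood of $V'$ constructed via Hensel's lemma, while simultaneously preserving the structural conditions $\jmath\ci\Phi'=\Phi\ci\io$, $\al\ci\Phi'=\io$, $\be\ci\Phi'=0$. The input enabling each Hensel step is the non-degeneracy of the transverse Hessian (inherited from the d-critical locus structure), together with the algebraic closure of $\K$ (needed to diagonalize over $\K$) and $\mathop{\rm char}\K\ne 2$ (needed to complete the square).
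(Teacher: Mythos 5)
Your overall strategy is the same as the paper's: you form the same \'etale fiber product $V' = V\t_{\bA^{m+n}}(U\t\bA^n)$ (the paper's display (5.12)), obtain $\al,\be,\jmath,\Phi'$, and then normalize $g\ci\jmath-f\ci\al$ by killing the linear-in-$z$ part and diagonalizing a non-degenerate quadratic form via \'etale square roots, using $\K$ algebraically closed and $\mathop{\rm char}\K\ne 2$.

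However, your stated ``key algebraic fact'' is false: $\Hess_{j(x)}g$ need \emph{not} restrict non-degenerately to an arbitrary complement $N$ of $A:=\d\Phi(T_{i(x)}U)$ in $T_{j(x)}V$. From $\ker\Hess_{j(x)}g\subseteq A$ (and in fact $\ker(\Hess_{j(x)}g\vert_A)=\ker\Hess_{j(x)}g$, which follows from $f=g\ci\Phi$) one can only conclude that the \emph{Schur complement} of $\Hess_{j(x)}g$ relative to $A$ --- equivalently the restriction to the $\Hess$-orthogonal complement $A^\perp$ --- is non-degenerate. The naive restriction to a generic complement can vanish: take $Q=\mathrm{diag}(1,-1)$ on $\K^2$, $A=\K e_1$, $N=\K(e_1+e_2)$, which satisfies all your hypotheses, yet $Q\vert_N=0$. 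What rescues your argument is an unremarked feature of your own construction: after you modify $\al$ to $\al_{\rm new}$ to absorb the linear term, the $\pd_z$-directions determined by $\al_{\rm new}\t\be$ become $\Hess_{j(x)}g$-orthogonal to $A$, so the matrix $B(i(x),0)$ that you actually diagonalize \emph{is} the Schur complement rather than an arbitrary restriction. You should either state and use this orthogonality property explicitly, or replace the Hessian argument with the ideal-theoretic one the paper gives around its display (5.10): since $\Crit(g)$ near $j(x)$ lies scheme-theoretically in $\Phi(U')$, each $\ti z_d$ lies in $I_{\d g}$, and differentiating the resulting identity and evaluating at $j(x)$ yields invertibility of the quadratic coefficient matrix directly.

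Two smaller remarks. First, ``composing $\al$ with the shift $\xi$'' is not literally defined since $U$ is not a linear space; one has to lift $u\ci\al+\xi:V'\ra\bA^m$ back through the \'etale chart $u:U\ra\bA^m$, which costs a further \'etale base change you have not accounted for. The paper sidesteps this entirely by killing $\frac{\pd g}{\pd\ti z_b}\vert_{\Phi(U)}$ via a change of \'etale coordinates $\ti y_a=\dot y_a+\sum_b A_{ab}\dot z_b$ on $V$ itself, \emph{before} forming the fiber product. Second, your single Hensel-constructed matrix square root $A$ with $A^TA=B$ is a legitimate substitute for the paper's $n$-step inductive completion of squares; the relevant linearization $\de\mapsto A_0\de+\de A_0$ is invertible at $A_0=\id$ exactly because $\mathop{\rm char}\K\ne 2$.
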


The second holds with $U'$ a Zariski  open neighbourhood of $x$ in
$U$, at the cost of giving a more general form for
$g\ci\jmath:V'\ra\bA^1$. It will be used in~\cite{BJM}.

\begin{prop} Let\/ $(X,s)$ be an algebraic d-critical locus, and\/
$\Phi\!:\!(R,U,f,i)\ab\hookra(S,V,g,j)$ an embedding of critical
charts on $(X,s)$. Then for each\/ $x\in R$ there exist a Zariski
open neighbourhood\/ $U'$ of\/ $i(x)$ in $U,$ a smooth\/ $\K$-scheme
$V',$ and morphisms $\jmath:V'\ra V,$ $\Phi':U'\ra V',$ $\al:V'\ra
U',$ $\be:V'\ra\bA^n$ and\/ $q_1,\ldots,q_n:U'\ra\bA^1\sm\{0\}$ for
$n=\dim V-\dim U,$ such that\/ $\jmath:V'\ra V$ and\/
$\al\t\be:V'\ra U'\t\bA^n$ are \'etale,
$\Phi\vert_{U'}=\jmath\ci\Phi',$ $\al\ci\Phi'=\id_{U'},$
$\be\ci\Phi'=0,$ and\/
\e
g\ci\jmath=f\ci\al+(q_1\ci\al)\cdot(z_1^2\ci\be)+\cdots+
(q_n\ci\al)\cdot(z_n^2\ci\be).
\label{dc2eq14}
\e

\label{dc2prop9}
\end{prop}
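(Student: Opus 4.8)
The plan is to run essentially the same argument that proves Proposition \ref{dc2prop8}, but to be more careful at the one point where diagonalization of a quadratic form forces a passage to an étale neighbourhood in $U$, and instead keep track of the diagonal coefficients as units $q_1,\dots,q_n$ on a Zariski open set. First I would recall what comes out of the embedding $\Phi:(R,U,f,i)\hookra(S,V,g,j)$: since $i(R)=\Crit(f)$, $j(R)=\Crit(g)$, $\Phi\ci i=j\vert_R$ and $f=g\ci\Phi$, the normal bundle $N$ of $\Phi(U)$ in $V$ along $\Phi(U)$ carries, via the Hessian of $g$ in the normal directions, a nondegenerate quadratic form (nondegeneracy because $\Crit(g)$ and $\Crit(f)$ have the same support near $i(R)$ and the fibre directions of $V\to U$ contribute no extra critical points). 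The key input is that $\Phi(U)\subseteq V$ has a Zariski open neighbourhood on which $V$ looks like the total space of $N$, i.e.\ étale-locally over $\Phi(U)$ we may write $V\cong N$ with $g\cong f\oplus Q$ where $Q$ is the normal quadratic form; this is a standard `Morse lemma with parameters' / formal-to-étale algebraization argument over a smooth base, using $\mathop{\rm char}\K\ne 2$ to complete the square. This step is already contained in the proof of Proposition \ref{dc2prop8} (or can be quoted from it); the only difference now is that I do not want to trivialize the bundle $N$ and simultaneously diagonalize $Q$ over all of $U$, because that requires étale base change on $U$.

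Next I would treat the bundle-and-form $(N,Q)$ over $U$. Restricting to a small enough Zariski open $U'\ni i(x)$, the rank-$n$ bundle $N$ becomes trivial, $N\vert_{U'}\cong U'\t\bA^n$ with coordinates $z_1,\dots,z_n$, and $Q$ becomes a symmetric $n\t n$ matrix $(Q_{kl})$ of regular functions on $U'$, nondegenerate at every point of $U'$. The point is then purely linear algebra over the ring $\O(U')$: I want to put $(Q_{kl})$ into diagonal form $\mathrm{diag}(q_1,\dots,q_n)$ with $q_k\in\O(U')^\times$, at the cost of an invertible change of the fibre coordinates $z_k$ (which is exactly the freedom encoded in the étale map $\al\t\be:V'\ra U'\t\bA^n$ rather than an isomorphism). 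The honest Gram--Schmidt / completing-the-square process works whenever the successive leading principal minors are invertible, which we cannot guarantee Zariski-locally; so after shrinking $U'$ around $i(x)$ I would instead argue as follows: pick a basis at the point $i(x)$ diagonalizing $Q\vert_{i(x)}$, then the first diagonal entry $q_1$ is a function that is a unit at $i(x)$, hence (shrinking $U'$) a unit on $U'$, so completing the square in $z_1$ splits off $q_1 z_1^2$ and reduces the rank by one; induct. Each step only shrinks $U'$ in the Zariski topology and only modifies the $z_k$ by a unipotent-plus-unit linear substitution with coefficients in $\O(U')$, which is an isomorphism $U'\t\bA^n\ra U'\t\bA^n$ over $U'$; composing with the étale map from the previous paragraph still gives an étale $\al\t\be:V'\ra U'\t\bA^n$. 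This yields the formula $g\ci\jmath=f\ci\al+(q_1\ci\al)(z_1^2\ci\be)+\cdots+(q_n\ci\al)(z_n^2\ci\be)$ of \eq{dc2eq14}, together with $\Phi\vert_{U'}=\jmath\ci\Phi'$, $\al\ci\Phi'=\id_{U'}$, $\be\ci\Phi'=0$ coming from the fact that $\Phi(U)$ is the zero section of $N$.

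Finally I would record that everything is compatible with the d-critical structure: $(R',U',f',i')$ with $R'=i^{-1}(U')$ is a subchart of $(R,U,f,i)$ on $(X,s)$, and pulling back $s$ is automatic from $f=g\ci\Phi$ and the naturality in Proposition \ref{dc2prop1}, so no extra bookkeeping with $\io_{R,U}$ is needed beyond what was done in Propositions \ref{dc2prop7} and \ref{dc2prop8}.

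The main obstacle, and the place that needs the most care, is the global diagonalization of the quadratic form $(Q_{kl})$ over the Zariski open set $U'$: one genuinely cannot diagonalize a nondegenerate symmetric matrix of regular functions Zariski-locally in general (this is why Proposition \ref{dc2prop8} only gets an étale neighbourhood of $U$), and the whole content of the more general shape \eq{dc2eq14} — allowing the coefficients $q_k$ to be nonconstant units rather than $1$ — is precisely the device that makes the Zariski statement true. I would make sure the induction above only ever shrinks $U'$ (never passes to an étale cover) and only ever changes the fibre coordinates by $\O(U')$-linear isomorphisms, so that the étaleness of $\al\t\be:V'\ra U'\t\bA^n$ is preserved throughout; and I would note that a clean way to organize the induction is to first reduce, by shrinking $U'$, to the case where $Q$ is diagonal at the single point $i(x)$, which guarantees each successively split-off coefficient is a unit near $i(x)$.
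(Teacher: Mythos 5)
Your plan separates the proof into two steps: (1) a ``Morse lemma with parameters'' putting $g$ in the form $f\oplus Q$ where $Q$ is a quadratic form on the normal bundle $N$ whose matrix $(Q_{kl})$ has entries in $\O(U')$, and (2) a purely $\O(U')$-linear diagonalization of that matrix. Step (2) is fine, and the observation that a nondegenerate symmetric matrix of regular functions which is diagonal at $i(x)$ can be diagonalized Zariski-locally on $U'$ by $\O(U')$-linear substitutions is a reasonable reorganization. But step (1) is where all the difficulty of the proposition lives, and your proposal does not actually carry it out. You say step (1) is ``contained in the proof of Proposition~\ref{dc2prop8},'' but that proof gives something different: it makes $Q$ \emph{constant} (the identity), at the price of passing to an \'etale neighbourhood of $i(x)$ in $U$ --- exactly the base change you need to avoid. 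The version you want, $Q_{kl}\in\O(U')$ over a \emph{Zariski} open $U'$, is not a weakening that falls out for free. Taylor-expanding $g-f=\sum \check z_b\check z_cQ_{bc}$ as in \eq{dc5eq9} gives $Q_{bc}$ depending on \emph{all} the coordinates, including the fibre ones, and removing the fibre dependence from even a single entry already demands a square root that does not exist Zariski-locally on $V$ (the toy model $h=qz^2+z^3$ becomes $qz'^2$ only after $z'=z\sqrt{1+q^{-1}z}$). So step (1), exactly like the one-variable-at-a-time procedure of \S\ref{dc55}, forces you to pass to \'etale double covers of~$V$.

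The missing idea is precisely the one driving \S\ref{dc55}: before extracting a square root of $Q_{nn}$, divide it by its pullback $q_n\circ\check\alpha$ from the base so that the normalized function $\check P_n$ is identically $1$ along the zero section; take the \'etale double cover of $V$ on which $\check P_n$ acquires a square root; and observe that the zero section lifts to this cover because $1$ has the canonical square root $1$. Doing this at every inductive step is what keeps $U'$ Zariski open in $U$ while $\jmath:V'\ra V$ is merely \'etale, and is exactly what forces the non-constant units $q_k$ into \eq{dc2eq14}. You correctly identify the $q_k$ as ``the device that makes the Zariski statement true'' and correctly insist on never base-changing $U$, but you silently assume the zero section lifts to whatever \'etale cover of $V$ your Morse lemma produces --- and that lifting is the whole content of the proof, not an automatic byproduct. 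Make the normalize-then-lift move explicit and your route collapses into the paper's.
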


\subsection{Canonical bundles of d-critical loci}
\label{dc24}

Propositions \ref{dc2prop7}--\ref{dc2prop9} locally describe
embeddings $\Phi:(R,U,f,i)\hookra(S,V,g,j)$ of critical charts on
$(X,s)$. We can also associate a piece of global data to $\Phi$, a
nondegenerate quadratic form $q_{\sst UV}$ on the pullback
$i^*(N_{\sst UV})$ of the normal bundle $N_{\sst UV}$ of $\Phi(U)$
in~$V$.

\begin{prop} Let\/ $(X,s)$ be a d-critical locus (either complex
analytic or algebraic), and\/ $\Phi:(R,U,f,i)\hookra(S,V,g,j)$ be an
embedding of critical charts on $(X,s)$. Write\/ $N_{\sst UV}$ for
the normal bundle of\/ $\Phi(U)$ in $V,$ regarded as a (holomorphic
or algebraic) vector bundle on $U$ in the exact sequence
\e
\xymatrix@C=20pt{ 0 \ar[r] & TU \ar[rr]^(0.4){\d\Phi} && \Phi^*(TV)
\ar[rr]^(0.55){\Pi_{\sst UV}} && N_{\sst UV} \ar[r] & 0,}
\label{dc2eq15}
\e
so that $i^*(N_{\sst UV})$ is a vector bundle on $R\subseteq X$.
Then there exists a unique $q_{\sst UV}\in H^0(S^2i^*(N_{\sst
UV}^*))$ which is a nondegenerate quadratic form on $i^*(N_{\sst
UV}),$ with the following property in each case:
\begin{itemize}
\setlength{\itemsep}{0pt}
\setlength{\parsep}{0pt}
\item[{\bf(a)}] If\/ $(X,s)$ is a complex analytic d-critical locus
and\/ $x,U',V',n,\al,\be$ are as in Proposition\/
{\rm\ref{dc2prop7},} writing $\langle\d z_1,\ldots,\d
z_n\rangle_{U'}$ for the trivial vector bundle on $U'$ with
basis $\d z_1,\ldots,\d z_n$ and\/ $R'=i^{-1}(U')\subseteq
R\subseteq X,$ there is a natural isomorphism $\hat\be:\langle\d
z_1,\ldots,\d z_n\rangle_{U'}\ra N_{\sst UV}^*\vert_{U'}$ such
that
\begin{gather}
\begin{split}
\Phi\vert_{U'}^*&(\d\be^*)=\Pi_{\sst UV}\vert_{U'}^*\ci\hat\be:\\
&\Phi\vert_{U'}^*\ci\be^*(T_0^*\C^n)=\langle\d z_1,\ldots, \d
z_n\rangle_{U'}\longra \Phi\vert_{U'}^*(T^*V),
\end{split}
\label{dc2eq16}\\
\text{with\/}\quad q_{\sst
UV}\vert_{R'}=i\vert_{R'}^*\bigl[(S^2\hat\be)(\d z_1\ot\d
z_1+\cdots+\d z_n\ot\d z_n)\bigr].
\label{dc2eq17}
\end{gather}

\item[{\bf(b)}] If\/ $(X,s)$ is an algebraic d-critical locus
and\/ $x,U',V',\io,\jmath,\Phi',\al,\be,n$ are as in
Proposition\/ {\rm\ref{dc2prop8},} then there is an isomorphism
$\hat\be:\langle\d z_1,\ldots,\d z_n\rangle_{U'}\ra\io^*(N_{\sst
UV}^*)$ making the following diagram of vector bundles on $U'$
commute:
\e
\begin{gathered}
\xymatrix@C=110pt@R=15pt{
*+[r]{\io^*(N_{\sst UV}^*)} \ar[r]_(0.3){\io^*(\Pi_{\sst UV}^*)}
& *+[l]{\io^*\ci\Phi^*(T^*V)=\Phi^{\prime *}\ci
\jmath^*(T^*V)} \ar[d]_{\Phi^{\prime *}(\d\jmath^*)} \\
*+[r]{\langle\d z_1,\ldots,\d z_n\rangle_{U'}=\Phi^{\prime *}
\ci\be^*(T_0^*\C^n)} \ar[r]^(0.7){\Phi^{\prime *}(\d\be^*)}
\ar@{.>}[u]_{\hat\be} & *+[l]{\Phi^{\prime *}(T^*V'),}}
\end{gathered}
\label{dc2eq18}
\e
and if\/ $R'=R\t_{i,U,\io}U'$ with projections $\rho:R'\ra R,$
$i':R'\ra U',$ then
\e
\rho^*(q_{\sst UV})=i^{\prime *}\bigl[(S^2\hat\be)(\d z_1\ot\d
z_1+\cdots+\d z_n\ot\d z_n)\bigr].
\label{dc2eq19}
\e

\item[{\bf(c)}] If\/ $(X,s)$ is an algebraic d-critical locus
and\/ $x,U',V',\jmath,\Phi',\al,\be,n,q_a$ are as in
Proposition\/ {\rm\ref{dc2prop9},} then there is an isomorphism
$\hat\be:\langle\d z_1,\ldots,\d
z_n\rangle_{U'}\ab\ra\io^*(N_{\sst UV}^*)$ making the following
commute:
\e
\begin{gathered}
\xymatrix@C=110pt@R=15pt{
*+[r]{N_{\sst UV}^*\vert_{U'}} \ar[r]_(0.3){\Pi_{\sst UV}^*\vert_{U'}}
& *+[l]{\Phi\vert_{U'}^*(T^*V)=\Phi^{\prime *}\ci
\jmath^*(T^*V)} \ar[d]_{\Phi^{\prime *}(\d\jmath^*)} \\
*+[r]{\langle\d z_1,\ldots,\d z_n\rangle_{U'}=\Phi^{\prime *}
\ci\be^*(T_0^*\C^n)} \ar[r]^(0.7){\Phi^{\prime *}(\d\be^*)}
\ar@{.>}[u]_{\hat\be} & *+[l]{\Phi^{\prime *}(T^*V'),}}
\end{gathered}
\label{dc2eq20}
\e
and if\/ $R'=i^{-1}(U')\subseteq R\subseteq X,$ then
\e
{}\!\!\! q_{\sst UV}\vert_{R'}=i\vert_{R'}^*\bigl[q_1\cdot
(S^2\hat\be)(\d z_1\ot\d z_1)+\cdots+q_n\cdot (S^2\hat\be)(\d
z_n\ot\d z_n)\bigr].
\label{dc2eq21}
\e
\end{itemize}

Now suppose $\Psi:(S,V,g,j)\hookra(T,W,h,k)$ is another embedding of
critical charts, so that\/ $\Psi\ci\Phi:(R,U,f,i)\hookra(T,W,h,k)$
is also an embedding, and define\/ $N_{\sst VW},q_{\sst VW}$ and\/
$N_{\sst UW},q_{\sst UW}$ using $\Psi$ and\/ $\Psi\ci\Phi$ as above.
Then there are unique morphisms $\ga_{\sst UVW},\de_{\sst UVW}$
which make the following diagram of vector bundles on $U$ commute,
with straight lines exact:
\e
\begin{gathered}
\xymatrix@!0@C=19pt@R=9pt{ &&&&&&&&&&&&&&& 0 \ar[ddl] \\
&&&&&&&&&&&&&&&& 0 \ar[dll] \\
&&&&&& 0 \ar[dddrr] &&&&&&&& TU \ar[dddllllll]_{\d\Phi}
\ar[ddddddllll]^(0.4){\d(\Psi\ci\Phi)} \\ \\ \\
&&&&&&&& \Phi^*(TV) \ar[dddllllll]_{\Pi_{\sst UV}}
\ar[dddrr]^(0.6){\Phi^*(\d\Psi)} \\ \\
0 \ar[drr] \\
&& N_{\sst UV} \ar[dll]  \ar@{.>}[dddrrrrrr]_{\ga_{\sst UVW}}
&&&&&&&& (\Psi\ci\Phi)^*(TW) \ar[ddddddrrrr]^{\Phi^*(\Pi_{\sst VW})}
\ar[dddll]^(0.4){\Pi_{\sst UW}} \\
0 \\ \\
&&&&&&&& N_{\sst UW}  \ar[dddll] \ar@{.>}[dddrrrrrr]_{\de_{\sst UVW}} \\ \\ \\
&&&&&& 0  &&&&&&&& \Phi^*(N_{\sst VW}) \ar[drr] \ar[ddr] \\
&&&&&&&&&&&&&&&& 0 \\
&&&&&&&&&&&&&&& 0.}\!\!\!\!\!\!\!\!\!\!\!\!\!\!\!\!\!\!\!\!\!{}
\end{gathered}
\label{dc2eq22}
\e

Pulling back by $i^*$ gives an exact sequence of vector bundles on
$R\subseteq X\!:$
\e
\xymatrix@C=14pt{ 0 \ar[r] & i^*(N_{\sst UV})
\ar[rrr]^(0.48){i^*(\ga_{\sst UVW})} &&& i^*(N_{\sst UW})
\ar[rrr]^(0.45){i^*(\de_{\sst UVW})} &&& j^*(N_{\sst VW})\vert_R
\ar[r] & 0.}
\label{dc2eq23}
\e
Then there is a natural isomorphism of vector bundles on $R$
\e
i^*(N_{\sst UW})\cong i^*(N_{\sst UV})\op j^*(N_{\sst VW})\vert_R,
\label{dc2eq24}
\e
compatible with the exact sequence {\rm\eq{dc2eq23},} which
identifies
\e
\begin{split}
q_{\sst UW}&\cong q_{\sst UV}\op q_{\sst VW}\op 0 \qquad\text{under
the splitting}\\
S^2(i^*(N_{\sst UW}^*))&\!\cong\! S^2(i^*(N_{\sst UV}^*))\!\op
\!S^2(j^*(N_{\sst VW}^*))\vert_R\!\op\! i^*(N_{\sst UV}^*)\!\ot\!
j^*(N_{\sst VW}^*)\vert_R.
\end{split}
\label{dc2eq25}
\e
\label{dc2prop10}
\end{prop}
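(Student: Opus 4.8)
The plan is to define $q_{\sst UV}$ locally from the normal-form description of embeddings, to check independence of all choices so that the local pieces glue, and then to deduce the compatibility statements for compositions. I will describe the complex analytic case in detail; the algebraic variants are parallel, using Proposition \ref{dc2prop8} or \ref{dc2prop9} in place of \ref{dc2prop7} and carrying along either \'etale localizations or the variable coefficients $q_k$ of \eq{dc2eq14}.

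Fix $x\in R$ and pick $U',V',n,\al,\be$ as in Proposition \ref{dc2prop7}, so $V'\cong U'\t\C^n$ via $\al\t\be$, $\al\ci\Phi\vert_{U'}=\id$, $\be\ci\Phi\vert_{U'}=0$, $g\vert_{V'}=f\ci\al+(z_1^2+\cdots+z_n^2)\ci\be$; the isomorphism $\hat\be$ of \eq{dc2eq16} is forced, and one \emph{defines} $q_{\sst UV}$ near $x$ by \eq{dc2eq17}. The useful observation is that this is intrinsic once one knows it is well defined: with $J=I_{\Phi(U'),V'}$ the embedding $\Phi(U')\hookra V'$ is regular, so $J^2/J^3\cong S^2(N_{\sst UV}^*\vert_{U'})$, and since each $\be^\sh z_k\in J$ has symbol $\hat\be(\d z_k)$ by \eq{dc2eq16}, the symbol of $g-f\ci\al=\sum_k(\be^\sh z_k)^2\in J^2$ is exactly $(S^2\hat\be)(\sum_k\d z_k\ot\d z_k)$; thus $q_{\sst UV}$ is the restriction to $R'=i^{-1}(U')$ of the symbol of $g-f\ci\al$. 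Nondegeneracy is then immediate, since in the $\al\t\be$-coordinates this symbol is the standard form. Uniqueness of $q_{\sst UV}$ subject to property (a) is also immediate, since (a) pins $q_{\sst UV}$ down on the cover of $R$ by the sets $R'$ as $x$ varies.

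The main obstacle is the well-definedness, i.e.\ independence of the local construction from the choice of $(\al,\be)$; this also forces the gluing. Given two choices $(\al_1,\be_1)$, $(\al_2,\be_2)$, restrict to a common domain; the difference of the two symbols is the symbol of $f\ci\al_2-f\ci\al_1$, which lies in $J^2$ as a difference of two elements of $J^2$. Working in the coordinates from $\al_1\t\be_1$, so $V'\cong U'\t\C^n$, $g=f(u)+\sum z_k^2$, and $\al_1$ is the projection, write $\al_2(u,z)=u+P(u,z)$ with $P(u,0)=0$. Matching the terms linear in $z$ in $g=f\ci\al_2+\sum(\be_2^\sh z_k)^2$ (the second sum being $O(z^2)$) shows that each linear-in-$z$ coefficient $\pd_{z_k}\al_2\vert_{z=0}$ is a local section of $\Ker(\d f:TU'\ra\O_{U'})$. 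The quadratic-in-$z$ part of $f\ci\al_2-f(u)$ is then a sum of $\d f$ applied to the quadratic-in-$z$ coefficients of $\al_2$ and of $\Hess f$ applied to the linear ones; the first summand has coefficients in $I_{\Crit f}$ since $\d f$ does, and so does the second, because if $v$ is a local section of $\Ker(\d f)$ then $\pd_j\bigl(\sum_i\pd_if\cdot v^i\bigr)=0$ gives $\sum_i\pd_i\pd_jf\cdot v^i=-\sum_i\pd_if\cdot\pd_jv^i\in I_{\Crit f}$, i.e.\ $\Hess f(v)\in I_{\Crit f}\cdot T^*U'$. Hence the symbol of $f\ci\al_2-f\ci\al_1$ has all coefficients in $I_{\Crit f}$ and restricts to zero on $R'=\Crit f$, so $q_{\sst UV}\vert_{R'}$ is independent of the choice and the local sections glue. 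In the algebraic case the same computation shows one and the same $q_{\sst UV}$ satisfies both (b) and (c).

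For the statements about compositions, the morphisms $\ga_{\sst UVW},\de_{\sst UVW}$ and the exactness of \eq{dc2eq22}--\eq{dc2eq23} are elementary linear algebra, namely the exact sequence associated to the flag of sub-bundles $TU\hookra\Phi^*(TV)\hookra(\Psi\ci\Phi)^*(TW)$ (a flag because $\d\Phi,\d\Psi$ are fibrewise injective with locally free cokernels): $\Phi^*(\d\Psi)$ induces $\ga_{\sst UVW}:N_{\sst UV}\ra N_{\sst UW}$, the pullback under $\Phi^*$ of the quotient $\Psi^*(TW)\ra N_{\sst VW}$ induces $\de_{\sst UVW}:N_{\sst UW}\ra\Phi^*(N_{\sst VW})$, and $\Im\ga_{\sst UVW}=\Ker\de_{\sst UVW}$; applying $i^*$ and using $i^*\ci\Phi^*=(j\vert_R)^*$ gives \eq{dc2eq23}. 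For \eq{dc2eq24}--\eq{dc2eq25}, I would construct a \emph{simultaneous} normal form: Proposition \ref{dc2prop7} applied to $\Phi$ gives $V'\cong U\t\C^m$ with $g=f\ci\al+(\sum z^2)\ci\be$, and applied to $\Psi$ gives $W'\cong V''\t\C^l$ with $h=g\ci\al'+(\sum y^2)\ci\be'$; substituting and shrinking, $\tilde\al=\al\ci\al'$ and $\tilde\be=(\be\ci\al')\t\be'$ give an isomorphism $W'\cong U\t\C^m\t\C^l$ with $h=f\ci\tilde\al+(\sum z^2+\sum y^2)\ci\tilde\be$, $\tilde\al\ci(\Psi\ci\Phi)=\id$, $\tilde\be\ci(\Psi\ci\Phi)=0$. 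In these coordinates $i^*(N_{\sst UW})$ is the trivial rank-$(m+l)$ bundle on $R$, \eq{dc2eq23} is the evident split sequence, and reading off $q_{\sst UV},q_{\sst VW},q_{\sst UW}$ via \eq{dc2eq17} for $\Phi,\Psi,\Psi\ci\Phi$ gives $q_{\sst UW}=\sum_k\d z_k\ot\d z_k+\sum_k\d y_k\ot\d y_k$, which is $q_{\sst UV}\op q_{\sst VW}$ with zero cross term. Finally, to see that the splitting is canonical (independent of the chosen normal form), observe that the same computation shows $q_{\sst UW}$ restricts to the nondegenerate form $q_{\sst UV}$ on $i^*(N_{\sst UV})$; hence $i^*(N_{\sst UV})$ has a well-defined $q_{\sst UW}$-orthogonal complement in $i^*(N_{\sst UW})$, this complement maps isomorphically to $j^*(N_{\sst VW})\vert_R$ under $\de_{\sst UVW}$, and $q_{\sst UW}$ induces $q_{\sst VW}$ on it, yielding \eq{dc2eq24}--\eq{dc2eq25}. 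The algebraic cases go through identically, up to \'etale localization and keeping track of the coefficients $q_k$ from Proposition \ref{dc2prop9}.
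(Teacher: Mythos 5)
Your proof is correct, and the second half of your argument --- defining $\ga_{\sst UVW},\de_{\sst UVW}$ from the flag $TU\hookra\Phi^*(TV)\hookra(\Psi\ci\Phi)^*(TW)$, building a simultaneous normal form for $\Psi\ci\Phi$ by composing normal forms for $\Phi$ and $\Psi$, and then globalizing the splitting \eq{dc2eq24} via the $q_{\sst UW}$-orthogonal complement of $i^*(N_{\sst UV})$ --- is essentially identical to the paper's (\S\ref{dc61}). Where you genuinely diverge is in establishing well-definedness and gluing of $q_{\sst UV}$. The paper compares two normal-form charts via the transition map $(\al'\t\be')\ci(\al\t\be)^{-1}$ of $U\t\C^n$: since this biholomorphism fixes $U\t\{0\}$ pointwise and preserves $f\boxplus(z_1^2+\cdots+z_n^2)$, it preserves the Hessian along the critical locus, and from the splitting $\Hess(f\boxplus\sum z_k^2)=\Hess f\op(\sum\d z_k\ot\d z_k)$ together with the block structure of the differential it follows that the induced map $\hat\be^{\prime*}\ci\hat\be^{*-1}$ on $N_{\sst UV}$ preserves the standard form. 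You instead give an intrinsic description: $q_{\sst UV}\vert_{R'}$ is the image of $g-f\ci\al\in J^2$ under the canonical identification $J^2/J^3\cong S^2(N_{\sst UV}^*)$ restricted to $R'$, and independence of $\al$ reduces to the observation that the quadratic-in-$z$ part of $f\ci\al_2-f\ci\al_1$ has coefficients in $I_{\Crit f}$, which you get from the Leibniz-rule fact that $\Hess f$ carries $\Ker(\d f:TU\ra\O_U)$ into $I_{\Crit f}\cdot T^*U$. Both arguments are valid; yours makes the chart-independence manifest by exhibiting a formula with no choice of $(\al,\be)$ in it, while the paper's Hessian-invariance viewpoint has the advantage that the same diagram chase is re-used verbatim in the pointwise block computations of \S\ref{dc62}. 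In the algebraic cases you correctly note that the same calculations go through after \'etale localization and, for case (c), keeping the coefficients $q_k$; the formal Taylor expansion you use holds in that setting via iterated Hadamard-lemma factorizations, though you might want to say a word about this since the paper handles (b),(c) only by remarking they are similar.
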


Using $N_{\sst UV},q_{\sst UV}$ we define an isomorphism of line
bundles $J_\Phi$ on $R^\red$:

\begin{dfn} Let $\Phi:(R,U,f,i)\hookra(S,V,g,j)$ be an embedding of
critical charts on a d-critical locus $(X,s)$. Define $N_{\sst
UV},\ab q_{\sst UV}$ as in Proposition \ref{dc2prop10}, and set
$n=\dim V-\dim U$. Write $R^\red$ for the reduced complex analytic
subspace or reduced $\K$-subscheme of $R$. Taking top exterior
powers in the dual of \eq{dc2eq15} and pulling back to $R^\red$
using $i^*$ gives an isomorphism of line bundles
\e
\rho_{\sst UV}:\bigl(i^*(K_U)\ot i^*(\La^nN_{\sst
UV}^*)\bigr)\vert_{R^\red}
\,{\buildrel\cong\over\longra}\,j^*(K_V)\vert_{R^\red}.
\label{dc2eq26}
\e
As $q_{\sst UV}$ is a nondegenerate quadratic form on $i^*(N_{\sst
UV}),$ its determinant $\det(q_{\sst UV})$ is a nonvanishing section
of $i^*(\La^nN_{\sst UV}^*)^{\ot^2}$. Define an isomorphism of line
bundles $J_\Phi:i^*\bigl(K_U^{\ot^2}\bigr) \vert_{R^\red}\ra
j^*\bigl(K_V^{\ot^2}\bigr)\vert_{R^\red}$ on $R^\red$ by the
commutative diagram
\e
\begin{gathered}
\xymatrix@C=180pt@R=20pt{
*+[r]{i^*\bigl(K_U^{\ot^2}\bigr)\vert_{R^\red}}
\ar@/_1pc/[dr]_{J_\Phi}
\ar[r]_(0.37){\id_{i^*(K_U^2)}\ot\det(q_{\sst UV})\vert_{R^\red}} &
*+[l]{\bigl(i^*\bigl(K_U^{\ot^2}\bigr)\!\ot\!i^*\bigl(\La^{\rm top}N_{\sst
UV}^*\bigr){}^{\ot^2}\bigr)\big\vert_{R^\red}}
\ar[d]_(0.45){\rho_{\sst UV}^{\ot^2}\vert_{R^\red}} \\
& *+[l]{j^*\bigl(K_V^{\ot^2}\bigr)\vert_{R^\red}.\!\!{}} }
\end{gathered}
\label{dc2eq27}
\e

\label{dc2def4}
\end{dfn}

Here are some useful properties of the $J_\Phi$. The proof that
$J_\Phi$ is independent of $\Phi$ needs $R^\red$ reduced, which is
why we restricted to $R^\red$ in Definition~\ref{dc2def4}.

\begin{prop} In Definition\/ {\rm\ref{dc2def4},} the isomorphism
$J_\Phi$ is independent of the choice of\/ $\Phi$. That is, if\/
$\Phi,\ti\Phi:(R,U,f,i)\hookra(S,V,g,j)$ are embeddings of critical
charts then\/~$J_\Phi=J_{\smash{\ti\Phi}}$.

If\/ $\Psi:(S,V,g,j)\hookra(T,W,h,k)$ is another embedding of
critical charts then
\e
J_\Psi\vert_{R^\red}\ci J_\Phi=J_{\Psi\ci\Phi}.
\label{dc2eq28}
\e

\label{dc2prop11}
\end{prop}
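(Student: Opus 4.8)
The plan is to prove Proposition \ref{dc2prop11} in two stages: first the independence of $J_\Phi$ on the choice of embedding $\Phi$, then the composition rule \eq{dc2eq28}. For the independence, suppose $\Phi,\ti\Phi:(R,U,f,i)\hookra(S,V,g,j)$ are two embeddings. The first observation is that, since $\Phi\ci i=j\vert_R=\ti\Phi\ci i$ and $\dim U=\dim V-n$, the two maps $\d\Phi,\d\ti\Phi:TU\ra\Phi^*(TV)$ agree along $i(R)$, and hence the induced identifications of the normal bundles $N_{\sst UV}$ (defined via \eq{dc2eq15}) and of the quadratic forms $q_{\sst UV}$ agree after restriction to $R$. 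More precisely, I would argue that the isomorphism $\hat\be$ and the data $N_{\sst UV}$, $q_{\sst UV}$ appearing in Definition \ref{dc2def4} depend only on the $1$-jet of $\Phi$ along $i(R)$; working in a critical chart via Proposition \ref{dc2prop7} (resp. \ref{dc2prop8} in the algebraic case), both $\Phi$ and $\ti\Phi$ locally look like the standard embedding $u\mapsto(u,0)$ into $U\t\C^n$ up to a change of coordinates that is the identity on $i(R)$ to first order. Since $J_\Phi$ is built out of $K_U,K_V$, the isomorphism $\rho_{\sst UV}$ of \eq{dc2eq26}, and $\det(q_{\sst UV})$, all restricted to $R^\red$, and since $R^\red$ is reduced so that a section of a line bundle vanishing to positive order on $R$ vanishes on $R^\red$, the two isomorphisms $J_\Phi$ and $J_{\ti\Phi}$ agree on $R^\red$.

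For the composition rule, the key input is the last part of Proposition \ref{dc2prop10}: the exact sequence \eq{dc2eq23}, the splitting \eq{dc2eq24} identifying $i^*(N_{\sst UW})\cong i^*(N_{\sst UV})\op j^*(N_{\sst VW})\vert_R$, and the compatibility \eq{dc2eq25} identifying $q_{\sst UW}\cong q_{\sst UV}\op q_{\sst VW}\op 0$ under the corresponding decomposition of $S^2(i^*(N_{\sst UW}^*))$. From \eq{dc2eq24} one gets $\La^{\rm top}N_{\sst UW}^*\cong\La^{\rm top}N_{\sst UV}^*\ot j^*(\La^{\rm top}N_{\sst VW}^*)$, and from the block-diagonal-plus-zero form of $q_{\sst UW}$ in \eq{dc2eq25} one gets $\det(q_{\sst UW})=\det(q_{\sst UV})\ot\det(q_{\sst VW})$ under this identification (the off-diagonal block contributes nothing to the determinant of a block-triangular form). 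Similarly, the isomorphisms $\rho_{\sst UV}$, $\rho_{\sst VW}$, $\rho_{\sst UW}$ of \eq{dc2eq26} are compatible via the exact sequences in \eq{dc2eq22}: taking top exterior powers in the columns of \eq{dc2eq22} gives $\rho_{\sst UW}=\rho_{\sst VW}\vert_{R^\red}\ci(\rho_{\sst UV}\ot\id)$ in an appropriate sense. Then chasing the defining commutative diagram \eq{dc2eq27} for each of $J_\Phi$, $J_\Psi$, $J_{\Psi\ci\Phi}$ and assembling these compatibilities yields $J_{\Psi\ci\Phi}=J_\Psi\vert_{R^\red}\ci J_\Phi$.

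The main obstacle I expect is the bookkeeping in the second stage: carefully tracking how the determinant of the nondegenerate quadratic form $q_{\sst UW}$ decomposes under \eq{dc2eq24}--\eq{dc2eq25}, and how this interacts with the top-exterior-power isomorphisms $\rho$, so that the two routes around the diagram \eq{dc2eq27} for $J_{\Psi\ci\Phi}$ and for $J_\Psi\ci J_\Phi$ genuinely coincide rather than differing by a sign or a reordering of tensor factors. This is where the sign convention $\mathop{\rm char}\K\ne2$ and the precise ordering in \eq{dc2eq25} matter. A secondary subtlety in the first stage is making rigorous the claim that $J_\Phi$ depends only on the first-order behaviour of $\Phi$ along $i(R)$; I would handle this by reducing, via Propositions \ref{dc2prop7}--\ref{dc2prop8}, to the case where $\Phi,\ti\Phi$ are both the standard inclusion into $U\t\C^n$ post-composed with a local automorphism fixing $i(R)$, and then observing that all the data entering \eq{dc2eq27} are restricted to $R^\red$, on which such automorphisms act trivially to the required order.
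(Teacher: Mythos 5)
Your argument for the composition rule \eq{dc2eq28} is essentially the approach the paper takes: one uses the splitting \eq{dc2eq24}--\eq{dc2eq25} from Proposition \ref{dc2prop10}, observes that the block-diagonal-plus-off-diagonal form of $q_{\sst UW}$ gives $\det(q_{\sst UW})=\det(q_{\sst UV})\otimes\det(q_{\sst VW})$ under the induced identification of top exterior powers, and checks compatibility of the $\rho$-isomorphisms via \eq{dc2eq22}. That part is sound.

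However, your argument for the $\Phi$-independence of $J_\Phi$ has a genuine gap. The opening claim, that $\Phi\circ i=j\vert_R=\ti\Phi\circ i$ forces $\d\Phi$ and $\d\ti\Phi$ to agree along $i(R)$, is false in general. The scheme-theoretic equality $\Phi\vert_{i(R)}=\ti\Phi\vert_{i(R)}$ only says that the components of $\Phi-\ti\Phi$ lie in $I_{R,U}'$, the ideal of $i(R)$ in $U$; and at a point $i(x)$ with $\dim T_xX<\dim U$, the ideal $I_{R,U}'$ (which equals $I_{(\d f)}$) is \emph{not} contained in $\mathfrak{m}_{i(x)}^2$, because $\Hess_{i(x)}f\ne0$. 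Concretely, take $U=\C^2$, $f=u_1^2+u_2^3$, $V=\C^3$, $g=v_1^2+v_2^3+v_3^2$, $\Phi(u_1,u_2)=(u_1,u_2,0)$, and $\ti\Phi(u_1,u_2)=(u_1(1+a),\,u_2+u_1^2b_1,\,u_1c)$ with $c=\frac12$ and $a,b_1$ solving $g\circ\ti\Phi=f$ near the origin; one checks $\ti\Phi\circ i=\Phi\circ i$ on $\Crit(f)=V(u_1,u_2^2)$ but $\d\ti\Phi\vert_{(0,0)}\ne\d\Phi\vert_{(0,0)}$. So the 1-jets of $\Phi,\ti\Phi$ along $i(R)$ genuinely differ, and the claim that $N_{\sst UV}$, $q_{\sst UV}$, $\hat\be$ depend only on this 1-jet does not get you to $J_\Phi=J_{\ti\Phi}$; a priori the objects $i^*(N_{\sst UV})$ built from $\Phi$ and from $\ti\Phi$ are not even canonically identified.

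The paper's proof instead works pointwise: fixing $x\in R^\red$, it introduces the normal spaces $N_{\sst XU}\vert_x$, $N_{\sst XV}\vert_x$ of $X$ in $U,V$ at $x$ (which are $\Phi$-independent), the nondegenerate Hessian forms $\Hess'_{i(x)}f$, $\Hess'_{j(x)}g$ they carry, and the exact sequence $0\to N_{\sst XU}\vert_x\to N_{\sst XV}\vert_x\to N_{\sst UV}\vert_{i(x)}\to 0$ with its canonical orthogonal splitting compatible with $\Hess'_{j(x)}g\cong\Hess'_{i(x)}f\oplus q_{\sst UV}\vert_x\oplus0$. Taking determinants and top exterior powers, this produces a commutative diagram (equation \eq{dc6eq11} in \S\ref{dc62}) whose outer route expresses $J_\Phi\vert_x$ as $(\rho_{\sst XV}^{\otimes2}\vert_x)\circ(\id\otimes\det\Hess'_{j(x)}g)\circ(\id\otimes\det\Hess'_{i(x)}f)^{-1}\circ(\rho_{\sst XU}^{\otimes2}\vert_x)^{-1}$, a formula that never mentions $\Phi$. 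Hence $J_\Phi\vert_x=J_{\ti\Phi}\vert_x$ for all $x\in R^\red$, and reducedness of $R^\red$ then upgrades the pointwise equality to $J_\Phi=J_{\ti\Phi}$. This is the step your proposal is missing: one must actually produce a $\Phi$-free formula for $J_\Phi\vert_x$, rather than argue that the $\Phi$-dependent ingredients are themselves $\Phi$-independent.
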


We can now define the {\it canonical bundle\/} of a d-critical
locus.

\begin{thm} Let\/ $(X,s)$ be a d-critical locus (either complex
analytic or algebraic), and\/ $X^\red\subseteq X$ the associated
reduced complex analytic space or reduced\/ $\K$-scheme. Then there
exists a (holomorphic or algebraic) line bundle $K_{X,s}$ on
$X^\red$ which we call the \begin{bfseries}canonical
bundle\end{bfseries} of\/ $(X,s),$ which is natural up to canonical
isomorphism, and is characterized by the following properties:
\begin{itemize}
\setlength{\itemsep}{0pt}
\setlength{\parsep}{0pt}
\item[{\bf(i)}] If\/ $(R,U,f,i)$ is a critical chart on
$(X,s),$ there is a natural isomorphism
\e
\io_{R,U,f,i}:K_{X,s}\vert_{R^\red}\longra
i^*\bigl(K_U^{\ot^2}\bigr)\vert_{R^\red},
\label{dc2eq29}
\e
where $K_U=\La^{\dim U}T^*U$ is the canonical bundle of\/ $U$ in
the usual sense.
\item[{\bf(ii)}] Let\/ $\Phi:(R,U,f,i)\hookra(S,V,g,j)$ be an
embedding of critical charts on $(X,s),$ and let\/ $J_\Phi$ be
as in \eq{dc2eq27}. Then
\e
\io_{S,V,g,j}\vert_{R^\red}=J_\Phi\ci
\io_{R,U,f,i}:K_{X,s}\vert_{R^\red}\longra j^*
\bigl(K_V^{\ot^2}\bigr)\big\vert_{R^\red}.
\label{dc2eq30}
\e
\item[{\bf(iii)}] For each $x\in X^\red,$ there is a canonical
isomorphism
\e
\ka_x:K_{X,s}\vert_x\,{\buildrel\cong\over\longra}\, \bigl(\La^{\rm
top}T_x^*X\bigr)^{\ot^2},
\label{dc2eq31}
\e
where $T_xX$ is the Zariski tangent space of\/ $X$ at\/~$x$.
\item[{\bf(iv)}] Suppose\/ $(R,U,f,i)$ is a critical chart on
$(X,s)$ and\/ $x\in R,$ and let\/ $\io_{R,U,f,i},\ab\ka_x$ be as
in\/ {\bf(i)\rm,\bf(iii)}. Then we have an exact sequence
\e
\begin{gathered}
{}\!\!\!\!\xymatrix@C=22pt@R=15pt{ 0 \ar[r] & T_xX
\ar[r]^(0.4){\d i\vert_x} & T_{i(x)}U
\ar[rr]^(0.53){\Hess_{i(x)}f} && T_{i(x)}^*U \ar[r]^{\d
i\vert_x^*}  & T_x^*X  \ar[r] & 0, }
\end{gathered}
\label{dc2eq32}
\e
and the following diagram commutes:
\e
\begin{gathered}
\xymatrix@C=150pt@R=13pt{ *+[r]{K_{X,s}\vert_x}
\ar[dr]_{\io_{R,U,f,i}\vert_x} \ar[r]_(0.55){\ka_x} &
*+[l]{\bigl(\La^{\rm top}T_x^*X\bigr){}^{\ot^2}}
\ar[d]_(0.45){\al_{x,R,U,f,i}} \\
& *+[l]{K_U\vert_{i(x)}^{\ot^2},\!\!\!} }
\end{gathered}
\label{dc2eq33}
\e
where $\al_{x,R,U,f,i}$ is induced by taking top exterior powers
in\/~{\rm\eq{dc2eq32}}.
\end{itemize}
\label{dc2thm3}
\end{thm}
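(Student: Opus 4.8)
The plan is to construct $K_{X,s}$ by the gluing procedure of Remark \ref{dc2rem4}, using as local model the line bundle $\cG_{R,U,f,i}:=i^*(K_U^{\ot2})\vert_{R^\red}$ on $R^\red$ for each critical chart $(R,U,f,i)$, and as transition data the isomorphisms $J_\Phi$ of Definition \ref{dc2def4}. First I would attach to any two critical charts $(R,U,f,i),(S,V,g,j)$ on $(X,s)$ and any point $x\in R\cap S$ a local isomorphism: by Theorem \ref{dc2thm2} pass to subcharts $(R',U',f',i'),(S',V',g',j')$ with $x\in R'\cap S'$, pick a critical chart $(T,W,h,k)$ and embeddings $\Phi,\Psi$ of these subcharts into it, and set $\Theta:=J_\Psi^{-1}\ci J_\Phi$ on $(R'\cap S')^\red$. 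The first real step is to show $\Theta$ is independent of the subcharts, of $(T,W,h,k)$, and of $\Phi,\Psi$: given a second such choice one applies Theorem \ref{dc2thm2} once more to $(T,W,h,k)$ and the new common chart at $x$, obtaining yet another chart dominating subcharts of both, and then the two parts of Proposition \ref{dc2prop11} --- that $J$ is independent of the embedding, and the composition law $J_\Psi\vert_{R^\red}\ci J_\Phi=J_{\Psi\ci\Phi}$ --- reduce both expressions for $\Theta$ to $J$ of a composite embedding into that chart, hence to the same map near $x$. Thus the locally defined maps agree on overlaps and patch to a well-defined isomorphism $\Theta_{(R,U,f,i),(S,V,g,j)}\colon i^*(K_U^{\ot2})\vert_{(R\cap S)^\red}\ra j^*(K_V^{\ot2})\vert_{(R\cap S)^\red}$ with $\Theta_{(R,U,f,i),(R,U,f,i)}=\id$. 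Iterating Theorem \ref{dc2thm2} to produce, near any point of a triple overlap, one chart dominating subcharts of all three of $(R,U,f,i),(S,V,g,j),(T,W,h,k)$, the same reduction gives the cocycle identity $\Theta_{(S,V,g,j),(T,W,h,k)}\ci\Theta_{(R,U,f,i),(S,V,g,j)}=\Theta_{(R,U,f,i),(T,W,h,k)}$ on triple overlaps.

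Next I would choose critical charts $(R_a,U_a,f_a,i_a)$, $a\in A$, with $\{R_a\}$ an open cover of $X$ --- these exist since $(X,s)$ is a d-critical locus --- and, as line bundles on $X^\red$ glue along open covers, descend the $\cG_{R_a,U_a,f_a,i_a}$ along $\{\Theta\}$ to a line bundle $K_{X,s}$ on $X^\red$ equipped with isomorphisms $\io_{R_a,U_a,f_a,i_a}$ to the $\cG_{R_a,U_a,f_a,i_a}$, unique up to canonical isomorphism. For an arbitrary critical chart $(R,U,f,i)$, property {\bf(i)} is then obtained by setting $\io_{R,U,f,i}:=\Theta_{(R_a,U_a,f_a,i_a),(R,U,f,i)}\ci\io_{R_a,U_a,f_a,i_a}$ near each $x\in R\cap R_a$, the cocycle identity making this independent of $a$, and compatibility with passing to subcharts being immediate from the construction of $\Theta$. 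Property {\bf(ii)} reduces to the identity $\Theta_{(R,U,f,i),(S,V,g,j)}=J_\Phi$ for an honest embedding $\Phi\colon(R,U,f,i)\hookra(S,V,g,j)$: computing $\Theta$ through a chart $(T,W,h,k)$ into which a subchart of $(R,U,f,i)$ embeds both as $\Psi\ci\Phi$ and via the chosen embedding, and using that $J$ is independent of the embedding, yields this. Characterization of $K_{X,s}$ by {\bf(i)},{\bf(ii)} is the standard descent uniqueness: a line bundle carrying isomorphisms satisfying {\bf(i)},{\bf(ii)} is identified with $K_{X,s}$ over each $R_a^\red$, and {\bf(ii)} plus the cocycle identity for $\Theta$ shows the identifications glue.

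For {\bf(iii)},{\bf(iv)}, I would first verify that for any critical chart $(R,U,f,i)$ and $x\in R$ the sequence \eq{dc2eq32} is exact: as $i(R)=\Crit(f)$ scheme-theoretically, near $i(x)$ the ideal of $X$ in $U$ is generated by the partial derivatives of $f$, so $T_xX=\Ker(\Hess_{i(x)}f)$ and $T_x^*X=\Coker(\Hess_{i(x)}f)$, the latter identified with $(T_xX)^*$ by symmetry of the Hessian. Taking top exterior powers in \eq{dc2eq32} gives an isomorphism $\al_{x,R,U,f,i}\colon(\La^{\rm top}T_x^*X)^{\ot2}\ra K_U^{\ot2}\vert_{i(x)}$, and one sets $\ka_x:=\al_{x,R,U,f,i}^{-1}\ci\io_{R,U,f,i}\vert_x$, so that \eq{dc2eq33} commutes by construction. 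The whole statement then rests on one lemma: for an embedding $\Phi\colon(R,U,f,i)\hookra(S,V,g,j)$ one has $J_\Phi\vert_x\ci\al_{x,R,U,f,i}=\al_{x,S,V,g,j}$. Granting this, \eq{dc2eq31} is canonical, because for two charts through $x$ a common dominating chart from Theorem \ref{dc2thm2}, together with {\bf(ii)} and the lemma, forces the two candidate $\ka_x$ to agree; this yields both the independence of $\ka_x$ of the chart in {\bf(iii)} and the commutativity of \eq{dc2eq33} for every chart in {\bf(iv)}.

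I expect this lemma to be the main obstacle. I would prove it by reducing --- using Proposition \ref{dc2prop7} in the complex analytic case, Proposition \ref{dc2prop8} in the algebraic case, and the independence of $J_\Phi$ from $\Phi$ --- to the normal form $V\cong U\t\C^n$, $\Phi(U)=U\t\{0\}$, $g=f\boxplus(z_1^2+\cdots+z_n^2)$ with $n=\dim V-\dim U$. There $N_{\sst UV}$ is trivial on $\pd/\pd z_1,\dots,\pd/\pd z_n$, we have $q_{\sst UV}=\sum_k\d z_k\ot\d z_k$ by \eq{dc2eq17}, and $\Hess_{j(x)}g=\Hess_{i(x)}f\oplus2\,\id$ on $T_{i(x)}U\oplus T_0\C^n$; one then checks that the determinant factor $\det(q_{\sst UV})$ entering $J_\Phi$ and the factor contributed to $\al_{x,S,V,g,j}$ by the extra diagonal block $T_0\C^n\xrightarrow{2\,\id}(T_0\C^n)^*$ of \eq{dc2eq32} cancel, with the isomorphism $\rho_{\sst UV}$ of \eq{dc2eq26} doing the bookkeeping between $K_V\vert_{j(x)}$ and $K_U\vert_{i(x)}\ot\La^{\rm top}N_{\sst UV}^*\vert_{i(x)}$. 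The remaining fiddly point is tracking open subsets and subcharts through the repeated applications of Theorem \ref{dc2thm2} in the gluing, but that is routine.
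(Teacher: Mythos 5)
Your proposal is correct and follows essentially the same route as the paper's proof in \S\ref{dc6}: your isomorphism $\Theta_{(R,U,f,i),(S,V,g,j)}$ is exactly the map $J^{\sst S,V,g,j}_{\sst R,U,f,i}$ constructed in Lemma \ref{dc6lem2}, with the same well-definedness and cocycle arguments via Theorem \ref{dc2thm2} and Proposition \ref{dc2prop11}, and the same definition $\ka_x:=\al_{x,R,U,f,i}^{-1}\ci\io_{R,U,f,i}\vert_x$ for parts {\bf(iii)},{\bf(iv)}. The one place you re-derive something is your ``key lemma'' $J_\Phi\vert_x\ci\al_{x,R,U,f,i}=\al_{x,S,V,g,j}$, which you propose to check by reduction to the normal form $g\cong f\boxplus(z_1^2+\cdots+z_n^2)$; the paper instead reads it off directly from the intrinsic commutative diagram \eq{dc6eq11}, already established in \S\ref{dc62} as part of the proof of Proposition \ref{dc2prop11} (and itself ultimately grounded in the splittings of \S\ref{dc61} which do use the normal form) --- so the underlying content coincides, though you should take care with the Hessian normalization (the factor of $2$ in $\Hess(z_1^2+\cdots+z_n^2)$ versus $q_{\sst UV}=\sum\d z_k\ot\d z_k$) when carrying out that computation.
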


\begin{rem}{\bf(a)} As in Theorem \ref{dc1thm1} proved in
\cite{BBJ}, if $(X,s)$ is the truncation of a $-1$-shifted
symplectic derived scheme $(\bX,\om)$ in the sense of Pantev et al.\
\cite{PTVV}, then $K_{X,s}\cong \det(\bL_\bX)\vert_{X^\red}$. So
$K_{X,s}$ is isomorphic to the canonical bundle of the derived
scheme $\bX$ in this case, which is why we call it a canonical
bundle.

\smallskip

\noindent{\bf(b)} The line bundle $K_{X,s}$ in Theorem \ref{dc2thm3}
is characterized uniquely up to isomorphism either by parts
(i),(ii), or by parts (i),(iii),(iv).
\label{dc2rem5}
\end{rem}

Here is a formula for pullback of canonical bundles under smooth
morphisms of d-critical loci, which will be useful in \S\ref{dc28}.
By saying that $T^*_{X/Y}$ is a {\it vector bundle of mixed rank},
and by the {\it top exterior power\/} $\La^{\rm top}T^*_{X/Y}$, we
mean the following: as $\phi:X\ra Y$ is smooth, there is a
decomposition $\coprod_{n\ge 0}X_n$ with $X_n\subseteq X$ open and
closed, such that $\phi\vert_{X_n}:X_n\ra Y$ is smooth of relative
dimension $n$. Then $T^*_{X/Y}\vert_{X_n}$ is a vector bundle on
$X_n$ of rank $n$, and the line bundle $\La^{\rm top}T^*_{X/Y}$ on
$X$ is defined by $\La^{\rm top}T^*_{X/Y}\vert_{X_n}=
\La^nT^*_{X/Y}\vert_{X_n}$ for each~$n$.

\begin{prop} Suppose $\phi:(X,s)\ra(Y,t)$ is a morphism of
d-critical loci with\/ $\phi:X\ra Y$ smooth, as in Proposition\/
{\rm\ref{dc2prop3}}. The \begin{bfseries}relative cotangent
bundle\end{bfseries} $T^*_{X/Y}$ is a vector bundle of mixed rank on
$X$ in the exact sequence of coherent sheaves on $X\!:$
\e
\xymatrix@C=35pt{0 \ar[r] & \phi^*(T^*Y) \ar[r]^(0.55){\d\phi^*} &
T^*X \ar[r] & T^*_{X/Y} \ar[r] & 0. }
\label{dc2eq34}
\e
There is a natural isomorphism of line bundles on $X^\red\!:$
\e
\Up_\phi:\phi\vert_{X^\red}^* (K_{Y,t})\ot\bigl(\La^{\rm
top}T^*_{X/Y}\bigr)\big\vert_{X^\red}^{\ot^2}
\,{\buildrel\cong\over\longra}\,K_{X,s},
\label{dc2eq35}
\e
such that for each\/ $x\in X^\red$ the following diagram of
isomorphisms commutes:
\e
\begin{gathered}
\xymatrix@C=160pt@R=17pt{ *+[r]{K_{Y,t}
\vert_{\phi(x)}\ot\bigl(\La^{\rm top}T^*_{X/Y}\vert_x\bigr)^{\ot^2}}
\ar[r]_(0.7){\Up_\phi\vert_x} \ar[d]^{\ka_{\phi(x)}\ot\id} &
*+[l]{K_{X,s}\vert_x}
\ar[d]_{\ka_x} \\
*+[r]{\bigl(\La^{\rm top}T_{\phi(x)}^*Y\bigr)^{\ot^2}\ot
\bigl(\La^{\rm top}T^*_{X/Y}\vert_x\bigr)^{\ot^2}}
\ar[r]^(0.7){\up_x^{\ot^2}} & *+[l]{\bigl(\La^{\rm
top}T_x^*X\bigr)^{\ot^2},\!\!{}} }
\end{gathered}
\label{dc2eq36}
\e
where $\ka_x,\ka_{\phi(x)}$ are as in {\rm\eq{dc2eq31},} and\/
$\up_x:\La^{\rm top}T_{\phi(x)}^*Y\ot \La^{\rm top}T^*_{X/Y}
\vert_x\ra\La^{\rm top}T_x^*X$ is obtained by restricting
\eq{dc2eq34} to $x$ and taking top exterior powers.
\label{dc2prop12}
\end{prop}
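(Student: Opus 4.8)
The plan is to construct $\Up_\phi$ locally from pairs of compatible critical charts on $(X,s)$ and $(Y,t)$, to check the pointwise compatibility \eq{dc2eq36} for each local model, and then to glue, using that a morphism of line bundles on the reduced space $X^\red$ is determined by its restrictions to points. Throughout one works on each open and closed piece $X_n\subseteq X$ on which $\phi$ is smooth of constant relative dimension $n$, so that $T^*_{X/Y}$ becomes an honest rank $n$ vector bundle; the exact sequence \eq{dc2eq34} then makes sense as stated.

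The first step is to produce, for each $x\in X$, a critical chart $(R,U,f,i)$ on $(X,s)$ with $x\in R$, a critical chart $(S,V,g,j)$ on $(Y,t)$ with $\phi(R)\subseteq S$, and a \emph{smooth} morphism $\pi\colon U\ra V$ with $\pi\ci i=j\ci\phi|_R$ and $f=g\ci\pi$. Starting from any critical chart $(S,V,g,j)$ on $(Y,t)$ near $\phi(x)$, the standard local structure of smooth morphisms (\'etale-locally on the source in the algebraic case, locally in the analytic topology in the complex analytic case) provides an open $x\in R\subseteq\phi^{-1}(S)$ and an \'etale morphism, respectively an open biholomorphism, $\psi\colon R\ra S\t\bA^n$ over $S$ (shrinking $S,V$ if necessary). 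Since $(\bA^n,0)=\Crit(0\colon\bA^n\ra\bA^1)$ with critical chart $(\bA^n,\bA^n,0,\id)$ by Example \ref{dc2ex3}, Proposition \ref{dc2prop5} makes $(S\t\bA^n,t\boxplus 0)$ a d-critical locus with critical chart $(S\t\bA^n,V\t\bA^n,g\boxplus 0,j\t\id)$, and as $t\boxplus 0=\mathrm{pr}_S^\star(t)$ Proposition \ref{dc2prop1} gives $\psi^\star(t\boxplus 0)=(\phi|_R)^\star(t)=s|_R$. Because $\psi$ is \'etale, this chart pulls back along $\psi$: in the algebraic case, writing $V\t\bA^n$ locally as $\Spec C$ with $j(S)\t\bA^n=\Spec C/I$ and lifting a standard-\'etale presentation of the \'etale $C/I$-algebra of $R$ to one over $C$ yields a smooth $U$, \'etale over $V\t\bA^n$ via some $\Psi$, together with a closed embedding $i\colon R\hookra U$ satisfying $\Psi\ci i=(j\t\id)\ci\psi$ and $\Psi^{-1}(j(S)\t\bA^n)=i(R)$; in the analytic case one simply takes $U$ to be the relevant open subset of $V\t\C^n$. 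Using that $\Crit$ commutes with \'etale base change and the compatibility \eq{dc2eq11} of $\io_{R,U}$ with $\Psi^\sh$, the quadruple $(R,U,f,i)$ with $f:=(g\boxplus 0)\ci\Psi$ is then a critical chart on $(X,s)$, and $\pi:=\mathrm{pr}_V\ci\Psi$ is smooth with $\pi\ci i=j\ci\phi|_R$ and $f=g\ci\pi$; moreover the square with sides $i,\phi|_R,j,\pi$ is Cartesian by construction.

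With such a chart in hand, smoothness of $\pi$ gives $0\ra\pi^*(T^*V)\ra T^*U\ra T^*_{U/V}\ra 0$, hence $K_U\cong\pi^*(K_V)\ot\La^{\rm top}T^*_{U/V}$, while the Cartesian square gives $i^*(T^*_{U/V})\cong T^*_{X/Y}|_R$. Combined with $\pi\ci i=j\ci\phi|_R$ this produces a canonical isomorphism $i^*(K_U^{\ot 2})|_{R^\red}\cong\phi|_{R^\red}^*\bigl(j^*(K_V^{\ot 2})\bigr)\ot\bigl(\La^{\rm top}T^*_{X/Y}\bigr)^{\ot 2}|_{R^\red}$, and I would define $\Up_\phi$ over $R^\red$ as the composite $\io_{R,U,f,i}^{-1}\ci(\text{this isomorphism})\ci\bigl(\phi|_{R^\red}^*(\io_{S,V,g,j})\ot\id\bigr)$, an isomorphism of line bundles with the domain appearing in \eq{dc2eq35}.

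The main work is then the pointwise check that this local $\Up_\phi|_x$ makes \eq{dc2eq36} commute. Here I would invoke Theorem \ref{dc2thm3}(iv) for both charts, expressing $\ka_x$ via the four-term exact sequence \eq{dc2eq32} for $(U,f)$ at $i(x)$ and $\ka_{\phi(x)}$ via its analogue for $(V,g)$ at $j(\phi(x))$. Since $f=g\ci\pi$ and $\d g$ vanishes at $j(\phi(x))$, one has $\Hess_{i(x)}f=\d\pi_{i(x)}^*\ci\Hess_{j(\phi(x))}g\ci\d\pi_{i(x)}$; together with the split exact sequence $0\ra T_{U/V}|_{i(x)}\ra T_{i(x)}U\ra T_{j(\phi(x))}V\ra 0$ and its dual coming from $\pi$, this realizes \eq{dc2eq32} for $(U,f)$ as the ``direct sum'' of \eq{dc2eq32} for $(V,g)$ with the trivial two-term complex on the fibre directions of $\pi$. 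Taking top exterior powers along all these sequences and comparing with $\up_x$ (from \eq{dc2eq34} restricted to $x$) and with the isomorphism of the previous step then yields \eq{dc2eq36}; I expect this determinant-line bookkeeping to be the one genuinely delicate point, and everything else to be formal. Finally, since $X^\red$ is reduced and locally of finite type, a line-bundle morphism on $X^\red$ is determined by its values at points, so the local isomorphisms just constructed, all satisfying \eq{dc2eq36} pointwise, agree on overlaps and glue to a global isomorphism $\Up_\phi$, which is then automatically the unique morphism compatible with the $\ka_x$, completing the proof.
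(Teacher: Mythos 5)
Your proof is correct, and the core mechanism is the same as the paper's: both arguments pin down $\Up_\phi$ fibrewise via the isomorphisms $\ka_x$, $\ka_{\phi(x)}$ of Theorem~\ref{dc2thm3}(iii)--(iv) and the Hessian exact sequences \eq{dc2eq32}, and then glue the local models using that $X^\red$ is reduced, so a line-bundle map is determined by its restrictions to points. Where you differ from the paper is in how the compatible critical charts and the intermediate smooth morphism $U\ra V$ are produced. The paper invokes the proof of Proposition~\ref{dc2prop3} (in \S\ref{dc42}) to obtain closed embeddings $i:R\hookra U$, $j:S\hookra V$ of \emph{minimal} dimensions $\dim U=\dim T_{x_0}X$, $\dim V=\dim T_{y_0}Y$, together with a smooth $\Phi:U\ra V$ and $f=g\ci\Phi$, and then uses the second part of Proposition~\ref{dc2prop2} --- which is precisely where minimality is needed --- to promote both embeddings to critical charts. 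You instead start from an arbitrary critical chart $(S,V,g,j)$ on $(Y,t)$, use local triviality of the smooth $\phi$ to get an \'etale $\psi:R\ra S\t\bA^n$ over $S$, build the product critical chart $(S\t\bA^n,V\t\bA^n,g\boxplus 0,j\t\id)$ on $(S\t\bA^n,t\boxplus 0)$ via Example~\ref{dc2ex3} and Proposition~\ref{dc2prop5}, and pull it back along an \'etale lift $\Psi:U\ra V\t\bA^n$. Your route avoids both the minimal-dimension device and Proposition~\ref{dc2prop2}, and the Cartesianness of the resulting square hands you $i^*(T^*_{U/V})\cong T^*_{X/Y}\vert_R$ directly, whereas the paper derives the corresponding isomorphism $\al$ of \eq{dc6eq18} from a diagram chase using only that $\Phi$ and $\phi\vert_R$ are smooth of the same relative dimension (and separately observes the square is Cartesian near $i(R)$). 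After that the two proofs coincide: your definition of $\Up_\phi$ over $R^\red$ is \eq{dc6eq20}, and the ``determinant-line bookkeeping'' you flag as the one delicate point --- realizing \eq{dc2eq32} for $(U,f,i(x))$ as an extension of \eq{dc2eq32} for $(V,g,j(\phi(x)))$ by the $\pi$-fibre directions and taking top exterior powers --- is exactly the content of the paper's $3\t 3$ diagram \eq{dc6eq22} and the compatibility of $\ga_x$ with the Hessian forms, so your pointwise check and gluing match the paper's.
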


\subsection{Orientations on d-critical loci}
\label{dc25}

The next two definitions will be important in the sequels
\cite{Buss,BBBJ,BBDJS,BBJ,BJM}. For examples of results on oriented
d-critical loci, see Theorems \ref{dc1thm3} and \ref{dc1thm4} above.

\begin{dfn} Let $(X,s)$ be a d-critical locus (either complex
analytic or algebraic), and $K_{X,s}$ its canonical bundle from
Theorem \ref{dc2thm3}. An {\it orientation\/} on $(X,s)$ is a choice
of square root line bundle $K_{X,s}^{1/2}$ for $K_{X,s}$ on
$X^\red$. That is, an orientation is a (holomorphic or algebraic)
line bundle $L$ on $X^\red$, together with an isomorphism
$L^{\ot^2}=L\ot L\cong K_{X,s}$. A d-critical locus with an
orientation will be called an {\it oriented d-critical locus}.

An {\it oriented critical chart\/} on an oriented d-critical locus
is a critical chart $(R,U,f,i)$ on $(X,s)$ with an isomorphism
$\jmath_{R,U,f,i}:K_{X,s}^{1/2}\vert_{R^\red}\ra i^*(K_U)
\vert_{R^\red}$ satisfying $\jmath_{R,U,f,i}^2=\io_{R,U,f,i}$, for
$\io_{R,U,f,i}$ as in~\eq{dc2eq29}.
\label{dc2def5}
\end{dfn}

\begin{rem} In view of equation \eq{dc2eq31}, one might hope to
define a canonical orientation $K_{X,s}^{1/2}$ for a d-critical
locus $(X,s)$ by $K_{X,s}^{1/2}\big\vert_x=\La^{\rm top}T_x^*X$ for
$x\in X^\red$. However, {\it this does not work}, as the spaces
$\La^{\rm top}T_x^*X$ do not vary continuously with $x\in X^\red$ if
$X$ is not smooth. Example \ref{dc2ex8} shows that d-critical loci
need not admit orientations.
\label{dc2rem6}
\end{rem}

In the situation of Proposition \ref{dc2prop12}, the factor
$(\La^{\rm top}T^*_{X/Y})\vert_{X^\red}^{\ot^2}$ in \eq{dc2eq35} has
a natural square root $(\La^{\rm top}T^*_{X/Y})\vert_{X^\red}$. Thus
we deduce:

\begin{cor} Let\/ $\phi:(X,s)\ra(Y,t)$ be a morphism of
d-critical loci with\/ $\phi:X\ra Y$ smooth. Then each orientation
$K_{Y,t}^{1/2}$ for\/ $(Y,t)$ lifts to a natural orientation
$K_{X,s}^{1/2}=\phi\vert_{X^\red}^*(K_{Y,t}^{1/2})\ot(\La^{\rm
top}T^*_{X/Y}) \vert_{X^\red}$ for~$(X,s)$.
\label{dc2cor2}
\end{cor}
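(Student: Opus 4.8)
The plan is to deduce Corollary \ref{dc2cor2} almost immediately from Proposition \ref{dc2prop12}, just as the surrounding text already hints. Recall that Proposition \ref{dc2prop12} gives, for a smooth morphism $\phi:(X,s)\ra(Y,t)$ of d-critical loci, a canonical isomorphism of line bundles on $X^\red$
\[
\Up_\phi:\phi\vert_{X^\red}^*(K_{Y,t})\ot\bigl(\La^{\rm top}T^*_{X/Y}\bigr)\big\vert_{X^\red}^{\ot^2}\,{\buildrel\cong\over\longra}\,K_{X,s}.
\]
So the whole content is: given a square root $K_{Y,t}^{1/2}$ of $K_{Y,t}$ on $Y^\red$, produce a square root of $K_{X,s}$ on $X^\red$. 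First I would pull back the isomorphism $\bigl(K_{Y,t}^{1/2}\bigr)^{\ot^2}\cong K_{Y,t}$ along $\phi\vert_{X^\red}$, noting that $\phi$ restricts to a morphism $\phi\vert_{X^\red}:X^\red\ra Y^\red$ (since $\phi$ smooth sends the reduced subscheme into the reduced subscheme), obtaining $\bigl(\phi\vert_{X^\red}^*(K_{Y,t}^{1/2})\bigr)^{\ot^2}\cong\phi\vert_{X^\red}^*(K_{Y,t})$.

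Next I would set
\[
L:=\phi\vert_{X^\red}^*\bigl(K_{Y,t}^{1/2}\bigr)\ot\bigl(\La^{\rm top}T^*_{X/Y}\bigr)\big\vert_{X^\red},
\]
a well-defined line bundle on $X^\red$ because $T^*_{X/Y}$ is a vector bundle of mixed rank on $X$ in the exact sequence \eq{dc2eq34} and hence $\La^{\rm top}T^*_{X/Y}$ is a genuine line bundle (as discussed just before Proposition \ref{dc2prop12}). Then $L^{\ot^2}\cong\phi\vert_{X^\red}^*(K_{Y,t})\ot\bigl(\La^{\rm top}T^*_{X/Y}\bigr)\vert_{X^\red}^{\ot^2}$, and composing with $\Up_\phi$ gives the required isomorphism $L^{\ot^2}\cong K_{X,s}$. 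This exhibits $L$ as an orientation $K_{X,s}^{1/2}$ for $(X,s)$, proving the corollary; I would record that the construction is canonical because $\Up_\phi$ is.

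There is essentially no obstacle here: all the work is already done in Proposition \ref{dc2prop12}, and the only thing to check is the trivial algebraic identity that a tensor product of a square root with a line bundle squares to the tensor product of the original bundle with that line bundle squared. The one small point worth a sentence is the naturality/uniqueness claim — that $K_{X,s}^{1/2}$ so defined depends only on $K_{Y,t}^{1/2}$ and $\phi$, not on auxiliary choices — which follows because $\Up_\phi$ is canonical by Proposition \ref{dc2prop12} and pullback of line bundles and of square-root data along $\phi\vert_{X^\red}$ is functorial. If desired one could also remark that, via the compatibility diagram \eq{dc2eq36}, the fibrewise identification $\ka_x$ of $K_{X,s}\vert_x$ with $(\La^{\rm top}T_x^*X)^{\ot^2}$ matches the tensor product of $\ka_{\phi(x)}$ with the square of $\up_x$, so the new orientation is compatible with the given one fibrewise; but this is not needed for the statement as phrased.
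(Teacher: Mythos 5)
Your argument is exactly the one the paper uses: it deduces the corollary directly from the isomorphism $\Up_\phi$ of Proposition \ref{dc2prop12}, combined with the observation that $(\La^{\rm top}T^*_{X/Y})\vert_{X^\red}^{\ot^2}$ has the evident square root $(\La^{\rm top}T^*_{X/Y})\vert_{X^\red}$. The paper treats this as immediate and gives no separate proof, so your write-up is correct and in line with the intended reasoning.
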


We can express orientations in terms of {\it principal\/
$\Z_2$-bundles}.

\begin{dfn} Let $(X,s)$ be a d-critical locus. For each embedding
of critical charts $\Phi:(R,U,f,i)\hookra(S,V,g,j)$ on $(X,s)$,
define a principal $\Z_2$-bundle $\pi_\Phi:P_\Phi\ra R$ over $R$ to
be the bundle of square roots of the isomorphism $J_\Phi$ in
\eq{dc2eq27}. That is, local sections $s_\al:R\ra P_\Phi$ correspond
to local isomorphisms $\al:i^*(K_U)\vert_{R^\red}\ra
j^*(K_V)\vert_{R^\red}$ with $\al\ot\al=J_\Phi$. Note that
Proposition \ref{dc2prop11} implies that $P_\Phi$ is independent of
the choice of $\Phi$, that is, if $\Phi,\ti\Phi:(R,U,f,i)
\hookra(S,V,g,j)$ are embeddings then~$P_\Phi=P_{\smash{\ti\Phi}}$.

If $\Psi:(S,V,g,j)\hookra(T,W,h,k)$ is another embedding of critical
charts then \eq{dc2eq28} implies that there is a canonical
isomorphism
\begin{equation*}
\Xi_{\Psi,\Phi}:P_{\Psi\ci\Phi}\,{\buildrel\cong\over\longra}\,
P_\Psi\vert_R\ot_{\Z_2}P_\Phi,
\end{equation*}
such that if local isomorphisms $\al:i^*(K_U)\vert_{R^\red}\ra
j^*(K_V)\vert_{R^\red}$, $\be:j^*(K_V)\vert_{R^\red}\ab\ra
k^*(K_W)\vert_{R^\red}$, $\ga:i^*(K_U)\vert_{R^\red}\ra
k^*(K_W)\vert_{R^\red}$ with $\al\ot\al=J_\Phi$,
$\be\ot\be=J_\Psi\vert_{R^\red}$, $\ga\ot\ga=J_{\Psi\ci\Phi}$
correspond to local sections $s_\al:R\ra P_\Phi$, $s_\be:R\ra
P_\Psi\vert_R$, $s_\ga:R\ra P_{\Phi\ci\Phi}$, then
$\Xi_{\Psi,\Phi}(s_\ga)=s_\be\ot_{\Z_2}s_\al$ if and only if
$\ga=\be\ci\al$, where $\ga=\be\ci\al$ is possible by~\eq{dc2eq28}.

Now let $K_{X,s}^{1/2}$ be a choice of orientation on $(X,s)$, as in
Definition \ref{dc2def5}. For each critical chart $(R,U,f,i)$ on
$(X,s)$, define a principal $\Z_2$-bundle
$\pi_{R,U,f,i}:Q_{R,U,f,i}\ra R$ to be the bundle of square roots of
the isomorphism $\io_{R,U,f,i}$ in \eq{dc2eq29}. That is, local
sections $s_\be:R\ra Q_{R,U,f,i}$ correspond to local isomorphisms
$\be:K_{X,s}^{1/2}\vert_{R^\red}\ra i^*(K_U)\vert_{R^\red}$ with
$\be\ot\be=\io_{R,U,f,i}$.

Given an orientation $K_{X,s}^{1/2}$ and an embedding
$\Phi:(R,U,f,i)\hookra(S,V,g,j)$, we have principal $\Z_2$-bundles
$\pi_\Phi:P_\Phi\ra R$, $\pi_{R,U,f,i}:Q_{R,U,f,i}\ra R$ and
$\pi_{S,V,g,j}:Q_{S,V,g,j}\ra S$. Then there is a natural
isomorphism of principal $\Z_2$-bundles
\e
\La_\Phi:Q_{S,V,g,j}\vert_R\,{\buildrel\cong\over\longra}\,
P_\Phi\ot_{\Z_2} Q_{R,U,f,i}
\label{dc2eq37}
\e
on $R$, defined as follows: local isomorphisms
\begin{gather*}
\al:i^*(K_U)\vert_{R^\red}\longra j^*(K_V)\vert_{R^\red},\quad
\be:K_{X,s}^{1/2}\vert_{R^\red}\longra i^*(K_U)\vert_{R^\red},\\
\text{and\/}\qquad \ga:K_{X,s}^{1/2}\vert_{R^\red}\longra
j^*(K_V)\vert_{R^\red}
\end{gather*}
with $\al\ot\al=i\vert_{R^\red}^*(J_\Phi)$, $\be\ot\be=
\io_{R,U,f,i}$ and $\ga\ot\ga=\io_{S,V,g,j}\vert_{R^\red}$
correspond to local sections $s_\al:R\ra i^*(P_\Phi)$, $s_\be:R\ra
Q_{R,U,f,i}$ and $s_\ga:R\ra Q_{S,V,g,j}\vert_R$. Equation
\eq{dc2eq30} shows that $\ga=\al\ci\be$ is a possible solution for
$\ga$, and we define $\La_\Phi$ in \eq{dc2eq37} such that
$\La_\Phi(s_\ga)=s_\al\ot_{\Z_2}s_\be$ if and only if
$\ga=\al\ci\be$. Note that $\La_\Phi$ is independent of the choice
of $\Phi$, as $J_\Phi,P_\Phi$ are.

If $\Psi:(S,V,g,j)\hookra(T,W,h,k)$ is another embedding of critical
charts then it is easy to check that the following diagram commutes:
\e
\begin{gathered}
\xymatrix@C=170pt@R=15pt{ *+[r]{Q_{T,W,h,k}\vert_R}
\ar[r]_{\La_{\Psi\ci\Phi}} \ar[d]^{\La_\Psi\vert_R} &
*+[l]{P_{\Psi\ci\Phi}\ot_{\Z_2}Q_{R,U,f,i}}
\ar[d]_{\Xi_{\Psi,\Phi}\ot\id_{Q_{R,U,f,i}}} \\
*+[r]{\bigl(P_\Psi\ot_{\Z_2}Q_{S,V,g,j}\bigr)\big\vert_R}
\ar[r]^{\id_{P_\Psi\vert_R}\ot\La_\Phi} &
*+[l]{P_\Psi\vert_R\ot_{\Z_2}P_\Phi\ot_{\Z_2} Q_{R,U,f,i}.\!\!{}} }
\end{gathered}
\label{dc2eq38}
\e
\label{dc2def6}
\end{dfn}

\begin{prop} Let\/ $(X,s)$ be a d-critical locus. Then Definition\/
{\rm\ref{dc2def6}} induces an isomorphism between isomorphism
classes of orientations\/ $K_{X,s}^{1/2}$ on $(X,s),$ and
isomorphism classes of the following collections of data:
\begin{itemize}
\setlength{\itemsep}{0pt}
\setlength{\parsep}{0pt}
\item[{\bf(a)}] For each critical chart\/
$(R,U,f,i)$ on $(X,s),$ a choice of principal\/ $\Z_2$-bundle
$\pi_{R,U,f,i}:Q_{R,U,f,i}\ra R$ on $R,$ and
\item[{\bf(b)}] For each embedding of critical charts\/
$\Phi:(R,U,f,i)\hookra(S,V,g,j),$ a choice of isomorphism
$\La_\Phi:Q_{S,V,g,j}\vert_R\ra P_\Phi\ot_{\Z_2} Q_{R,U,f,i}$ as
in {\rm\eq{dc2eq37},}
\end{itemize}
such that\/ \eq{dc2eq38} commutes for all embeddings\/
$\Phi:(R,U,f,i)\hookra(S,V,g,j),$ $\Psi:(S,V,g,j)\hookra(T,W,h,k),$
where $P_\Phi,P_\Psi,P_{\Psi\ci\Phi},\Xi_{\Psi,\Phi}$ are as in the
first part of Definition\/~{\rm\ref{dc2def6}}.
\label{dc2prop13}
\end{prop}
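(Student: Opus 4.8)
The plan is to construct the two maps between isomorphism classes of orientations and isomorphism classes of collections $\bigl(\{Q_{R,U,f,i}\},\{\La_\Phi\}\bigr)$ satisfying \eq{dc2eq38}, and show they are mutually inverse. In one direction, Definition \ref{dc2def6} already does the work: given an orientation $K_{X,s}^{1/2}$, it produces for each critical chart the $\Z_2$-bundle $Q_{R,U,f,i}$ of square roots of $\io_{R,U,f,i}$, and for each embedding the isomorphism $\La_\Phi$ of \eq{dc2eq37}, and the compatibility \eq{dc2eq38} is asserted there; one should note that isomorphic orientations give isomorphic collections, so this descends to isomorphism classes. The nontrivial direction is the reverse: from an abstract collection $\bigl(\{Q_{R,U,f,i}\},\{\La_\Phi\}\bigr)$ we must reconstruct a line bundle $K_{X,s}^{1/2}$ on $X^\red$ with $\bigl(K_{X,s}^{1/2}\bigr)^{\ot^2}\cong K_{X,s}$.

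For this reconstruction, first I would choose, as in Remark \ref{dc2rem4}(iv), a family of critical charts $\{(R_a,U_a,f_a,i_a):a\in A\}$ with $\{R_a:a\in A\}$ an open cover of $X$. On each $R_a^\red$ define the candidate local model $L_a := Q_{R_a,U_a,f_a,i_a}\times_{\Z_2}i_a^*(K_{U_a})\vert_{R_a^\red}$, the line bundle associated to the principal $\Z_2$-bundle $Q_{R_a,U_a,f_a,i_a}$ via the $\Z_2$-action $-1:\ell\mapsto -\ell$ on $i_a^*(K_{U_a})$; by construction $L_a^{\ot^2}\cong i_a^*(K_{U_a}^{\ot^2})\vert_{R_a^\red}$ canonically, since $Q_{R_a,U_a,f_a,i_a}$ is a bundle of square roots of $\io_{R_a,U_a,f_a,i_a}$. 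Then on overlaps $R_a\cap R_b$ I would use Theorem \ref{dc2thm2} to pass to a common critical chart $(T,W,h,k)$ with embeddings $\Phi_a,\Phi_b$ of subcharts, and combine the isomorphisms $\La_{\Phi_a},\La_{\Phi_b}$ with $\io_{R_a,U_a,f_a,i_a}$, $\io_{T,W,h,k}$ to produce an isomorphism $L_a\vert_{R_a\cap R_b}\cong L_b\vert_{R_a\cap R_b}$ squaring to the canonical identification $\io_{S,V,g,j}\vert\ci\io_{R,U,f,i}^{-1}$ of the squares (which agrees with the gluing isomorphisms of $K_{X,s}$ from Theorem \ref{dc2thm3}). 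The cocycle condition for these gluing maps on triple overlaps $R_a\cap R_b\cap R_c$ is exactly what \eq{dc2eq38} guarantees, once one checks — using Proposition \ref{dc2prop11} and the independence of $P_\Phi,\La_\Phi$ of the choice of $\Phi$, together with Theorem \ref{dc2thm2} applied to triples of charts — that the gluing map is independent of all auxiliary choices of $(T,W,h,k)$, $\Phi_a$, $\Phi_b$. Hence the $L_a$ glue to a line bundle $K_{X,s}^{1/2}$ on $X^\red$, and its square is glued from the $i_a^*(K_{U_a}^{\ot^2})\vert_{R_a^\red}$ by precisely the transition maps of $K_{X,s}$ in Theorem \ref{dc2thm3}(i),(ii), giving the required isomorphism $\bigl(K_{X,s}^{1/2}\bigr)^{\ot^2}\cong K_{X,s}$.

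Finally I would verify the two constructions are mutually inverse. Starting from an orientation, forming the collection, and reconstructing gives back a line bundle canonically isomorphic to $K_{X,s}^{1/2}$ because each $L_a$ is canonically $Q_{R_a,U_a,f_a,i_a}\times_{\Z_2}i_a^*(K_{U_a})\vert_{R_a^\red}\cong K_{X,s}^{1/2}\vert_{R_a^\red}$ via the tautological section of $Q_{R_a,U_a,f_a,i_a}$ over any local isomorphism $K_{X,s}^{1/2}\vert_{R^\red}\to i_a^*(K_{U_a})\vert_{R^\red}$, and these local identifications are compatible on overlaps by the definition of $\La_\Phi$. Conversely, starting from a collection and forming the orientation $K_{X,s}^{1/2}$, the $\Z_2$-bundle of square roots of $\io_{R,U,f,i}$ for $K_{X,s}^{1/2}$ is canonically $Q_{R,U,f,i}$, and under this identification the recovered $\La_\Phi$ is the original one, again by unwinding \eq{dc2eq37}. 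Both checks are essentially bookkeeping with the $\Z_2$-torsor formalism.

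I expect the main obstacle to be the independence/well-definedness step in the reconstruction: showing the gluing isomorphism $L_a\vert\cong L_b\vert$ obtained via a third chart $(T,W,h,k)$ does not depend on the choice of $(T,W,h,k)$ or of the embeddings of subcharts into it. This requires comparing two such third charts by a fourth (another application of Theorem \ref{dc2thm2}), and propagating \eq{dc2eq38} and Proposition \ref{dc2prop11} through the resulting diagram of $\Z_2$-bundles — the same kind of diagram-chase that underlies the gluing procedure described in Remark \ref{dc2rem4}(ii)--(iv), now carried out one categorical level up for the square-root torsors rather than for the line bundles themselves.
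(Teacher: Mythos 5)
Your proposal is correct and follows essentially the same route as the paper's own (briefly sketched) proof: go forward via Definition~\ref{dc2def6}; go back by associating to each $Q_{R,U,f,i}$ a local square root $L_{R,U,f,i}$ of $K_{X,s}\vert_{R^\red}$, turning each $\La_\Phi$ into a transition isomorphism $i_\Phi:L_{R,U,f,i}\to L_{S,V,g,j}\vert_{R^\red}$, deducing the cocycle condition $i_{\Psi\ci\Phi}=i_\Psi\vert_{R^\red}\ci i_\Phi$ from \eq{dc2eq38}, and gluing via Theorem~\ref{dc2thm2} and the sheaf property of line bundles. The one small imprecision is that $L_a^{\ot^2}\cong i_a^*(K_{U_a}^{\ot^2})\vert_{R_a^\red}$ follows purely from the $\Z_2$-torsor formalism (triviality of $Q\ot_{\Z_2}Q$), not from $Q$ ``being a bundle of square roots'' --- at this point $Q$ is abstract data; composing with $\io_{R_a,U_a,f_a,i_a}^{-1}$ is then what exhibits $L_a$ as a square root of $K_{X,s}\vert_{R_a^\red}$.
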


The proof of Proposition \ref{dc2prop13} is straightforward.
Definition \ref{dc2def6} shows how to go from an orientation
$K_{X,s}^{1/2}$ to a collection of data $Q_{R,U,f,i},\La_\Phi$. For
the converse, given a collection of data $Q_{R,U,f,i},\La_\Phi$,
note that each $Q_{R,U,f,i}$ determines a square root $L_{R,U,f,i}$
for $K_{X,s}\vert_{R^\red}$ uniquely up to isomorphism for each
critical chart $(R,U,f,i)$, and for an embedding
$\Phi:(R,U,f,i)\hookra(S,V,g,j)$ the isomorphism $\La_\Phi$
determines an isomorphism $i_\Phi:L_{R,U,f,i}\ra
L_{S,V,g,j}\vert_{R^\red}$, and for $\Phi:(R,U,f,i)\hookra
(S,V,g,j)$, $\Psi:(S,V,g,j)\hookra(T,W,h,k)$, equation \eq{dc2eq38}
commuting implies that $i_{\Psi\ci\Phi}=i_\Psi\vert_{R^\red}\ci
i_\Phi$. By the sheaf property of line bundles, we can then show
there exists $\smash{K_{X,s}^{1/2}}$ unique up to canonical
isomorphism, with isomorphisms $K_{X,s}^{1/2}\vert_{R^\red}\cong
L_{R,U,f,i}$ for all $(R,U,f,i)$, which are compatible with $i_\Phi$
for all $\Phi$. We leave the details to the reader.

\begin{rem} Let $\Phi:(R,U,f,i)\hookra(S,V,g,j)$ be an embedding of
critical charts on a d-critical locus $(X,s)$. Define $N_{\sst UV},
q_{\sst UV}$ as in Proposition \ref{dc2prop10}, and
$\pi_\Phi:P_\Phi\ra R$ as in Definition \ref{dc2def6}. Then an
alternative interpretation of $P_\Phi$ is as {\it the principal\/
$\Z_2$-bundle of orientations of the nondegenerate quadratic form
$q_{\sst UV}$ on the vector bundle $i^*(N_{\sst UV})$ over\/} $R$.

Thus, Proposition \ref{dc2prop13} shows that an orientation
$\smash{K_{X,s}^{1/2}}$ on $(X,s)$ is equivalent to giving principal
$\Z_2$-bundles $Q_{R,U,f,i}\ra R$ for each chart $(R,U,f,i)$ on
$(X,s)$, such that $Q_{R,U,f,i}$ and $Q_{S,V,g,j}\vert_R$ differ by
the principal $\Z_2$-bundle of orientations of $q_{\sst UV}$ for
each embedding $\Phi:(R,U,f,i)\hookra (S,V,g,j)$. This is why we
chose the term {\it orientation\/} for $\smash{K_{X,s}^{1/2}}$. It
is closely relation to the notion of {\it orientation data\/} in
Kontsevich and Soibelman~\cite[\S 5]{KoSo1}.
\label{dc2rem7}
\end{rem}

Here are some examples of canonical bundles and orientations:

\begin{ex} Let $U$ be a complex manifold, $f:U\ra\C$ be holomorphic,
and $(X,s)$ be the complex analytic d-critical locus from Example
\ref{dc2ex1} with $X=\Crit(f)$. Then Theorem \ref{dc2thm3}(i) with
$(R,U,f,i)=(X,U,f,{\rm inc})$ implies that $K_{X,s}\cong
K_U^{\ot^2}\vert_{X^\red}$. Hence $K_{X,s}$ has a natural square
root $K_{X,s}^{1/2}=K_U\vert_{X^\red}$, and $(X,s)$ a natural
orientation. The analogue holds for algebraic critical loci.
\label{dc2ex6}
\end{ex}

\begin{ex} Let $X$ be a complex manifold, so that $(X,0)$ is a
d-critical locus as in Example \ref{dc2ex3}. Then Theorem
\ref{dc2thm3}(i) with $(R,U,f,i)=(X,X,0,\id_X)$ shows that
$K_{X,0}\cong K_X^{\ot^2}$, where $K_X$ is the usual canonical
bundle of $X$. Again, $(X,0)$ has a natural
orientation~$K_{X,0}^{1/2}=K_X$.

As we call $K_{X,0}$ the canonical bundle of $(X,0)$, one might have
expected $K_{X,0}\cong K_X$. The explanation is that as a derived
scheme, $\Crit(0:X\ra\C)$ is not $X$, but the shifted cotangent
bundle $T^*X[1]$, and the degree $-1$ fibres of the projection
$T^*X[1]\ra X$ include an extra factor of $K_X$ in~$K_{X,0}$.
\label{dc2ex7}
\end{ex}

\begin{ex} Let $X$ be the non-reduced projective $\C$-scheme
\begin{equation*}
\bigl\{[x,y,z]\in\CP^2:z^2=2yz=0\bigr\}.
\end{equation*}
The reduced $\C$-subscheme $X^\red\subset X\subset\CP^2$ is the
$\CP^1$ defined by $z=0$, and $X$ has only one non-reduced point
$[1,0,0]$, with $X\sm\{[1,0,0]\}\cong\C$ smooth. The open
neighbourhood $X\sm\{[0,1,0]\}$ of [1,0,0] in $X$ is isomorphic as a
classical $\C$-scheme to $\Crit(yz^2:\C^2\ra\C)$, where $(y,z)$ are
the coordinates on~$\C^2$.

Extend $X$ to an algebraic d-critical locus as follows: on
$X\sm\{[0,1,0]\}$, define $s$ as in Example \ref{dc2ex1} using
$X\sm\{[0,1,0]\}\cong\Crit(yz^2:\C^2\ra\C)$. But $\cSz_X\equiv 0$ on
$X\sm\{[1,0,0]\}$ by Example \ref{dc2ex3} as $X\sm\{[1,0,0]\}$ is
smooth, so $s$ extends uniquely by zero to all of $X$. Since $(X,s)$
is modelled on $\Crit(yz^2:\C^2\ra\C)$ on $X\sm\{[0,1,0]\}$ and on
$\Crit(0:\C\ra\C)$ on $X\sm\{[1,0,0]\}$, it is an algebraic
d-critical locus.

Theorem \ref{dc2thm3} defines a line bundle $K_{X,s}$ on
$X^\red\cong\CP^1$. Calculation shows that
$K_{X,s}\cong\O_{\CP^1}(-5)$. For the smooth algebraic d-critical
locus $(\CP^1,0)$ we have $K_{\CP^1,0}\cong
K_{\CP^1}^{\ot^2}\cong\O_{\CP^1}(-4)$ as in Example \ref{dc2ex7}, so
the effect of the nonreduced point $[1,0,0]$ in $X$ is to modify
$K_{X,s}$ from $\O_{\CP^1}(-4)$ to $\O_{\CP^1}(-5)$. Since $-5$ is
odd, $K_{X,s}$ admits no square root. Thus, $(X,s)$ {\it is an
example of a non-orientable algebraic d-critical locus}. We can also
consider $(X,s)$ as a complex analytic d-critical locus, where again
it is not orientable.
\label{dc2ex8}
\end{ex}

\subsection{Equivariant d-critical loci}
\label{dc26}

We now discuss group actions on algebraic d-critical loci.

\begin{dfn} Let $(X,s)$ be an algebraic d-critical locus over $\K$, and $\mu:G\t X\ra X$ an action of an algebraic $\K$-group $G$ on the $\K$-scheme $X$. We also write the action as $\mu(\ga):X\ra X$ for $\ga\in G$. We say that $(X,s)$ is $G$-{\it invariant\/} if $\mu(\ga)^\star(s)=s$ for all $\ga\in G$, or equivalently, if $\mu^\star(s)=\pi_X^\star(s)$ in $H^0(\cSz_{G\t X})$, where $\pi_X:G\t X\ra X$ is the projection.

Let $\chi:G\ra\bG_m$ be a morphism of algebraic $\K$-groups, that is, a character of $G$, where $\bG_m=\K\sm\{0\}$ is the multiplicative group. We say that $(X,s)$ is $G$-{\it equivariant, with character\/} $\chi,$ if $\mu(\ga)^\star(s)=\chi(\ga)\cdot s$ for all $\ga\in G$, or equivalently, if $\mu^\star(s)=(\chi\ci\pi_G)\cdot(\pi_X^\star(s))$ in $H^0(\cSz_{G\t X})$, where $H^0(\O_G)\ni\chi$ acts on $H^0(\cSz_{G\t X})$ by multiplication, as $G$ is a smooth $\K$-scheme.

Suppose $(X,s)$ is $G$-invariant or $G$-equivariant, with $\chi=1$ in the $G$-invariant case. We call a critical chart $(R,U,f,i)$ on $(X,s)$ with a $G$-action $\rho:G\t U\ra U$ a $G$-{\it equivariant critical chart\/} if $R\subseteq X$ is a $G$-invariant open subscheme, and $i:R\hookra U$, $f:U\ra\bA^1$ are equivariant with respect to the actions $\mu\vert_{G\t R},\rho,\chi$ of $G$ on $R,U,\bA^1$, respectively.

We call a subchart $(R',U',f',i')\subseteq(R,U,f,i)$ a $G$-{\it equivariant subchart\/} if $R'\subseteq R$ and $U'\subseteq U$ are $G$-invariant open subschemes. Then $(R',U',f',i'),\rho'$ is a $G$-equivariant critical chart, where~$\rho'=\rho\vert_{G\t U'}$.

Suppose $(R,U,f,i),\rho$ and $(S,V,g,j),\si$ are $G$-equivariant critical charts on $(X,s)$, and $\Phi:(R,U,f,i)\hookra(S,V,g,j)$ is an embedding. We call $\Phi$ {\it equivariant\/} if $\Phi:U\hookra V$ is equivariant with respect to the actions $\rho,\si$ of $G$ on~$U,V$.
\label{dc2def7}
\end{dfn}

When we have a $G$-equivariant d-critical locus $(X,s)$, we would like to be able to work only with $G$-equivariant critical charts and subcharts (so in particular, we would like $X$ to be covered by such charts) and $G$-equivariant embeddings. However, as Example \ref{dc2ex9} below shows, $X$ may not be covered by $G$-equivariant critical charts without extra assumptions on~$X,G$.

We will restrict to the case when $G$ is a torus, with a `good' action on~$X$:

\begin{dfn} Let $X$ be a $\K$-scheme, $G$ an algebraic $\K$-torus, and $\mu:G\t X\ra X$ an action of $G$ on $X$. We call $\mu$ a {\it good action\/} if $X$ admits a Zariski open cover by $G$-invariant affine open $\K$-subschemes $U\subseteq X$.
\label{dc2def8}
\end{dfn}

Sumihiro \cite[Cor.~2]{Sumi} proves that every torus action on a
{\it normal\/} $\K$-variety is good. Applying this to the reduced
$\K$-subscheme $X^\red$ of a $\K$-scheme $X$, and noting that open
$U\subseteq X$ is affine if and only if $U^\red\subseteq X^\red$ is
affine, yields:

\begin{lem} Suppose $X$ is a $\K$-scheme whose reduced\/
$\K$-subscheme $X^\red$ is normal. Then any action $\mu$ of an
algebraic $\K$-torus $G$ on $X$ is good.
\label{dc2lem}
\end{lem}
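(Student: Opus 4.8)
The plan is to reduce the statement to Sumihiro's theorem \cite[Cor.~2]{Sumi}, which treats the normal variety case, by transferring both the action and the desired cover between $X$ and its reduced subscheme $X^\red$. The first step is to check that $\mu:G\t X\ra X$ restricts to an action on $X^\red$. Since $G$ is a $\K$-torus it is smooth over $\K$, hence geometrically reduced, so $G\t X^\red$ is a reduced $\K$-scheme with the same underlying topological space as $G\t X$; that is, $G\t X^\red=(G\t X)^\red$. Therefore the composite $G\t X^\red\hookra G\t X\,{\buildrel\mu\over\longra}\,X$ factors uniquely through $X^\red\hookra X$, yielding $\mu^\red:G\t X^\red\ra X^\red$, and the group-action axioms transfer from $\mu$ to $\mu^\red$ as each is an equality of morphisms valued in $X$ which may be tested on $X^\red$. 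Thus $(X^\red,\mu^\red)$ is a normal $\K$-scheme equipped with a torus action.

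The second step applies Sumihiro. As $X^\red$ is normal, its connected components are irreducible, so $X^\red$ is a disjoint union of normal $\K$-varieties; and as $G$ is connected, $\mu^\red$ preserves each component. Applying \cite[Cor.~2]{Sumi} to the torus action on each component and taking the union of the resulting covers, we obtain a Zariski open cover $\{W_a:a\in A\}$ of $X^\red$ by $G$-invariant affine open $\K$-subschemes.

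The third step lifts this cover back up to $X$. Because $X$ and $X^\red$ share the same underlying topological space, each $W_a$ equals $U_a^\red$ for a unique Zariski open $U_a\subseteq X$, and $\{U_a:a\in A\}$ is an open cover of $X$. Each $U_a$ is $G$-invariant, since $G$-invariance of an open $U\subseteq X$ is the topological condition $\mu^{-1}(U)=G\t U$ inside $G\t X$, which holds for $U_a$ if and only if it holds for $W_a=U_a^\red$ inside $(G\t X)^\red$. Finally, each $U_a$ is affine, since $U_a^\red=W_a$ is affine and, as noted in the excerpt, an open subscheme of a $\K$-scheme (locally of finite type, hence Noetherian here) is affine precisely when its reduction is. Hence $\{U_a:a\in A\}$ is a Zariski open cover of $X$ by $G$-invariant affine open subschemes, and $\mu$ is a good action.

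I expect the only delicate point to be the first step — verifying that the action genuinely descends to $X^\red$, together with the companion observation that being $G$-invariant is a purely topological condition, so it passes freely between an open of $X$ and its reduction. The remaining ingredients, namely Sumihiro's theorem and the equivalence ``$U$ affine $\Leftrightarrow$ $U^\red$ affine'', are invoked as black boxes, and the component-by-component reduction for the normal case is routine.
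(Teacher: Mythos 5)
Your proof is correct and takes essentially the same route as the paper's brief argument: apply Sumihiro's theorem to $X^\red$ and lift the resulting $G$-invariant affine open cover back to $X$, using that $X$ and $X^\red$ share the same topological space and that an open $U\subseteq X$ is affine if and only if $U^\red$ is. You have simply spelled out two points the paper leaves implicit --- that the action descends to $X^\red$ (via $G\t X^\red=(G\t X)^\red$, since $G$ is smooth) and the reduction to connected components for Sumihiro --- which is fine.
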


A torus-equivariant d-critical locus $(X,s)$ admits an open cover by
equivariant critical charts if and only if the torus action is good:

\begin{prop} Let\/ $(X,s)$ be an algebraic d-critical locus which
is invariant or equivariant under the action $\mu:G\t X\ra X$ of an
algebraic torus $G$.
\smallskip

\noindent{\bf(a)} If\/ $\mu$ is good then for all\/ $x\in X$ there
exists a $G$-equivariant critical chart\/ $(R,U,f,i),\rho$ on
$(X,s)$ with\/ $x\in R,$ and we may take $\dim U=\dim T_xX$.
\smallskip

\noindent{\bf(b)} Conversely, if for all\/ $x\in X$ there exists a
$G$-equivariant critical chart\/ $(R,U,f,i),\rho$ on $(X,s)$ with\/
$x\in R,$ then $\mu$ is good.
\label{dc2prop14}
\end{prop}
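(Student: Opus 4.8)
The plan is to prove (a) first, then deduce (b) by a short dimension-count argument. For (a), suppose $\mu$ is good and fix $x \in X$. Choose a $G$-invariant affine open $\K$-subscheme $x \in \tilde R \subseteq X$, which exists by the goodness hypothesis. Since $\tilde R$ is affine, standard representation-theoretic facts for linearly reductive groups (a torus is linearly reductive) give a $G$-equivariant closed embedding $\tilde R \hookra W$ into a $\K$-vector space $W$ on which $G$ acts linearly: concretely, take the coordinate ring $\O(\tilde R)$, pick a finite-dimensional $G$-invariant subspace of generators (possible by local finiteness of the $G$-action on $\O(\tilde R)$), and let $W^*$ be its span, so $\tilde R \hookra W = \Spec \Sym(W^*)$ is $G$-equivariant. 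We may further cut down to the fibre directions at $x$: choosing a $G$-invariant linear subspace $W_0 \subseteq W$ complementary to the image of $T_x \tilde R$ (again by reductivity of $G$, decomposing $W$ into isotypic pieces), the composite $\tilde R \hookra W \to W/W_0 =: U_0$ is, after passing to a $G$-invariant Zariski open neighbourhood $R_0$ of $x$, a closed embedding with $\dim U_0 = \dim T_x X$ and $\d i_0|_x$ an isomorphism. This produces a $G$-equivariant closed embedding $i_0 : R_0 \hookra U_0$ with $U_0$ a smooth $\K$-scheme carrying a linear $G$-action and $\dim U_0 = \dim T_x X$.

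The next step is to upgrade $(R_0, U_0, i_0)$ to an actual critical chart while keeping $G$-equivariance of $f$. By Proposition \ref{dc2prop2} applied to the (non-equivariant) embedding $i_0 : R_0 \hookra U_0$ with $\dim U_0 = \dim T_x X$, there is a Zariski open $i_0(x) \in U_0' \subseteq U_0$ and a regular $f : U_0' \to \bA^1$ with $\io_{R_0',U_0'}(s|_{R_0'}) = i_0^{-1}(f) + I^2_{R_0',U_0'}$ and $\Crit(f) = i_0(R_0')$, where $R_0' = i_0^{-1}(U_0')$. The issue is that $f$ need not be $G$-invariant and $U_0'$ need not be $G$-invariant. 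To fix the domain, shrink $U_0'$ to the $G$-invariant open subset $U' := \{ u \in U_0' : \rho(\ga, u) \in U_0' \ \forall \ga \in G\}$ (this is open since $G$ is of finite type and the action is a morphism; it contains $i_0(x)$ since $i_0(x)$ is a $G$-fixed... more carefully, $i_0(x)$ lies in the $G$-invariant closed subscheme $i_0(R_0)$, so its $G$-orbit lies in $i_0(R_0') \subseteq U_0'$), and set $R' = i_0^{-1}(U')$, $f' = f|_{U'}$. To make the function equivariant, use that $(X,s)$ is $G$-equivariant with character $\chi$: for each $\ga \in G$ the quadruple $(R', U', \chi(\ga)^{-1}\cdot(f'\ci\rho(\ga)), i_0)$ is also a critical chart presenting $s|_{R'}$ — here I invoke Lemma \ref{dc2lem1} (scaling) together with the fact that $\rho(\ga) : U' \to U'$ intertwines $i_0$ with $\mu(\ga)\ci i_0$ and $\mu(\ga)^\star(s) = \chi(\ga)\cdot s$ — so by the uniqueness clause in Proposition \ref{dc2prop2} (the critical chart with $\dim U = \dim T_x X$ is determined near $i_0(x)$ by $s$), we get $\chi(\ga)^{-1}\cdot (f'\ci\rho(\ga)) = f'$ on a neighbourhood of $i_0(x)$ for every $\ga$, i.e. $f'$ is $\chi$-equivariant near $i_0(x)$. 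Finally shrink once more to a $G$-invariant open on which this equivariance holds identically (average/intersect $G$-translates as above). This yields the required $G$-equivariant critical chart $(R,U,f,i),\rho$ with $\dim U = \dim T_x X$.

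For part (b), the converse, suppose every $x \in X$ lies in some $G$-equivariant critical chart $(R,U,f,i),\rho$. Then $R \subseteq X$ is a $G$-invariant open subscheme and $i : R \hookra U$ is a $G$-equivariant closed embedding into a smooth $\K$-scheme with a $G$-action. To conclude $\mu$ is good we must produce a $G$-invariant \emph{affine} open cover of $X$. It suffices to cover each such $R$ by $G$-invariant affine opens; pulling back along the equivariant closed embedding $i$, it is enough to cover $i(R) \subseteq U$ by $G$-invariant affine opens of $U$ (the preimage under $i$ of a $G$-invariant affine open of $U$ is a $G$-invariant affine open of $R$, since closed subschemes of affines are affine and $i$ is equivariant). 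But $U$ is a smooth, hence normal, $\K$-variety (working component-wise), so Sumihiro's theorem \cite[Cor.~2]{Sumi} — already quoted in the excerpt — says the torus action of $G$ on $U$ is good, giving the desired $G$-invariant affine open cover of $U$, and thus of $R$, and thus of $X$.

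\textbf{Main obstacle.} The delicate point is the equivariance of the function $f$ in (a): Proposition \ref{dc2prop2} gives $f$ only up to the ambiguity of the construction, and a priori $G$ could act nontrivially on this ambiguity. The argument above resolves this by exploiting the \emph{uniqueness} statement in Proposition \ref{dc2prop2} in the minimal-dimension case $\dim U = \dim T_x X$ (so that $f$ is pinned down by $s$ near $x$) combined with the scaling Lemma \ref{dc2lem1} to absorb the character $\chi$ — after which one only needs the routine but slightly fussy manipulations of shrinking to $G$-invariant opens, for which one must check that the relevant "invariant interior" opens are genuinely open and contain the orbit of $i(x)$. A secondary technical point is the construction of the \emph{equivariant} embedding $\tilde R \hookra U_0$ with $\dim U_0 = \dim T_x X$: this uses linear reductivity of the torus $G$ to split off a $G$-invariant complement to $T_x\tilde R$ inside the ambient linear representation, which is standard but should be spelled out.
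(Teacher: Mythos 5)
Your part (b) is essentially the paper's argument (equivariant closed embedding into a normal $U$ plus Sumihiro's theorem), and that is fine. For part (a) there are two genuine gaps, the first being the more serious.

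\textbf{The ``uniqueness clause'' in Proposition \ref{dc2prop2} does not exist.} Proposition \ref{dc2prop2} says that once $\dim U=\dim T_xX$ and a function $f$ with $\io_{R,U}(s\vert_R)=i^{-1}(f)+I_{R,U}^2$ has been chosen, the quadruple can be shrunk to a critical chart using that same $f$. It does \emph{not} say $f$ is determined by $s$: the relation $\io_{R,U}(s\vert_R)=i^{-1}(f)+I_{R,U}^2$ only pins down $f$ modulo the ideal $I_{R,U}^2$, and two representatives differing by an element of $I_{R,U}^2$ generically both give critical charts after shrinking but are genuinely different functions. Hence your step ``$\chi(\ga)^{-1}\cdot(f'\ci\rho(\ga))=f'$ near $i_0(x)$'' does not follow; what follows is only a congruence modulo $I_{R,U}^2$, which is far weaker. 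The paper fixes this by \emph{averaging}: having arranged for $U''$ to be a $G$-invariant affine (via Sumihiro) and chosen any $f''$ with $(\io_{R'',U''})_*(s\vert_{R''})=f''+I_{R'',U''}^2$, it replaces $f''$ by its $\chi$-isotypic component in $H^0(\O_{U''})$ (using that a torus is linearly reductive and $U''$ is affine, so $H^0(\O_{U''})$ is a rational $G$-module). This projection preserves the defining relation because $I_{R'',U''}^2$ and $(\io_{R'',U''})_*(s\vert_{R''})$ are themselves $G$-equivariant with the right characters. You should replace the uniqueness argument by this Reynolds-operator averaging; affineness and $G$-invariance of $U''$ are needed to even state it, which is another reason the paper invokes Lemma \ref{dc2lem} there.

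\textbf{The ``invariant interior'' $U'=\{u\in U_0':\rho(\ga,u)\in U_0'\ \forall\ga\in G\}$ need not be open.} This is $\bigcap_{\ga\in G}\rho(\ga)^{-1}(U_0')$, an intersection of opens over the infinitely many points of $G$, and such an intersection is not open in general (take $G=\bG_m$ acting on $\bA^2$ by $t\cdot(a,b)=(ta,t^{-1}b)$ and $U_0'=\bA^2\setminus\{(1,1)\}$: the intersection removes the closed orbit $\{(t,t^{-1})\}$, a non-closed constructible set, so the complement is not open). The paper avoids this by (i) choosing a $G$-invariant \emph{affine} $U''$ up front via Sumihiro, and (ii) at the final shrinking step, observing that the \emph{largest} open $U\subseteq U''$ on which $U\cap i''(R'')=U\cap\Crit(f'')$ is automatically $G$-invariant because both $i''(R'')$ and $\Crit(f'')$ are $G$-invariant closed subschemes. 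You should use that structural observation rather than a generic ``invariant interior'' device.

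A lesser point on the construction of the ambient smooth scheme: you project $\ti R\hookra W\twoheadrightarrow W/W_0$, whereas the paper \emph{cuts}: it chooses $G$-equivariant generators $h_1,\ldots,h_N$ of the ideal of $i'(R')\subseteq\bA^M$, picks a minimal subset whose differentials at $i'(x)$ cut out $T_{i'(x)}i'(R')$, and takes $U'$ to be their common zero locus. The cutting approach yields a closed subscheme of $\bA^M$ containing $i'(R')$, so the embedding $i':R'\hookra U'$ stays a closed embedding for free; the projection approach requires separate justification that $\ti R\to W/W_0$ is still a closed embedding after shrinking (it is unramified at $x$, but that alone does not give a closed immersion without an injectivity argument, which may also force shrinking that must then be made $G$-equivariant). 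The cutting route is cleaner and you should prefer it, or supply the missing justification if you keep the projection.
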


We can also prove a torus-equivariant analogue of
Theorem~\ref{dc2thm2}:

\begin{prop} Let\/ $(X,s)$ be an algebraic d-critical locus
equivariant under an algebraic torus $G,$ and\/ $(R,U,f,i),\rho,$
$(S,V,g,j),\si$ be $G$-equivariant critical charts on $(X,s)$. Then
for each\/ $x\in R\cap S$ there exist\/ $G$-equivariant subcharts
$(R',U',f',i')\subseteq(R,U,f,i),$ $(S',V',g',j')\subseteq
(S,V,g,j)$ with\/ $x\in R'\cap S',$ a $G$-equivariant critical
chart\/ $(T,W,h,k),\tau$ on $(X,s),$ and\/ $G$-equivariant
embeddings $\Phi:(R',U',f',i')\hookra (T,W,h,k)$ and\/
$\Psi:(S',V',g',j')\hookra(T,W,h,k)$.
\label{dc2prop15}
\end{prop}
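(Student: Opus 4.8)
The plan is to run the proof of Theorem \ref{dc2thm2} `with $G$-equivariance bolted on', by carrying out each step of the non-equivariant construction inside $G$-invariant open sets and checking that all the morphisms built can be chosen $G$-equivariantly. First I would use Proposition \ref{dc2prop14}(a): since $(X,s)$ is equivariant under the torus $G$ and (as $G$ is a torus and one may check the action is good, e.g.\ by restricting to a $G$-invariant affine neighbourhood and applying Sumihiro/Lemma \ref{dc2lem}, or simply noting that the hypothesis of Proposition \ref{dc2prop14}(b) is vacuously checkable here since we are handed equivariant charts) there exists a $G$-equivariant critical chart $(T_0,W_0,h_0,k_0),\tau_0$ on $(X,s)$ with $x\in T_0$ and $\dim W_0=\dim T_xX$. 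The idea, exactly as in Theorem \ref{dc2thm2}, is that $(T,W,h,k)$ will be built from the three charts $(R,U,f,i)$, $(S,V,g,j)$, $(T_0,W_0,h_0,k_0)$ by forming a suitable fibre product / embedding into a product, and the equivariant version of Proposition \ref{dc2prop6} (which one gets by running that proof $G$-equivariantly, using that $\mathop{\rm char}\K\ne 2$ to diagonalize quadratic forms $G$-equivariantly after averaging is \emph{not} available, so instead one uses the linear-algebra normal form on the $G$-representation $T_xW$ directly) lets us assume $W_0\subseteq \bA^n$ with a linear $G$-action.

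Next I would construct the $G$-equivariant embeddings. Shrinking $R,S$ to $G$-invariant open subsets $R',S'$ containing $x$ (the $G$-orbit of $x$ is contained in any chart domain already, and $G$-invariant opens are cofinal among opens containing a $G$-invariant set, by taking $\bigcap_{\ga\in G}\mu(\ga)(R)$ or rather the interior of the complement of $G\cdot(X\sm R)$; here one uses that $X$ is Noetherian so this is a finite intersection up to replacing $G$ by a finite set of test points — more honestly, one uses that $\mu$ is good and works in a $G$-invariant affine open), I want $G$-equivariant maps $U'\hookra W$ and $V'\hookra W$ over the identity on $R',S'$ and intertwining $f,g$ with $h$. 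In the non-equivariant proof these come from choosing local coordinates / extending the closed embeddings $i,j$; to make them equivariant I would instead invoke the equivariant analogue of Proposition \ref{dc2prop2} (equivariant critical charts from equivariant embeddings), which follows by the same Luna-slice-type / equivariant-extension argument that underlies Proposition \ref{dc2prop14}(a), choosing a $G$-equivariant closed embedding of a $G$-invariant neighbourhood of $x$ into a $G$-representation. Then the chart $(T,W,h,k)$ is assembled as the appropriate $G$-invariant open subset of $W_0\t\bA^m$ with the diagonal $G$-action and $h=h_0\boxplus(\text{sum of squares})$, using Proposition \ref{dc2prop5} ($G$-equivariantly) to see it is a critical chart on $(X,s)$; and $\Phi,\Psi$ are the composites of the equivariant embeddings just built with the equivariant inclusions into this product.

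The main obstacle I expect is precisely the point where the non-equivariant argument uses `choose coordinates near $i(x)$' or `extend the embedding $i$' — these choices must be made $G$-equivariantly, and for a general algebraic group this would fail, but for a \emph{torus} $G$ acting with $x$ fixed (or with $G\cdot x$ in the chart) one can linearize: the cotangent space $T_{i(x)}^*U$ is a $G$-representation, $G$-reps of a torus split as sums of characters, and one can choose a $G$-equivariant splitting and hence a $G$-equivariant system of `coordinates'. The key inputs making this work are: (i) $G$ is a torus, so all the relevant actions linearize locally and the quadratic forms appearing in Propositions \ref{dc2prop7}--\ref{dc2prop9} can be put in $G$-equivariant normal form (sum of $q_a z_a^2$ with each $z_a$ a weight vector) using $\mathop{\rm char}\K\ne 2$; (ii) the action is good, so we may work throughout in $G$-invariant affine opens, where $G$-invariant opens containing $x$ are cofinal and where averaging over the (here unnecessary, but harmless) reductive group is replaced by the weight decomposition; (iii) Propositions \ref{dc2prop2}, \ref{dc2prop5}, \ref{dc2prop6}, \ref{dc2prop14} all have straightforward $G$-equivariant refinements obtained by inserting these observations into their proofs. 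Granting these, the combinatorics of fitting $(R,U,f,i),(S,V,g,j)$ into a common $(T,W,h,k)$ is identical to the proof of Theorem \ref{dc2thm2}, and $G$-equivariance is preserved at every step.
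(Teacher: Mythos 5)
Your high-level plan --- run the proofs of Proposition \ref{dc2prop6} (\S\ref{dc51}) and Theorem \ref{dc2thm2} (\S\ref{dc52}) with $G$-equivariance inserted, working in $G$-invariant opens (available via Lemma \ref{dc2lem} because $U,V$ are smooth hence normal) and choosing all auxiliary data $G$-equivariantly using the weight decomposition for the torus --- is indeed the paper's strategy in \S\ref{dc72}. But there is a genuine gap at the key technical step, and you flag the danger but then propose a fix that does not work.

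The issue is the term $\sum_{a=m+1}^n z_a^2$ in the definition of $g$ in equation \eq{dc5eq1}. If $z_a$ is a weight vector of weight $\ka_a:G\ra\bG_m$, then $z_a^2$ has weight $\ka_a^2$, and in general $\ka_a^2\ne\chi$, so this term is simply not $G$-equivariant with character $\chi$ no matter how one diagonalizes. Your proposed remedy --- ``put the quadratic form in $G$-equivariant normal form, a sum of $q_a z_a^2$ with each $z_a$ a weight vector'' --- fails for the same reason: weights still square, and $\ka_a^2=\chi$ is a genuinely restrictive condition that will fail for generic weights. (A $\chi$-equivariant quadratic form on a torus representation pairs the $\ka$-weight space with the $\chi\ka^{-1}$-weight space; when these are distinct you get hyperbolic planes, not squares.) The paper's actual fix is to abandon squares altogether: replace $V$ by $V\t\bA^{n-m}$ with new coordinates $w_{m+1},\ldots,w_n$, give $w_a$ the complementary weight $\ka_a^{-1}\chi$, and replace each $z_a^2$ by the product $z_a w_a$, which then has weight exactly $\chi$ --- see equation \eq{dc7eq1}. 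Correspondingly $\Phi$ is replaced by $\Phi\t 0$. You have not supplied this idea, and without it the equivariant version of Proposition \ref{dc2prop6} --- the crux --- does not go through.

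A smaller point: you assert that averaging is not available. It is. $G$ is a torus, so projection onto the $\chi$-isotypic component of any regular function (valid in any characteristic) does the job; the paper uses exactly this to choose $h:V\ra\bA^1$ of weight $\chi$. Also, your opening appeal to Proposition \ref{dc2prop14}(a) to first produce a minimal-dimensional equivariant chart is not needed and is not how the paper proceeds; the argument goes directly through the equivariant versions of Proposition \ref{dc2prop6} and the construction in \S\ref{dc52}, where the rôle of the hyperbolic form $\sum r_a s_a$ in \eq{dc5eq6}--\eq{dc5eq7} is again to assign $r_a$ weight $\la_a$ and $s_a$ the complementary weight $\la_a^{-1}\chi$.
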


Suppose now that $(X,s)$ is an algebraic d-critical locus invariant
under a good action $\mu$ of an algebraic torus $G$. Write $X^G$ for
the $G$-fixed subscheme of $X$, so that $X^G$ is a closed
$\K$-subscheme of $X$ with inclusion $\io:X^G\hookra X$. Set
$s^G=\io^\star(s)\in H^0(\cSz_{\smash{X^G}})$, for $\io^\star$ as in
Proposition \ref{dc2prop1}. Let $(R,U,f,i),\rho$ be a
$G$-equivariant critical chart on $X$. Write $R^G,U^G$ for the
$G$-fixed subschemes of $R,U$, and $f^G=f\vert_{U^G}$,
$i^G=i\vert_{R^G}$. It is easy to see that $(R^G,U^G,f^G,i^G)$ is a
critical chart on $(X^G,s^G)$. Since we can cover $X$ by such
$(R,U,f,i),\rho$ by Proposition \ref{dc2prop14}, we can cover $X^G$
by such $(R^G,U^G,f^G,i^G)$. This proves:

\begin{cor} Suppose $(X,s)$ is an algebraic d-critical locus
invariant under a good action $\mu$ of an algebraic torus $G$. Write
$X^G$ for the $G$-fixed subscheme of\/ $X,$ with inclusion
$\io:X^G\hookra X,$ and\/ $s^G=\io^\star(s)\in
H^0(\cSz_{\smash{X^G}})$. Then $(X^G,s^G)$ is an algebraic
d-critical locus.
\label{dc2cor3}
\end{cor}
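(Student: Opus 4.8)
The plan is to exhibit $(X^G,s^G)$ as covered by critical charts obtained by taking $G$-fixed points in the $G$-equivariant critical charts on $(X,s)$. Fix $x\in X^G\subseteq X$. Since $\mu$ is good, Proposition \ref{dc2prop14}(a) gives a $G$-equivariant critical chart $(R,U,f,i),\rho$ on $(X,s)$ with $x\in R$. Let $U^G\subseteq U$ and $R^G\subseteq R$ be the $G$-fixed subschemes, and set $f^G=f\vert_{U^G}$, $i^G=i\vert_{R^G}$. Note $R$ is $G$-invariant open in $X$, so $R^G=R\cap X^G$ is open in $X^G$ and $x\in R^G$; as the $R$ cover $X$, these $R^G$ cover $X^G$. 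I claim $(R^G,U^G,f^G,i^G)$ is a critical chart on $(X^G,s^G)$; granting this for every such $\rho$-equivariant chart, the result follows from Definition \ref{dc2def1}.

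The claim has four ingredients. First, $U^G$ is a smooth $\K$-scheme: this is the standard fact that the fixed-point scheme of a diagonalizable group acting on a smooth scheme is smooth; concretely, since $G$ is diagonalizable and acts trivially on $U^G$, both $TU\vert_{U^G}$ and $T^*U\vert_{U^G}$ split $\O_{U^G}$-linearly into weight summands, and the weight-$0$ summand of $T^*U\vert_{U^G}$ is $T^*U^G$. Second, $i^G\colon R^G\hookra U^G$ is a closed embedding: taking $G$-fixed points commutes with fibre products, so $R^G=R\t_U U^G$ (via $i$), and $i^G$ is a base change of the closed embedding $i$. Third, the critical-chart equation $\io_{R^G,U^G}(s^G\vert_{R^G})=(i^G)^{-1}(f^G)+I^2_{R^G,U^G}$ holds: apply Proposition \ref{dc2prop1} to the morphism $\io\colon X^G\hookra X$, with opens $R^G\subseteq X^G$ and $R\subseteq X$, closed embeddings $i^G\colon R^G\hookra U^G$ and $i\colon R\hookra U$, and $\Phi$ the closed embedding $\ka\colon U^G\hookra U$; the compatibility $\ka\ci i^G=i\ci\io\vert_{R^G}$ holds because $i^G=i\vert_{R^G}$. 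Chasing $s\vert_R$, which satisfies $\io_{R,U}(s\vert_R)=i^{-1}(f)+I^2_{R,U}$, around the commuting square \eq{dc2eq11} and using $s^G=\io^\star(s)$ and $\ka^\sh(f)=f\vert_{U^G}=f^G$ yields the identity; that $s^G\in H^0(\cSz_{X^G})$ is also part of Proposition \ref{dc2prop1}.

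The fourth and main ingredient is the \emph{scheme-theoretic} identity $i^G(R^G)=\Crit(f^G)$ inside $U^G$. Since $i(R)=\Crit(f)$, by the second point it suffices to prove $\Crit(f)\t_U U^G=\Crit(f\vert_{U^G})$ as closed subschemes of $U^G$. The ideal of $\Crit(f)$ in $\O_U$ is the image of $TU\xrightarrow{\langle\d f,-\rangle}\O_U$, so the ideal of $\Crit(f)\t_U U^G$ in $\O_{U^G}$ is the image of $TU\vert_{U^G}\xrightarrow{\langle\d f\vert_{U^G},-\rangle}\O_{U^G}$. Now $TU\vert_{U^G}$ and $T^*U\vert_{U^G}$ decompose into weight spaces over the diagonalizable $G$, the weight-$0$ parts being $TU^G$ and $T^*U^G$; the pairing kills (weight-$0$ form)$\ot$(nonzero-weight vector); and $\d f$ is $G$-invariant, since $f$ is (the invariant case having $\chi=1$), so $\d f\vert_{U^G}$ lies in the weight-$0$ summand $T^*U^G$, where it equals $\d(f\vert_{U^G})=\d f^G$. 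Hence $\langle\d f\vert_{U^G},-\rangle$ annihilates the nonzero-weight part of $TU\vert_{U^G}$ and restricts to $\langle\d f^G,-\rangle$ on $TU^G$, so its image is exactly the ideal of $\Crit(f^G)$, proving the identity. I expect this step to be the main obstacle: one must argue at the level of schemes rather than points, which is precisely where the weight decomposition (hence diagonalizability of the torus), the invariance of $f$, and the smoothness of $U^G$ (through $(T^*U\vert_{U^G})_0=T^*U^G$) all enter essentially.
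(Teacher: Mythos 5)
Your proof takes essentially the same approach as the paper: cover $X$ by $G$-equivariant critical charts $(R,U,f,i),\rho$ via Proposition \ref{dc2prop14}(a), pass to $G$-fixed subschemes $(R^G,U^G,f^G,i^G)$, and verify these are critical charts on $(X^G,s^G)$. The paper leaves the last verification as ``easy to see''; your four ingredients (smoothness of $U^G$, closedness of $i^G$, the chart equation via Proposition \ref{dc2prop1}, and the scheme-theoretic identity $\Crit(f)\cap U^G=\Crit(f^G)$ via the weight decomposition of $TU\vert_{U^G}$) are a correct and complete account of what that verification requires.
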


Maulik \cite{Maul} will use the last three results when $G=\bG_m$ to
prove a torus localization formula for the motives $MF_{X,s}$
associated to oriented algebraic d-critical loci $(X,s)$ by Bussi,
Joyce and Meinhardt \cite{BJM}, as in Theorem \ref{dc1thm4}, writing
$\pi_*(MF_{X,s})$ in terms of $\pi_*(MF_{\smash{X^{\bG_m},
s^{\bG_m}}})$ for~$\pi:X,X^{\bG_m}\ra \Spec\K$.

Here is an example of a non-good torus action on a d-critical locus:

\begin{ex} Let $\bA^2$ have coordinates $(x,y)$, and define
$f:\bA^2\ra\bA^1$ by $f(x,y)=x^2y^2$. Write $(X,s)$ for the
corresponding affine d-critical locus with $X=\Crit(f)$. It is the
union of the $x$- and $y$-axes in $\bA^2$, with a non-reduced point
at $(0,0)$. Let $G=\bG_m$ act on $\bA^2$ by $u:(x,y)\mapsto
(ux,u^{-1}y)$. Then $f$ is $\bG_m$-invariant, so $(X,s)$ is also
$\bG_m$-invariant.

Define an \'etale equivalence relation $\sim$ on $X$ by
$(x,0)\sim(0,x^{-1})$ for $0\ne x\in\bA^1$, and let $\ti
X=X/\mathbin{\sim}$ be the quotient $\K$-scheme. As $\sim$ is
$\bG_m$-equivariant and preserves $s$, the $\bG_m$-action and
d-critical structure $s$ on $X$ both descend to $\ti X$, so $(\ti
X,\ti s)$ is a $\bG_m$-invariant d-critical locus.

Now $\ti X$ is a projective scheme (it can be embedded in
$\mathbb{KP}^2$, with reduced subscheme the nodal cubic
$u^2w=v^2w+v^3$ in homogeneous coordinates $[u,v,w]$ on
$\mathbb{KP}^2$), but it is not affine. The $\bG_m$-action on $\ti
X$ has only two orbits, $(0,0)$ and $\ti X\sm\{(0,0)\}$. Thus, the
only $\bG_m$-invariant open neighbourhood of $(0,0)$ in $\ti X$ is
$\ti X$ itself, which is not affine, so the $\bG_m$-action on $\ti
X$ is not good.

Proposition \ref{dc2prop14}(b) shows that there does not exist a
$\bG_m$-equivariant critical chart $(R,U,f,i),\rho$
on $(\ti X,\ti s)$ with $(0,0)\in R$.
\label{dc2ex9}
\end{ex}

\begin{rem} For actions of {\it reductive\/} groups, we can prove
the following weaker analogues of Propositions \ref{dc2prop14} and
\ref{dc2prop15} by similar methods:
\begin{itemize}
\setlength{\itemsep}{0pt}
\setlength{\parsep}{0pt}
\item[{\bf(i)}] Let $(X,s)$ be an algebraic d-critical locus
which is invariant or equivariant under the action $\mu:T\t X\ra
X$ of a reductive algebraic $\K$-group $G$. Suppose $x\in X$ is
a fixed point of $G$, and there exists a $G$-invariant affine
open neighbourhood of $x$ in $X$ (this is automatic if $X^\red$
is normal). Then there exists a $G$-equivariant critical chart
$(R,U,f,i),\rho$ on $(X,s)$ with $x\in R,$ and we may take $\dim
U=\dim T_xX$.

\item[{\bf(ii)}] Let $(X,s)$ be an algebraic d-critical locus
equivariant under a reductive $\K$-group $G,$ and $x\in X$ be a
fixed point of $G$, and $(R,U,f,i),\rho,$ $(S,V,g,j),\si$ be
$G$-equivariant critical charts on $(X,s)$ with $x\in R\cap S$.
Then there exist $G$-equivariant subcharts
$(R',U',f',i')\subseteq(R,U,f,i),$ $(S',V',g',j')\subseteq
(S,V,g,j)$ with $x\in R'\cap S',$ a $G$-equivariant critical
chart $(T,W,h,k),\tau$ on $(X,s),$ and $G$-equivariant
embeddings $\Phi:(R',U',f',i')\hookra (T,W,h,k)$ and
$\Psi:(S',V',g',j')\hookra(T,W,h,k)$.
\end{itemize}
We make no claims about points $x\in X$ not fixed by~$G$.
\label{dc2rem8}
\end{rem}

\subsection[Background material on Artin stacks, and sheaves upon
them]{Background material on sheaves on Artin stacks}
\label{dc27}

Section \ref{dc28} will extend \S\ref{dc21}--\S\ref{dc25} from
$\K$-schemes to Artin $\K$-stacks. As a preliminary, to establish
notation, we discuss Artin stacks and sheaves upon them.

{\it Artin stacks\/} are a class of geometric spaces, generalizing
schemes and algebraic spaces. For a good introduction to Artin
stacks see G\'omez \cite{Gome}, and for a thorough treatment see
Laumon and Moret-Bailly \cite{LaMo}. Artin stacks over a field $\K$
form a 2-{\it category\/} $\Art_\K$, with objects the Artin stacks
$X,Y,\ldots,$ 1-morphisms $f,g:X\ra Y$, and 2-morphisms $\eta:f\Ra
g$, which are all 2-isomorphisms.

There is a natural full and faithful strict (2-)functor
$F_\Sch^\Art:\Sch_\K\ra\Art_\K$ from the category $\Sch_\K$ of
$\K$-schemes (regarded as a 2-category with only identity
2-morphisms) to the 2-category $\Art_\K$ of Artin $\K$-stacks. By a
common abuse of notation, we will identify $\Sch_\K$ with its image
in $\Art_\K$, and consider schemes as special examples of Artin
stacks. By definition, every Artin $\K$-stack $X$ admits a {\it
smooth atlas}, which is a smooth, surjective 1-morphism $t:T\ra X$
in $\Art_\K$, for some $\K$-scheme~$T$.

Let $X$ be a $\K$-scheme, and $K\supseteq\K$ a field containing
$\K$. A $K$-{\it point\/} $x$ of $X$ is a morphism $x:\Spec K\ra X$
in $\Sch_\K$, and a {\it point\/} $x$ of $X$, written $x\in X$, is a
$K$-point for any $K$. Similarly, if $X$ is an Artin $\K$-stack, a
$K$-{\it point\/} of $X$ is a 1-morphism $x:\Spec K\ra X$ in
$\Art_\K$. Two $K$-points $x,x'$ are {\it equivalent}, written
$x\cong x'$, if there is a 2-isomorphism $\th:x\Ra x'$. A {\it
point\/} $x$ of $X$, written $x\in X$, is a $K$-point for any $K$.

If $x$ is a $K$-point in $X$, the {\it isotropy group\/} or {\it
stabilizer group\/} $\Iso_x(X)$ is the group of 2-isomorphisms
$\th:x\Ra x$. It has the structure of an algebraic $K$-group, and we
write $\fIso_x(X)$ for its Lie algebra, a $K$-vector space. The {\it
Zariski cotangent space\/} $T_x^*X$ of $X$ at $x$ is also a
$K$-vector space; we have $h^0(\bL_X)\vert_x\cong T_x^*X$ and
$h^1(\bL_X)\vert_x\cong\fIso_x(X)^*$, where $\bL_X$ is the {\it
cotangent complex\/} of $X$, as in Remark \ref{dc2rem10} below.

Laumon and Moret-Bailly \cite[\S\S 12, 13, 15, 18]{LaMo} develop a
theory of {\it sheaves on Artin stacks}, including {\it
quasi-coherent}, {\it coherent}, and {\it constructible\/} sheaves,
and their derived categories. Unfortunately, Laumon and Moret-Bailly
wrongly assume that 1-morphisms of algebraic stacks induce morphisms
of lisse-\'etale topoi, so parts of their theory concerning
pullbacks, etc., are unsatisfactory. Olsson \cite{Olss} rewrites the
theory, correcting this mistake. Laszlo and Olsson
\cite{LaOl1,LaOl2} study derived categories of constructible
sheaves, and perverse sheaves, on Artin stacks, in more detail. All
of \cite{LaMo,LaOl1,LaOl2,Olss} work with sheaves on Artin stacks in
the {\it lisse-\'etale topology}, which we now define.

\begin{dfn} Recall that a {\it site\/} is a category with a
Grothendieck topology, as in Artin \cite{Arti}. Let $X$ be an Artin
$\K$-stack. Define  the {\it lisse-\'etale site\/} $\Le(X)$ of $X$
as follows. The category of $\Le(X)$ has objects pairs $(T,t)$,
where $T$ is a $\K$-scheme and $t:T\ra X$ a smooth 1-morphism in
$\Art_\K$, and morphisms $(\phi,\eta):(T,t)\ra(U,u)$, for $\phi:T\ra
U$ a morphism in $\Sch_\K$ and $\eta:t\Ra u\ci\phi$ a 2-morphism in
$\Art_\K$. Composition of morphisms
$(T,t)\,{\buildrel(\phi,\eta)\over\longra}
\,(U,u)\,{\buildrel(\psi,\ze)\over\longra}\,(V,v)$~is
\begin{equation*}
(\psi,\ze)\ci(\phi,\eta):=\bigl(\psi\ci\phi,(\ze*\id_{
\phi})\od\eta\bigr).
\end{equation*}
Identity morphisms are $\id_{(T,t)}=(\id_T,\id_t)$. The coverings of
an object $(T,t)$ in the Grothendieck topology on $\Le(X)$ are those
collections of morphisms $\bigl\{(\phi_i,\eta_i):
(T_i,t_i)\ra(T,t)\bigr\}{}_{i\in I}$ for which $\bigl\{\phi_i:T_i\ra
T\bigr\}{}_{i\in I}$ is an open cover of $T$ in the \'etale topology
on $\Sch_\K$.
\label{dc2def9}
\end{dfn}

Definition \ref{dc2def9} differs from Laumon and Moret-Bailly
\cite[Def.~12.1]{LaMo} in taking objects $(T,t)$ with $T$ a
$\K$-scheme rather than an algebraic $\K$-space. But as in
\cite[Lem.~12.1.2(i)]{LaMo}, the two definitions yield the same
notion of sheaf on~$X$.

We can now define {\it sheaves\/} (of sets, or $\K$-vector spaces,
or $\K$-algebras, or \ldots) {\it on\/} $X$ to be sheaves on the
site $\Le(X)$, using the notion of {\it sheaves on a site\/} from
Artin \cite{Arti}. The {\it structure sheaf\/} $\O_X$ is a sheaf of
$\K$-algebras on $\Le(X)$, and by considering sheaves of
$\O_X$-modules on $\Le(X)$ we can define {\it quasi-coherent\/} and
{\it coherent sheaves\/} on $X$, as in \cite[\S 13]{LaMo}
and~\cite[\S 6]{Olss}.

Laumon and Moret-Bailly \cite[Lem.~12.2.1]{LaMo} give an
alternative, explicit description of the categories of sheaves on an
Artin $\K$-stack $X$. In \S\ref{dc28} we will use the category
$\Sh(X)$ in Proposition \ref{dc2prop16} as our definition of sheaves
on~$X$.

\begin{prop}[Laumon and Moret-Bailly \cite{LaMo}] Let\/ $X$ be an
Artin\/ $\K$-stack. The category of sheaves of sets on $X$ in the
lisse-\'etale topology is equivalent to the category $\Sh(X)$
defined as follows:
\smallskip

\noindent{\bf(A)} Objects $\cA$ of\/ $\Sh(X)$ comprise the following
data:
\begin{itemize}
\setlength{\itemsep}{0pt}
\setlength{\parsep}{0pt}
\item[{\bf(a)}] For each\/ $\K$-scheme $T$ and smooth\/ $1$-morphism
$t:T\ra X$ in $\Art_\K,$ we are given a sheaf of sets $\cA(T,t)$
on $T,$ in the \'etale topology.
\item[{\bf(b)}] For each\/ $2$-commutative diagram in
$\Art_\K\!:$
\e
\begin{gathered}
\xymatrix@C=50pt@R=1pt{ & U \ar[ddr]^u \\
\rrtwocell_{}\omit^{}\omit{^{\eta}} && \\
T  \ar[uur]^{\phi} \ar[rr]_t && X, }
\end{gathered}
\label{dc2eq39}
\e
where $T,U$ are schemes and\/ $t: T\ra X,$ $u:U\ra X$ are
smooth\/ $1$-morphisms in $\Art_\K,$ we are given a morphism
$\cA(\phi,\eta):\phi^{-1} (\cA(U,u)) \ra\cA(T,t)$ of \'etale
sheaves of sets on $T$.
\end{itemize}
This data must satisfy the following conditions:
\begin{itemize}
\setlength{\itemsep}{0pt}
\setlength{\parsep}{0pt}
\item[{\bf(i)}] If\/ $\phi:T\ra U$ in {\bf(b)} is \'etale, then
$\cA(\phi,\eta)$ is an isomorphism.
\item[{\bf(ii)}] For each\/ $2$-commutative diagram in $\Art_\K\!:$
\begin{equation*}
\xymatrix@C=70pt@R=1pt{ & V \ar[ddr]^v \\
\rrtwocell_{}\omit^{}\omit{^{\ze}} && \\
U  \ar[uur]^{\psi} \ar[rr]_(0.3)u && X, \\
\urrtwocell_{}\omit^{}\omit{^{\eta}} && \\
T \ar[uu]_{\phi} \ar@/_/[uurr]_t }
\end{equation*}
with $T,U,V$ schemes and\/ $t,u,v$ smooth, we must have
\begin{align*}
\cA\bigl(\psi\ci\phi,(\ze*\id_{\phi})\od\eta\bigr)
&=\cA(\phi,\eta)\ci\phi^{-1}(\cA(\psi,\ze))\quad\text{as
morphisms}\\
(\psi\ci\phi)^{-1}(\cA(V,v))&=\phi^{-1}\ci
\psi^{-1}(\cA(V,v))\longra\cA(T,t).
\end{align*}
\end{itemize}

\noindent{\bf(B)} Morphisms $\al:\cA\ra\cB$ of\/ $\Sh(X)$ comprise a
morphism $\al(T,t):\cA(T,t)\ra\cB(T,t)$ of \'etale sheaves of sets
on a scheme $T$ for all smooth\/ $1$-morphisms $t:T\ra X,$ such that
for each diagram \eq{dc2eq39} in {\bf(b)} the following commutes:
\begin{equation*}
\xymatrix@C=120pt@R=15pt{*+[r]{\phi^{-1}(\cA(U,u))}
\ar[d]^{\phi^{-1}(\al(U,u))} \ar[r]_(0.55){\cA(\phi,\eta)} &
*+[l]{\cA(T,t)} \ar[d]_{\al(T,t)} \\
*+[r]{\phi^{-1}(\cB(U,u))}
\ar[r]^(0.55){\cB(\phi,\eta)} & *+[l]{\cB(T,t).\!{}} }
\end{equation*}

\noindent{\bf(C)} Composition of morphisms $\cA\,{\buildrel\al
\over\longra}\,\cB\,{\buildrel\be\over\longra}\,\cC$ in $\Sh(X)$ is
$(\be\ci\al)(T,t)=\ab\be(T,t)\ab\ci\ab\al(T,t)$. Identity morphisms
$\id_\cA:\cA\ra\cA$ are $\id_\cA(T,t)=\id_{\cA(T,t)}$.
\smallskip

The analogue of all the above also holds for (\'etale) sheaves of\/
$\K$-vector spaces, sheaves of\/ $\K$-algebras, and so on, in place
of (\'etale) sheaves of sets.

Furthermore, the analogue of all the above holds for quasi-coherent
sheaves, (or coherent sheaves, or vector bundles, or line bundles)
on $X,$ where in {\bf(a)} $\cA(T,t)$ becomes a quasi-coherent sheaf
(or coherent sheaf, or vector bundle, or line bundle) on $T,$ in
{\bf(b)} we replace $\phi^{-1}(\cA(U,u))$ by the pullback\/
$\phi^*(\cA(U,u))$ of quasi-coherent sheaves (etc.), and\/
$\cA(\phi,\eta),\ab\al(T,t)$ become morphisms of quasi-coherent
sheaves (etc.) on\/~$T$.

We can also describe \begin{bfseries}global sections\end{bfseries}
of sheaves on Artin $\K$-stacks in the above framework: a global
section $s\in H^0(\cA)$ of\/ $\cA$ in part\/ {\bf(A)} assigns a
global section $s(T,t)\in H^0(\cA(T,t))$ of\/ $\cA(T,t)$ on\/ $T$
for all smooth\/ $t:T\ra X$ from a scheme $T,$ such that\/
$\cA(\phi,\eta)^*(s(U,u))=s(T,t)$ in $H^0(\cA(T,t))$ for all\/
$2$-commutative diagrams \eq{dc2eq39} with\/ $t,u$ smooth.
\label{dc2prop16}
\end{prop}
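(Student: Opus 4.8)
The plan is to deduce this from Laumon and Moret-Bailly's explicit description of the category of lisse-\'etale sheaves \cite[Lem.~12.2.1]{LaMo} (with the correction of \cite{Olss} for the pullback functors), and then to verify that the minor differences in our formulation do not affect the argument: namely that we carry along the $2$-morphisms $\eta$ in the labelling data $(\phi,\eta)$, that we take the objects $(T,t)$ of $\Le(X)$ with $T$ a $\K$-scheme rather than an algebraic space (harmless by \cite[Lem.~12.1.2(i)]{LaMo}), and that we want a single uniform statement for sheaves of sets, of $\K$-vector spaces, of $\K$-algebras, and for quasi-coherent sheaves, coherent sheaves, vector bundles and line bundles.

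First I would set up the forward functor. Given a sheaf $\cF$ on $\Le(X)$ and a smooth $1$-morphism $t:T\ra X$ from a scheme $T$, the objects $(T',t\ci\phi)$ of $\Le(X)$ with $\phi:T'\ra T$ \'etale, together with their coverings, form a site equivalent to the small \'etale site of $T$; restricting $\cF$ along this inclusion produces an \'etale sheaf $\cA(T,t)$ on $T$, which is the data \textbf{(a)}. For \textbf{(b)}, given a $2$-commutative triangle \eq{dc2eq39} with $t,u$ smooth and $\phi:T\ra U$ a morphism of schemes, the $2$-morphism $\eta:t\Ra u\ci\phi$ identifies, for each \'etale $\psi:T'\ra T$, the object $(T',u\ci\phi\ci\psi)$ with $(T',t\ci\psi)$ inside $\Le(X)$; composing this identification with the restriction maps of $\cF$ along $(T',u\ci\phi\ci\psi)\ra(U,u)$ gives a morphism of \'etale sheaves $\phi^{-1}(\cA(U,u))\ra\cA(T,t)$, which is $\cA(\phi,\eta)$. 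Condition \textbf{(i)} holds because when $\phi$ is \'etale it becomes, after the above identification, a morphism in the small \'etale site of $T$ and $\cF$ is already a sheaf there; conditions \textbf{(ii)} and the morphism axioms \textbf{(B)},\textbf{(C)} are bookkeeping with composition of $1$- and $2$-morphisms.

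Conversely, given an object $\cA$ of $\Sh(X)$ I would reconstruct a sheaf $\cF$ on $\Le(X)$ by $\cF(T,t)=H^0(T,\cA(T,t))$, with restriction along $(\phi,\eta):(T,t)\ra(U,u)$ obtained by applying $H^0$ to the composite $\cA(U,u)\ra\phi_*\phi^{-1}(\cA(U,u))\ra\phi_*(\cA(T,t))$ built from $\cA(\phi,\eta)$; conditions \textbf{(i)},\textbf{(ii)} make these restrictions compose correctly and the assignment functorial. The sheaf axiom for a lisse-\'etale cover $\{(\phi_i,\eta_i):(T_i,t_i)\ra(T,t)\}$ reduces, using \textbf{(i)} to replace each $\cA(T_i,t_i)$ by $\phi_i^{-1}(\cA(T,t))$, to the \'etale sheaf axiom for $\cA(T,t)$ along the \'etale cover $\{\phi_i:T_i\ra T\}$. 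One then checks the two constructions are mutually quasi-inverse, which is formal. The variants for sheaves of $\K$-vector spaces and $\K$-algebras are verbatim with ``set'' replaced throughout; for the quasi-coherent, coherent, vector-bundle and line-bundle versions one replaces $\phi^{-1}$ by the quasi-coherent pullback $\phi^*$ and uses that smooth (hence \'etale) pullback is exact and preserves each of these subcategories, with \textbf{(i)} providing the gluing isomorphisms. Finally the description of global sections $H^0(\cA)$ follows by taking $\cF=\cA$ in the equivalence and unwinding the sheaf axiom over a smooth atlas.

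The main obstacle I expect is organisational rather than conceptual: threading the $2$-morphisms $\eta$ through every identification and composition so that \textbf{(ii)} and the cocycle compatibilities hold on the nose, not merely up to further $2$-morphisms; and dealing with the fact that $\Le(X)$ is a large site, which is handled as in \cite{LaMo,Olss} by restricting to a small cofinal family of smooth atlases or by the standard universe conventions. In the quasi-coherent and coherent cases the only genuine extra input beyond the set-valued argument is the exactness of smooth pullback together with compatibility of $\cA(\phi,\eta)$ with the relevant $\O$-module structure, which is precisely where one uses that $\cA(\phi,\eta)$ is an isomorphism for $\phi$ \'etale.
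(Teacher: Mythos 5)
The paper does not actually prove Proposition~\ref{dc2prop16}; it is stated in \S\ref{dc27} as \emph{background material} and attributed to Laumon and Moret-Bailly \cite[Lem.~12.2.1]{LaMo}. (The introduction to \S\ref{dc2} lists the sections whose results are proved later as \S\ref{dc21}, \S\ref{dc22}, \S\ref{dc23}, \S\ref{dc24}, \S\ref{dc26} and \S\ref{dc28} --- \S\ref{dc27} is omitted because it is quoted, not proved.) So there is no paper proof to compare against; your proposal is a reconstruction of the \cite{LaMo} argument, which is exactly what the paper implicitly relies upon.

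Your outline is correct: restricting a lisse-\'etale sheaf along the inclusion of the small \'etale site of $T$ into $\Le(X)$ gives $\cA(T,t)$; the $2$-morphism $\eta$ identifies $(T',u\ci\phi\ci\psi)$ with $(T',t\ci\psi)$ and produces $\cA(\phi,\eta)$; condition \textbf{(i)} is forced because \'etale morphisms belong to the covering class; the reverse functor $\cF(T,t)=H^0(T,\cA(T,t))$ inherits restrictions from $\cA(\phi,\eta)$ via adjunction; and the descent axiom on $\Le(X)$ reduces, using \textbf{(i)}, to \'etale descent for each $\cA(T,t)$ since coverings in $\Le(X)$ are by definition \'etale covers of the underlying scheme. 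Two small corrections to the write-up: the phrase ``smooth (hence \'etale) pullback is exact'' is a slip --- smooth does not imply \'etale; you mean that smooth morphisms are flat, so $\phi^*$ is exact on quasi-coherent sheaves. Also, you tie exactness of $\phi^*$ to condition \textbf{(i)}, but these are separate ingredients: \textbf{(i)} provides the descent isomorphisms over \'etale covers, while flatness (exactness of $\phi^*$) is what keeps quasi-coherence, coherence, vector bundles and line bundles stable under the construction. Keeping them distinct makes the quasi-coherent variant cleaner.
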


\begin{rem} As in Laumon and Moret-Bailly \cite[\S 13.1]{LaMo}, if
$T$ is a $\K$-scheme, there is a difference between the categories
$\Sh(T)_{\rm Zar}$ and $\Sh(T)_{\text{\rm\'et}}$ of sheaves of sets
(say) on $T$ in the Zariski and \'etale topologies. There are
adjoint functors $\ep_*:\Sh(T)_{\text{\rm\'et}}\ra\Sh(T)_{\rm Zar}$
and $\ep^{-1}:\Sh(T)_{\rm Zar}\ra\Sh(T)_{\text{\rm\'et}}$, with
$\ep^{-1}$ fully faithful, but in general $\Sh(T)_{\text{\rm\'et}}$
may be larger than $\Sh(T)_{\rm Zar}$. So one should distinguish
between sheaves in the Zariski and the \'etale topologies.

However, as in \cite[p.~120]{LaMo}, the categories of quasi-coherent
sheaves on $T$ (and hence their full subcategories of coherent
sheaves, vector bundles, and line bundles) in the Zariski and
\'etale topologies are equivalent, essentially by definition. So for
quasi-coherent sheaves we need not distinguish between the Zariski
and \'etale topologies, and in the last part of Proposition
\ref{dc2prop16} we can take the $\cA(T,t)$ to be quasi-coherent
sheaves on $T$ in the usual (Zariski) sense.
\label{dc2rem9}
\end{rem}

For Theorem \ref{dc2thm4} we will need the following properties of
cotangent complexes, as in Illusie \cite{Illu1,Illu2} and Laumon and
Moret-Bailly~\cite[\S 17]{LaMo}.

\begin{rem}{\bf(i)} If $f:X\ra Y$ is a 1-morphism of Artin
$\K$-stacks, we have a distinguished triangle in~$D(\qcoh(X))$:
\e
\xymatrix@C=40pt{ f^*(\bL_Y) \ar[r]^(0.55){\bL_f} & \bL_X \ar[r] &
\bL_{X/Y} \ar[r] & f^*(\bL_X)[1], }
\label{dc2eq40}
\e
where $\bL_X,\bL_Y$ are the {\it cotangent complexes\/} of $X,Y$ and
$\bL_{X/Y}$ (also written $\bL^f_{X/Y}$) is the {\it relative
cotangent complex\/} of~$f$.
\smallskip

\noindent{\bf(ii)} If $f,g:X\ra Y$ are 1-morphisms of Artin
$\K$-stacks and $\eta:f\Ra g$ is a 2-morphism, then we have a
commutative diagram
\begin{equation*}
\xymatrix@C=40pt@R=15pt{ f^*(\bL_Y) \ar[d]^{\eta^*(\bL_Y)}
\ar[r]_(0.55){\bL_f} & \bL_X \ar[d]^\id \ar[r] & \bL_{X/Y}^f
\ar[d]^\cong \ar[r] & f^*(\bL_X)[1] \ar[d]^{\eta^*(\bL_Y)} \\
g^*(\bL_Y) \ar[r]^(0.55){\bL_g} & \bL_X \ar[r] & \bL_{X/Y}^g
\ar[r] & g^*(\bL_X)[1]. }
\end{equation*}

\noindent{\bf(iii)} Let $X\,{\buildrel f\over\longra}\,Y\,
{\buildrel g\over\longra}\,Z$ be 1-morphisms of Artin $\K$-stacks.
Then there is a distinguished triangle in~$D(\qcoh(X))$:
\begin{equation*}
\xymatrix@C=40pt{ f^*(\bL_{Y/Z}) \ar[r] & \bL_{X/Z} \ar[r] &
\bL_{X/Y} \ar[r] & f^*(\bL_{Y/Z})[1]. }
\end{equation*}

\noindent{\bf(iv)} If $f:X\ra Y$ is smooth then $\bL_{X/Y}$ is
equivalent to a {\it vector bundle of mixed rank\/} on $X$ in degree
0, in the sense of Proposition \ref{dc2prop12}. In this case we
write $T^*_{X/Y}$ or $T^{f\,*}_{X/Y}$ for $\bL_{X/Y}$ considered as
a vector bundle of mixed rank. The top exterior power $\La^{\rm
top}\bL_{X/Y}=\La^{\rm top}T^*_{X/Y}$ is a line bundle on~$X$.
\smallskip

\noindent{\bf(v)} If $\phi:T\ra U$ is a smooth morphism in $\Sch_\K$
then the relative cotangent bundle $T^*_{T/U}$ in Proposition
\ref{dc2prop12} is canonically isomorphic to~$\bL_{T/U}$.
\label{dc2rem10}
\end{rem}

\subsection{Extension of \S\ref{dc21}--\S\ref{dc25} to Artin
stacks}
\label{dc28}

We now extend parts of \S\ref{dc21}--\S\ref{dc25} from $\K$-schemes
to Artin $\K$-stacks. In \cite{BBBJ} we will use the ideas of this
section to extend the results of \cite{BBDJS,BBJ,BJM} (summarized in
Theorems \ref{dc1thm1}, \ref{dc1thm3} and \ref{dc1thm4} above) to
Artin stacks. Note that by the same methods we can also extend
\S\ref{dc21}--\S\ref{dc25} to {\it Deligne--Mumford\/ $\K$-stacks\/}
or {\it algebraic\/ $\K$-spaces}, and the proofs simplify as the
\'etale topology is easier to work with than the lisse-\'etale
topology. We leave the details to the interested reader.

Combining Theorem \ref{dc2thm1}, Proposition \ref{dc2prop1}, and the
material of \S\ref{dc27}, we deduce an analogue of Theorem
\ref{dc2thm1} for Artin $\K$-stacks:

\begin{cor} Let\/ $X$ be an Artin $\K$-stack, and write\/
$\Sh(X)_\Kalg,\Sh(X)_\Kvect$ for the categories of sheaves of\/
$\K$-algebras and\/ $\K$-vector spaces on $X$ defined in
Proposition\/ {\rm\ref{dc2prop16}}. Then:
\begin{itemize}
\setlength{\itemsep}{0pt}
\setlength{\parsep}{0pt}
\item[{\bf(a)}] We may define canonical objects\/ $\cS_X$ in
both\/ $\Sh(X)_\Kalg$ and\/ $\Sh(X)_\Kvect$ by
$\cS_X(T,t):=\cS_T$ for all smooth morphisms $t:T\ra X$ for
$T\in\Sch_\K,$ for $\cS_T$ as in Theorem\/ {\rm\ref{dc2thm1}}
taken to be a sheaf of\/ $\K$-algebras (or $\K$-vector spaces)
on $T$ in the \'etale topology, and\/
$\cS_X(\phi,\eta):=\phi^\star:\phi^{-1}
(\cS_X(U,u))=\phi^{-1}(\cS_U)\ra\cS_T=\cS_X(T,t)$ for all\/
$2$-commutative diagrams \eq{dc2eq39} in $\Art_\K$ with\/ $t,u$
smooth, where $\phi^\star$ is as in Proposition\/
{\rm\ref{dc2prop1}}.
\item[{\bf(b)}] There is a natural decomposition
$\cS_X\!=\!\K_X\!\op\!\cSz_X$ in $\Sh(X)_\Kvect$ induced by the
splitting $\cS_X(T,t)\!=\!\cS_T\!=\!\K_T\op\cSz_T$ in Theorem\/
{\rm\ref{dc2thm1}(a),} where $\K_X$ is a sheaf of\/
$\K$-subalgebras of\/ $\cS_X$ in $\Sh(X)_\Kalg,$ and\/ $\cSz_X$
a sheaf of ideals~in\/~$\cS_X$.
\end{itemize}
\label{dc2cor4}
\end{cor}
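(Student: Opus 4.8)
The plan is to verify that the assignment sending a smooth $t:T\ra X$ (with $T\in\Sch_\K$) to $\cS_T$, and a $2$-commutative diagram \eq{dc2eq39} to $\phi^\star$, satisfies the data (a), (b) and conditions (i), (ii) in part (A) of Proposition \ref{dc2prop16}, so that it defines an object $\cS_X$; the decomposition in part (b) of the corollary will then follow by applying the same bookkeeping to the sub-data $t\mapsto\K_T$ and $t\mapsto\cSz_T$ coming from Theorem \ref{dc2thm1}(a).

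First I would observe that for each smooth $t:T\ra X$ from a $\K$-scheme $T$, Theorem \ref{dc2thm1} provides a sheaf $\cS_T$ of commutative $\K$-algebras on $T$ in the \'etale topology, which gives the data (A)(a) (and, forgetting the algebra structure, also gives the data for $\Sh(X)_\Kvect$). For a $2$-commutative diagram \eq{dc2eq39} with $t,u$ smooth and $\phi:T\ra U$ a morphism of $\K$-schemes, Proposition \ref{dc2prop1} supplies a morphism $\phi^\star:\phi^{-1}(\cS_U)\ra\cS_T$ of sheaves of commutative $\K$-algebras, which I take to be $\cS_X(\phi,\eta)$. The key point is that $\cS_T$, $\cS_U$ and $\phi^\star$ depend only on the schemes $T,U$ and on the scheme morphism $\phi$, and not on $t,u$ or on the $2$-morphism $\eta$; so $\cS_X(\phi,\eta)$ is in fact independent of $\eta$. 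This is what makes the $2$-categorical bookkeeping in Proposition \ref{dc2prop16} collapse to statements already proved in \S\ref{dc21}.

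Next I would check the conditions. Condition (A)(i), that $\cS_X(\phi,\eta)=\phi^\star$ is an isomorphism when $\phi:T\ra U$ is \'etale, is exactly the last part of Proposition \ref{dc2prop1} for sheaves in the \'etale topology. Condition (A)(ii), the functoriality $\cS_X(\psi\ci\phi,(\ze*\id_\phi)\od\eta)=\cS_X(\phi,\eta)\ci\phi^{-1}(\cS_X(\psi,\ze))$, follows from \eq{dc2eq12}, namely $(\psi\ci\phi)^\star=\phi^\star\ci\phi^{-1}(\psi^\star)$, combined with the $\eta$-independence just noted; and $\id_T^\star=\id_{\cS_T}$ from Proposition \ref{dc2prop1} handles identity morphisms. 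Since each $\phi^\star$ is a morphism of sheaves of commutative $\K$-algebras, this same data defines $\cS_X$ simultaneously in $\Sh(X)_\Kalg$ and $\Sh(X)_\Kvect$, proving (a); uniqueness up to canonical isomorphism is automatic from the pointwise prescription.

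Finally, for (b): Theorem \ref{dc2thm1}(a) gives $\cS_T=\K_T\op\cSz_T$ for each $T$, with $\K_T$ a sheaf of $\K$-subalgebras and $\cSz_T$ a sheaf of ideals. Proposition \ref{dc2prop1} states that $\phi^\star$ maps $\phi^{-1}(\cSz_U)\ra\cSz_T$, and, being a $\K$-algebra morphism, it carries $\phi^{-1}(\K_U)$ (the constants) into $\K_T$; hence the sub-data $t\mapsto\K_T$ and $t\mapsto\cSz_T$ each satisfy (A)(a),(b),(i),(ii) and define objects $\K_X\in\Sh(X)_\Kalg$ and $\cSz_X\in\Sh(X)_\Kvect$, with the inclusions $\K_T\hookra\cS_T$, $\cSz_T\hookra\cS_T$ assembling into morphisms in $\Sh(X)$ by Proposition \ref{dc2prop16}(B), giving $\cS_X=\K_X\op\cSz_X$ with $\cSz_X$ a sheaf of ideals. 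I expect no genuine obstacle here: the one thing needing care is the interaction of the lisse-\'etale $2$-categorical structure with the construction, and this dissolves as soon as one records that $\cS_T$ and $\phi^\star$ are intrinsic to the underlying schemes and scheme morphisms, after which the proof is a routine transcription of Theorem \ref{dc2thm1} and Proposition \ref{dc2prop1} through the dictionary of Proposition \ref{dc2prop16}.
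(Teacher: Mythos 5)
Your proof is correct and follows the same route the paper takes: the paper's own (very terse) justification is precisely that conditions (A)(i) and (A)(ii) of Proposition \ref{dc2prop16} follow from the last part of Proposition \ref{dc2prop1} and from equation \eq{dc2eq12}, which is what you verify. Your observation that $\cS_T$, $\cS_U$, $\phi^\star$ depend only on the underlying schemes and scheme morphism (not on $t,u,\eta$), and your handling of part (b) via the algebra-morphism property of $\phi^\star$ and the preservation of $\cSz$ under $\phi^\star$, are the right elaborations of what the paper leaves implicit.
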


Here the conditions (i),(ii) on the data $\cS_X(T,t),
\cS_X(\phi,\eta)$ in Proposition \ref{dc2prop16}(A) follow from the
last part of Proposition \ref{dc2prop1} and equation \eq{dc2eq12}.
We can now generalize algebraic d-critical loci to Artin stacks.

\begin{dfn} A {\it d-critical stack\/} $(X,s)$ is an Artin
$\K$-stack $X$ and a global section $s\in H^0(\cSz_X)$, where
$\cSz_X$ is as in Corollary \ref{dc2cor4} and global sections as in
Proposition \ref{dc2prop16}, such that $\bigl(T,s(T,t)\bigr)$ is an
algebraic d-critical locus in the sense of Definition \ref{dc2def1}
for all smooth morphisms $t:T\ra X$ with~$T\in\Sch_\K$.
\label{dc2def10}
\end{dfn}

The next proposition gives a very convenient way to understand
global sections of $\cS_X,\cSz_X$ and d-critical structures on $X$
by working on the scheme $T$ for an atlas $t:T\ra X$ for $X$.

\begin{prop} Let\/ $X$ be an Artin $\K$-stack, and\/ $t:
T\ra X$ a smooth atlas for $X$. Then $T\t_{t,X,t}T$ is equivalent to
a $\K$-scheme $U$ as $t$ is representable and\/ $T$ a scheme, so we
have a $2$-Cartesian diagram
\e
\begin{gathered}
\xymatrix@C=90pt@R=11pt{ *+[r]{U} \ar[d]^{\pi_1} \ar[r]_(0.3){\pi_2}
\drtwocell_{}\omit^{}\omit{^{\eta}} &
*+[l]{
T} \ar[d]_t \\
*+[r]{T} \ar[r]^(0.7)t & *+[l]{X} }
\end{gathered}
\label{dc2eq41}
\e
in $\Art_\K,$ with\/ $\pi_1,\pi_2:U\ra T$ smooth morphisms in
$\Sch_\K$. Also $T,U,\pi_1,\pi_2$ can be naturally completed to a
smooth groupoid in $\Sch_\K,$ and\/ $X$ is equivalent in $\Art_\K$
to the associated groupoid stack\/ $[U\rra T]$.
\begin{itemize}
\setlength{\itemsep}{0pt}
\setlength{\parsep}{0pt}
\item[{\bf(i)}] Let\/ $\cS_X$ be as in Corollary\/
{\rm\ref{dc2cor4},} and\/ $\cS_T,\cS_U$ be as in Theorem\/
{\rm\ref{dc2thm1},} regarded as sheaves on $T,U$ in the \'etale
topology, and define $\pi_i^\star:\pi_i^{-1}(\cS_T)\ra \cS_U$ as
in Proposition\/ {\rm\ref{dc2prop1}} for $i=1,2$. Consider the
map $t^*:H^0(\cS_X)\ra H^0(\cS_T)$ mapping $t^*:s\mapsto
s(T,t)$. This is injective, and induces a bijection
\e
t^*:H^0(\cS_X)\,{\buildrel\cong\over\longra}\,\bigl\{s'\in
H^0(\cS_T):\text{$\pi_1^\star(s')= \pi_2^\star(s')$ in
$H^0(\cS_U)$}\bigr\}.
\label{dc2eq42}
\e
The analogue holds for $\cSz_X,\cSz_T,\cSz_U$.
\item[{\bf(ii)}] Suppose $s\in H^0(\cSz_X),$ so that\/ $t^*(s)\in
H^0(\cSz_T)$ with\/ $\pi_1^\star\ci t^*(s)=\pi_2^\star\ci
t^*(s)$. Then $(X,s)$ is a d-critical stack if and only if\/
$\bigl(T,t^*(s)\bigr)$ is an algebraic d-critical locus, and
then\/ $\bigl(U,\pi_1^\star\ci t^*(s)\bigr)$ is also an
algebraic d-critical locus.
\end{itemize}
\label{dc2prop17}
\end{prop}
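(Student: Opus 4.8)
The plan is to reduce everything to the description of sheaves on Artin stacks in Proposition~\ref{dc2prop16}, combined with the sheaf property of $\cS_X$, $\cSz_X$ and the behaviour of the pullback morphisms $\phi^\star$ from Proposition~\ref{dc2prop1}. First I would recall that, since $t:T\ra X$ is a representable smooth 1-morphism and $T$ is a scheme, the fibre product $T\t_{t,X,t}T$ is a scheme $U$, and the two projections $\pi_1,\pi_2:U\ra T$ are smooth; together with the composition and diagonal morphisms this gives the groupoid $U\rra T$ in $\Sch_\K$, and the standard descent statement $X\simeq[U\rra T]$. This part is purely formal stack theory and I would just cite it from \cite{Gome,LaMo,Olss}, noting only that the 2-Cartesian square \eq{dc2eq41} carries the canonical 2-morphism $\eta$.

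For part (i), the key point is that by Proposition~\ref{dc2prop16}(A),(C) (applied to sheaves of $\K$-algebras, or of $\K$-vector spaces), a global section $s\in H^0(\cS_X)$ is exactly the assignment $(T',t')\mapsto s(T',t')\in H^0(\cS_{T'})$ for all smooth $t':T'\ra X$, compatible under pullback along 2-commutative triangles. Injectivity of $t^*:H^0(\cS_X)\ra H^0(\cS_T)$ follows because $t:T\ra X$ is a smooth \emph{surjective} atlas: any other smooth $t':T'\ra X$ can, \'etale-locally on $T'$, be lifted through $t$ (using surjectivity and smoothness), so the sheaf $\cS_{T'}$ and the value $s(T',t')$ are determined by $s(T,t)$ via the isomorphisms $\cS_X(\phi,\eta)$, which are isomorphisms in the \'etale-local situation by Proposition~\ref{dc2prop16}(A)(i) together with the last part of Proposition~\ref{dc2prop1}. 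For the characterization of the image: given $s'\in H^0(\cS_T)$, the two composites $\pi_1,\pi_2:U\ra T\ra X$ into $X$ differ by the 2-morphism $\eta$ coming from \eq{dc2eq41}, so the compatibility condition of Proposition~\ref{dc2prop16}(A)(b),(ii) for a section of $\cS_X$ forces precisely $\pi_1^\star(s')=\pi_2^\star(s')$ in $H^0(\cS_U)$; conversely, given such an $s'$, one uses \'etale descent along the cover $t:T\ra X$ (i.e.\ the sheaf condition for $\cS_X$ on $\Le(X)$, equivalently the equivalence $X\simeq[U\rra T]$) to glue $s'$ to a well-defined section $s\in H^0(\cS_X)$ with $t^*(s)=s'$ --- here one must also check the cocycle condition on the triple fibre product $T\t_X T\t_X T$, which follows from \eq{dc2eq12} for $\phi^\star$. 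The statement for $\cSz_X$ is identical, using the compatibility of the splitting $\cS_X=\K_X\op\cSz_X$ with $\phi^\star$ from Proposition~\ref{dc2prop1} (and Corollary~\ref{dc2cor4}(b)).

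For part (ii), one direction is immediate from Definition~\ref{dc2def10}: if $(X,s)$ is a d-critical stack then $\bigl(T,s(T,t)\bigr)=\bigl(T,t^*(s)\bigr)$ is an algebraic d-critical locus by definition, since $t$ is a smooth morphism from a scheme. Conversely, suppose $\bigl(T,t^*(s)\bigr)$ is an algebraic d-critical locus; I must show $\bigl(T',t'^*(s)\bigr)$ is one for \emph{every} smooth $t':T'\ra X$. Given $x'\in T'$, using surjectivity and smoothness of $t$ one can find an \'etale (after shrinking, even Zariski) neighbourhood $R'$ of $x'$ in $T'$ together with a morphism $h:R'\ra T$ and a 2-morphism $t\ci h\Ra t'\vert_{R'}$, with $h$ smooth; since d-critical structures pull back under smooth morphisms by Proposition~\ref{dc2prop3}, and $h^\star\bigl(t^*(s)\vert\bigr)=t'^*(s)\vert_{R'}$ by \eq{dc2eq12}, we conclude $\bigl(R',t'^*(s)\vert_{R'}\bigr)$ is an algebraic d-critical locus; as being a d-critical locus is a (Zariski-)local condition on the scheme, $\bigl(T',t'^*(s)\bigr)$ is an algebraic d-critical locus. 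Applying this to the smooth morphism $\pi_1:U\ra T\ra X$ (or $\pi_1:U\ra T$ followed by $t$) gives that $\bigl(U,\pi_1^\star\ci t^*(s)\bigr)$ is also an algebraic d-critical locus. The main obstacle I anticipate is the bookkeeping with 2-morphisms in the "only if" part of (i) --- making precise that the condition $\pi_1^\star(s')=\pi_2^\star(s')$ is exactly the right descent datum, i.e.\ matching Proposition~\ref{dc2prop16}(A)(ii) against the cocycle \eq{dc2eq12} on $T\t_X T\t_X T$ --- but this is a routine, if slightly tedious, unravelling of the definitions rather than a genuine difficulty.
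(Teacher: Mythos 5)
Your treatment of the first paragraph (stack theory) and of part (i) is correct and is essentially the paper's implicit argument: the paper compresses (i) to one sentence (``a general property of global sections of sheaves on Artin stacks''), and what you spell out — local lifting along the surjective smooth atlas for injectivity, and matching the compatibility condition of Proposition~\ref{dc2prop16}(A)(b),(ii) against $\pi_1^\star(s')=\pi_2^\star(s')$ for the descent characterization — is the right unravelling. Note only that for arbitrary local lifts $h:R'\ra T$ the morphisms $\cS_X(h,\eta)$ are not isomorphisms; what you actually use is that they determine $s(R',t'\vert_{R'})$ from $s(T,t)$, and that \'etale restrictions $\cS_X(\iota_{R'},\id)$ are isomorphisms by Proposition~\ref{dc2prop16}(A)(i). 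Your ``only if'' and ``and then'' claims in (ii) also agree with the paper.

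There is, however, a genuine gap in your converse direction of (ii). You assert that, after shrinking $T'$ to $R'$, one can find a \emph{smooth} $h:R'\ra T$ with a $2$-morphism $t\ci h\Ra t'\vert_{R'}$. Such an $h$ need not exist: take $X=\Spec\K$, $T=\bA^1$ with $t$ the structure morphism, and $T'=\Spec\K$ with $t'=\id$; every $h:R'\ra T$ is then a closed point of $\bA^1$, a closed immersion of negative relative dimension, never smooth. In general, whenever $t'$ has smaller relative dimension over $X$ than $t$ has, no smooth lift exists, so you cannot pull back the d-critical structure from $T$ to $R'$ via the forward half of Proposition~\ref{dc2prop3} alone. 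The paper's proof avoids this by never lifting $T'$ into $T$: it forms the scheme $W=T\t_{t,X,t'}T'$ (using representability of $t$), observes that $\pi_T:W\ra T$ is smooth (so $(W,\pi_T^\star(s(T,t)))$ is d-critical by the forward part of Proposition~\ref{dc2prop3}), that $\pi_{T'}:W\ra T'$ is smooth \emph{and surjective} (being the base change of the surjective smooth atlas $t$), and that $\pi_T^\star(s(T,t))=\pi_{T'}^\star(s(T',t'))$ by the global-section compatibility; then the \emph{converse} half of Proposition~\ref{dc2prop3} gives that $(T',s(T',t'))$ is d-critical. So the missing ingredient in your argument is precisely the use of the fibre product together with the surjective, converse direction of Proposition~\ref{dc2prop3} to ``come down'' from $W$ to $T'$, rather than trying to ``go across'' from $T'$ to $T$.
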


\begin{ex} Suppose an algebraic $\K$-group $G$ acts on a $\K$-scheme
$T$ with action $\mu:G\t T\ra T$, and write $X$ for the quotient
Artin $\K$-stack $[T/G]$. Then as in \eq{dc2eq41} there is a natural
2-Cartesian diagram
\begin{equation*}
\xymatrix@C=110pt@R=11pt{ *+[r]{G\t T} \ar[d]^{\pi_T}
\ar[r]_(0.3){\mu} \drtwocell_{}\omit^{}\omit{^{\eta}} & *+[l]{T}
\ar[d]_t \\ *+[r]{T} \ar[r]^(0.6)t & *+[l]{X=[T/G],\!{}} }
\end{equation*}
where $t:T\ra X$ is a smooth atlas for $X$. If $s'\in H^0(\cSz_T)$
then $\pi_1^\star(s')=\pi_2^\star(s')$ in \eq{dc2eq42} becomes
$\pi_T^\star(s')=\mu^\star(s')$ on $G\t T$, that is, $s'$ is
$G$-invariant in the sense of \S\ref{dc26}. Hence, Proposition \ref{dc2prop17} shows that
d-critical structures $s$ on $X=[T/G]$ are in 1-1 correspondence
with $G$-invariant d-critical structures $s'$ on~$T$.
\label{dc2ex10}
\end{ex}

Next we state an analogue of Theorem \ref{dc2thm3}, constructing the
canonical bundle $K_{X,s}$ of a d-critical stack $(X,s)$.

\begin{thm} Let\/ $(X,s)$ be a d-critical stack. Using the
description of quasi-coherent sheaves on $X^\red$ in Proposition
{\rm\ref{dc2prop16}} and the notation of Remark\/
{\rm\ref{dc2rem10},} there is a line bundle $K_{X,s}$ on the
reduced\/ $\K$-substack\/ $X^\red$ of\/ $X$ called the
\begin{bfseries}canonical bundle\end{bfseries} of\/ $(X,s),$
unique up to canonical isomorphism, such that:
\begin{itemize}
\setlength{\itemsep}{0pt}
\setlength{\parsep}{0pt}
\item[{\bf(a)}] For each point\/ $x\in X^\red\subseteq X$ we
have a canonical isomorphism
\e
\ka_x:K_{X,s}\vert_x\,{\buildrel\cong\over\longra}\,
\bigl(\La^{\rm top}T_x^*X\bigr)^{\ot^2} \ot\bigl(\La^{\rm
top}\fIso_x(X)\bigr)^{\ot^2},
\label{dc2eq43}
\e
where $T_x^*X$ is the Zariski cotangent space of\/ $X$ at\/ $x,$
and\/ $\fIso_x(X)$ the Lie algebra of the isotropy group
(stabilizer group) $\Iso_x(X)$ of\/ $X$ at\/~$x$.
\item[{\bf(b)}] If\/ $T$ is a $\K$-scheme and\/ $t:
T\ra X$ a smooth $1$-morphism, so that\/ $t^\red:T^\red\ra
X^\red$ is also smooth, then there is a natural isomorphism of
line bundles on $T^\red\!:$
\e
\Ga_{T,t}:K_{X,s}(T^\red,t^\red)\,{\buildrel\cong\over\longra}\,
K_{T,s(T,t)}\ot \bigl(\La^{\rm top}T^*_{
T/X}\bigr)\big\vert_{T^\red}^{\ot^{-2}}.
\label{dc2eq44}
\e
Here $\bigl(T,s(T,t)\bigr)$ is an algebraic d-critical locus by
Definition\/ {\rm\ref{dc2def10},} and\/ $K_{T,s(T,t)}\ra T^\red$
is its canonical bundle from Theorem\/~{\rm\ref{dc2thm3}}.
\item[{\bf(c)}] In the situation of\/ {\bf(b)\rm,} let\/ $p\in
T^\red\subseteq T,$ so that\/ $t(p)\in X$. Taking the long exact
cohomology sequence of\/ \eq{dc2eq40} for $t:T\ra X$ and
restricting to $p\in T$ gives an exact sequence
\e
0 \longra T^*_{t(p)}X \longra T^*_pT \longra T^*_{ T/X}\vert_p
\longra \fIso_{t(p)}(X)^* \longra 0.
\label{dc2eq45}
\e
Then the following diagram commutes:
\e
\begin{gathered}
{}\!\!\!\!\!\!\!\!\!\!\!\!\!\!\!\!\!\!\!\!
\xymatrix@!0@C=98pt@R=40pt{*+[r]{K_{X,s}\vert_{t(p)}}
\ar[d]^{\ka_{t(p)}} \ar@{=}[r] & K_{X,s}(T^\red,t^\red)\vert_p
\ar[rr]_(0.3){\Ga_{T,t}\vert_p} && *+[l]{K_{T,s(T,t)}\vert_p\ot
\bigl(\La^{\rm top}T^*_{\smash{
T/X}}\bigr)\big\vert_p^{\ot^{-2}}} \ar[d]_{\ka_p\ot\id} \\
*+[r]{\bigl(\La^{\rm top}T_{t(p)}^*X\bigr)^{\ot^2}\!\!\ot\!\bigl(\La^{\rm
top}\fIso_{t(p)}(X)\bigr)^{\ot^2}} \ar[rrr]^(0.54){\al_p^2} &&&
*+[l]{\bigl(\La^{\rm top}T^*_pT\bigr)^{\ot^2}\!\!\ot\!
\bigl(\La^{\rm top}T^*_{T/X}\bigr) \big\vert_p^{\ot^{-2}},}
}\!\!\!\!\!{}
\end{gathered}
\label{dc2eq46}
\e
where $\ka_p,\ka_{t(p)},\Ga_{T,t}$ are as in {\rm\eq{dc2eq31},
\eq{dc2eq43}} and\/ {\rm\eq{dc2eq44},} respectively, and\/
$\al_p:\La^{\rm top}T_{t(p)}^*X\ot\La^{\rm
top}\fIso_{t(p)}(X)\,{\buildrel\cong\over\longra}\,\La^{\rm
top}T^*_pT\ot\La^{\rm top}T^*_{T/X}\vert^{-1}_p$ is induced by
taking top exterior powers in\/~\eq{dc2eq45}.
\end{itemize}
\label{dc2thm4}
\end{thm}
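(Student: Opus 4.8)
\emph{Sketch of proof.} The plan is to construct $K_{X,s}$ on $X^\red$ by faithfully flat descent along a smooth atlas, building it out of the canonical bundles of the algebraic d-critical loci $\bigl(T,s(T,t)\bigr)$ supplied by Theorem \ref{dc2thm3} together with the pullback isomorphism of Proposition \ref{dc2prop12}. First, for each $\K$-scheme $T$ and smooth $1$-morphism $t:T\ra X$, I would set $L_{T,t}:=K_{T,s(T,t)}\ot\bigl(\La^{\rm top}T^*_{T/X}\bigr)\big\vert_{T^\red}^{\ot^{-2}}$, a line bundle on $T^\red$: here $\bigl(T,s(T,t)\bigr)$ is an algebraic d-critical locus by Definition \ref{dc2def10}, so $K_{T,s(T,t)}$ is its canonical bundle from Theorem \ref{dc2thm3}, while $T^*_{T/X}=\bL_{T/X}$ is a vector bundle of mixed rank by Remark \ref{dc2rem10}(iv), so $\La^{\rm top}T^*_{T/X}$ is a line bundle on $T$. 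This $L_{T,t}$ will be $K_{X,s}(T^\red,t^\red)$, so that \eq{dc2eq44} holds by definition once the descent is in place.

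The next step is to equip the family $\{L_{T,t}\}$ with comparison isomorphisms. Let $\phi:T'\ra T$ be a smooth morphism of $\K$-schemes and $\eta:t'\Ra t\ci\phi$ a $2$-morphism with $t,t'$ smooth; the case of interest is the two projections $\pi_1,\pi_2:T\t_{t,X,t}T\ra T$. Then $s(T',t')=\phi^\star\bigl(s(T,t)\bigr)$ by compatibility of global sections with pullbacks (Proposition \ref{dc2prop17}(i) and \eq{dc2eq42}), so by Proposition \ref{dc2prop3} the map $\phi:\bigl(T',s(T',t')\bigr)\ra\bigl(T,s(T,t)\bigr)$ is a morphism of d-critical loci which is smooth, and Proposition \ref{dc2prop12} gives an isomorphism $\Up_\phi$ as in \eq{dc2eq35}. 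Meanwhile the distinguished triangle of Remark \ref{dc2rem10}(iii) for $T'\ra T\ra X$ is, as all three morphisms are smooth, a short exact sequence $0\ra\phi^*(T^*_{T/X})\ra T^*_{T'/X}\ra T^*_{T'/T}\ra 0$ of vector bundles of mixed rank in degree $0$; taking top exterior powers gives $\La^{\rm top}T^*_{T'/X}\cong\phi^*\bigl(\La^{\rm top}T^*_{T/X}\bigr)\ot\La^{\rm top}T^*_{T'/T}$, the left side being formed using the $2$-isomorphism $t'\simeq t\ci\phi$ and Remark \ref{dc2rem10}(ii). Feeding this into \eq{dc2eq35} and twisting by $\bigl(\La^{\rm top}T^*_{T/X}\bigr)^{\ot^{-2}}$ yields a canonical isomorphism $\Theta_\phi:(\phi^\red)^*(L_{T,t})\,{\buildrel\cong\over\longra}\,L_{T',t'}$ on $T'^\red$. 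Using that $\Up_\phi$ is uniquely characterised by the commuting square \eq{dc2eq36} and that the maps $\up_x$ there compose correctly, I would check $\Theta$ is functorial, $\Theta_{\psi\ci\phi}=\Theta_\psi\ci(\psi^\red)^*(\Theta_\phi)$ up to canonical identifications. Taking a smooth atlas $t:T\ra X$ and forming $U=T\t_{t,X,t}T$, $W=T\t_{t,X,t}T\t_{t,X,t}T$ as in Proposition \ref{dc2prop17}, the isomorphism $\Theta_{\pi_1}^{-1}\ci\Theta_{\pi_2}:\pi_2^*(L_{T,t})\ra\pi_1^*(L_{T,t})$ on $U^\red$ is then a descent datum for the smooth groupoid $U\rra T$, the cocycle condition on $W^\red$ being exactly functoriality of $\Theta$. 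By faithfully flat descent for line bundles --- equivalently, by exhibiting the result as an object of the category $\Sh(X^\red)$ of \S\ref{dc27} via the smooth atlas $t^\red:T^\red\ra X^\red$ --- there is a line bundle $K_{X,s}$ on $X^\red$, unique up to canonical isomorphism, with $(t^\red)^*(K_{X,s})\cong L_{T,t}$ compatibly with $\Theta$. Running the same argument for an arbitrary smooth $t':T'\ra X$ and comparing the two descended bundles via the projections of $T'\t_X T$ (which are smooth morphisms of d-critical loci, so Proposition \ref{dc2prop12} applies) shows $K_{X,s}$ is independent of the atlas and produces the natural isomorphism $\Ga_{T',t'}$ of \eq{dc2eq44}, proving {\bf(b)}.

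For {\bf(a)}, given $x\in X^\red$ pick $p\in T^\red$ with $t^\red(p)=x$, so $K_{X,s}\vert_x\cong L_{T,t}\vert_p=K_{T,s(T,t)}\vert_p\ot\bigl(\La^{\rm top}T^*_{T/X}\vert_p\bigr)^{\ot^{-2}}$. By Theorem \ref{dc2thm3}(iii) we have $K_{T,s(T,t)}\vert_p\cong\bigl(\La^{\rm top}T^*_pT\bigr)^{\ot^2}$, and the four-term exact sequence \eq{dc2eq45} coming from the long exact cohomology sequence of \eq{dc2eq40} for $t:T\ra X$ gives, on taking the alternating product of determinant lines, a canonical isomorphism $\La^{\rm top}T^*_pT\cong\La^{\rm top}T^*_xX\ot\La^{\rm top}T^*_{T/X}\vert_p\ot\La^{\rm top}\fIso_x(X)$. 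Combining these, the factors $\bigl(\La^{\rm top}T^*_{T/X}\vert_p\bigr)^{\ot^{\pm2}}$ cancel and one obtains $\ka_x:K_{X,s}\vert_x\,{\buildrel\cong\over\longra}\,\bigl(\La^{\rm top}T^*_xX\bigr)^{\ot^2}\ot\bigl(\La^{\rm top}\fIso_x(X)\bigr)^{\ot^2}$, as in \eq{dc2eq43}; independence of $p$ and of the atlas, hence well-definedness of $\ka_x$, follows by comparing choices over a common fibre product using the compatibility in Proposition \ref{dc2prop12}. Part {\bf(c)} is then a diagram chase: the square \eq{dc2eq46} is obtained by stacking the scheme-level diagram \eq{dc2eq33} of Theorem \ref{dc2thm3}(iv) for $\bigl(T,s(T,t)\bigr)$ against the determinant-of-exact-sequence identifications used to build $\ka_x$ in {\bf(a)} and the construction of $\Ga_{T,t}$ from $\Up_\phi$.

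The main obstacle I anticipate is the coherence and sign bookkeeping. One must verify that $\Up_\phi$ from Proposition \ref{dc2prop12} is functorial under composition of smooth morphisms of d-critical loci --- this is not stated there, but should follow from its uniqueness characterisation via \eq{dc2eq36} --- that the identifications of determinant lines of the four-term sequences \eq{dc2eq45} and of the relative cotangent triangles are the canonical ones and fit together so that \eq{dc2eq46} commutes exactly rather than up to a sign, and that all the identifications made using the $2$-isomorphisms $t'\simeq t\ci\phi$ (via Remark \ref{dc2rem10}(ii)) are mutually compatible, so that the cocycle condition for the descent datum holds on the nose.
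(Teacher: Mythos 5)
Your proposal is correct and follows essentially the same route as the paper: define $L_{T,t}=K_{T,s(T,t)}\ot(\La^{\rm top}T^*_{T/X})\vert_{T^\red}^{\ot^{-2}}$, use $\Up_\phi$ from Proposition \ref{dc2prop12} together with the determinant of the relative cotangent sequence to build comparison isomorphisms, descend along a smooth groupoid presentation of $X^\red$, and then read off (a) via the four-term sequence \eq{dc2eq45} and (c) by stacking \eq{dc2eq33} against the identifications used in (a) and (b). The one point where the paper's argument is worth noting: the coherence concern you flag at the end (whether the cocycle condition holds exactly, not just up to sign) is handled not by proving an abstract functoriality of $\Up_\phi$ under composition, but by restricting the descent identity to each point $(p_1,p_2,p_3,\th_{12},\th_{23})$ of $W^\red$ — where it reduces to a commuting diagram of finite-dimensional vector spaces coming from \eq{dc2eq36} and the determinant lines of \eq{dc2eq45} — and then invoking that $W^\red$ is reduced, so an equality of morphisms of line bundles on $W^\red$ can be checked pointwise. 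This is precisely the mechanism that resolves your anticipated bookkeeping issue; you had the right tool (the pointwise characterisation \eq{dc2eq36}) in hand. For atlas-independence the paper also uses a small simplification: rather than comparing two descents via projections of $T'\t_X T$, it runs the construction on the single atlas $U\amalg U'$ and restricts, which sidesteps the need for a separate compatibility argument.
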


Here is the analogue of Definition~\ref{dc2def5}:

\begin{dfn} Let $(X,s)$ be a d-critical stack, and $K_{X,s}$
its canonical bundle from Theorem \ref{dc2thm4}. An {\it
orientation\/} on $(X,s)$ is a choice of square root line bundle
$K_{X,s}^{1/2}$ for $K_{X,s}$ on $X^\red$. That is, an orientation
is a line bundle $L$ on $X^\red$, together with an isomorphism
$L^{\ot^2}=L\ot L\cong K_{X,s}$. A d-critical stack with an
orientation will be called an {\it oriented d-critical stack}.
\label{dc2def11}
\end{dfn}

Suppose $(X,s)$ is an oriented d-critical stack. Then for each
smooth $t:T\ra X$ we have a square root $K_{X,s}^{1/2}
(T^\red,t^\red)$ for $K_{X,s}(T^\red,t^\red)$. Thus by \eq{dc2eq44},
$K_{X,s}^{1/2}(T^\red,t^\red)\ot (\La^{\rm top}\bL_{\smash{
T/X}})\vert_{T^\red}$ is a square root for $K_{T,s(T,t)}$. This
proves:

\begin{lem} Let\/ $(X,s)$ be a d-critical stack. Then an
orientation $K_{X,s}^{1/2}$ for $(X,s)$ determines a canonical
orientation\/ $\smash{K_{T,s(T,t)}^{1/2}}$ for the algebraic
d-critical locus $\bigl(T,s(T,t)\bigr),$ for all smooth\/ $t: T\ra
X$ with\/ $T$ a $\K$-scheme.
\label{dc2lem3}
\end{lem}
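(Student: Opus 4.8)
The plan is to extract the statement directly from part (b) of Theorem \ref{dc2thm4}. For a smooth $1$-morphism $t:T\ra X$ with $T\in\Sch_\K$, equation \eq{dc2eq44} provides a canonical isomorphism $\Ga_{T,t}:K_{X,s}(T^\red,t^\red)\xrightarrow{\cong}K_{T,s(T,t)}\ot\bigl(\La^{\rm top}T^*_{T/X}\bigr)\big\vert_{T^\red}^{\ot^{-2}}$, which rearranges to
\[
K_{T,s(T,t)}\;\cong\;K_{X,s}(T^\red,t^\red)\ot\bigl(\La^{\rm top}T^*_{T/X}\bigr)\big\vert_{T^\red}^{\ot^2}.
\]
So it suffices to produce a square root of the right-hand side on $T^\red$, using the orientation of $(X,s)$ together with the obvious square root of the second factor.

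First I would unwind the notion of orientation in this setting: by Definition \ref{dc2def11} and the description of line bundles on the reduced substack $X^\red$ via Proposition \ref{dc2prop16}, an orientation $K_{X,s}^{1/2}$ is a line bundle $L$ on $X^\red$ equipped with an isomorphism $L^{\ot^2}\cong K_{X,s}$. Since $t:T\ra X$ smooth implies $t^\red:T^\red\ra X^\red$ smooth, I may evaluate this data on $t^\red$: Proposition \ref{dc2prop16}(A) produces a line bundle $L(T^\red,t^\red)$ on $T^\red$ and an isomorphism $L(T^\red,t^\red)^{\ot^2}\cong K_{X,s}(T^\red,t^\red)$. Tensoring with $\bigl(\La^{\rm top}T^*_{T/X}\bigr)\big\vert_{T^\red}$ and combining with the displayed isomorphism, I get
\[
\Bigl(L(T^\red,t^\red)\ot\bigl(\La^{\rm top}T^*_{T/X}\bigr)\big\vert_{T^\red}\Bigr)^{\ot^2}\;\cong\;K_{X,s}(T^\red,t^\red)\ot\bigl(\La^{\rm top}T^*_{T/X}\bigr)\big\vert_{T^\red}^{\ot^2}\;\cong\;K_{T,s(T,t)},
\]
so $K_{T,s(T,t)}^{1/2}:=L(T^\red,t^\red)\ot\bigl(\La^{\rm top}T^*_{T/X}\bigr)\big\vert_{T^\red}$ is the required orientation of $\bigl(T,s(T,t)\bigr)$.

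It then remains to observe that this assignment is canonical, i.e.\ compatible with pullbacks: given a $2$-commutative triangle relating smooth $1$-morphisms $t:T\ra X$, $t':T'\ra X$ via a morphism $\phi:T'\ra T$ in $\Sch_\K$, the canonical isomorphisms $\phi^*\bigl(L(T^\red,t^\red)\bigr)\cong L((T')^\red,(t')^\red)$ and $\phi^*\bigl(K_{X,s}(T^\red,t^\red)\bigr)\cong K_{X,s}((T')^\red,(t')^\red)$ built into Proposition \ref{dc2prop16}, together with the splitting $\bigl(\La^{\rm top}T^*_{T'/X}\bigr)\big\vert_{(T')^\red}\cong\phi^*\bigl((\La^{\rm top}T^*_{T/X})\vert_{T^\red}\bigr)\ot\bigl(\La^{\rm top}T^*_{T'/T}\bigr)\big\vert_{(T')^\red}$ coming from Remark \ref{dc2rem10}(iii),(v), and the compatibility of the $\Ga_{T,t}$ under pullback (pinned down by the pointwise diagram \eq{dc2eq46}), identify the square roots produced on $T'$ and on $T$ in the expected way. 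There is essentially no genuine obstacle here: the content of the lemma is the one-line rearrangement of \eq{dc2eq44}, and the only work is the routine functoriality bookkeeping in this last step, which follows formally from the identifications already assembled in Theorem \ref{dc2thm4} and Remark \ref{dc2rem10}.
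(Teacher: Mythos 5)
Your argument is exactly the paper's: the orientation $K_{X,s}^{1/2}$ on $X^\red$, evaluated at $(T^\red,t^\red)$, gives a square root of $K_{X,s}(T^\red,t^\red)$, and tensoring with the evident square root $(\La^{\rm top}T^*_{T/X})\vert_{T^\red}$ of the second factor in \eq{dc2eq44} produces a square root of $K_{T,s(T,t)}$. The paper's proof stops there; your extra paragraph on pullback compatibility is accurate but not needed for the lemma as stated, which only asserts a canonical orientation for each fixed smooth $t:T\ra X$.
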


In \cite{BBBJ} we will prove that an oriented d-critical stack
$(X,s)$ has a natural perverse sheaf $P_{X,s}^\bu$, $\cD$-module
$D_{X,s}$, mixed Hodge module $H_{X,s}^\bu$ (over $\K=\C$) and
motive $MF_{X,s}$, as in Theorems \ref{dc1thm3} and \ref{dc1thm4}
for $\K$-schemes.

\section{The sheaves $\cS_X,\cSz_X$}
\label{dc3}

Sections \ref{dc31}--\ref{dc33} prove Theorem \ref{dc2thm1} from
\S\ref{dc21}, and \S\ref{dc34} proves Proposition~\ref{dc2prop1}.

\subsection{Construction of the sheaf $\cS_X$ in Theorem \ref{dc2thm1}}
\label{dc31}

Let $X$ be a fixed complex analytic space. In this section we
construct the sheaf $\cS_X$ in Theorem \ref{dc2thm1}, satisfying
Theorem \ref{dc2thm1}(i),(ii). We will use the following notation.
Define a {\it triple\/} $(R,U,i)$ to be an open subset $R\subseteq
X$, a complex manifold $U$, and an embedding $i:R\hookra U$ of $R$
as a closed complex analytic subspace of $U$, as in Theorem
\ref{dc2thm1}(i). For such a triple $(R,U,i)$, define the sheaf of
ideals $I_{R,U}\subseteq i^{-1}(\O_U)$ as in \eq{dc2eq1}. We will
also write $I_{R,U}'\subseteq\O_U$ for the sheaf of ideals vanishing
on the closed complex analytic subspace~$i(R)\subseteq U$.

If $(R,U,i)$ is a triple and $U'\subseteq U$ is open, set
$R':=i^{-1}(U')\subseteq R$ and $i':=i\vert_{R'}:R'\hookra U'$. Then
$(R',U',i')$ is another triple, which we call a {\it subtriple\/} of
$(R,U,i)$, and write as~$(R',U',i')\subseteq(R,U,i)$.

For each triple $(R,U,i)$, as in \eq{dc2eq2} define
$\cK_{R,U,i},\ka_{R,U}$ by the exact sequence of sheaves of
$\C$-vector spaces on $R$:
\e
\xymatrix@C=20pt{ 0 \ar[r] & \cK_{R,U,i} \ar[rr]^(0.4){\ka_{R,U}} &&
\displaystyle\frac{i^{-1}(\O_U)}{I_{R,U}^2} \ar[rr]^(0.4)\d &&
\displaystyle\frac{i^{-1}(T^*U)}{I_{R,U}\cdot i^{-1}(T^*U)}\,. }
\label{dc3eq1}
\e
That is, $\ka_{R,U}:\cK_{R,U,i}\ra i^{-1}(\O_U)/I_{R,U}^2$ is the
kernel of $\d:i^{-1}(\O_U)/I_{R,U}^2\ra i^{-1}(T^*U)/I_{R,U}\cdot
i^{-1}(T^*U)$. The difference between \eq{dc2eq2} and \eq{dc3eq1} is
that \eq{dc2eq2} includes an isomorphism
$\cS_X\vert_R\cong\cK_{R,U,i}$, but we have not yet defined $\cS_X$.
If $(R',U',i')\subseteq(R,U,i)$ then $\cK_{R',U',i'}=\cK_{R,U,i}
\vert_{\smash{R'}}$ and~$\ka_{R',U'}=\ka_{R,U}\vert_{\smash{R'}}$.

Note that $i^{-1}(\O_U)/I_{R,U}^2$ in \eq{dc3eq1} is a sheaf of
commutative $\C$-algebras on $R$, since $i^{-1}(\O_U)$ is and
$I_{R,U}^2\subset i^{-1}(\O_U)$ is a sheaf of ideals. Now
$\ka_{R,U}(\cK_{R,U,i})$ is the subsheaf of local sections
$f+I_{R,U}^2$ in $i^{-1}(\O_U)/I_{R,U}^2$ such that $\d f\in
I_{R,U}\cdot i^{-1}(T^*U)$. If $f+I_{R,U}^2,g+I_{R,U}^2$ are two
such sections then $\d(fg)=f\,\d g+g\,\d f\in I_{R,U}\cdot
i^{-1}(T^*U)$, so $(f+I_{R,U}^2)\cdot(g+I_{R,U}^2)
\in\ka_{R,U}(\cK_{R,U,i})$. Also $1+I_{R,U}^2\in\ka_{R,U}
(\cK_{R,U,i})$ as $\d(1)=0$. Hence $\ka_{R,U}(\cK_{R,U,i})$ is a
subsheaf of $\C$-vector spaces in $i^{-1}(\O_U)/I_{R,U}^2$ which is
closed under multiplication and contains the identity, so
$\ka_{R,U}(\cK_{R,U,i})$ is a sheaf of $\C$-subalgebras in
$i^{-1}(\O_U)/I_{R,U}^2$. Therefore $\cK_{R,U,i}$ has the structure
of a sheaf of commutative $\C$-algebras on $R$ in a unique way, such
that $\ka_{R,U}$ in \eq{dc3eq1} is a morphism of sheaves of
$\C$-algebras.

Call $\Phi:(R,U,i)\ra(S,V,j)$ a {\it morphism of triples\/} if
$R\subseteq S\subseteq X$, and $\Phi:U\ra V$ is holomorphic with
$\Phi\ci i=j\vert_{R}:R\ra V$. As for \eq{dc2eq3}, form the
commutative diagram of sheaves of $\C$-vector spaces on $R$:
\e
\begin{gathered}
\xymatrix@C=10pt@R=17pt{ 0 \ar[r] & \cK_{S,V,j}\vert_{R}
\ar@{.>}[d]^{\Phi^*} \ar[rrr]^{\ka_{S,V}\vert_{R}} &&&
\displaystyle\frac{j^{-1}(\O_{V})}{I_{S,V}^2}\Big\vert_{R}
\ar@<-2ex>[d]^{i^{-1}(\Phi^\sh)} \ar[rr]^(0.4)\d &&
\displaystyle\frac{j^{-1}(T^*V)}{I_{S,V}\cdot
j^{-1}(T^*V)}\Big\vert_{R} \ar@<-2ex>[d]^{i^{-1}(\d\Phi)} \\
0 \ar[r] & \cK_{R,U,i} \ar[rrr]^{\ka_{R,U}} &&&
\displaystyle\frac{i^{-1}(\O_{U})}{I_{R,U}^2} \ar[rr]^(0.4)\d &&
\displaystyle\frac{i^{-1}(T^*U)}{I_{R,U}\cdot i^{-1}(T^*U)}\,. }
\end{gathered}
\label{dc3eq2}
\e
Since the right hand square of \eq{dc3eq2} commutes as in
\eq{dc2eq3}, exactness of the rows implies there is a unique
$\Phi^*:\cK_{S,V,j}\vert_{R}\ra\cK_{R,U,i}$ making \eq{dc3eq2}
commute. As $\ka_{R,U},\ka_{S,V}\vert_{R},i^{-1}(\Phi^\sh)$ are
morphisms of sheaves of $\C$-algebras, so is $\Phi^*$.

If $(R',U',i')\subseteq(R,U,i)$ and $(S',V',j')\subseteq(S,V,j)$
with $\Phi(U')\subseteq V'\subseteq V$ then setting
$\Phi':=\Phi\vert_{U'}:U'\ra V'$, we have~$\Phi^{\prime
*}=\Phi^*\vert_{R'}$.

If $\Psi:(S,V,j)\ra(T,W,k)$ is another morphism then so is
$\Psi\ci\Phi:(R,U,i)\ra(T,W,k)$, and by considering the diagram
\begin{equation*}
\xymatrix@C=13pt@R=21pt{ 0 \ar[r] & \cK_{T,W,k}\vert_{R}
\ar[d]^{\Psi^*\vert_{R}}
\ar@<-3.3ex>@/_1pc/[dd]_(0.7){(\Psi\ci\Phi)^*}
\ar[rrr]^{\ka_{T,W}\vert_{R}} &&&
\displaystyle\frac{k^{-1}(\O_{W})}{I_{T,W}^2}\Big\vert_{R}
\ar@<-2ex>[d]^{j^{-1}(\Psi^\sh)\vert_{R}}
\ar@<-4ex>@/_1pc/[dd]_(0.7){i^{-1}((\Psi\ci\Phi)^\sh)}
\ar[rr]^(0.4)\d &&
\displaystyle\frac{k^{-1}(T^*W)}{I_{T,W}\cdot
k^{-1}(T^*W)}\Big\vert_{R}
\ar@<-2ex>[d]^{j^{-1}(\d\Psi)\vert_{R}}
\ar@<-4ex>@/_1pc/[dd]_(0.7){i^{-1}(\d(\Psi\ci\Phi))} \\
0 \ar[r] & \cK_{S,V,j}\vert_{R} \ar[d]^(0.6){\Phi^*}
\ar[rrr]^{\ka_{S,V}\vert_{R}} &&&
\displaystyle\frac{j^{-1}(\O_{V})}{I_{S,V}^2}\Big\vert_{R}
\ar@<-2ex>[d]^(0.6){i^{-1}(\Phi^\sh)} \ar[rr]^(0.4)\d &&
\displaystyle\frac{j^{-1}(T^*V)}{I_{S,V}\cdot
j^{-1}(T^*V)}\Big\vert_{R} \ar@<-2ex>[d]^(0.6){i^{-1}(\d\Phi)} \\
0 \ar[r] & \cK_{R,U,i} \ar[rrr]^{\ka_{R,U}} &&&
\displaystyle\frac{i^{-1}(\O_{U})}{I_{R,U}^2}
\ar[rr]^(0.4)\d &&
\displaystyle\frac{i^{-1}(T^*U)}{I_{R,U}\cdot
i^{-1}(T^*U)}\,, }
\end{equation*}
we see that $(\Psi\ci\Phi)^*=\Phi^*\ci\Psi^*\vert_{R}$, that is, the
morphisms $\Phi^*$ in \eq{dc3eq2} are contravariantly functorial. If
$(R,U,i)=(S,V,j)$ and $\Phi=\id_{U}$ then~$\Phi^*=\id$.

We begin with three lemmas. The first is the main point of the
proof:

\begin{lem} The morphism\/ $\Phi^*$ in \eq{dc3eq2} is independent
of the choice of\/ $\Phi$. That is, if\/ $\Phi,\ti\Phi:(R,U,i)
\ra(S,V,j)$ are morphisms of triples then
\e
\Phi^*=\ti\Phi{}^*:\cK_{S,V,j}\vert_{R}\longra\cK_{R,U,i}.
\label{dc3eq3}
\e
\label{dc3lem1}
\end{lem}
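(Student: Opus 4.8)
The plan is to test the claimed identity \eq{dc3eq3} after composing with the injective map $\ka_{R,U}$ of \eq{dc3eq1}. By the construction of $\Phi^*$ and $\ti\Phi{}^*$ from the commutative diagram \eq{dc3eq2} we have $\ka_{R,U}\ci\Phi^*=i^{-1}(\Phi^\sh)\ci(\ka_{S,V}\vert_R)$ and $\ka_{R,U}\ci\ti\Phi{}^*=i^{-1}(\ti\Phi{}^\sh)\ci(\ka_{S,V}\vert_R)$, and $\ka_{R,U}$ is injective, being the inclusion of a kernel. So it suffices to prove that the two ring homomorphisms $i^{-1}(\Phi^\sh),i^{-1}(\ti\Phi{}^\sh):\bigl(j^{-1}(\O_V)/I_{S,V}^2\bigr)\vert_R\ra i^{-1}(\O_U)/I_{R,U}^2$ agree on the image of $\ka_{S,V}\vert_R$. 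By \eq{dc3eq1} that image is the sheaf of local sections $f+I_{S,V}^2$, with $f$ a local section of $j^{-1}(\O_V)$, satisfying $\d f\in I_{S,V}\cdot j^{-1}(T^*V)$; so the lemma reduces to showing $i^{-1}(\Phi^\sh)(f)-i^{-1}(\ti\Phi{}^\sh)(f)\in I_{R,U}^2$ for every such $f$.

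I would prove this by a derivation argument. Set $\de:=i^{-1}(\Phi^\sh)-i^{-1}(\ti\Phi{}^\sh):j^{-1}(\O_V)\vert_R\ra i^{-1}(\O_U)$, a $\C$-linear map vanishing on constants. Since $\Phi\ci i=j\vert_R=\ti\Phi\ci i$ we get $i^\sh\ci\de=(\Phi\ci i)^\sh-(\ti\Phi\ci i)^\sh=0$, so $\de$ takes values in $I_{R,U}=\Ker(i^\sh)$; in particular $i^{-1}(\Phi^\sh)\equiv i^{-1}(\ti\Phi{}^\sh)$ modulo $I_{R,U}$. Reducing $\de$ modulo $I_{R,U}^2$ gives $\ov\de:j^{-1}(\O_V)\vert_R\ra I_{R,U}/I_{R,U}^2$. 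From the twisted Leibniz rule $\de(gh)=i^{-1}(\Phi^\sh)(g)\cdot\de(h)+\de(g)\cdot i^{-1}(\ti\Phi{}^\sh)(h)$, together with the observation that the factors $i^{-1}(\Phi^\sh)(g)$ and $i^{-1}(\ti\Phi{}^\sh)(h)$ may be replaced by their common reduction $j^\sh(g)\vert_R,\ j^\sh(h)\vert_R$ modulo $I_{R,U}$ without changing the product modulo $I_{R,U}^2$, one finds that $\ov\de$ is a genuine $\C$-derivation of $j^{-1}(\O_V)\vert_R$ into the $\O_X\vert_R$-module $I_{R,U}/I_{R,U}^2$, over the ring homomorphism $j^\sh\vert_R$. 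Hence $\ov\de$ factors through the universal derivation: $\ov\de=\th\ci\bigl(\d\bmod I_{S,V}\cdot j^{-1}(T^*V)\bigr)$ for some $\O_X\vert_R$-linear $\th$, where $\d\bmod I_{S,V}\cdot j^{-1}(T^*V)$ is exactly the map $j^{-1}(\O_V)\vert_R\ra j^{-1}(T^*V)/I_{S,V}\cdot j^{-1}(T^*V)$ in \eq{dc3eq1}. But the defining condition on $f$ above is precisely that $\d f$ vanishes in $j^{-1}(T^*V)/I_{S,V}\cdot j^{-1}(T^*V)$, so $\ov\de(f)=\th(0)=0$, i.e.\ $\de(f)\in I_{R,U}^2$. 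Combining with the first paragraph, $\ka_{R,U}\ci\Phi^*=\ka_{R,U}\ci\ti\Phi{}^*$, hence $\Phi^*=\ti\Phi{}^*$.

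The step I expect to be the crux is verifying that $\de$ descends modulo $I_{R,U}^2$ to an \emph{untwisted} derivation: this is exactly where the hypothesis $\Phi\ci i=\ti\Phi\ci i$ enters, since it forces $i^{-1}(\Phi^\sh)$ and $i^{-1}(\ti\Phi{}^\sh)$ to agree after reducing mod $I_{R,U}$, so the two ``twists'' in the Leibniz rule coincide there; and it is also why the naive bound $\de(f)\in I_{R,U}$ can be improved to $\de(f)\in I_{R,U}^2$ only when $f$ lies in $\cK_{S,V,j}$, i.e.\ when $\d f\in I_{S,V}\cdot j^{-1}(T^*V)$. The remaining ingredients — that inverse image of sheaves of rings commutes with Kähler differentials, that for a complex manifold $V$ these reproduce $T^*V$, and that $\C$-derivations into a module are represented by the universal one — are standard, and the argument goes through unchanged in the algebraic case, though only the complex analytic case is needed here. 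Alternatively one could argue locally near $x\in R$ in holomorphic coordinates $v_1,\dots,v_m$ on $V$ near $j(x)$: writing $e_k=v_k\ci\Phi-v_k\ci\ti\Phi\in I_{R,U}$, a Hadamard expansion gives $f\ci\Phi-f\ci\ti\Phi=\sum_{k=1}^m e_k\int_0^1(\pd f/\pd v_k)(\ti\Phi+te)\,\d t$, and each coefficient integral also lies in $I_{R,U}$ since $\pd f/\pd v_k\in I_{S,V}$, yielding the same conclusion at the cost of some analytic bookkeeping.
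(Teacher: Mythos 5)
Your argument is correct, but it takes a genuinely different route from the paper's. The paper works concretely: picking holomorphic coordinates $(z_1,\ldots,z_n)$ on $V$ near $j(x)$ and using a finite Taylor expansion
\[
f\ci\ti\Phi-f\ci\Phi=\ts\sum_a\bigl(\tfrac{\pd f}{\pd z_a}\ci\Phi\bigr)\bigl(z_a\ci\ti\Phi-z_a\ci\Phi\bigr)+\sum_{a,b}A_{ab}\bigl(z_a\ci\ti\Phi-z_a\ci\Phi\bigr)\bigl(z_b\ci\ti\Phi-z_b\ci\Phi\bigr),
\]
then observes that each $z_a\ci\ti\Phi-z_a\ci\Phi$ lies in $I_{R,U}'$ (because $\Phi\ci i=\ti\Phi\ci i$) and each $\tfrac{\pd f}{\pd z_a}\ci\Phi$ lies in $I_{R,U}'$ (because $\d f\in I_{S,V}'\cdot T^*V$), so every summand lands in $(I_{R,U}')^2$. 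Your proof instead packages these two inputs conceptually: $\Phi\ci i=\ti\Phi\ci i$ forces $\de:=i^{-1}(\Phi^\sh)-i^{-1}(\ti\Phi{}^\sh)$ to land in $I_{R,U}$ and makes $\ov\de$ an honest $\C$-derivation over $j^\sh$; the hypothesis $\d f\in I_{S,V}\cdot j^{-1}(T^*V)$ then kills $\ov\de(f)$ via the factorization through $T^*V$. Both arguments are correct. One point worth making explicit: the step where $\ov\de$ is asserted to factor through $T^*V$ is itself nontrivial in the analytic setting. The algebraic Kähler differentials of a convergent power series ring $\O_{V,v}\cong\C\{z_1,\ldots,z_n\}$ form a huge non-coherent module, not $T^*V$; what one actually needs is that $\C$-derivations into \emph{finite} $\O_{V,v}$-modules are automatically continuous (so factor through $\hat\Omega\cong T^*V$). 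That is a theorem — provable by the Hadamard expansion plus Krull intersection, or by citing the theory of universally finite differential modules for analytic algebras — and your closing remark that ``a Hadamard expansion\ldots yield[s] the same conclusion'' is not merely an alternative: it is essentially what underwrites the factorization you use. So the paper's route is the more self-contained one, while yours is conceptually cleaner but quietly leans on that analytic input; your stalk $(I_{R,U}/I_{R,U}^2)_x$ is indeed finite over $\O_{V,j(x)}$ (it is coherent on $R$ and $j^\sh$ is surjective), so the theorem does apply.
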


\begin{proof} If $x\in R\subseteq S\subseteq X$ and $\al$ is a
local section of $\cK_{S,V,j}\vert_{R}$ defined near $x$ in $R$,
then $\al=f+(I_{S,V}')^2$ for $f$ a local section of $\O_{V}$
defined near $j(x)$ in $V$ such that $\d f$ is a section of
$I_{S,V}'\cdot T^*V\subseteq T^*V$ near $j(x)$ in $V$. Then
$\Phi^*(\al)=f\ci\Phi+(I_{R,U}')^2$ and $\ti\Phi{}^*(\al)=
f\ci\ti\Phi+(I_{R,U}')^2$ near $x$ in~$R$.

Choose holomorphic coordinates $(z_1,\ldots,z_n)$ on $V$ near
$j(x)=\Phi(i(x))=\ti\Phi(i(x))$. Then by a holomorphic version of
Taylor's Theorem we have
\e
\begin{split}
f\ci\ti\Phi-f\ci\Phi&=\ts\sum_{a=1}^n\bigl(\frac{\pd f}{\pd
z_a}\ci\Phi\bigr)\cdot\bigl(z_a\ci\ti\Phi-z_a\ci\Phi\bigr)\\
&+\ts\sum_{a,b=1}^nA_{ab}\bigl(z_a\ci\ti\Phi-z_a\ci\Phi\bigr)
\bigl(z_b\ci\ti\Phi-z_b\ci\Phi\bigr)
\end{split}
\label{dc3eq4}
\e
near $j(x)$, for some holomorphic $A_{ab}:V\ra\C$ defined
near~$j(x)$.

Since $\Phi\ci i=\ti\Phi\ci i$ and $I_{R,U}'$ is the ideal in
$\O_{U}$ vanishing on $i(R)$ near $i(x)$, we see that
$z_a\ci\ti\Phi-z_a\ci\Phi\in I_{R,U}'$ on $U$ near $i(x)$ for each
$a=1,\ldots,n$. Also $\frac{\pd f}{\pd z_a}\in I_{S,V}'$ near $j(x)$
by choice of $\al,f$, and $g\mapsto g\ci\Phi$ maps $I_{S,V}'\ra
I_{R,U}'$ near $j(x),i(x)$ as $\Phi\ci i=j\vert_{R}$, so $\frac{\pd
f}{\pd z_a}\ci\Phi\in I_{R,U}'$ near $i(x)$. Thus each factor
$(\cdots)$ on the right hand side of \eq{dc3eq4} lies in $I_{R,U}'$
near $i(x)$, so $f\ci\ti\Phi-f\ci\Phi\in (I_{R,U}')^2$ near $i(x)$.
Therefore
\begin{equation*}
\Phi^*(\al)=f\ci\Phi+(I_{R,U}')^2=f\ci\ti\Phi+(I_{R,U}')^2=
\ti\Phi{}^*(\al)
\end{equation*}
near $x$ in $R$ for any local section $\al$ of
$\cK_{S,V,j}\vert_{R}$, which proves~\eq{dc3eq3}.
\end{proof}

\begin{lem} Let\/ $(R,U,i),(S,V,j)$ be triples. Then for
each\/ $x\in R\cap S\subseteq X,$ there exists a subtriple
$(R',U',i')\subseteq(R,U,i)$ with\/ $x\in R'\subseteq R\cap S$ and a
morphism $\Phi:(R',U',i')\ra(S,V,j)$.
\label{dc3lem2}
\end{lem}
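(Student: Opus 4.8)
\medskip

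The plan is to produce $\Phi$ by extending the morphism $j\ci(i\vert_{R\cap S})^{-1}$ off the closed complex analytic subspace $i(R)\subseteq U$ to a holomorphic map on an ambient open neighbourhood, exploiting that $V$ is a complex manifold (so a holomorphic map into a coordinate chart of $V$ is just a tuple of holomorphic functions) and that holomorphic functions extend from a closed analytic subspace to the ambient manifold near any point. Concretely, fix $x\in R\cap S$ and set $y=i(x)\in U$, $z=j(x)\in V$; choose an open $z\in V_0\subseteq V$ biholomorphic to an open $W\subseteq\C^n$ by a map sending $z\mapsto 0$, and let $w_1,\dots,w_n$ be the corresponding coordinate functions on $V_0$. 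Since $j$ is continuous and $j(x)\in V_0$, there is an open $x\in R_1\subseteq R\cap S$ with $j(R_1)\subseteq V_0$. As $i$ is a closed embedding and $R_1\subseteq R$ open, $i(R_1)$ is open in $i(R)$, so $i(R_1)=i(R)\cap U_1$ for some open $U_1\subseteq U$; then $i(R_1)$ is closed in $U_1$, and $R_1=i^{-1}(U_1)$ (if $r\in R$ with $i(r)\in U_1$ then $i(r)\in i(R)\cap U_1=i(R_1)$, so $r\in R_1$ by injectivity of $i$), whence $(R_1,U_1,i\vert_{R_1})\subseteq(R,U,i)$ is a subtriple with $x\in R_1$.

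Next I would construct the map and confront the main step. The composite $j\vert_{R_1}\ci(i\vert_{R_1})^{-1}$ is a morphism of complex analytic spaces $i(R_1)\ra V_0\cong W\subseteq\C^n$, hence equals $(g_1,\dots,g_n)$ where $g_a$ is the pullback of $w_a$, a holomorphic function on the complex analytic space $i(R_1)$, i.e.\ a section of $\O_{U_1}/I_{R_1,U_1}'$ on $i(R_1)$. The only nontrivial point is that each $g_a$ extends to a holomorphic function near $y$ in $U_1$: the quotient map $\O_{U_1}\ra\O_{U_1}/I_{R_1,U_1}'$ is surjective, in particular on the stalk at $y$, so the germ of $g_a$ at $y$ lifts to a germ in $\O_{U_1,y}$; choosing representatives of these finitely many germs and intersecting their domains gives an open $y\in U'\subseteq U_1$ and holomorphic $\tilde g_1,\dots,\tilde g_n:U'\ra\C$ agreeing with $g_1,\dots,g_n$ on $i(R)\cap U'$. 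After shrinking $U'$ I may assume $\tilde g=(\tilde g_1,\dots,\tilde g_n)$ maps $U'$ into $W$ (possible since $\tilde g(y)=0\in W$ and $W$ is open) and that $R':=i^{-1}(U')$ still contains $x$; composing $\tilde g$ with the biholomorphism $W\,{\buildrel\cong\over\longra}\,V_0\hookra V$ produces a holomorphic map $\Phi:U'\ra V$.

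It then remains to verify $\Phi\ci i'=j\vert_{R'}$ for $i'=i\vert_{R'}$. Both sides are morphisms $R'\ra V$ landing in $V_0$, and a morphism of complex analytic spaces into the smooth space $V_0$ is determined by the $n$-tuple of pullbacks of the coordinate functions $w_a$; for $\Phi\ci i'$ that pullback is the restriction of $\tilde g_a$ to $i(R')$, which by construction equals $g_a\vert_{i(R')}$, namely the pullback of $w_a$ along $j\vert_{R'}$. Hence $\Phi:(R',U',i')\ra(S,V,j)$ is a morphism of triples with $x\in R'\subseteq R\cap S$, proving the lemma. The main (and only mild) obstacle is the stalkwise extension of the $g_a$ from $i(R_1)$ to $U_1$; all remaining steps are routine shrinking of open sets and unwinding definitions.
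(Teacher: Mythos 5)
Your proof is correct and takes essentially the same approach as the paper's: choose coordinates on $V$ near $j(x)$, extend the coordinate functions of $j$ from the closed analytic subspace $i(R)$ to a neighbourhood of $i(x)$ in $U$ (which the paper likewise justifies by the fact that holomorphic functions extend locally off a closed analytic subspace), assemble these extensions into $\Phi$, and verify $\Phi\ci i'=j\vert_{R'}$ by injectivity of the coordinate chart. Your treatment is slightly more explicit about the stalkwise lifting via surjectivity of $\O_{U_1}\ra\O_{U_1}/I_{R_1,U_1}'$, but the argument is the same.
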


\begin{proof} Choose holomorphic coordinates $(z_1,\ldots,z_n)$ on
an open neighbourhood $\ti V$ of $j(x)$ in $V$, so that
$(z_1,\ldots,z_n):\ti V\ra\C^n$ is a biholomorphism with an open set
$\ti W\subseteq\C^n$. Let $U'$ be an open neighbourhood of $i(x)$ in
$U$ small enough that $R':=i^{-1}(U')\subseteq R\cap j^{-1}(\ti
V)\subseteq R\cap S\subseteq X$. Then $z_a\ci j\vert_{R'}$ for
$a=1,\ldots,n$ are morphisms $R'\ra\C$. Since $i(R')$ is a closed
complex analytic subspace of $U'$ isomorphic to $R'$, and any
holomorphic function on $i(R')$ extends locally to $U'$ near $i(x)$,
by making $R',U'$ smaller we can suppose there exist holomorphic
$f_a:U'\ra\C$ with $f_a\ci i\vert_{R'}=z_a\ci j\vert_{R'}$
for~$a=1,\ldots,n$.

Making $R',U'$ smaller again, we can suppose that
$(f_1,\ldots,f_n):U'\ra\C^n$ maps into $\ti V\subseteq\C^n$. Then
there is a unique holomorphic map $\Phi:U'\ra \ti V\subseteq V$ with
$z_a\ci\Phi=f_a$ for $a=1,\ldots,n$. Hence $z_a\ci\Phi\ci
i\vert_{R'}=f_a\ci i\vert_{R'}=z_a\ci j\vert_{R'}$ for
$a=1,\ldots,n$, which implies that $\Phi\ci i\vert_{R'}=
j\vert_{R'}$ as $(z_1,\ldots,z_n):\ti V\ra\C^n$ is injective. Thus
$\Phi:(R',U',i')\ra(S,V,j)$ is a morphism of triples.
\end{proof}

\begin{lem} Let\/ $(R,U,i),(S,V,j)$ be triples. Then
there exists a unique isomorphism of sheaves of commutative\/
$\C$-algebras on $R\cap S\!:$
\e
I_{\sst S,V,j}^{\sst R,U,i}:\cK_{S,V,j}\vert_{R\cap S}\longra
\cK_{R,U,i}\vert_{R\cap S}
\label{dc3eq5}
\e
such that if\/ $(R',U',i'),\Phi$ are as in Lemma\/
{\rm\ref{dc3lem2}} then $I_{\sst S,V,j}^{\sst
R,U,i}\big\vert{}_{R'}=\Phi^*$. Also
\e
I_{\sst R,U,i}^{\sst R,U,i}=\id_{\cK_{R,U,i}},\qquad I_{\sst
R,U,i}^{\sst S,V,j}= \bigl(I_{\sst S,V,j}^{\sst R,U,i}\bigr)^{-1},
\label{dc3eq6}
\e
and if\/ $(T,W,k)$ is another triple then
\e
\begin{gathered}
I_{\sst S,V,j}^{\sst R,U,i}\big\vert_{R\cap S\cap T} \ci I_{\sst
T,W,k}^{\sst S,V,j}\big\vert_{R\cap S\cap T}=
I_{\sst T,W,k}^{\sst R,U,i}\big\vert_{R\cap S\cap T}:\\
\cK_{T,W,k}\vert_{R\cap S\cap T}\longra \cK_{R,U,i}\vert_{R\cap
S\cap T}.
\end{gathered}
\label{dc3eq7}
\e
\label{dc3lem3}
\end{lem}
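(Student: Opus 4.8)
The plan is to define $I_{\sst S,V,j}^{\sst R,U,i}$ by gluing together the locally defined morphisms $\Phi^*$ of \eq{dc3eq2}: near each point $x$ of $R\cap S$, Lemma \ref{dc3lem2} produces a subtriple $(R',U',i')\subseteq(R,U,i)$ with $x\in R'$ and a morphism of triples $\Phi:(R',U',i')\ra(S,V,j)$, hence a morphism $\Phi^*:\cK_{S,V,j}\vert_{R'}\ra\cK_{R',U',i'}=\cK_{R,U,i}\vert_{R'}$ of sheaves of commutative $\C$-algebras, and Lemma \ref{dc3lem1} will be used to show these $\Phi^*$ are independent of all choices, so that they patch to a global morphism.

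First I would verify the independence. If $(R_1',U_1',i_1'),\Phi_1$ and $(R_2',U_2',i_2'),\Phi_2$ are two choices of subtriple and morphism near a point $x\in R\cap S$, set $U''=U_1'\cap U_2'$, $R''=i^{-1}(U'')$ and $i''=i\vert_{R''}$; then $\Phi_1\vert_{U''}$ and $\Phi_2\vert_{U''}$ are both morphisms $(R'',U'',i'')\ra(S,V,j)$, so $(\Phi_1\vert_{U''})^*=(\Phi_2\vert_{U''})^*$ by Lemma \ref{dc3lem1}, while the restriction property of $\Phi^*$ recorded before Lemma \ref{dc3lem1} gives $\Phi_k^*\vert_{R''}=(\Phi_k\vert_{U''})^*$ for $k=1,2$, whence $\Phi_1^*=\Phi_2^*$ near $x$. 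Hence the various $\Phi^*$ agree on overlaps, and since morphisms of sheaves themselves form a sheaf on $R\cap S$ they glue to a morphism of sheaves of commutative $\C$-algebras $I_{\sst S,V,j}^{\sst R,U,i}:\cK_{S,V,j}\vert_{R\cap S}\ra\cK_{R,U,i}\vert_{R\cap S}$ with $I_{\sst S,V,j}^{\sst R,U,i}\vert_{R'}=\Phi^*$ for every $(R',U',i'),\Phi$ as in Lemma \ref{dc3lem2}. Uniqueness is immediate from this characterization, since such $R'$ cover $R\cap S$.

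Next I would establish \eq{dc3eq6} and \eq{dc3eq7}. Taking $(R',U',i')=(R,U,i)$ and $\Phi=\id_{U}$ in the construction shows $I_{\sst R,U,i}^{\sst R,U,i}=\id$ locally near every point, hence globally. For \eq{dc3eq7}, given $x\in R\cap S\cap T$ I would apply Lemma \ref{dc3lem2} twice to pick a subtriple $(S'',V'',j'')\subseteq(S,V,j)$ with $x\in S''$ and a morphism $\Psi:(S'',V'',j'')\ra(T,W,k)$, and a subtriple $(R',U',i')\subseteq(R,U,i)$ with $x\in R'$ and a morphism $\Phi:(R',U',i')\ra(S,V,j)$; after shrinking $R',U'$ one may assume $R'\subseteq S''$ and $\Phi(U')\subseteq V''$, so that $\Phi:(R',U',i')\ra(S'',V'',j'')$ and $\Psi\ci\Phi:(R',U',i')\ra(T,W,k)$ become morphisms of triples. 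The contravariant functoriality of the $\Phi^*$ recorded before Lemma \ref{dc3lem1} then gives $(\Psi\ci\Phi)^*=\Phi^*\ci(\Psi^*\vert_{R'})$, and since $I_{\sst T,W,k}^{\sst R,U,i}\vert_{R'}=(\Psi\ci\Phi)^*$, $I_{\sst S,V,j}^{\sst R,U,i}\vert_{R'}=\Phi^*$ and $I_{\sst T,W,k}^{\sst S,V,j}\vert_{R'}=\Psi^*\vert_{R'}$ by the characterizing property of $I$, this shows \eq{dc3eq7} holds on $R'$; as such $R'$ cover $R\cap S\cap T$, \eq{dc3eq7} follows. Putting $(T,W,k)=(R,U,i)$ in \eq{dc3eq7} and using $I_{\sst R,U,i}^{\sst R,U,i}=\id$ gives $I_{\sst S,V,j}^{\sst R,U,i}\ci I_{\sst R,U,i}^{\sst S,V,j}=\id$, and interchanging the two triples gives $I_{\sst R,U,i}^{\sst S,V,j}\ci I_{\sst S,V,j}^{\sst R,U,i}=\id$; hence $I_{\sst S,V,j}^{\sst R,U,i}$ is an isomorphism with inverse $I_{\sst R,U,i}^{\sst S,V,j}$, which is \eq{dc3eq6}.

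The one genuinely delicate point is the independence of the local morphisms $\Phi^*$ from the subtriple and morphism used to define them; this is precisely the content of Lemma \ref{dc3lem1}, and once it is in hand the construction and the cocycle identities \eq{dc3eq6}--\eq{dc3eq7} are formal consequences of the restriction and composition properties of the $\Phi^*$ established before Lemma \ref{dc3lem1}. (The algebraic case goes through verbatim, working in the Zariski or \'etale topology as appropriate.)
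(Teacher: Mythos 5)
Your proposal is correct and follows essentially the same route as the paper: you glue the local morphisms $\Phi^*$ using Lemma \ref{dc3lem1} to verify independence on overlaps, deduce the identity case by taking $\Phi=\id_U$, prove the cocycle identity \eq{dc3eq7} by shrinking so that $\Phi$ lands in the domain of $\Psi$ and invoking functoriality of $\Phi\mapsto\Phi^*$, and then obtain \eq{dc3eq6} from \eq{dc3eq7} by specializing $(T,W,k)=(R,U,i)$ and exchanging the two triples. The only cosmetic difference is that you present the independence check for two choices near the same point $x$ via $U''=U_1'\cap U_2'$, while the paper phrases it for choices at possibly different points and intersects $R'\cap\hat R'$; since $R''=i^{-1}(U_1'\cap U_2')=R_1'\cap R_2'$ for subtriples these are the same argument.
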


\begin{proof} Suppose $x,(R',U',i'),\Phi$ and $\hat x,(\hat
R',\hat U',\hat i'),\hat\Phi$ are two possible choices in Lemma
\ref{dc3lem2}. Then $\Phi:U'\ra V$, $\hat\Phi:\hat U'\ra V$ induce
morphisms
\begin{equation*}
\Phi^*:\cK_{S,V,j}\vert_{R'}\longra\cK_{R,U, i}
\vert_{R'}\;\>\text{and}\;\> \hat\Phi{}^*:\cK_{S,V,j}
\vert_{\hat R'}\longra\cK_{R,U,i}\vert_{\hat R'}.
\end{equation*}
The restrictions $\Phi\vert_{\smash{U'\cap \hat U'}}$ and
$\hat\Phi\vert_{\smash{U'\cap \hat U'}}$ thus induce morphisms
\begin{equation*}
\Phi^*\vert_{R'\cap\hat R'},\hat\Phi^*\vert_{R'\cap\hat
R'}:\cK_{S,V,j}\vert_{R'\cap\hat R'}\longra\cK_{R,U,
i}\vert_{R'\cap\hat R'}.
\end{equation*}
Lemma \ref{dc3lem1} now shows that $\Phi^*\vert_{R'\cap\hat
R'}=\hat\Phi^*\vert_{R'\cap\hat R'}$.

Thus, Lemma \ref{dc3lem2} shows that for each $x\in R\cap S$ we can
choose an open neighbourhood $R'$ of $x$ in $R\cap S$, and a
morphism $\Phi^*:\cK_{S,V,j}\vert_{R'}\longra\cK_{R,U,
i}\vert_{R'}$. These open neighbourhoods $R'$ form an open cover of
$X$, and on overlaps $R'\cap\hat R'$ the corresponding morphisms
$\Phi^*,\hat\Phi^*$ agree. Therefore by properties of sheaves there
is a unique morphism $I_{\sst S,V,j}^{\sst R,U,i}$ of sheaves of
commutative $\C$-algebras in \eq{dc3eq5} such that $I_{\sst
S,V,j}^{\sst R,U,i}\big\vert {}_{R'}=\Phi^*$ for all
$(R',U',i'),\Phi$ as in Lemma~\ref{dc3lem2}.

To see that $I_{\sst R,U,i}^{\sst R,U,i}=\id_{\cK_{R,U,i}}$ as in
\eq{dc3eq6}, take $(R',U',i')=(R,U,i)= (S,V,j)$ and $\Phi=\id_{U}$,
so that $\Phi^*=\id$.

To prove \eq{dc3eq7}, let $(R,U,i),(S,V,j), (T,W,k)$ be triples and
$x\in R\cap S\cap T$. Apply Lemma \ref{dc3lem2} twice to get open
$i(x)\in U'\subseteq U$ and $j(x)\in V'\subseteq V$ and morphisms
$\Phi:(R',U',i')\ra(S,V,j)$ and $\Psi:(S',V',j')\ra(T,W,k)$. Making
$U',R'$ smaller we can suppose that $j(x)\in \Phi(U')\subseteq
V'\subseteq V$, so that $\Phi$ is also a morphism $(R',U',i')\ra
(S',V',j')$. Functoriality of the $\Phi^*$ now gives
$\Phi^*\ci\Psi^*\vert_{R'}=(\Psi\ci\Phi)^*$. So the defining
property of the $I_{\sst S,V,j}^{\sst R,U,i}$ gives
\begin{equation*}
I_{\sst S,V,j}^{\sst R,U,i}\big\vert_{R'} \ci
I_{\sst T,W,k}^{\sst S,V,j}\big\vert_{R'}=
I_{\sst T,W,k}^{\sst R,U,i}\big\vert_{R'}.
\end{equation*}
As we can cover $R\cap S\cap T$ by such open $R'$, equation
\eq{dc3eq7} follows.

Finally, applying \eq{dc3eq7} with $(T,W,k)=(R,U,i)$ and using the
first equation of \eq{dc3eq6} yields
\begin{equation*}
I_{\sst S,V,j}^{\sst R,U,i}\ci
I_{\sst R,U,i}^{\sst S,V,j}=
I_{\sst R,U,i}^{\sst R,U,i}\big\vert_{R\cap S}=
\id_{\cK_{R,U,i}}\big\vert_{R\cap S}.
\end{equation*}
Exchanging $(R,U,i),(S,V,j)$ proves the second equation of
\eq{dc3eq6}, and also shows that $I_{\sst S,V,j}^{\sst R,U,i}$ is an
isomorphism. The lemma is complete.
\end{proof}

We can now construct the sheaf $\cS_X$ in Theorem \ref{dc2thm1}.
Since $X$ is locally of finite type by our convention in
\S\ref{dc1}, near each $x\in X$ it admits a local embedding
$i:X\hookra U$ into a complex manifold $U$. Therefore we can choose
a family $\bigl\{(R_a,U_a,i_a):a\in A\bigr\}$ of triples such that
$\{R_a:a\in A\}$ is an open cover of $X$. For each $a\in A$ we have
a sheaf of commutative $\C$-algebras $\cK_{R_a,U_a,i_a}$ on $R_a$
from \eq{dc3eq1}, and for all $a,b\in A$ we have an isomorphism
\begin{equation*}
I_{\sst R_b,U_b,i_b}^{\sst R_a,U_a,i_a}:\cK_{R_b,U_b,i_b}\vert_{R_a\cap
R_b}\longra\cK_{R_a,U_a,i_a}\vert_{R_a\cap R_b}
\end{equation*}
on $R_a\cap R_b$ by \eq{dc3eq5}, which satisfy the usual conditions
\eq{dc3eq6}--\eq{dc3eq7} on identities, symmetry, and triple
overlaps.

Hence by properties of sheaves there exists a sheaf $\cS_X$ of
commutative $\C$-algebras on $X$, unique up to canonical
isomorphism, with isomorphisms $\io_{R_a,U_a}:\cS_X\vert_{R_a}
\ra\cK_{R_a,U_a,i_a}$ for $a\in A$, such that $I_{\sst
R_b,U_b,i_b}^{\sst R_a,U_a,i_a}\ci \io_{R_b,U_b}\vert_{R_a\cap
R_b}=\io_{R_a,U_a}\vert_{R_a\cap R_b}$ for all $a,b\in A$. One way
to do this is to define an explicit presheaf ${\cal P}\cS_X$ on $X$
using $R_a,\cK_{R_a,U_a,i_a},I_{\sst R_b,U_b,i_b}^{\sst
R_a,U_a,i_a}$, and take $\cS_X$ to be the sheafification of~${\cal
P}\cS_X$.

Now suppose $(R,U,i)$ is any triple, as in Theorem \ref{dc2thm1}(i).
We can construct $\cS_X$ using the family $\bigl\{(R_a,U_a,i_a):a\in
A\bigr\}\cup \bigl\{(R,U,i)\bigr\}$ instead of
$\bigl\{(R_a,U_a,i_a):a\in A\bigr\}$, and get the same (not just
isomorphic) sheaf $\cS_X$ and the same isomorphisms
$\io_{R_a,U_a}:\cS_X\vert_{R_a}\ra \cK_{R_a,U_a,i_a}$, but now we
also have an isomorphism $\io_{R,U}:\cS_X\vert_R\ra\cK_{R,U,i}$.
Combining this with \eq{dc3eq1} gives the exact sequence
\eq{dc2eq2}, proving Theorem~\ref{dc2thm1}(i).

Suppose $\Phi:(R,U,i)\ra(S,V,j)$ is any morphism of triples, as in
Theorem \ref{dc2thm1}(ii), so that $R\subseteq S$. We can construct
$\cS_X$ using the family $\bigl\{(R_a,U_a,i_a):a\in A\bigr\}\cup
\bigl\{(R,U,i),(S,V,j)\bigr\}$ instead of $\bigl\{(R_a,U_a,i_a):a\in
A\bigr\}$, and get the same sheaf $\cS_X$ and isomorphisms
$\io_{R_a,U_a}$, but now we also have isomorphisms
$\io_{R,U}:\cS_X\vert_{R}\ra\cK_{R,U,i}$ and
$\io_{S,V}:\cS_X\vert_{S}\ra\cK_{S,V,j}$ satisfying $I_{\sst
S,V,j}^{\sst R,U,i}\ci\io_{S,V} \vert_{R}=\io_{R,U}$. Consider the
diagram:
\e
\begin{gathered}
\xymatrix@C=6pt@R=14pt{ \cS_X\vert_{R} \ar[d]^\id
\ar[rrr]^(0.4){\io_{S,V}\vert_{R}} &&& \cK_{S,V,j}\vert_{R}
\ar@<-3ex>[d]^{I_{\sst S,V,j}^{\sst R,U,i}=\Phi^*}
\ar[rrr]^(0.4){\ka_{S,V}\vert_{R}} &&&
\displaystyle\frac{j^{-1}(\O_{V})}{I_{S,V}^2}\Big\vert_{R}
\ar@<-2ex>[d]^{i^{-1}(\Phi^\sh)} \ar[rr]^(0.4)\d &&
\displaystyle\frac{j^{-1}(T^*V)}{I_{S,V}\cdot
j^{-1}(T^*V)}\Big\vert_{R} \ar@<-5ex>[d]^{i^{-1}(\d\Phi)} \\
\cS_X\vert_{R} \ar[rrr]^(0.4){\io_{R,U}} &&& \cK_{R,U,i}
\ar[rrr]^{\ka_{R,U}} &&&
\displaystyle\frac{i^{-1}(\O_{U})}{I_{R,U}^2} \ar[rr]^(0.4)\d &&
\displaystyle\frac{i^{-1} (T^*U)}{I_{R,U}\cdot i^{-1}(T^*U)}\,.
}\!\!\!\!\!\!\!\!\!\!\!\!\!\!\!{}
\end{gathered}
\label{dc3eq8}
\e
Here $I_{\sst S,V,j}^{\sst R,U,i}=\Phi^*$ by Lemma \ref{dc3lem3}.
The left hand square of \eq{dc3eq8} commutes as $I_{\sst
S,V,j}^{\sst R,U,i}\ci\io_{S,V} \vert_{R}=\io_{R,U}$, and the right
hand two squares commute by \eq{dc3eq2}. Composing the first two
horizontal morphisms in the rows of \eq{dc3eq8} gives \eq{dc2eq3}.
This proves Theorem~\ref{dc2thm1}(ii).

\subsection{Theorem \ref{dc2thm1}(a)--(c): properties of $\cS_X$}
\label{dc32}

Next we prove that the sheaf $\cS_X$ constructed in \S\ref{dc31}
satisfies Theorem \ref{dc2thm1}(a)--(c). We continue to use the
notation of \S\ref{dc31}. For part (a), define $\cSz_X\subset\cS_X$
to be the kernel of the morphism $\cS_X\ra\O_{X^\red}$ in
\eq{dc2eq5}, where $\be_X:\cS_X\ra\O_X$ is defined using
\eq{dc3eq1}--\eq{dc3eq2} and $\O_X\vert_R\cong
i^{-1}(\O_U)/I_{R,U}$. As \eq{dc2eq5} are morphisms of sheaves of
$\C$-algebras, $\cSz_X$ is a sheaf of ideals in~$\cS_X$.

There is also a natural inclusion $\C_X\hookra\cS_X$ of sheaves of
$\C$-algebras, where $\C_X$ is the sheaf of locally constant
functions $X\ra\C$: for any triple $(R,U,i)$, the sheaf $\C_U$ is a
subsheaf of $\O_U$, so $\bigl(i^{-1}(\C_U)+I_{R,U}^2\bigr)
/I_{R,U}^2$ is a subsheaf of $i^{-1}(\O_U)/I_{R,U}^2$ which lies in
the kernel of $\d$ in \eq{dc2eq2}, and so lifts to a subsheaf of
$\cS_X\vert_R$, which is isomorphic to $\C_X\vert_R$, as locally
constant functions on $X$ lift locally uniquely to locally constant
functions on $U$ near~$i(X)$.

It remains to show that $\cS_X=\C_X\op\cSz_X$. To see this, suppose
$(R,U,i)$ is a triple and $s$ is a local section of $\cS_X$ near
$x\in R$, so that $\io_{R,U}(s\vert_R)=f+(I_{R,U}')^2$ for $f$ a
local section of $\O_U$ defined near $i(x)$ in $U$ with $\d f$ a
local section of $I_{R,U}'\cdot T^*U$. Then $f\vert_{i(X^\red)}$ is
locally constant on $i(X^\red)$ near $i(x)$, so $f\vert_{i(X^\red)}$
extends locally uniquely to a locally constant function $c:U\ra\C$
defined near $i(x)$. Writing $f_0=f-c$, we have
$f_0\vert_{i(X^\red)}=0$ near $i(x)$. The local section $s$ now
splits uniquely as $s=s_0+t$ with $\io_{R,U}(s_0)=f_0+(I_{R,U}')^2$
and $\io_{R,U}(t)=c+(I_{R,U}')^2$. But $s_0$ is a local section of
$\cSz_X$ as $f_0\vert_{i(X^\red)}=0$, and $t$ a local section of
$\C_X$ as $c$ is locally constant. Hence $\cS_X=\C_X\op\cSz_X$,
proving~(a).

Part (b) involves the {\it cotangent complex\/} $\bL_X\in
D(\qcoh(X))$. For background on (co)tangent complexes, see Illusie
\cite{Illu1}, \cite[\S 1]{Illu2} for $\K$-schemes, and Palamodov
\cite{Pala1,Pala2,Pala3,Pala4} for complex analytic spaces. We need
only two facts: that $h^0(\bL_X)\cong T^*X$, and if $R\subseteq X$
is open and $i:R\hookra U$ is a closed embedding of $R$ into a
complex manifold $U$, then the truncation $\tau_{\ge -1}(\bL_X)$
satisfies
\e
\tau_{\ge -1}(\bL_X)\vert_R\cong \bigl[\!\xymatrix@C=35pt{
I_{R,U}/I_{R,U}^2 \ar[r]^(0.49)\ga & i^*(T^*U)}\!\bigr]
\label{dc3eq9}
\e
in $D(\qcoh(R))$, where $I_{R,U}/I_{R,U}^2$ is in degree $-1$ and
$i^*(T^*U)$ in degree 0, and the morphism $\ga$ maps
$\ga:f+I_{R,U}^2\mapsto i^*(\d f)$.

Consider the diagram of sheaves on $R$:
\e
\begin{gathered}
\xymatrix@C=9pt@R=11pt{ & 0 \ar[d] && 0 \ar[d] && 0 \ar[d]
\\
0 \ar[r] & h^{-1}(\bL_X)\vert_R \ar@{.>}[d]^{\al_X\vert_R} \ar[rr]
&& I_{R,U}/I_{R,U}^2 \ar[rr]^\ga \ar[d] && i^*(T^*U)
\ar[d]^(0.4)\cong \ar[r] & {\begin{subarray}{l}\ts h^0(\bL_X)\vert_R
\\ \ts \cong T^*X\vert_R \end{subarray}} \ar[r] & 0
\\
0 \ar[r] & \cS_X\vert_R \ar[rr]^(0.4){\io_{R,U}}
\ar@{.>}[d]^{\be_X\vert_R} &&
\displaystyle\frac{i^{-1}(\O_U)}{I_{R,U}^2} \ar[rr]^(0.4)\d
\ar[d]^\pi && \displaystyle\frac{i^{-1}(T^*U)}{I_{R,U}\cdot
i^{-1}(T^*U)} \ar[d]
\\
0 \ar[r] & \O_X\vert_R \ar[rr]_(0.45)\cong^(0.45)\de \ar[d]^\d &&
\displaystyle\frac{i^{-1}(\O_U)}{I_{R,U}} \ar[d] \ar[rr] && 0 \ar[d]
\\
& T^*X\vert_R && 0 && 0.\!\!{} }\!\!\!\!\!\!\!\!\!\!\!\!\!\!\!\!{}
\end{gathered}
\label{dc3eq10}
\e
Here the first row is exact by \eq{dc3eq9}, the second row by
\eq{dc2eq2}, and the third row and second and third columns are
obviously exact. Also the middle two squares commute by definition
of $\ga$ in \eq{dc3eq9}. Properties of exact sequences now imply
there are unique morphisms $\al_X\vert_R,\be_X\vert_R$ as shown
making \eq{dc3eq10} commute. Using functoriality of the isomorphism
\eq{dc3eq9}, one can show that these $\al_X\vert_R,\be_X\vert_R$ are
independent of $R,U,i$ locally, and so glue on an open cover to give
unique global morphisms $\al_X,\be_X$ in \eq{dc2eq6}, noting that
the first column of \eq{dc3eq10} is the restriction of \eq{dc2eq6}
to~$R$.

Since taking kernels of morphisms of sheaves of $\C$-vector spaces
is a left exact functor from the category of morphisms of such
sheaves to the category of such sheaves, equation \eq{dc3eq10} also
implies that the first column of \eq{dc3eq10} is exact at
$h^{-1}(\bL_X)\vert_R$ and $\cS_X\vert_R$. To prove it is exact at
$\O_X\vert_R$, we work at the level of stalks. Let $x\in R\subseteq
X$, and write $\cS_{X,x},\O_{X,x},T^*X_x,\ldots$ for the stalks of
$\cS_X,\O_X,T^*X,\ldots$ at $x\in X$, so that $\cS_{X,x},\ldots$ are
$\C$-vector spaces whose elements are germs at $x$ of sections of
$\cS_X,\ldots.$ A sequence of sheaves of $\C$-vector spaces on $R$
is exact if and only if it is exact on stalks at every~$x\in R$.

Let $\eta\in\O_{X,x}$ with $\d\eta=0\in T^*X_x$. Then $\de_x(\eta)
\in i^{-1}(\O_U)_x/I_{R,U,x}$, so we may write $\de_x(\eta)=
\ze+I_{R,U,x}$ for some $\ze\in i^{-1}(\O_U)_x$. The exact sequences
in \eq{dc3eq10} induce an isomorphism
\begin{equation*}
T^*X_x\cong i^{-1}(T^*U)_x/\bigl(I_{R,U,x}\cdot i^{-1}(T^*U)_x+
\d(I_{R,U,x})\bigr),
\end{equation*}
which identifies $\d\eta\in T^*X_x$ with $\d\ze+\bigl(I_{R,U,x}\cdot
i^{-1}(T^*U)_x+\d(I_{R,U,x})\bigr)$, so $\d\ze\in I_{R,U,x}\cdot
i^{-1}(T^*U)_x+\d(I_{R,U,x})\subseteq i^{-1}(T^*U)_x$ as $\d\eta=0$.
As $\ze$ is unique up to addition of an element of $I_{R,U,x}$, by
changing our choice of $\ze$ we can eliminate the $\d(I_{R,U,x})$
component in $\d\ze$, so that $\d\ze\in I_{R,U,x}\cdot
i^{-1}(T^*U)_x$. Then $\ze+I_{R,U,x}^2$ lies in
$i^{-1}(\O_U)_x/I_{R,U,x}^2$ with $\d(\ze+I_{R,U,x}^2)=0$ in
$i^{-1}(T^*U)_x/I_{R,U,x}\cdot i^{-1}(T^*U)_x$. Hence by the
exactness of the middle row of \eq{dc3eq10} in stalks at $x$, there
exists a unique $\th\in\cS_{X,x}$ with $\io_{R,U,x}(\th)=
\ze+I_{R,U,x}^2$. Therefore
\begin{equation*}
\de_x\ci\be_{X,x}(\th)=\pi_x\ci\io_{R,U,x}(\th)=
\pi_x(\ze+I_{R,U,x}^2)=\ze+I_{R,U,x}=\de_x(\eta),
\end{equation*}
as the bottom left square in \eq{dc3eq10} commutes.

Since $\de$ is an isomorphism, this forces $\be_{X,x}(\th)=\eta$.
Similarly, if $\th\in\cS_{X,x}$ then $\d\ci\be_{X,x}(\th)=0$. Thus
the first column of \eq{dc3eq10} is exact on stalks at
$\O_X\vert_R$, so it is exact. As such open $R\subseteq X$ cover
$X$, we see that \eq{dc2eq6} is exact. Exactness of \eq{dc2eq7}
follows from \eq{dc2eq5}--\eq{dc2eq6}. This proves
Theorem~\ref{dc2thm1}(b).

For part (c), consider the morphism of sheaves on $R$
\begin{equation*}
\d:i^{-1}(\O_U)/I_{R,U}^2\longra i^{-1}(T^*U)/\d(I_{R,U}^2).
\end{equation*}
Composing this with $\io_{R,U}$ maps $\cS_X\vert_R\ra
i^{-1}(T^*U)/\d(I_{R,U}^2)$. From \eq{dc2eq2}, the image of
$\d\ci\io_{R,U}$ lies in $\bigl(I_{R,U}\cdot
i^{-1}(T^*U)/\d(I_{R,U}^2)\bigr)\subseteq
i^{-1}(T^*U)/\d(I_{R,U}^2)$. Also, as $\d^2=0$, the image lies in
$\Ker\d+\d(I_{R,U}^2)$. This defines a morphism
\e
\d\ci\io_{R,U}:\cS_X\vert_R\longra\frac{\Ker\bigl(\d:I_{R,U}\cdot
i^{-1}(T^*U)\longra i^{-1}(\La^2T^*U)\bigr)}{
\Im\bigl(\d:I_{R,U}^2\longra I_{R,U}\cdot i^{-1}(T^*U)\bigr)}\,,
\label{dc3eq11}
\e
where the right hand side is the cohomology of~\eq{dc2eq8}.

In the splitting $\cS_X\vert_R=\C_X\vert_R\op\cSz_X\vert_R$, clearly
$\C_X\vert_R$ lies in the kernel of $\d\ci\io_{R,U}$ in
\eq{dc3eq11}, since $\d$ of a locally constant function is zero. We
claim that the restriction of \eq{dc3eq11} to
$\cSz_X\vert_R\subset\cS_R$ is an isomorphism. As for part (b), it
is enough to prove this on the stalks at each $x\in R$. By the
Poincar\'e Lemma on the complex manifold $U$, written using
morphisms $\d:\La^kT^*U_{i(x)}\ra\La^{k+1}T^*U_{i(x)}$ of stalks at
$i(x)$, and pulled back to $X$ using $i$, we have an exact sequence
\e
\xymatrix@C=27pt{ 0 \ar[r] & \C \ar[r]^(0.4)1 & i^{-1}(\O_X)_x
\ar[r]^\d & i^{-1}(T^*U)_x \ar[r]^(0.45)\d & i^{-1}(\La^2T^*U)_x. }
\label{dc3eq12}
\e

Let $\phi$ lie in the stalk at $x$ of the r.h.s.\ of \eq{dc3eq11}.
Then $\phi=\psi+\d(I_{R,U,x}^2)$, where $\psi\in I_{R,U,x}\cdot
i^{-1}(T^*U)_x\subseteq i^{-1}(T^*U)_x$ with $\d\psi=0$ in
$i^{-1}(\La^2T^*U)_x$. By exactness of \eq{dc3eq12}, we may write
$\psi=\d\ze$ for $\ze\in i^{-1}(\O_U)_x$, where $\ze$ is unique up
to addition of a constant $1(c)$ for $c\in\C$. We fix $\ze$ uniquely
by requiring that $\ze(i(x))=0$. Then $\ze+I_{R,U,x}^2$ lies in
$i^{-1}(\O_U)_x/I_{R,U,x}^2$ with $\d(\ze+I_{R,U,x}^2)=0$ in
$i^{-1}(T^*U)_x/(I_{R,U,x}\cdot i^{-1}(T^*U)_x)$, since
$\d\ze=\psi\in I_{R,U,x}\cdot i^{-1}(T^*U)_x$. Hence
$\ze+I_{R,U,x}^2=\io_{R,U,x}(\th)$ for unique $\th\in\cS_{X,x}$ as
\eq{dc2eq2} is exact, and $\ze(i(x))=0$ implies that
$\th\in\cSz_{X,x}$. Therefore $\d\ci\io_{R,U}\vert_{\cS_X\vert_R}$
is an isomorphism on stalks, and so is an isomorphism, proving
Theorem~\ref{dc2thm1}(c).

\subsection{Modifications to the proof for the algebraic case}
\label{dc33}

Next we explain how to modify \S\ref{dc31}--\S\ref{dc32} to work
with $\K$-schemes in algebraic geometry, rather than complex
analytic spaces and complex manifolds.

In \S\ref{dc31} we replace complex analytic spaces $X,R$ by
$\K$-schemes, and complex manifolds $U$ by smooth $\K$-schemes, and
open subsets $R\subseteq X$, $U'\subseteq U$, etc., are taken to be
open in the Zariski topology. Then the proofs work in the algebraic
case without modification, working throughout with sheaves in either
the Zariski or the \'etale topology, with the exception of Lemma
\ref{dc3lem2}, which is false: one can write down examples of
$\K$-schemes $X$ with embeddings $i:X\hookra U$, $j:X\hookra V$ into
smooth $\K$-schemes $U,V$, such that for Zariski or \'etale open
$\es\ne U'\subseteq U$, there exist no morphisms $\Phi:U'\ra V$
with~$\Phi\ci i'=j\vert_{U'}$.

Here are two different ways to fix this:
\begin{itemize}
\setlength{\itemsep}{0pt}
\setlength{\parsep}{0pt}
\item[(A)] The analogue of Lemma \ref{dc3lem2} is true if we require
that the smooth $\K$-scheme $V$ is isomorphic to a Zariski open
subset of an affine space $\bA^n$, as then we can take the
coordinates $(z_1,\ldots,z_n)$ in the proof of Lemma
\ref{dc3lem2} to be the embedding~$V\hookra\bA^n$.
\item[(B)] Suppose $R,S\subseteq X$ are Zariski open and
$i:R\hookra U$, $j:S\hookra V$ are closed embeddings into smooth
$\K$-schemes $U,V$. Write $\hat U,\hat V$ for the formal
completions of $U,V$ along $i(R),j(S)$, with inclusions
$\hat\imath:R\hookra \hat U$, $\hat\jmath:S\hookra\hat V$. Then
one can prove the following formal analogue of Lemma
\ref{dc3lem2}: for each $x\in R\cap S\subseteq X,$ there exists
a Zariski open $\hat U'\subseteq\hat U$ with $x\in R'\subseteq
R\cap S$, and a morphism of formal $\K$-schemes $\hat\Phi:\hat
U'\ra\hat V$ with $\hat\Phi\ci\hat\imath'=\hat\jmath\vert_{R'}$,
where $R'=\hat\imath^{-1}(\hat U')$ and~$\hat\imath'=
\hat\imath\vert_{R'}$.
\end{itemize}

Using approach (A), the whole of \S\ref{dc31} works provided we
restrict to triples $(R,U,i)$ with $U$ isomorphic to a Zariski open
in $\bA^n$. In particular, we can construct the sheaf $\cS_X$ using
a family $\bigl\{(R_a,U_a,i_a):a\in A\bigr\}$ with
$U_a\subseteq\bA^{n_a}$ Zariski open. Note that immediately after
the proof of Lemma \ref{dc3lem3}, Zariski locally near each $x\in X$
there exists an embedding $i:X\hookra U$ with~$U\subseteq\bA^n$.

The disadvantage of this is that it proves Theorem
\ref{dc2thm1}(i),(ii) only for $U,V$ Zariski open in affine spaces
$\bA^n$. To prove them for $U,V$ general smooth $\K$-schemes, we
have to do some more work.

To prove the algebraic version of \S\ref{dc31} using (B), observe
that \eq{dc2eq2}--\eq{dc2eq3} defining $\cS_X$ depend only on the
formal completion $\hat U$ of $U$ along $i(R)$, since
\begin{equation*}
\frac{i^{-1}(\O_U)}{I_{R,U}^2}\cong \frac{\hat\imath^{-1}(
\O_{\smash{\hat U}})}{I_{R,\smash{\hat U}}^2}\quad\text{and}\quad
\frac{i^{-1}(T^*U)}{I_{R,U}\cdot i^{-1}(T^*U)}\cong
\frac{\hat\imath^{-1}(T^*\hat U)}{I_{\smash{R,\hat U}}\cdot
\hat\imath^{-1}(T^*\hat U)}\,.
\end{equation*}
Thus we may replace $U,V$ by $\hat U,\hat V$ throughout the proof.
For triples $(R,U,i),\ab(S,V,j)$ with $U,V$ smooth $\K$-schemes and
$R\subseteq S$, a morphism $\Phi:U\ra V$ with $\Phi\ci i=j\vert_{R}$
induces a morphism $\hat\Phi:\hat U\ra\hat V$ with $\hat\Phi\ci
\hat\imath=\hat\jmath\vert_{R}$. However, the converse is false:
there may be $\hat\Phi:\hat U\ra\hat V$ with $\hat\Phi\ci
\hat\imath=\hat\jmath\vert_{R}$ which are not induced by any
$\Phi:U\ra V$ Zariski or \'etale locally on $U$. Because there are
more formal morphisms $\hat\Phi$ than morphisms $\Phi$, the formal
analogue of Lemma \ref{dc3lem2} holds, which makes the proof work.

For the extension of \S\ref{dc32} to $\K$-schemes, the proofs of
Theorem \ref{dc2thm1}(a),(b) need no modification. But for (c),
equation \eq{dc3eq12} is not exact at $i^{-1}(T^*U)_x$ in the
algebraic case: algebraic closed 1-forms need not be locally exact,
in either the Zariski or the \'etale topology. For example, the
closed 1-form $z^{-1}\d z$ on $\bA^1\sm\{0\}$ is not algebraically
locally exact, since $\log z$ is not an algebraic function.

Because of this, in the algebraic case the morphism from $\cSz_X$ to
the cohomology of \eq{dc2eq8} constructed in \S\ref{dc32} is
injective, but generally not surjective. The solution is to modify
\eq{dc2eq8}, replacing $U$ by the formal completion $\hat U$ of $U$
along $i(R)$, as in \eq{dc2eq9}. In fact the Poincar\'e Lemma may
not hold on $\hat U$ either (consider the case $X=U=\hat U$ and
$i=\id_X$), but what matters is that one can show that if
$\hat\al\in H^0(I_{\smash{R,\hat U}}\cdot T^*\hat U)\subset
H^0(T^*\hat U)$ with $\d\hat\al=0$ then $\hat\al=\d f$ for a unique
$f\in H^0(\O_{\smash{\hat U}})$ with $f\ci i\vert_{X^\red}=0$, that
is, the particular class of formal closed 1-forms we are interested
in can be integrated to formal functions. This completes the proof
of Theorem~\ref{dc2thm1}.

\subsection{Proof of Proposition \ref{dc2prop1}}
\label{dc34}

Let $\phi:X\ra Y$ be a morphism of complex analytic spaces. To
construct the morphism $\phi^\star:\phi^{-1}(\cS_Y)\ra \cS_X$ in
Proposition \ref{dc2prop1}, we will generalize the construction of
$\cS_X$ in \S\ref{dc31}. Modifying the notation of \S\ref{dc31},
define an $X$-{\it triple\/} $(R,U,i)_X$ to be an open subset
$R\subseteq X$, a complex manifold $U$, and a closed embedding
$i:R\hookra U$, and a $Y$-{\it triple\/} $(S,V,j)_Y$ to be an open
subset $S\subseteq Y$, a complex manifold $V$, and a closed
embedding $j:S\hookra V$. Define $X$-{\it subtriples\/}
$(R',U',i')_X\subseteq(R,U,i)_X$ and $Y$-{\it subtriples\/}
$(S',V',j')_Y\subseteq(S,V,j)_Y$ as in~\S\ref{dc31}.

Call $\Up:(R_1,U_1,i_1)_X\ra(R_2,U_2,i_2)_X$ a {\it morphism of\/
$X$-triples\/} if $R_1\subseteq R_2\subseteq X$, and $\Up:U_1\ra
U_2$ is holomorphic with $\Up\ci i_1=i_2\vert_{R_1}:R_1\ra U_2$.
Call $\Phi:(R,U,i)_X\ra(S,V,j)_Y$ a {\it morphism of\/
$X,Y$-triples\/} if $\phi(R)\subseteq S\subseteq Y$, and $\Phi:U\ra
V$ is holomorphic with $\Up\ci i=j\ci\phi\vert_R:R\ra V$. Call
$\Psi:(S_1,V_1,j_1)_Y\ra(S_2,V_2,j_2)_Y$ a {\it morphism of\/
$Y$-triples\/} if $S_1\subseteq S_2\subseteq Y$, and $\Psi:V_1\ra
V_2$ is holomorphic with $\Psi\ci j_1=j_2\vert_{S_1}:S_1\ra V_2$.

Let $\Phi:(R,U,i)_X\ra(S,V,j)_Y$ be a morphism of $X,Y$-triples.
Consider the diagram of sheaves of $\C$-vector spaces on $R:$
\e
\begin{gathered}
\xymatrix@R=20pt@C=9pt{ 0 \ar[r] & \phi^{-1}(\cS_Y)\vert_R
\ar[rrr]_(0.45){\phi^{-1}(\io_{S,V})\vert_R}
\ar@{.>}[d]^{\phi^\star_\Phi} &&& {\frac{\ts\phi^{-1}\ci
j^{-1}(\O_V)\vert_R}{\ts\phi^{-1}(I_{S,V}^2)\vert_R}}
\ar[d]^{i^{-1}(\Phi^\sharp)} \ar[rr]_(0.42){\phi^{-1}(\d)} &&
\frac{\ts \phi^{-1}(j^{-1}(T^*V))\vert_R}{\ts \phi^{-1}(I_{S,V}\cdot
j^{-1}(T^*V))\vert_R}
\ar[d]_{i^{-1}(\d\Phi)} \\
0 \ar[r] & \cS_X\vert_R \ar[rrr]^(0.45){\io_{R,U}} &&& {\frac{\ts
i^{-1}(\O_U)}{\ts I_{R,U}^2}} \ar[rr]^(0.42)\d && \frac{\ts
i^{-1}(T^*U)}{\ts I_{R,U}\cdot i^{-1}(T^*U)}\,,
}\!\!\!\!\!\!\!\!\!\!\!\!\!\!\!{}
\end{gathered}
\label{dc3eq13}
\e
which is \eq{dc2eq11} with $\phi^\star_\Phi$ in place of
$\phi^\star\vert_R$. The rows of \eq{dc3eq13} are exact, as
\eq{dc2eq2} is exact, and the right hand square of \eq{dc3eq13}
commutes by definition of $\d\Phi$. So by exactness, there is a
unique morphism $\phi^\star_\Phi$ of sheaves of $\C$-vector spaces
making \eq{dc3eq13} commute. Since $\io_{R,U},\phi^{-1}(\io_{S,V})
\vert_R,i^{-1}(\Phi^\sharp)$ are morphisms of sheaves of commutative
$\C$-algebras, so is~$\phi^\star_\Phi$.

Suppose $\Up:(R_1,U_1,i_1)_X\ra(R_2,U_2,i_2)_X$ is a morphism of
$X$-triples, and $\Phi_2:(R_2,U_2,i_2)_X\ra(S_2,V_2,j_2)_Y$ a
morphism of $X,Y$-triples. Then $\Phi_2\ci\Up:(R_1,U_1,i_1)_X\ra
(S_2,V_2,j_2)_Y$ is a morphism of $X,Y$-triples, and composing
\eq{dc2eq3} for $\Up$ with the restriction of \eq{dc3eq13} for
$\Phi_2$ to $R_1$, we see that
\e
\phi^\star_{\Phi_2\ci\Up}=\phi^\star_{\Phi_2}\vert_{R_1}:
\phi^{-1}(\cS_Y)\vert_{R_1}\longra\cS_X\vert_{R_1}.
\label{dc3eq14}
\e
Similarly, suppose $\Phi_1:(R_1,U_1,i_1)_X\ra(S_1,V_1,j_1)_Y$ is a
morphism of $X,Y$-triples, and $\Psi:(S_1,V_1,j_1)_Y
\ra(S_2,V_2,j_2)_Y$ a morphism of $Y$-triples. Then $\Psi\ci\Phi_1:
(R_1,U_1,i_1)_X\ra (S_2,V_2,j_2)_Y$ is a morphism of $X,Y$-triples,
and composing \eq{dc3eq13} for $\Phi_1$ with $\phi\vert_{R_1}^{-1}$
applied to \eq{dc2eq3} for $\Psi$, we see that
\e
\phi^\star_{\Psi\ci\Phi_1}=\phi^\star_{\Phi_1}:
\phi^{-1}(\cS_Y)\vert_{R_1}\longra\cS_X\vert_{R_1}.
\label{dc3eq15}
\e

Easy generalizations of the proof of Lemmas \ref{dc3lem1} and
\ref{dc3lem2} show:

\begin{lem} The morphism\/ $\phi^\star_\Phi$ in \eq{dc3eq13} is
independent of the choice of\/ $\Phi$. That is, if\/
$\Phi,\ti\Phi:(R,U,i)_X\ra(S,V,j)_Y$ are morphisms of\/
$X,Y$-triples then
\e
\phi^\star_\Phi=\phi^\star_{\ti\Phi}:\phi^{-1}(\cS_Y)\vert_R
\longra\cS_X\vert_R.
\label{dc3eq16}
\e

\label{dc3lem4}
\end{lem}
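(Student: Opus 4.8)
The plan is to reduce Lemma \ref{dc3lem4} to a purely local statement about holomorphic functions and then reuse, almost word for word, the Taylor-expansion argument from the proof of Lemma \ref{dc3lem1}; that argument is the only genuinely analytic ingredient. Since the claimed equality \eq{dc3eq16} is of morphisms of sheaves on $R$, it is enough to verify it near each $x\in R$ on local sections. First I would note that the rows of the defining diagram \eq{dc3eq13} are exact — this is just \eq{dc2eq2} for the $Y$-triple $(S,V,j)_Y$, pulled back along the exact functor $\phi^{-1}$ and restricted to $R$ — so both $\io_{R,U}$ and $\phi^{-1}(\io_{S,V})\vert_R$ are injective. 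As \eq{dc3eq13} commutes we have $\io_{R,U}\ci\phi^\star_\Phi=i^{-1}(\Phi^\sharp)\ci\phi^{-1}(\io_{S,V})\vert_R$, and likewise for $\ti\Phi$; hence, using injectivity of $\io_{R,U}$, proving $\phi^\star_\Phi=\phi^\star_{\ti\Phi}$ amounts to showing that $i^{-1}(\Phi^\sharp)$ and $i^{-1}(\ti\Phi^\sharp)$ agree on the image of $\phi^{-1}(\io_{S,V})\vert_R$.

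Next I would unwind that image. Using injectivity of $\phi^{-1}(\io_{S,V})\vert_R$ together with exactness of \eq{dc2eq2} for $(S,V,j)_Y$, a local section of the image near $x$ is represented by (the pullback to $R$, taken modulo $\phi^{-1}((I_{S,V}')^2)$, of) a holomorphic function $g$ on $V$ defined near $j(\phi(x))$ with $\d g\in I_{S,V}'\cdot T^*V$ there. The hypotheses $\Phi\ci i=j\ci\phi\vert_R=\ti\Phi\ci i$ give $i^{-1}\ci\Phi^{-1}=\phi\vert_R^{-1}\ci j^{-1}=i^{-1}\ci\ti\Phi^{-1}$, and under this identification $i^{-1}(\Phi^\sharp)$ sends the section to $g\ci\Phi+(I_{R,U}')^2$ while $i^{-1}(\ti\Phi^\sharp)$ sends it to $g\ci\ti\Phi+(I_{R,U}')^2$. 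So the whole statement collapses to: $g\ci\ti\Phi-g\ci\Phi\in (I_{R,U}')^2$ on $U$ near $i(x)$.

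For this final step I would copy the proof of Lemma \ref{dc3lem1}: choose holomorphic coordinates $(z_1,\ldots,z_n)$ on $V$ near the common point $\Phi(i(x))=\ti\Phi(i(x))=j(\phi(x))$, Taylor-expand $g\ci\ti\Phi-g\ci\Phi$ to first order with quadratic remainder exactly as in \eq{dc3eq4}, and observe that each factor $z_a\ci\ti\Phi-z_a\ci\Phi$ lies in $I_{R,U}'$ near $i(x)$ because $\Phi\ci i=\ti\Phi\ci i$, while each $\frac{\pd g}{\pd z_a}\ci\Phi$ also lies in $I_{R,U}'$ near $i(x)$: indeed $\frac{\pd g}{\pd z_a}\in I_{S,V}'$ by the choice of $g$, and $h\mapsto h\ci\Phi$ carries $I_{S,V}'$ into $I_{R,U}'$ since $\Phi(i(R))=j(\phi(R))\subseteq j(S)$. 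Every term of the expansion then lies in $(I_{R,U}')^2$, which proves \eq{dc3eq16}. Finally I would remark that the algebraic case goes through identically, replacing $U,V$ by their formal completions along $i(R),j(S)$ exactly as in \S\ref{dc33}, and that the companion existence statement (the analogue of Lemma \ref{dc3lem2} for morphisms of $X,Y$-triples) is obtained by the same construction used for Lemma \ref{dc3lem2}.

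I do not expect a real obstacle: as the text says, this is an easy generalization of Lemma \ref{dc3lem1}. The one point needing care is the bookkeeping around the pullback functor $\phi^{-1}$ — making sense of $i^{-1}(\Phi^\sharp)$ as a map out of $\phi^{-1}\ci j^{-1}(\O_V)\vert_R$ via the identity $\Phi\ci i=j\ci\phi\vert_R$, and noting that this same identity is precisely what forces $\Phi^\sharp$ to carry the vanishing ideal of $j(S)$ in $V$ into the vanishing ideal of $i(R)$ in $U$, which is what makes the Taylor estimate close.
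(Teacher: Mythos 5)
Your proposal is correct and follows exactly the route the paper intends: the paper declares Lemma~\ref{dc3lem4} an ``easy generalization'' of Lemma~\ref{dc3lem1} without writing out details, and your argument is precisely that generalization, reducing via exactness of the rows of \eq{dc3eq13} to the Taylor-expansion estimate $g\ci\ti\Phi-g\ci\Phi\in(I_{R,U}')^2$, with the only change from Lemma~\ref{dc3lem1} being that $\Phi\ci i=j\ci\phi\vert_R$ replaces $\Phi\ci i=j\vert_R$ in verifying that both factors in each term of \eq{dc3eq4} lie in $I_{R,U}'$. Your treatment of the algebraic case via formal completions likewise matches \S\ref{dc33}.
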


\begin{lem} Let\/ $(R,U,i)_X$ be an $X$-triple and\/ $(S,V,j)_Y$ a
$Y$-triple. Then for each\/ $x\in R\cap \phi^{-1}(S)\subseteq X,$
there exist an $X$-subtriple $(R',U',i')_X\subseteq(R,U,i)_X$ with\/
$x\in R'\subseteq R\cap \phi^{-1}(S)$ and a morphism of\/
$X,Y$-triples\/~$\Phi:(R',U',i')_X\ra(S,V,j)_Y$.
\label{dc3lem5}
\end{lem}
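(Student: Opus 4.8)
\emph{Proof proposal for Lemma \ref{dc3lem5}.}
The plan is to run the argument of Lemma \ref{dc3lem2} essentially verbatim, with the composite $j\ci\phi\vert_{R'}:R'\ra V$ playing the role that $j\vert_{R'}$ played there. Fix $x\in R\cap\phi^{-1}(S)$, so that $\phi(x)\in S$ and the point $j(\phi(x))\in V$ is defined. First I would choose holomorphic coordinates $(z_1,\ldots,z_n)$ on an open neighbourhood $\ti V$ of $j(\phi(x))$ in $V$, so that $(z_1,\ldots,z_n):\ti V\ra\C^n$ is a biholomorphism onto an open subset $\ti W\subseteq\C^n$. Since $\phi$ is continuous and $i$ is a homeomorphism onto its image $i(R)$, I would then pick an open neighbourhood $U'$ of $i(x)$ in $U$ small enough that $R':=i^{-1}(U')$ satisfies $x\in R'\subseteq R\cap\phi^{-1}(S)$ and $\phi(R')\subseteq j^{-1}(\ti V)$; then $(R',U',i')_X\subseteq(R,U,i)_X$ with $i'=i\vert_{R'}$, and $j\ci\phi\vert_{R'}:R'\ra\ti V\subseteq V$ makes sense.

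The functions $z_a\ci j\ci\phi\vert_{R'}:R'\ra\C$ for $a=1,\ldots,n$ are then holomorphic. As $i(R')$ is a closed complex analytic subspace of $U'$ isomorphic to $R'$, and any holomorphic function on $i(R')$ extends to a holomorphic function on $U'$ near $i(x)$, after shrinking $R',U'$ I would obtain holomorphic $f_a:U'\ra\C$ with $f_a\ci i'=z_a\ci j\ci\phi\vert_{R'}$ for $a=1,\ldots,n$. Shrinking once more so that $(f_1,\ldots,f_n):U'\ra\C^n$ maps into $\ti W$, there is a unique holomorphic $\Phi:U'\ra\ti V\subseteq V$ with $z_a\ci\Phi=f_a$ for all $a$. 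Finally $z_a\ci\Phi\ci i'=f_a\ci i'=z_a\ci j\ci\phi\vert_{R'}$ for $a=1,\ldots,n$, and injectivity of $(z_1,\ldots,z_n):\ti V\ra\C^n$ forces $\Phi\ci i'=j\ci\phi\vert_{R'}:R'\ra V$; since moreover $\phi(R')\subseteq S$, this says precisely that $\Phi:(R',U',i')_X\ra(S,V,j)_Y$ is a morphism of $X,Y$-triples, as required.

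I do not expect a genuine obstacle here: the proof is a mechanical adaptation of Lemma \ref{dc3lem2}, and the only point needing a little care is keeping the successive shrinkings of $R'$ and $U'$ mutually compatible --- ensuring $R'$ stays inside $R\cap\phi^{-1}(S)$, that all the extensions $f_a$ can be found on one common $U'$, and that $(f_1,\ldots,f_n)$ takes values in $\ti W$. (In the algebraic setting the statement, like Lemma \ref{dc3lem2} itself, is false as stated and needs the fixes of \S\ref{dc33}: either one restricts to $Y$-triples $(S,V,j)_Y$ with $V$ Zariski open in $\bA^n$, as in approach (A), or one passes to the formal completions $\hat U,\hat V$ along $i(R),j(S)$ and allows formal morphisms $\hat\Phi:\hat U'\ra\hat V$, as in approach (B).)
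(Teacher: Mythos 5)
Your proof is correct and follows exactly the route the paper intends: the paper simply remarks that Lemma \ref{dc3lem5} is an ``easy generalization'' of Lemma \ref{dc3lem2}, and you have carried out precisely that generalization, substituting $j\ci\phi\vert_{R'}$ for $j\vert_{R'}$ throughout and adding the extra shrinkage so that $R'\subseteq R\cap\phi^{-1}(S)$ and $\phi(R')\subseteq j^{-1}(\ti V)$. Your closing caveat about the algebraic case also agrees with the paper, which notes in \S\ref{dc34} that the extension to $\K$-schemes requires the fixes of \S\ref{dc33}.
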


We now claim that there exists a unique morphism $\phi^\star:
\phi^{-1}(\cS_Y)\ra \cS_X$ of sheaves of commutative $\C$-algebras
on $X$, such that $\phi^\star\vert_R=\phi^\star_\Phi$ for all
morphisms of $X,Y$-triples $\Phi:(R,U,i)_X\ra(S,V,j)_Y$. To show
this, it is enough to prove:
\begin{itemize}
\setlength{\itemsep}{0pt}
\setlength{\parsep}{0pt}
\item[(a)] For all $x\in X$, there exists a morphism of
$X,Y$-triples $\Phi:(R,U,i)_X\ra(S,V,j)_Y$ with $x\in R$; and
\item[(b)] If $\Phi_1:(R_1,U_1,i_1)_X\ra(S_1,V_1,j_1)_Y$ and
$\Phi_2:(R_2,U_2,i_2)_X\ra(S_2,V_2,j_2)_Y$ are morphisms of
$X,Y$-triples, then $\phi^\star_{\Phi_1}\vert_{R_1\cap R_2}=
\phi^\star_{\Phi_2}\vert_{R_1\cap R_2}$.
\end{itemize}
To see part (a) holds, choose an $X$-triple $(R,U,i)_X$ with $x\in
R$ and a $Y$-triple $(S,V,j)_Y$ with $\phi(x)\in S$, and apply Lemma
\ref{dc3lem5}. For (b), let $x\in R_1\cap R_2$. Applying Lemma
\ref{dc3lem2} twice gives an $X$-subtriple
$(R_1',U_1',i_1')_X\subseteq (R_1,U_1,i_1)_X$ with $x\in
R_1'\subseteq R_1\cap R_2$ and a morphism of $X$-triples
$\Up:(R_1',U_1',i_1')_X\ra (R_2,U_2,i_2)_X$, and a $Y$-subtriple
$(S_1',V_1',j_1')_Y\subseteq (S_1,V_1,j_1)_Y$ with $\phi(x)\in S_1'$
and a morphism of $Y$-triples $\Psi:(S_1',V_1',j_1')_Y\ra
(S_2,V_2,j_2)_Y$. Making $R_1',U_1'$ smaller if necessary we may
suppose that $\Phi_1(U_1')\subseteq V_1'\subseteq V_1$. Then we have
\begin{equation*}
\phi^\star_{\Phi_1}\vert_{R_1'}=\phi^\star_{\Phi_1\vert_{U_1'}}=
\phi^\star_{\Psi\ci\Phi_1\vert_{U_1'}}=\phi^\star_{\Phi_2\ci\Up}
=\phi^\star_{\Phi_2}\vert_{R_1'},
\end{equation*}
using \eq{dc3eq15} in the second step, Lemma \ref{dc3lem4} for
$\Psi\ci\Phi_1\vert_{U_1'},\Phi_2\ci\Up:(R_1',U_1',i_1')_X\ra
(S_2,V_2,j_2)_Y$ in the third, and \eq{dc3eq14} in the fourth. Thus
for each $x\in R_1\cap R_2$ we can find an open $x\in R_1'\subseteq
R_1\cap R_2$ with $\phi^\star_{\Phi_1}\vert_{R_1'}=
\phi^\star_{\Phi_2}\vert_{R_1'}$. This implies~(b).

Thus, comparing \eq{dc2eq11} and \eq{dc3eq13} shows that there
exists a unique morphism $\phi^\star$ such that \eq{dc2eq11}
commutes for all $R,S,U,V,i,j,\Phi$ as in Proposition
\ref{dc2prop1}. Consider the diagram
\e
\begin{gathered}
\xymatrix@C=50pt@R=15pt{ 0 \ar[r] & \phi^{-1}(\cSz_Y)
\ar@{.>}[d]_{\phi^\star\vert_{\phi^{-1}(\cSz_Y)}} \ar[r] &
\phi^{-1}(\cS_Y) \ar[d]_{\phi^\star}
\ar[r]_{\phi^{-1}(i_Y^\sh\ci\be_Y)} & \phi^{-1}(\O_{Y^\red})
\ar[d]_{\phi\vert_{X^\red}^\sharp} \\ 0 \ar[r] & \cSz_X \ar[r] &
\cS_X \ar[r]^{i_X^\sh\ci\be_X} & \O_{X^\red}. }
\end{gathered}
\label{dc3eq17}
\e
The rows are exact by Theorem \ref{dc2thm1}(a), and one can see the
right hand square commutes by composing the left hand square of
\eq{dc2eq11} with the projections to $\phi^{-1}(\O_{Y^\red})\vert_R,
\O_{X^\red}\vert_R$. Thus by exactness, $\phi^\star$ maps
$\phi^{-1}(\cSz_Y)\ra \cSz_X$ as in~\eq{dc3eq17}.

Now let $\psi:Y\ra Z$ be another morphism of complex analytic
spaces. Using the obvious notation, suppose $\Phi:(R,U,i)_X\ra
(S,V,j)_Y$, $\Psi:(S,V,j)_Y\ra(T,W,k)_Z$ are morphisms of
$X,Y$-triples and $Y,Z$-triples, so that $\Psi\ci\Phi:(R,U,i)_X\ra
(T,W,k)_Z$ is a morphism of $X,Z$-triples. Then comparing
\eq{dc2eq11} for $\Psi\ci\Phi$ with the composition of
$\phi\vert_R^{-1}$ applied to \eq{dc2eq11} for $\Psi$ with
\eq{dc2eq11} for $\Phi$ shows that $(\psi\ci\phi)^\star\vert_R=
\phi^\star\ci\phi^{-1}(\psi^\star)\vert_R$. As we can cover $X$ by
such open $R\subseteq X$, equation \eq{dc2eq12} follows.

To show that $\id_X^\star=\id_{\cS_X}$, compare \eq{dc2eq3} with
\eq{dc2eq11} with $R,U,i,\id_U$ in place of $S,V,j,\Phi$. Finally,
if $\phi:X\ra Y$ is an \'etale morphism of complex analytic spaces,
then $\phi$ is a local isomorphism in the complex analytic topology,
so $\phi^\star$ is an isomorphism. This proves Proposition
\ref{dc2prop1} for complex analytic spaces.

The extension to $\K$-schemes works as in \S\ref{dc33}. For the last
part, for $\cS_X,\cS_Y$ sheaves in the Zariski topology, if
$\phi:X\ra Y$ is a Zariski inclusion then it is an isomorphism
locally in the Zariski topology, so $\phi^\star$ is an isomorphism.
Similarly, for $\cS_X,\cS_Y$ sheaves in the \'etale topology, if
$\phi:X\ra Y$ is \'etale then it is an isomorphism locally in the
\'etale topology, so again $\phi^\star$ is an isomorphism.

\section{D-critical loci}
\label{dc4}

We now prove Propositions \ref{dc2prop2} and \ref{dc2prop3}
from~\S\ref{dc22}.

\subsection{Proof of Proposition \ref{dc2prop2}}
\label{dc41}

We will prove the $\K$-scheme case, as it is more complicated, and
we tackle the second part of the proposition first.

Suppose $(X,s)$ is an algebraic d-critical locus, and $x\in X$. Let
$(T,W,h,k)$ be a critical chart on $(X,s)$ with $x\in T$, and set
$\dim T_xX=m$ and $\dim W=n$. Then $\Hess_{k(x)}h$ has rank $n-m$.
Choose \'etale coordinates $(z_1,\ldots,z_n)$ on a Zariski open
neighbourhood $W'$ of $k(x)$ in $W$, such that
\e
\frac{\pd h}{\pd z_i\pd z_j}\Big\vert_{k(x)}=\begin{cases} 1, & i=j\in
\{m+1,m+2,\ldots,n\}, \\
0, & \text{otherwise.}\end{cases}
\label{dc4eq1}
\e
Define $V=\bigl\{w\in W':\frac{\pd h}{\pd z_{m+1}}(w)=\cdots=
\frac{\pd h}{\pd z_n}(w)=0\bigr\}$. Equation \eq{dc4eq1} implies
that the equations $\frac{\pd h}{\pd z_j}(w)=0$ for $j=m+1,\ldots,n$
are transverse at $k(x)$, so $V$ is smooth of dimension $n-m$ near
$k(x)$. Making $W'$ smaller, we can suppose $V$ is smooth of
dimension~$n-m$.

Define $S=k^{-1}(W')$, so that $x\in S\subseteq T\subseteq X$ is
Zariski open and $k\vert_S:S\hookra W'$ is a closed embedding with
$k(S)$ the $\K$-subscheme $\d h\vert_{W'}^{-1}(0)$ in $W'$. But $\d
h\vert_{W'}=0$ implies that $\frac{\pd h}{\pd z_{m+1}}(w)=\cdots=
\frac{\pd h}{\pd z_n}(w)=0$, so $k(S)\subseteq V\subseteq W'$. Thus
$j:=k\vert_S:S\hookra V$ is a closed embedding. Write
$g:=h\vert_V:V\ra\bA^1$. It is now easy to check that $(S,V,g,j)$ is
a critical chart on $(X,s)$ with $x\in S$ and~$\dim V=\dim T_xX$.

Suppose now that $x\in R\subseteq X$ is Zariski open, and
$i:R\hookra U$ is a closed embedding into a smooth $\K$-scheme $U$
with $\dim U=\dim T_xX=m$, and $f:U\ra\bA^1$ is regular with
$\io_{R,U}(s\vert_R)=i^{-1}(f)+I_{R,U}^2$. Let $(S,V,g,j)$ be as
above. Then $i\vert_{R\cap S}\t j\vert_{R\cap S}:R\cap S\ra U\t V$
is a closed embedding. Choose a Zariski open neighbourhood $\ti S$
of $x$ in $R\cap S$ and a smooth locally closed $\K$-subscheme $\ti
V$ of $U\t V$ such that $\dim\ti V=m$ and $(i\t j)(\ti S)\subseteq
\ti V\subseteq U\t V$ as $\K$-subschemes of $U\t V$, with $(i\t
j)(\ti S)$ closed in~$\ti V$.

Write $\ti\jmath=(i\t j)\vert_{\ti S}:\ti S\ra \ti V$ and $\pi_U:\ti
V\ra U$, $\pi_V:\ti V\ra V$ for the projections. As
$T_{(i(x),j(x))}\ti V= \d \ti\jmath\vert_x(T_xX)=(\d i\vert_x\t\d
j\vert_x)(T_xX)$ and $\d i\vert_x:T_xX\ra T_{i(x)}U$, $\d
j\vert_x:T_xX\ra T_{j(x)}V$ are isomorphisms, we see that
$\d\pi_U\vert_{(i(x),j(x))}:T_{(i(x),j(x))}\ti V\ra T_{i(x)}U$ and
$\d\pi_V\vert_{(i(x),j(x))}:T_{(i(x),j(x))}\ti V\ra T_{j(x)}V$ are
isomorphisms, so $\pi_U,\pi_V$ are \'etale near $(i(x),j(x))$.
Making $\ti S,\ti V$ smaller, we can suppose $\pi_U,\pi_V$ are
\'etale. Since $\pi_V\ci\ti\jmath=j\vert_{\ti S}$, and $j:S\hookra
V$ is a closed embedding, we see that $\ti\jmath(\ti S)$ is open and
closed in $\pi_V^{-1}(j(S))$. Thus, making $\ti V$ smaller, we can
suppose that~$\ti\jmath(\ti S)=\pi_V^{-1}(j(S))$.

Define $\ti g=g\ci\pi_V:\ti V\ra\bA^1$. Since $\pi_V$ is \'etale,
$\Crit(\ti g)=\pi_V^{-1}(\Crit(g))=\pi_V^{-1}(j(S))=\ti\jmath(\ti
S)$. As $(S,V,g,j)$ is a critical chart, we have
$\io_{S,V}(s\vert_S)=j^{-1}(g)+I_{S,V}^2$. Combining this with $\ti
g=g\ci\pi_V$, $\pi_V\ci\ti\jmath=j\vert_{\ti S}$, and Theorem
\ref{dc2thm1}(ii), proves that $\io_{\smash{\ti S,\ti
V}}(s\vert_{\smash{\ti S}})= \ti\jmath^{-1}(\ti g)+I_{\smash{\ti
S,\ti V}}^2$. Therefore $(\ti S,\ti V,\ti g,\ti\jmath)$ is a
critical chart on~$(X,s)$.

Now $\pi_U:\ti S\ra U$ is \'etale, and using
$\io_{R,U}(s\vert_R)=i^{-1}(f)+I_{R,U}^2$ and Theorem
\ref{dc2thm1}(ii) again shows that $\ti\jmath^{-1}(\ti
g)+I_{\smash{\ti S,\ti V}}^2=\ti\jmath^{-1}(\pi_U^*(f))+
I_{\smash{\ti S,\ti V}}^2$, so that
\e
\pi_U^*(f)-\ti g\in I_{\d\ti g}^2\subseteq\O_{\ti V},
\label{dc4eq2}
\e
where $I_{\d\ti g}\subseteq\O_{\ti V}$ is the ideal generated by $\d\ti
g$. Differentiating \eq{dc4eq2} gives
\begin{equation*}
\d\bigl(\pi_U^*(f)-\ti g\bigr)\in I_{\d\ti g}\cdot I_{\d\ti
g,\pd^2\ti g}\cdot T^*V,
\end{equation*}
where $I_{\d\ti g,\pd^2\ti g}\subseteq\O_{\ti V}$ is the ideal
generated by the first and second derivatives of $\ti g$. Therefore
locally on $\ti V$ we may write
\e
\d\bigl(\pi_U^*(f)\bigr)=(\id+\al)\cdot\d\ti g,\quad \text{where}
\quad \al\in I_{\d\ti g,\pd^2\ti g}\cdot TV\ot T^*V.
\label{dc4eq3}
\e

As $\ti\jmath(x)\in\Crit(\ti g)$ and $T_{\ti\jmath(x)}\Crit(\ti g)
=T_{\ti\jmath(x)}\ti V$ we have $\d\ti g\vert_{\ti\jmath(x)}=
\pd^2\ti g\vert_{\ti\jmath(x)}=0$, so $\al\vert_{\ti\jmath(x)}=0$,
and $\id+\al$ is invertible near $\ti\jmath(x)$. Making $\ti S,\ti
V$ smaller, we can suppose $\id+\al$ is invertible on $\ti V$. So
\eq{dc4eq3} implies that $\Crit(\pi_U^*(f))=\Crit(\ti g)$, as
$\K$-subschemes of $\ti V$. Hence $(\ti S,\ti
V,\pi_U^*(f),\ti\jmath)$ is a critical chart on $(X,s)$.

As $\pi_U:\ti V\ra U$ is \'etale, it maps
$\Crit(\pi_U^*(f))\ra\Crit(f)$ on the image of $\pi_U$. But
$\Crit(\pi_U^*(f))=\ti\jmath(\ti S)$, so
$\pi_U\bigl(\Crit(\pi_U^*(f))\bigr)=\pi_U\ci\jmath(\ti S)=i(\ti S)$.
Thus $\Crit(f)\cap \pi_U(\ti V)=i(\ti S)$, and $\Crit(f)$ coincides
with $i(R)$ near $i(x)$. Choose open $i(x)\in U'\subseteq\pi_U(\ti
V)\subseteq U$, and set $R'=i^{-1}(U')$, $f'=f\vert_{U'}$ and
$i'=i\vert_{R'}$. Since $\pi_U$ is \'etale, $\pi_U^{-1}(\Crit(f'))=
\Crit(\pi_U^*(f))\cap \pi_U^{-1}(U')=\ti\jmath(\ti S)\cap
\pi_U^{-1}(U')=\pi_U^{-1}(i'(R'))$, and this forces
$\Crit(f')=i'(R')$, since the \'etale morphism $\pi_U$ is a Zariski
isomorphism $\jmath(\ti S)\ra i(\ti S)$. Therefore $(R',U',f',i')$
is a critical chart on $(X,s)$. This proves the second part of
Proposition \ref{dc2prop2} in the $\K$-scheme case.

For the first part, suppose $(X,s)$ is an algebraic d-critical
locus, $R\subseteq X$ is Zariski open, $i:R\hookra U$ is a closed
embedding into a smooth $\K$-scheme $U$, and $x\in R$. Let $n=\dim
U$ and $m=\dim T_xX$. Choose a Zariski open neighbourhood $U'$ of
$x$ in $X$ and \'etale coordinates $(z_1,\ldots,z_n):U'\ra\bA^n$
such that $z_{m+1}\vert_{i'(R')}=\cdots=z_n\vert_{i'(R')}=0$, where
$R'=i^{-1}(U')$ and $i'=i\vert_{R'}:R'\hookra U'$, a closed
embedding. Making $R',U'$ smaller if necessary, we may choose
regular $f:U'\ra\bA^1$ with $\io_{R',U'}(s\vert_{R'})=i^{\prime
-1}(f)+I_{R',U'}^2$.

Set $V'=\{v\in U':z_{m+1}(v)=\cdots=z_n(v)=0\}$, so that $V'$ is a
smooth $\K$-subscheme of $U'$ with $\dim V'=m=\dim T_xX$ and
$i'(R')\subseteq V'\subseteq U'$. The proof above shows that
$\Crit(f\vert_{V'})=i'(R')$ near $i'(x)$, so making $R',U',V'$
smaller we may suppose that $\Crit(f\vert_{V'})=i'(R')$. Define
$f':U'\ra\bA^1$ by
\e
f'=f-\sum_{j=m+1}^n\frac{\pd f}{\pd z_j}\,z_j
+\frac{1}{2}\sum_{j,k=m+1}^n\biggl(\de_{jk}+\frac{\pd^2f}{\pd z_j\pd
z_k}\biggr)z_jz_k.
\label{dc4eq4}
\e

Differentiating \eq{dc4eq4} shows that for $j=m+1,\ldots,n$ we have
\begin{equation*}
\frac{\pd f'}{\pd z_j}=z_j+\frac{1}{2}\sum_{k,l=m+1}^n
\frac{\pd^3f}{\pd z_j\pd z_k\pd z_l}\,z_kz_l.
\end{equation*}
Rewriting this in matrix form gives
\e
\begin{pmatrix} \frac{\pd f'}{\pd z_{m+1}} \\ \vdots \\
\frac{\pd f'}{\pd z_n} \end{pmatrix} = \begin{pmatrix} \de_{jk} +
\frac{1}{2}\sum_{l=m+1}^n \frac{\pd^3f}{\pd z_j\pd z_k\pd z_l}\,z_l
\end{pmatrix}_{j,k=m+1}^n \begin{pmatrix} z_{m+1} \\ \vdots \\ z_n
\end{pmatrix}
\label{dc4eq5}
\e
As $z_l=0$ on $V'\subseteq U'$ the first matrix on the r.h.s.\ of
\eq{dc4eq5} is invertible near $V'$ in $U'$, so making $U'$ smaller
while fixing $R',V'$, we can suppose it is invertible. Then the
$\K$-subscheme $\frac{\pd f'}{\pd z_{m+1}}=\cdots=\frac{\pd f'}{\pd
z_n}=0$ in $U'$ is $z_{m+1}=\cdots=z_n=0$, that is, $V'$. Therefore
as $f'\vert_{V'}=f\vert_{V'}$ by \eq{dc4eq4} we have
\e
\Crit(f')=\Crit(f'\vert_{V'})=\Crit(f\vert_{V'})=i'(R').
\label{dc4eq6}
\e

Since $z_j\vert_{i'(R')}=\frac{\pd f}{\pd z_j}\vert_{i'(R')}=0$ for
$j=m+1,\ldots,n$ we have $i^{\prime -1}(z_j)\in I_{R',U'}$ and
$i^{\prime -1}\bigl(\frac{\pd f}{\pd z_j}\bigr)\in I_{R',U'}$ for
$j=m+1,\ldots,n$. Thus \eq{dc4eq4} implies that
\e
i^{\prime -1}(f')+I_{R',U'}^2=i^{\prime
-1}(f)+I_{R',U'}^2=\io_{R',U'}(s\vert_{R'}).
\label{dc4eq7}
\e
Equations \eq{dc4eq6}--\eq{dc4eq7} imply that $(R',U',f',i')$ is a
critical chart on $(X,s)$. This completes the proof of Proposition
\ref{dc2prop2} for $\K$-schemes.

For complex analytic spaces, the proof above also works more-or-less
without change, but it can be simplified, as \'etale morphisms of
complex analytic spaces are local isomorphisms, and so are
invertible on suitable open sets.

\subsection{Proof of Proposition \ref{dc2prop3}}
\label{dc42}

Let $\phi:X\ra Y$ be a smooth morphism of complex analytic spaces
(or $\K$-schemes) and $t\in H^0(\cSz_Y)$, and set
$s=\phi^\star(t)\in H^0(\cSz_X)$. Fix $x\in X$ with $y=\phi(x)$.
Write $\dim T_xX=m$ and $\dim T_yY=n$, so that $m\ge n$ and $\phi$
is smooth of relative dimension $m-n$ near $x$. We may choose
(Zariski) open $y\in S\subseteq Y$ and $x\in
R\subseteq\phi^{-1}(S)\subseteq X$, closed embeddings $i:R\hookra
U$, $j:S\hookra V$ for $U,V$ complex manifolds (or smooth
$\K$-schemes) with $\dim U=m$, $\dim V=n$, and a morphism $\Phi:U\ra
V$ smooth of relative dimension $m-n$ with $\Phi\ci
i=j\ci\phi\vert_R:R\ra V$. Making $S,V$ (and hence $R,U$) smaller,
we may choose holomorphic $g:V\ra\C$ (or regular $g:V\ra\bA^1$) with
$\io_{S,V}(t\vert_S)=j^{-1}(g)+I_{S,V}^2$. Define $f=g\ci\Phi$. Then
$s=\phi^\star(t)$ and Proposition \ref{dc2prop1} imply
that~$\io_{R,U}(s\vert_R)=i^{-1}(f)+I_{R,U}^2$.

Since $\dim T_xX=\dim U$, Proposition \ref{dc2prop2} implies that
$(X,s)$ is a d-critical locus near $x$ if and only if
$\Crit(f)=i(R)$ near $i(x)$ as complex analytic subspaces (or
$\K$-subschemes) of $X$. Similarly, $(Y,t)$ is a d-critical locus
near $y$ if and only if $\Crit(g)=j(R)$ near $j(y)$. But $\Phi:U\ra
V$ is smooth and $f=g\ci\Phi$ implies that
$\Crit(f)=\Phi^{-1}(\Crit(g))$. Also $\Phi\ci i=j\ci\phi\vert_R$ and
$\phi,\Phi$ smooth of relative dimension $m-n$ imply that
$i(R)=\Phi^{-1}(j(S))$ near $i(R)$.

Therefore $\Crit(f)=i(R)$ near $i(x)$ if and only if $\Crit(g)=j(R)$
near $j(y)$, and so $(X,s)$ is a d-critical locus near $x$ if and
only if $(Y,t)$ is a d-critical locus near $y$. Hence $(Y,t)$ a
d-critical locus implies $(X,s)$ is a d-critical locus, and if
$\phi:X\ra Y$ is surjective then $(X,s)$ a d-critical locus implies
$(Y,t)$ is a d-critical locus. If $(X,s),(Y,t)$ are d-critical loci
then $\phi:(X,s)\ra(Y,t)$ is a morphism of d-critical loci by
Definition \ref{dc2def1}. This proves Proposition~\ref{dc2prop3}.

\section{Comparing critical charts $(R,U,f,i)$}
\label{dc5}

Next we prove Theorem \ref{dc2thm2} and Propositions \ref{dc2prop6}
and \ref{dc2prop7}--\ref{dc2prop9} from~\S\ref{dc23}.

\subsection{Proof of Proposition \ref{dc2prop6}}
\label{dc51}

Let $(R,U,f,i)$ be a critical chart on an algebraic d-critical locus
$(X,s)$ over $\K$, and $x\in R$. Then $U$ is a smooth $\K$-scheme,
so there exist an affine open neighbourhood $\ti U$ of $i(x)$ in $U$
and a closed embedding of $\K$-schemes $\ti\Phi:\ti U\hookra\bA^n$
for some $n\ge 0$. Choose a Zariski open neighbourhood $V$ of
$\ti\Phi(i(x))$ in $\bA^n$ and \'etale coordinates
$(z_1,\ldots,z_n)$ on $V$ such that $\ti\Phi(\ti U)\cap V$ is the
$\K$-subscheme defined by $z_{m+1}=\cdots=z_n=0$ in $V$, where
$m=\dim U$. Set $U'=\ti\Phi{}^{-1}(V)$, $R'=S=i^{-1}(U')$,
$i'=i\vert_{R'}:R'\hookra U'$, $f'=f\vert_{U'}:U'\ra\bA^1$,
$\Phi=\ti\Phi\vert_{U'}:U'\hookra V$, and~$j=\Phi\ci i':S=R'\hookra
V$.

Now $\Phi(U')$ is a closed $\K$-subscheme of $V$, and
$f'\ci\Phi^{-1}:\Phi(U')\ra\bA^1$ a regular function. Zariski
locally on $V$, we may extend $f'\ci\Phi^{-1}$ from $\Phi(U')$ to
$V$. Thus, making $V,U',R',S$ smaller, there exists a regular
$h:V\ra\bA^1$ with $h\ci\Phi=f':U'\ra\bA^1$. Then we may form the
partial derivatives $\frac{\pd h}{\pd z_a},\frac{\pd^2h}{\pd z_a\pd
z_b}$ for $a,b=1,\ldots,n$. Define $g:V\ra\bA^1$ by
\e
g=\ts h-\sum_{a=m+1}^nz_a\cdot\frac{\pd h}{\pd z_a}+
\ha\sum_{a,b=m+1}^nz_az_b\cdot\frac{\pd^2 h}{\pd z_a\pd
z_b}+\sum_{a=m+1}^nz_a^2.
\label{dc5eq1}
\e
Here we use $\mathop{\rm char}\K\ne 2$ from \S\ref{dc1}. Since
$z_a\vert_{\Phi(U')}=0$ for $a=m+1,\ldots,n$, we have
$g\ci\Phi=h\ci\Phi=f':U'\ra\bA^1$. Also equation \eq{dc5eq1} implies
that
\e
\ts\frac{\pd g}{\pd
z_a}\vert_{\Phi(U')}\!=\!0,\;\>a=m\!+\!1,\ldots,n,\;\>
\ts\frac{\pd^2g}{\pd z_a\pd
z_b}\vert_{\Phi(U')}\!=\!2\de_{ab},\;\>a,b=m\!+\!1,\ldots,n.
\label{dc5eq2}
\e

Consider the $\K$-subscheme of $V$ defined by the equations
$\frac{\pd g}{\pd z_a}=0$ for $a=m+1,\ldots,n$. The first equation
of \eq{dc5eq2} shows this $\K$-subscheme contains $\Phi(U')$, and
the second that this $\K$-subscheme coincides with $\Phi(U')$ near
$\Phi(U')$ in $V$. Hence, by making $V$ smaller while keeping
$U',R',S$ fixed, we can take this $\K$-subscheme to be $\Phi(U')$.
We now claim that
\e
\Crit(g)=\Phi(\Crit(f'))=\Phi(i(R'))=j(S)
\label{dc5eq3}
\e
as $\K$-subschemes of $V$. To see this, note that the equations
$\frac{\pd g}{\pd z_a}=0$ for $a=1,\ldots,m$ defining $\Crit(g)$
divide into $\frac{\pd g}{\pd z_a}=0$ for $a=m+1,\ldots,n$, which
define the smooth $\K$-subscheme $\Phi(U')$ in $V$, and $\frac{\pd
g}{\pd z_a}=0$ for $a=1,\ldots,m$, which under $\Phi(U')\cong U'$
correspond to $\d f'=0$ as~$g\ci\Phi=f'$.

As $(R',U',f',i')$ is a critical chart on $(X,s)$ we have
$\io_{R',U'}(s\vert_{R'})=i^{\prime -1}(f')+I_{R',U'}^2$. Applying
Theorem \ref{dc2thm1}(ii) with $(R',U',i)$ in place of $(R,U,i)$ and
using $g\ci\Phi=f'$, $j=\Phi\ci i'$ and $R'=S$ shows that
$\io_{S,V}(s\vert_S)=j^{-1}(g) +I_{S,V}^2$. Together with
\eq{dc5eq3}, this implies that $(S,V,g,j)$ is a critical chart, and
$\Phi:U'\hookra V$ an embedding with $j=\Phi\ci i'$, $g\ci\Phi=f'$
show $\Phi:(R,U,f,i)\hookra(S,V,g,j)$ is an embedding of critical
charts. This proves Proposition~\ref{dc2prop6}.

\subsection{Proof of Theorem \ref{dc2thm2}}
\label{dc52}

We begin with the complex analytic case. Let $(X,s)$ be a complex
analytic d-critical locus, $(R,U,f,i),(S,V,g,j)$ be critical charts
on $(X,s)$, and $x\in R\cap S$. Then $i(R\cap S)$ is a locally
closed complex analytic subspace of $U$, and $j\ci
i^{-1}\vert_{i(R\cap S)}:i(R\cap S)\ra V$ a morphism to a complex
manifold $V$. So we can extend $j\ci i^{-1}$ locally to a
holomorphic map $U\ra V$. That is, we can choose an open
neighbourhood $U'$ of $i(x)$ in $U$ with $R':=i^{-1}(U')\subseteq
R\cap S$, and a holomorphic map $\Th:U'\ra V$ such that $\Th\ci
i'=j\vert_{R'}:R'\ra V$, for~$i':=i\vert_{R'}$.

Theorem \ref{dc2thm1}(ii) with $(R',U',i'),\Th$ in place of
$(R,U,i),\Phi$ gives a commutative diagram \eq{dc2eq3}. Applying
this to $s\vert_{R'}$ shows that
\e
\io_{R',U'}(s\vert_{R'})=i^{\prime -1}(\Th^\sh)\bigl[
\io_{S,V}(s\vert_{S})\bigr].
\label{dc5eq4}
\e
Write $I_{R',U'}'\subseteq \O_{U'}$ for the ideal vanishing on
$i(R')\subseteq U'$, and $f'=f\vert_{U'}$. Then
\ea
i^{\prime -1}\bigl[f'&+(I_{R',U'}')^2\bigr]= i^{\prime
-1}(f')+I_{R',U'}^2=\io_{R',U'}(s\vert_{R'})=i^{\prime
-1}(\Th^\sh)\bigl[\io_{S,V}(s\vert_S)\bigr]
\nonumber\\
&=i^{\prime -1}(\Th^\sh)\bigl[j^{-1}(g)+I_{S,V}^2\bigr]=i^{\prime
-1}(\Th^\sh)\bigl[i^{\prime -1}\ci\Th^{-1}(g)+I_{S,V}^2\bigr]
\nonumber\\
&=i^{\prime -1}\bigl[\Th^\sh\bigl(\Th^{-1}(g)\bigr)\bigr]
+I_{R',U'}^2=i^{\prime -1}\bigl[g\ci\Th+(I_{R',U'}')^2\bigr],
\label{dc5eq5}
\ea
using $i^{\prime -1}(I_{R',U'}')=I_{R',U'}$ in the first and seventh
steps, $\io_{R',U'}(s\vert_{R'})=i^{\prime -1}(f')+I_{R',U'}^2$ in
the second, \eq{dc5eq4} in the third, $\io_{S,V}(s\vert_S)=
j^{-1}(g)+I_{S,V}^2$ in the fourth, $\Th\ci i'=j\vert_{R'}$ in the
fifth, and $i^{\prime -1}(\Th^\sh)(I_{S,V})=I_{R',U'}$ in the sixth.

Equation \eq{dc5eq5} implies that $f'-g\ci\Th\in (I_{R',U'}')^2$.
Therefore, making $U',R'$ smaller if necessary, we can choose
holomorphic functions $r_a,s_a:U'\ra\C$ for $a=1,\ldots,n$, some
$n\ge 0$, such that $r_a,s_a\in H^0(I_{R',U'}')$ for $a=1,\ldots,n$
and
\e
f'=g\ci\Th+r_1s_1+\cdots+r_ns_n:U'\longra\C.
\label{dc5eq6}
\e
Define $W=V\t\C^{2n}$, $T=S$, $k=j\t(0,\ldots,0):T=S\hookra
V\t\C^{2n}=W$, $(S',V',g',j')=(S,V,g,j)$, and $\Phi:U'\ra W$,
$\Psi:V'\ra W$, $h:W\ra\C$ by
\begin{gather}
\Phi(u)=\bigl(\Th(u),(r_1(u),\ldots,r_n(u),s_1(u),\ldots,s_n(u))\bigr),\;\>
\Psi(v)=\bigl(v,(0,\ldots,0)\bigr),
\nonumber\\
\text{and}\quad h\bigl(v,(y_1,\ldots,y_n,z_1,\ldots,z_n)\bigr)=g(v)+
y_1z_1+\cdots+y_nz_n.
\label{dc5eq7}
\end{gather}
By increasing $n$ and adding extra functions $r_a,s_a$ with $s_a=0$,
we can suppose $\Phi$ is an embedding near $i(x)\in U'$. So making
$U',R'$ smaller we can take $\Phi:U'\hookra W$ to be an embedding.

Since $(S,V,g,j)$ is a critical chart, $(T,W,h,k)$ is one too. Also
$\Phi:U'\hookra W$, $\Psi:V'\hookra W$ are embeddings, and
$f'=h\ci\Phi$, $g'=h\ci\Psi$, $\Phi\ci i'=k\vert_{R'}$, $\Psi\ci
j'=k\vert_{S'}$ follow from \eq{dc5eq6}--\eq{dc5eq7} and $\Th\ci
i'=j\vert_{R'}$. Therefore $\Phi:(R',U',f',i')\hookra (T,W,h,k)$,
$\Psi:(S',V',g',j')\hookra (T,W,h,k)$ are embeddings of critical
charts with $x\in R'\cap S'$, proving Theorem \ref{dc2thm2} in the
complex analytic case.

For the algebraic case, with $(X,s)$ an algebraic d-critical locus
over $\K$, and $(R,U,f,i),(S,V,g,j)$ critical charts, we would like
to follow the above method, but there is a problem with the first
step: if $V$ is a general smooth $\K$-scheme, we may not be able to
choose a Zariski open $i(x)\in U'\subseteq U$ and a morphism
$\Th:U'\ra V$ such that $\Th\ci i'=j\vert_{R'}:R'\ra V$. However,
this is valid if $V$ is Zariski open in an affine space~$\bA^m$.

So we modify the method above as follows: first we apply Proposition
\ref{dc2prop6}, proved in \S\ref{dc51}, to get a subchart
$(S',V',g',j')\subseteq(S,V,g,j)$ with $x\in S'$ and an embedding
$\Xi:(S',V',g',j')\hookra(\ti S,\ti V,\ti g,\ti\jmath)$ for $(\ti
S,\ti V,\ti g,\ti\jmath)$ a critical chart on $(X,s)$ with $\ti
V\subseteq\bA^m$ Zariski open. Thus we may choose Zariski open
$i(x)\in U'\subseteq U$ and a morphism $\Th:U'\ra\ti V$ with $\Th\ci
i'=\ti\jmath\vert_{R'}:R'\ra W$. Then we follow the rest of the
proof above with $(\ti S,\ti V,\ti g,\ti\jmath),\bA^n$ in place of
$(S,V,g,j),\C^n$, so that $W=\ti V\t\bA^{2n}$, except that we define
$\Psi:V'\hookra W$ by $\Psi(v)=\bigl(\Xi(v), (0,\ldots,0)\bigr)$. We
leave the details to the reader.

\subsection{Proof of Proposition \ref{dc2prop7}}
\label{dc53}

Choose holomorphic coordinates $(\dot y_1,\ldots,\dot y_m, \dot
z_1,\ldots,\dot z_n)$ on an open neighbourhood $\dot V$ of $j(x)$ in
$V$, where $\dim U=m$ and $\dim V=m+n$, such that
$j(x)=(0,\ldots,0)$ and $\Phi(U)\cap\dot V$ is the submanifold $\dot
z_1=\cdots=\dot z_n=0$ in $\dot V$. Set $\dot U=\Phi^{-1}(\dot V)$
and $\dot x_a=\dot y_a\ci\Phi\vert_{\dot U}$ for $a=1,\ldots,m$.
Then $\dot U$ is an open neighbourhood of $i(x)$ in $U$, and $(\dot
x_1,\ldots,\dot x_m)$ are holomorphic coordinates on $\dot U$ with
$i(x)=(0,\ldots,0)$. Write $f\vert_{\dot U}=\dot f(\dot
x_1,\ldots,\dot x_m)$ and $g\vert_{\dot V}=\dot g(\dot
y_1,\ldots,\dot y_m, \dot z_1,\ldots,\dot z_n)$ as functions of
these coordinates, so that $f=g\ci\Phi$ implies that $\dot f(\dot
y_1,\ldots,\dot y_m)=\dot g(\dot y_1,\ldots,\dot y_m,0,\ldots,0)$
for~$(\dot y_1,\ldots,\dot y_m)\in\dot U$.

Then the ideal $I_{R,U}'=I_{(\d f)}$ is on $\dot U$ the ideal of
holomorphic functions in $(\dot x_1,\ldots,\dot x_m)$ generated by
$\frac{\pd\dot f}{\pd \dot x_a}$ for $a=1,\ldots,m$, and the ideal
$I_{S,V}'=I_{(\d g)}$ is on $\dot V$ the ideal of holomorphic
functions in $(\dot y_1,\ldots,\dot y_m,\dot z_1,\ldots,\dot z_n)$
generated by $\frac{\pd\dot g}{\pd \dot y_a}$ for $a=1,\ldots,m$ and
$\frac{\pd\dot g}{\pd \dot z_b}$ for $b=1,\ldots,n$. Since $\Phi$
maps $U$ to $\dot z_1=\cdots=\dot z_n=0$ and $i(R)$ to
$j(R)\subseteq j(S)$, we have $I_{(\d f)}\cong I_{(\d g)}\vert_{\dot
z_1=\cdots=\dot z_n=0}$, that is,
\begin{align*}
&\bigl(\ts\frac{\pd\dot f}{\pd \dot x_a}(\dot y_1,\ldots,\dot y_m):
a=1,\ldots,m\bigr)=\\
&\bigl(\ts\frac{\pd\dot g}{\pd \dot y_a}(\dot y_1,\ldots,\dot y_m,
0,\ldots,0):a\!=\!1,\ldots,m,\;\> \frac{\pd\dot g}{\pd\dot z_b}
(\dot y_1,\ldots,\dot y_m,0,\ldots,0):b\!=\!1,\ldots,n\bigr).
\end{align*}
As $\frac{\pd\dot g}{\pd\dot y_a}(\dot y_1,\ldots,\dot
y_m,0,\ldots,0)=\frac{\pd\dot f}{\pd\dot x_a}(\dot y_1,\ldots,\dot
y_m)$, this holds provided each $\frac{\pd\dot g}{\pd \dot z_b}(\dot
y_1,\ab\ldots,\ab \dot y_m,\ab 0,\ab\ldots,0)$ lies in
$\bigl(\frac{\pd\dot g}{\pd \dot y_a}(\dot y_1,\ldots,\dot
y_m,0,\ldots,0): a=1,\ldots,m\bigr)$. Thus, making $\dot U,\dot V$
smaller if necessary, we can suppose there exist holomorphic
functions $A_{ab}(\dot y_1,\ldots,\dot y_m)$ on $\dot U$ for
$a=1,\ldots,m$, $b=1,\ldots,n$ such that for each~$b$
\begin{equation*}
\ts\frac{\pd\dot g}{\pd \dot z_b}(\dot y_1,\ldots,\dot y_m,0,
\ldots,0)=\ts\sum_{a=1}^mA_{ab}(\dot y_1,\ldots,\dot y_m)\cdot
\frac{\pd\dot g}{\pd \dot y_a}(\dot y_1,\ldots,\dot y_m,0,\ldots,0).
\end{equation*}

Define holomorphic coordinates $(\ti y_1,\ldots,\ti y_m,\ti
z_1,\ldots,\ti z_n)$ on an open neighbourhood $\ti V$ of $j(x)$ in
$\dot V$ by $\ti y_a=\dot y_a+\sum_{b=1}^nA_{ab}(\dot
y_1,\ldots,\dot y_m)\,\dot z_b$ and $\ti z_b=\dot z_b$. Here $\ti
y_a,\ti z_b$ are defined on all of $\dot V$, but they need only be a
coordinate system near $j(x)$ in $\dot V$ where $\dot
z_1,\ldots,\dot z_n$ are small, so we shrink $\dot V$ to $\ti
V\subseteq\dot V$. We also write $\ti U=\Phi^{-1}(\ti
V)\subseteq\dot U$ and $\ti x_a=\dot x_a\vert_{\ti U}$. Then $\ti
y_a\ci\Phi\vert_{\ti U}=\ti x_a$ as $\dot z_b \ci\Phi\vert_{\ti
U}=0$. Making $\ti V$ smaller if necessary we can suppose that if
$(\ti y_1,\ldots,\ti y_m,\ti z_1,\ldots,\ti z_n)\in\ti V$ then $(\ti
y_1,\ldots,\ti y_m)\in\dot U$, using the coordinates $(\dot
x_1,\ldots,\dot x_m)$ on~$\dot U$.

Write $g\vert_{\ti V}=\ti g(\ti y_1,\ldots,\ab\ti y_m,\ab\ti
z_1,\ab\ldots,\ti z_n)$ using these coordinates. Then
\begin{align*}
&\ts\frac{\pd\ti g}{\pd\ti z_b}(\ti y_1,\ldots,\ti
y_m,0,\ldots,0)\\
&=\ts\sum_{a=1}^m\frac{\pd\dot g}{\pd \dot y_a}(\dot y_1,\ldots,
\dot y_m,0,\ldots,0)\cdot\frac{\pd \dot y_a}{\pd\ti z_b}
+\sum_{c=1}^n\frac{\pd\dot g}{\pd \dot z_c}(\dot y_1,\ldots,
\dot y_m,0,\ldots,0)\cdot\frac{\pd\dot z_c}{\pd\ti z_b}\\
&=\ts\sum_{a=1}^m\frac{\pd\dot g}{\pd \dot y_a}(\dot y_1,
\ldots,\dot y_m,0)\cdot(-A_{ab}(\dot y_1, \ldots,\dot y_m)) \\
&\quad\ts+\sum_{c=1}^n\bigl(\sum_{a=1}^mA_{ac}(\dot y_1,\ldots,
\dot y_m)\cdot\frac{\pd\dot g}{\pd \dot y_a}(\dot y_1,\ldots,
\dot y_m,0,\ldots,0)\bigr)\cdot\de_{bc}=0.
\end{align*}
So $\ti g(\ti y_1,\ldots,\ti y_m,0,\ldots,0)=\dot f(\ti
y_1,\ldots,\ti y_m)$ and $\frac{\pd\ti g}{\pd\ti z_b}(\ti
y_1,\ldots,\ti y_m,0,\ldots,0)=0$ for $b=1,\ldots,n$ in the new
coordinates $(\ti y_1,\ldots,\ti y_m,\ti z_1,\ldots,\ti z_n)$.

Consider the holomorphic function $h:\ti V\ra\C$ given by
\e
h(\ti y_1,\ldots,\ti y_m,\ti z_1,\ldots,\ti z_n)=\ti g(\ti
y_1,\ldots,\ti y_m,\ti z_1,\ldots,\ti z_n)-\dot f(\ti y_1,\ldots,\ti
y_m).
\label{dc5eq8}
\e
It satisfies $h(\ti y_1,\ldots,\ti y_m,0,\ldots,0)=0$ and $\frac{\pd
h}{\pd\ti z_b}(\ti y_1,\ldots,\ti y_m,0,\ldots,0)=0$ for all
$b=1,\ldots,n$ and $(\ti y_1,\ldots,\ti y_m)\in\ti U$, so $h$ lies
in the ideal $(\ti z_1,\ldots,\ti z_n)^2$. Thus, making $\ti V,\ti
U$ smaller if necessary, we may write
\e
\begin{split}
h(\ti y_1,\ldots,\ti y_m,\ti z_1,\ldots,\ti
z_n)=\ts\sum_{b,c=1}^n\ti z_b\ti z_c\,Q_{bc}(\ti y_1,\ldots,\ti
y_m,\ti z_1,\ldots,\ti z_n),
\end{split}
\label{dc5eq9}
\e
for some (nonunique) holomorphic functions $Q_{bc}:\ti V\ra\C$
with~$Q_{bc}=Q_{cb}$.

Now $\Crit(\ti g)=\Crit(g)\cap\ti V=j(S)\cap\ti V$ lies in $\Phi(\ti
U)=\{\ti z_1=\cdots=\ti z_n=0\}$ as complex analytic subspaces of
$\ti V$. Therefore $\ti z_1,\ldots,\ti z_n$ lie in the ideal $I_{(\d
\ti g)}$ generated by $\frac{\pd\ti g}{\pd\ti y_a},\frac{\pd\ti
g}{\pd\ti z_b}$, so making $\ti U,\ti V$ smaller, for each
$d=1,\ldots,n$ there exist holomorphic functions $B_{ad}(\ti
y_1,\ldots,\ti y_m,\ti z_1,\ldots,\ti z_n),C_{bd}(\ti y_1,\ldots,\ti
y_m,\ab\ti z_1,\ab\ldots,\ab\ti z_n)$ on $\ti V$ such that using
\eq{dc5eq8}--\eq{dc5eq9} we have
\ea
\ti z_d&=\ts\sum\limits_{a=1}^mB_{ad}\cdot\frac{\pd\ti g}{\pd\ti
y_a}+\sum\limits_{b=1}^nC_{bd}\cdot\frac{\pd\ti g}{\pd\ti z_b}
\nonumber\\
&=\ts\sum\limits_{a=1}^mB_{ad}\cdot\Bigl[\frac{\pd\dot f}{\pd \dot
x_a}(\ti y_1,\ldots,\ti
y_m)+\sum\limits_{a=1}^m\sum\limits_{b,c=1}^n\ti z_b\ti z_c\frac{\pd
Q_{bc}}{\pd\ti y_a}(\ti y_1,\ldots,\ti y_m,\ti z_1,\ldots,\ti
z_n)\Bigr]
\nonumber\\
&\quad +2\ts\sum\limits_{b,c=1}^nC_{bd}\cdot\ti z_cQ_{bc}(\ti
y_1,\ldots,\ti y_m,\ti z_1,\ldots,\ti z_n)
\nonumber\\
&\quad+\ts\sum\limits_{b,c,e=1}^nC_{ed}\cdot\ti z_b\ti z_c\frac{\pd
Q_{bc}}{\pd\ti z_e}(\ti y_1,\ldots,\ti y_m,\ti z_1,\ldots,\ti z_n).
\label{dc5eq10}
\ea

Apply $\frac{\pd}{\pd\ti z_c}$ to \eq{dc5eq10} and restrict it to
the point $(0,\ldots,0)=j(x)$, noting that $\frac{\pd\dot f}{\pd
\dot x_a}(0,\ldots,0)=0$. This yields
\begin{equation*}
\de_{cd}=2\ts\sum_{b=1}^nC_{bd}(0,\ldots,0)\cdot
Q_{bc}(0,\ldots,0), \quad\text{for all $c,d=1,\ldots,n$.}
\end{equation*}
Hence the symmetric matrix $\bigl(Q_{bc}(0,\ldots,0)
\bigr){}_{b,c=1}^n$ is invertible. Thus, by applying an element of
$\GL(n,\C)$ to the coordinates $(\ti z_1,\ldots,\ti z_n)$ we can
suppose that $Q_{bc}(0,\ldots,0)=\de_{bc}$ for~$b,c=1,\ldots,n$.

We now define new holomorphic coordinates
$(y_1,\ldots,y_m,z_1,\ldots,z_n)$ on an open neighbourhood $V'$ of
$j(x)$ in $\ti V$, and write
$g'(y_1,\ldots,y_m,z_1,\ldots,z_n)=g\vert_{V'}$ for $g$ as a
function of these new coordinates and $U'=\Phi^{-1}(V')$, such that:
\begin{itemize}
\setlength{\itemsep}{0pt}
\setlength{\parsep}{0pt}
\item[(a)] $y_a=\ti y_a$ for $a=1,\ldots,m$.
\item[(b)] $\frac{\pd z_b}{\pd\ti z_c}=\de_{bc}$ at $j(x)$
for $b,c=1,\ldots,n$.
\item[(c)] $\Phi(U')$ is the submanifold $z_1=\cdots=z_n=0$ in
$V'$.
\item[(d)] $g'(y_1,\ldots,y_m,z_1,\ldots,z_n)=\dot f(y_1,\ldots,y_m)+
z_1^2+\cdots+z_n^2$.
\end{itemize}
We define $z_b$ by reverse induction on $b=n,n-1,\ldots,1$.

For the first step, as $Q_{nn}=1$ at $j(x)=(0,\ldots,0)$, we may
restrict to a small open neighbourhood $V'$ of $j(x)$ in $\ti V$ on
which $Q_{nn}(\ti y_1,\ldots,\ti y_m,\ti z_1,\ldots,\ti z_n)$ is
invertible and has a square root $Q_{nn}^{1/2}$. Rewrite \eq{dc5eq9}
on $V'$ as
\begin{gather}
\ti g(\ti y_1,\ldots,\ti y_m,\ti z_1,\ldots,\ti z_n)=\dot f(\ti
y_1,\ldots,\ti y_m)\ts+\sum_{b,c=1}^{n-1}\ti z_b\ti
z_c\Bigl[Q_{bc}-Q_{nn}^{-1}Q_{bn}Q_{cn}\Bigr]
\nonumber\\
+\Bigl[Q_{nn}^{1/2}\ti z_n+\ts\sum_{b=1}^{n-1}Q_{nn}^{-1/2}Q_{bn}\ti
z_b\Bigr]^2=\dot f(\ti y_1,\ldots,\ti y_m)\ts+\sum_{b,c=1}^{n-1}\ti
z_b\ti z_c\hat Q_{bc}+z_n^2,
\nonumber\\
\text{where}\;\> \hat Q_{bc}=Q_{bc}-Q_{nn}^{-1}Q_{bn}Q_{cn},\;\>
z_n=Q_{nn}^{1/2}\ti z_n+\ts\sum_{b=1}^{n-1}Q_{nn}^{-1/2}Q_{bn}\ti
z_b.
\label{dc5eq11}
\end{gather}
Note that $\hat Q_{bc}(0,\ldots,0)=\de_{bc}$ for $b,c=1,\ldots,n-1$,
as $Q_{bc}(0,\ldots,0)=\de_{bc}$. For the second step, making $V'$
smaller so that $\hat Q_{n-1\,n-1}$ is invertible and has a square
root $\hat Q_{n-1\,n-1}^{1/2}$ on $V'$, we have
\begin{align*}
\ti g(\ti y_1,\ldots,\ti y_m,\ti z_1,\ldots,\ti z_n)&=f(\ti
y_1,\ldots,\ti y_m)\ts+\sum_{b,c=1}^{n-2}\ti z_b\ti z_c\check
Q_{bc}+z_{n-1}^2+z_n^2,\\
\text{where}\qquad \check Q_{bc}&=\hat Q_{bc}-\hat
Q_{n-1\,n-1}^{-1}\hat Q_{b\,n-1}\hat Q_{c\,n-1}\\
\text{and}\qquad z_{n-1}&=\hat Q_{n-1\,n-1}^{1/2}\ti
z_{n-1}+\ts\sum_{b=1}^{n-2}\hat Q_{n-1\,n-1}^{-1/2}\hat
Q_{b\,n-1}\ti z_b.
\end{align*}
Continuing in this way, we define $j(x)\in V'\subseteq\ti V$ and
holomorphic functions $y_1,\ldots,y_m,z_1,\ab\ldots,\ab z_n:V'\ra\C$
satisfying (a)--(d) above.

Parts (a)--(c) imply that $(y_1,\ldots,y_m,\ab z_1,\ab\ldots,\ab
z_n)$ are a coordinate system near $j(x)$ in $V'$, so making $V'$
smaller, we can suppose they are coordinates on $V'$. Define
$\al:V'\ra U$ and $\be:V'\ra\C^n$ by
$\al:(y_1,\ldots,y_m,z_1,\ldots,z_n) \mapsto(y_1,\ldots,y_m)$ and
$\be:(y_1,\ldots,y_m,z_1,\ldots,z_n) \mapsto(z_1,\ldots,z_n)$, using
coordinates $(y_1,\ab\ldots,\ab y_m,z_1,\ldots,z_n)$ on $V'$ and
$(\dot x_1,\ldots,\dot x_m)$ on $\dot U$. Proposition \ref{dc2prop7}
then follows from (a),(c),(d) above.

\subsection{Proof of Proposition \ref{dc2prop8}}
\label{dc54}

We will adapt \S\ref{dc53} to the algebraic context. The first part,
until just before \eq{dc5eq8}, works with $i(x)\in\ti U\subseteq
\dot U\subseteq U$ and $j(x)\in\ti V\subseteq\dot V\subseteq V$
Zariski open and $(\dot x_1,\ldots,\dot x_m):\dot U\ra\bA^n$, $(\dot
y_1,\ldots,\dot y_m, \dot z_1,\ldots,\dot z_n):\dot V\ra\bA^{m+n}$,
$(\ti y_1,\ldots,\ab\ti y_m,\ab \ti z_1,\ab\ldots,\ab\ti z_n):\ti
V\ra\bA^{m+n}$ \'etale coordinates on $\dot U,\dot V,\ti V$. Note
that $f,g$ are not functions of $\dot x_a$ or $\dot y_a,\dot z_b$ or
$\ti y_a,\ti z_b$, except in an \'etale sense, so we cannot
rigorously write $f\vert_{\dot U}=\dot f(\dot x_1,\ldots,\dot x_m)$,
and so on. Nonetheless, the partial derivatives $\frac{\pd f}{\pd
\dot x_a},\ab \frac{\pd g}{\pd\dot y_a},\ab\frac{\pd g}{\pd \dot
z_b},\ab\frac{\pd g}{\pd\ti y_a},\frac{\pd g}{\pd\ti z_b}$ are all
well defined on~$\dot U,\dot V,\ti V$.

In this way, we construct \'etale coordinates $(\ti y_1,\ldots,\ti
y_m, \ti z_1,\ldots,\ti z_n)$ on $\ti V\subseteq V$ such that
$\Phi(\ti U)$ is the smooth $\K$-subscheme $\ti z_1=\cdots=\ti
z_n=0$ in $\ti V$, and $\frac{\pd g}{\pd\ti z_b}\vert_{\Phi(\ti
U)}=0$ for $b=1,\ldots,n$. Now \eq{dc5eq8} does not make sense on
$\ti V$, since we cannot extend $f$ from $\ti U$ to $\ti V$ as $f$
is not a function of $(\ti y_1,\ldots,\ti y_m)$. Instead, we form
the Cartesian square of smooth $\K$-schemes and \'etale morphisms
\e
\begin{gathered}
\xymatrix@C=130pt@R=17pt{ *+[r]{\check V=(\ti U\t\bA^n)
\t_{\bA^{m+n}}\ti V} \ar[r]_(0.6){\check\jmath}
\ar[d]^{\check\al\t\check\be} & *+[l]{\ti V}
\ar[d]_{(\ti y_1,\ldots,\ti y_m,\ti z_1,\ldots,\ti z_n)} \\
*+[r]{\ti U\t\bA^n} \ar[r]^(0.4){(\ti x_1,\ldots,\ti x_m)\t
\id_{\bA^n}} & *+[l]{\bA^{m+n}=\bA^m\t\bA^n.\!\!{}}}
\end{gathered}
\label{dc5eq12}
\e
There is a unique $\check v\in\check V$ with $\check\al(\check
v)=i(x)$, $\check\be(\check v)=(0,\ldots,0)$ and
$\check\jmath(\check v)=j(x)$. Then $\check\al\t\check\be$ and
$\check\jmath$ are \'etale, so we can regard $\check\jmath:\check
V\ra\ti V\subseteq V$ as an \'etale open set in $V$. Define \'etale
coordinates $(\check y_1,\ldots,\check y_m,\check z_1,\ldots,\check
z_n):\check V\ra\bA^{m+n}$ by $\check y_a=\ti y_a\ci\check\jmath$,
$\check z_b=\ti z_b\ci\check\jmath$, and define $\check f,\check
g:\check V\ra\bA^1$ by $\check f=f\ci\check\al$ and $\check
g=g\ci\check\jmath$. The analogue of \eq{dc5eq8} is now $\check
h=\check g-\check f:\check V\ra\bA^1$. The previous argument now
shows that on the smooth $\K$-subscheme $\check U\in\check V$
defined by $\check z_1=\cdots=\check z_n=0$ we have $\check
h\vert_{\check U}=0$ and $\frac{\pd\check h}{\pd\check
z_b}\vert_{\check U}=0$ for $b=1,\ldots,n$. Therefore $\check h$
lies in the ideal $(\check z_1,\ldots,\check z_n)^2$ on $\check V$.
So making $\ti U,\check U,\ti V,\check V$ smaller, we may write
$\check h=\ts\sum_{b,c=1}^n\check z_b\check z_cQ_{bc}$, the analogue
of \eq{dc5eq9}, for some $Q_{bc}:\check V\ra\bA^1$ with
$Q_{bc}=Q_{cb}$, where the last part requires $\mathop{\rm
char}\K\ne 2$, and $Q_{bc}(0,\ldots,0)=\de_{bc}$
becomes~$Q_{bc}(\check v)=\de_{bc}$.

The last part of \S\ref{dc53}, defining the coordinates
$(y_1,\ldots,y_m,z_1,\ldots,z_n)$, involves taking square roots
$Q_{nn}^{1/2},\hat Q_{n-1\,n-1}^{-1/2},\ldots.$ These generally will
not exist on $\check V$ or Zariski open subsets of $\check V$, but
they will exist on \'etale open subsets of $\check V$, noting that
$\K$ is algebraically closed, so that square roots exist in $\K$. So
we can construct an \'etale open neighbourhood $\jmath':V'\ra\check
V$ of $\check v$ in $\check V$ on which we define \'etale
coordinates $(y_1,\ldots,y_m,z_1,\ldots,z_n):V'\ra\bA^{m+n}$
satisfying the analogues of (a)--(d) in \S\ref{dc53}, where (d)
becomes $\check g\ci\jmath'=\check f\ci\jmath'+z_1^2+\cdots+z_n^2$.

Set $U'=\{v'\in V':z_1(v')=\cdots=z_n(v')=0\}$, which is a smooth
$\K$-subscheme of $V'$ as $(y_1,\ldots,y_m,z_1,\ldots,z_n)$ are
\'etale coordinates on $V'$. Define $\io:U'\ra U$, $\jmath:V'\ra V$,
$\Phi':U'\ra V'$, $\al:V'\ra U$, and $\be:V'\ra\bA^n$ by
$\io=\check\al \ci\jmath'\vert_{U'}$,
$\jmath=\check\jmath\ci\jmath'$, $\Phi'=\id_{U'}$, $\al=\check\al
\ci\jmath'$ and $\be=(z_1,\ldots,z_n)$. As $\jmath':V'\ra\check V$
is an \'etale open neighbourhood of $\check v$ in $\check V$, there
exists $u'\in V'$ with $\jmath'(u')=\check v$, and $z_b(u')=0$ for
$b=1,\ldots,n$ as $\check z_b(\check v)=\ti
z_b\ci\check\jmath(\check v)=\ti z_b(j(x))=0$, so $u'\in U$ with
$\io(u')=\check\al \ci\jmath'(u')=\check\al(\check v)=i(x)$. Also
$\io,\jmath,\al\t\be$ are \'etale as
$\check\al\t\check\be,\check\jmath,\jmath'$ are.

To see that $\Phi\ci\io=\jmath\ci\Phi'$, note that
\begin{align*}
&(\ti y_1,\ldots,\ti y_m,\ti z_1,\ldots,\ti z_n)\ci\Phi\ci\io=
(\ti x_1,\ldots,\ti x_m,0,\ldots,0)\ci\check\al \ci\jmath'\vert_{U'}\\
&=\bigl((\ti x_1,\ldots,\ti x_m)\t\id_{\bA^n}\bigr)\ci(\check\al\t
\check\be)\ci\jmath'\vert_{U'}\\
&=(\ti y_1,\ldots,\ti y_m,\ti z_1,\ldots,\ti z_n)\ci\check\jmath
\ci\jmath'\vert_{U'}\ci\id_{U'}=(\ti y_1,\ldots,\ti y_m,
\ti z_1,\ldots,\ti z_n)\ci\jmath\ci\Phi',
\end{align*}
using the definitions and \eq{dc5eq12}. Since $\Phi\ci\io(u')=j(x)=
\jmath\ci\Phi'(u')$ and $(\ti y_1,\ldots,\ab\ti y_m,\ab\ti
z_1,\ab\ldots,\ti z_n)$ are \'etale coordinates on $\ti V\subseteq
V$, this implies that $\Phi\ci\io=\jmath\ci\Phi'$ near $u'$ in $U'$,
so making $U'$ smaller if necessary we have
$\Phi\ci\io=\jmath\ci\Phi'$. The equations $\al\ci\Phi'=\io,$
$\be\ci\Phi'=0$ are immediate, and
$g\ci\jmath=f\ci\al+(z_1^2+\cdots+z_n^2)\ci\be$ follows from $\check
g\ci\jmath'=\check f\ci\jmath'+z_1^2+\cdots+z_n^2$. This proves
Proposition~\ref{dc2prop8}.

\subsection{Proof of Proposition \ref{dc2prop9}}
\label{dc55}

For Proposition \ref{dc2prop9}, we first follow the proof of
Proposition \ref{dc2prop8} in \S\ref{dc54} until immediately before
the choice of $\jmath':V'\ra\check V$, so we have \'etale
coordinates $(\ti y_1,\ldots,\ti y_m, \ti z_1,\ldots,\ti z_n)$ on
$j(x)\in \ti V\subseteq V$ and $(\ti x_1,\ldots,\ti x_m)$ on
$i(x)\in \ti U=\Phi^{-1}(\ti V)\subseteq U$ with $\ti x_a=\ti
y_a\ci\Phi\vert_{\smash{\ti U}}$, $\frac{\pd g}{\pd\ti
z_b}\vert_{\Phi(\ti U)}=0$, and a Cartesian square \eq{dc5eq12} with
\'etale coordinates $(\check y_1,\ldots,\check y_m,\check
z_1,\ldots,\check z_n)$ on $\check V$ and $\check v\in\check V$ with
$\check\al(\check v)=i(x)$, $\check\be(\check v)=(0,\ldots,0)$ and
$\check\jmath(\check v)=j(x)$, and functions $\check f,\check
g,\check h,Q_{bc}:\check V\ra\bA^1$ with $\check f=f\ci\check\al$,
$\check g=g\ci\check\jmath$, and $\check h=\check g-\check f
=\ts\sum_{b,c=1}^n\check z_b\check z_cQ_{bc}$, with~$Q_{bc}(\check
v)=\de_{bc}$.

We have morphisms $\id\t 0:\ti U\ra \ti U\t\bA^n$,
$\Phi\vert_{\smash{\ti U}}:\ti U\ra\ti V$ with $\bigl((\ti
x_1,\ldots,\ti x_m)\t\id_{\bA^n}\bigr)\ci(\id\t 0)=(\ti
y_1,\ldots,\ti y_m, \ti z_1,\ldots,\ti z_n)\ci\Phi\vert_{\smash{\ti
U}}:\ti U\ra\bA^{m+n}$. Thus by the Cartesian property of
\eq{dc5eq12}, there is a unique morphism $\check\Phi:\ti U\ra\check
V$ with $(\check\al\t\check\be)\ci\check\Phi=\id\t 0$ and
$\check\jmath\ci\check\Phi=\Phi\vert_{\smash{\ti U}}$. Also
$\check\Phi(i(x))=\check v$, since $(\id\t
0)(i(x))=(i(x),0)=(\check\al\t\check\be)(\check v)$
and~$\Phi\vert_{\smash{\ti U}}(i(x))=j(x)=\check\jmath(\check v)$.

We now modify the inductive procedure in \S\ref{dc53}--\S\ref{dc54},
to construct a Zariski open neighbourhood $U'$ of $i(x)$ in $\ti
U\subseteq U$, an \'etale open neighbourhood $\jmath':V'\ra\check V$
of $\check v$ in $\check V$ with $\check\al\ci\jmath'(V')\subseteq
U'\subseteq\ti U$, a morphism $\Phi':U'\ra V'$ with
$\jmath'\ci\Phi'=\check\Phi\vert_{U'}$, \'etale coordinates
$(y_1,\ldots,\ab y_m,\ab z_1,\ab\ldots,z_n):V' \ra\bA^{m+n}$, and
regular functions $q_1,\ldots,q_n:U'\ra\bA^1 \sm\{0\}$, such that:
\begin{itemize}
\setlength{\itemsep}{0pt}
\setlength{\parsep}{0pt}
\item[(a)] $y_a=\check y_a\ci\jmath'=\ti y_a\ci\check\jmath\ci\jmath'$
for $a=1,\ldots,m$.
\item[(b)] $\frac{\pd z_b}{\pd z_c'}=\de_{bc}$ at $\Phi'(i(x))$
for $b,c=1,\ldots,n$, where $(y_1',\ldots,y_m',
z_1',\ldots,z_n')=(\check y_1,\ldots,\check y_m,\check
z_1,\ldots,\check z_n)\ci\jmath'$ are \'etale coordinates on
$V'$.
\item[(c)] $\Phi'(U')$ is the submanifold $z_1=\cdots=z_n=0$ in
$V'$.
\item[(d)] $\check h\ci\jmath'=(q_1\ci\check\al\ci\jmath')\cdot z_1^2
+\cdots+(q_n\ci\check\al\ci\jmath')\cdot z_n^2$.
\end{itemize}

In the first step of the induction, as $Q_{nn}(\check v)=1$ and
$\check\Phi(i(x))=\check v$, we can choose a Zariski open
neighbourhood $U'$ of $i(x)$ in $\ti U$ such that $q_n:=Q_{nn}\ci
\check\Phi$ is nonzero on $U'$, so that $q_n:U'\ra\bA^1\sm\{0\}$.
Then $\check\al^{-1}(U')$ is a Zariski open neighbourhood of $\check
v$ in $\check V$, with $\check\Phi(U')\subseteq\check\al^{-1}(U')$
as $\check\al\ci\check\Phi=\id_{\smash{\ti U}}$, and
\begin{equation*}
\check P_n:=\bigl(Q_{nn}\vert_{\check\al^{-1}(U')}\bigr)\cdot
\bigl(q_n^{-1}\ci\check\al
\vert_{\check\al^{-1}(U')}\bigr):\check\al^{-1}(U')\longra\bA^1
\end{equation*}
is a regular function, with $\check P_n\ci\check\Phi\vert_{U'}=1$.

Define $\hat\jmath:\hat V\ra\check P_n^{-1}(\bA^1\sm\{0\})
\subseteq\check V$ to be the \'etale double cover parametrizing
square roots of $\check P_n$ wherever $\check P_n$ is nonzero. Then
$\check P_n\ci\hat\jmath$ has a natural square root $(\check
P_n\ci\hat\jmath)^{1/2}:\hat V\ra\bA^1\sm\{0\}$. Since $\check
P_n\ci\check\Phi\vert_{U'}=1$, there is a unique lift
$\hat\Phi:U'\ra \hat V$ such that
$\hat\jmath\ci\hat\Phi=\check\Phi\vert_{U'}$ and $(\check
P_n\ci\hat\jmath)^{1/2}\ci\hat\Phi=1$.

In an analogue of \eq{dc5eq11}, we may now write
\ea
\check h\ci\hat\jmath&=\ts\sum_{b,c=1}^n(\check z_b\check
z_cQ_{bc})\ci\hat\jmath=\ts\sum_{b,c=1}^{n-1}\bigl(\check z_b\check
z_c\bigl[Q_{bc}- Q_{nn}^{-1}Q_{bn}Q_{cn}\bigr]\bigr)\ci\hat\jmath
\nonumber\\
&+(q_n\ci\check\al\ci\hat\jmath)\cdot \Bigl[(\check
P_n\ci\hat\jmath)^{1/2}\bigl(\check z_n+\ts\sum_{b=1}^{n-1}
Q_{nn}^{-1}Q_{bn}\check z_b\bigr)\ci\hat\jmath\Bigr]^2
\nonumber\\
&=\ts\sum_{b,c=1}^{n-1}\hat z_b\hat z_c\hat Q_{bc}
+(q_n\ci\hat\al)\cdot\hat z_n^2,
\label{dc5eq13}
\ea
where $\hat z_b=\check z_b\ci\hat\jmath$ for $b=1,\ldots,n-1$, $\hat
z_n=(\check P_n\ci\hat\jmath)^{1/2}\bigl(\check z_n
+\ts\sum_{b=1}^{n-1}Q_{nn}^{-1}Q_{bn}\check z_b\bigr)\ci\hat\jmath$,
$\hat Q_{bc}=[Q_{bc}- Q_{nn}^{-1}Q_{bn}Q_{cn}]\ci\hat\jmath$ for
$b,c=1,\ldots,n-1$, and $\hat\al=\check\al\ci\hat\jmath$.

In the second step we define $q_{n-1}:=\hat Q_{n-1\,n-1}\ci
\hat\Phi$, and making $U'$ smaller we can suppose that $q_{n-1}$
maps $U'\ra\bA^1\sm\{0\}$. Then we define
\begin{equation*}
\hat P_{n-1}:=\bigl(\hat Q_{n-1\,n-1}\vert_{\hat\al^{-1}(U')}\bigr)\cdot
\bigl(q_{n-1}^{-1}\ci\hat\al
\vert_{\hat\al^{-1}(U')}\bigr):\hat\al^{-1}(U')\longra\bA^1,
\end{equation*}
so that $\hat P_{n-1}\ci\hat\Phi=1$. Let $\ddot\jmath:\ddot V\ra\hat
P_{n-1}^{-1}(\bA^1\sm\{0\}) \subseteq\hat V$ be the \'etale double
cover parametrizing square roots of $\hat P_{n-1}$ where $\hat
P_{n-1}$ is nonzero. Then $\hat P_{n-1}\ci\ddot\jmath$ has a square
root $(\hat P_{n-1}\ci\ddot\jmath)^{1/2}:\ddot V\ra\bA^1\sm \{0\}$,
and there is a unique lift $\ddot\Phi:U'\ra \ddot V$ such that
$\ddot\jmath\ci\ddot\Phi=\hat\Phi$ and $(\ddot
P_{n-1}\ci\ddot\jmath)^{1/2}\ci\ddot\Phi=1$. As for \eq{dc5eq13} we
have
\begin{equation*}
\check h\ci\hat\jmath\ci\ddot\jmath=\ts\sum_{b,c=1}^{n-2}\ddot
z_b\ddot z_c\ddot Q_{bc} +(q_{n-1}\ci\ddot\al)\cdot\ddot z_{n-1}^2
+(q_n\ci\ddot\al)\cdot\ddot z_n^2,
\end{equation*}
where $\ddot z_{n-1}=(\hat P_{n-1}\ci\ddot\jmath)^{1/2}\bigl(\hat
z_{n-1} +\ts\sum_{b=1}^{n-2}\hat Q_{n-1n-1}^{-1}\hat Q_{bn-1}\hat
z_b\bigr)\ci\ddot\jmath$, $\ddot z_b=\hat z_b\ci\ddot\jmath$ for
$b=1,\ldots,n-2,n$, $\ddot Q_{bc}=[\hat Q_{bc}- \hat
Q_{n-1n-1}^{-1}\hat Q_{bn-1}\hat Q_{cn-1}]\ci\ddot\jmath$ for
$b,c=1,\ldots,n-2$, and $\ddot\al=\hat\al\ci\ddot\jmath$. Continuing
in this way, after $n$ inductive steps we define data
$\jmath',V',\Phi',y_a,z_b,q_b$ satisfying (a)--(d) above. The
important difference with \S\ref{dc54} is that each time we pass to
a further \'etale cover $\hat V,\ddot V,\ldots$ of $\check V$ to
take a square root, we also lift $\check\Phi\vert_{U'}:U'\ra\check
V$ to morphisms $\hat\Phi:U'\ra\hat V,$  $\ddot\Phi:U'\ra\ddot V$,
\ldots, to these \'etale covers, for $U'\subseteq U$ Zariski open.
Proposition \ref{dc2prop9} now follows as in
\S\ref{dc53}--\S\ref{dc54}, with \eq{dc2eq14} coming from (d) above.

\section{Canonical bundles of d-critical loci}
\label{dc6}

We prove the results of \S\ref{dc24}, Propositions \ref{dc2prop10},
\ref{dc2prop11}, \ref{dc2prop12} and Theorem~\ref{dc2thm3}.

\subsection{Proof of Proposition \ref{dc2prop10}}
\label{dc61}

We will prove the complex analytic case of Proposition
\ref{dc2prop10}, involving (a) and Proposition \ref{dc2prop7}. The
algebraic cases with (b),(c) and Propositions \ref{dc2prop8} and
\ref{dc2prop9} are similar.

Let $(X,s)$ be a complex analytic d-critical locus and
$x,U',V',n,\al,\be$ be as in Proposition \ref{dc2prop7}. To see
there exists a unique isomorphism $\hat\be$ satisfying \eq{dc2eq16},
consider the diagram of vector bundles on~$U'$:
\e
\begin{gathered}
\xymatrix@C=12pt@R=15pt{ 0 \ar[r] & TU\vert_{U'} \ar[d]^\id_\cong
\ar[rr]_(0.4){\d\Phi\vert_{U'}} && \Phi^*(TV)\vert_{U'}
\ar[d]^{\Phi\vert_{U'}^*(\d(\al\t\be))}_\cong
\ar[rr]_(0.55){\Pi\vert_{U'}} && N_{\sst
UV}\vert_{U'} \ar@{.>}[d]^{\hat\be^*}_\cong \ar[r] & 0 \\
0 \ar[r] & TU\vert_{U'} \ar[rr]^(0.4){\d(\id\t 0)} &&
T(U\!\t\!\C^n)\vert_{U'\t 0} \ar[rr] && \ban{\frac{\pd}{\pd
z_1},\ldots,\frac{\pd}{\pd z_n}}_{U'} \ar[r] & 0. }\!\!\!\!\!\!\!{}
\end{gathered}
\label{dc6eq1}
\e
Here \eq{dc6eq1} has exact rows, the first two columns are
isomorphisms as $\al\t\be$ is \'etale, and the left hand square
commutes since $\al\ci\Phi\vert_{U'}=\id$ and
$\be\ci\Phi\vert_{U'}=0$. Therefore by exactness there is a unique
isomorphism $\hat\be^*$ making \eq{dc6eq1} commute, and taking duals
shows that \eq{dc2eq16} holds. So \eq{dc2eq17} prescribes $q_{\sst
UV}\vert_{R'}$, for $R'=i^{-1}(U')\subseteq R\subseteq X$. This
$q_{\sst UV}\vert_{R'}$ is a nondegenerate holomorphic quadratic
form on $i^*(N_{\sst UV})\vert_{R'}$, since $\d z_1\ot\d
z_1+\cdots+\d z_n\ot\d z_n$ is a nondegenerate holomorphic quadratic
form on $\ban{\frac{\pd}{\pd z_1},\ldots,\frac{\pd}{\pd
z_n}}{}_{U'}$.

Thus, for each $x\in R$ we can find an open $x\in R'\subseteq R$ and
a given value for the restriction $q_{\sst UV}\vert_{R'}$. Since
such $R'$ form an open cover for $R$, and $S^2i^*(N_{\sst UV}^*)$ is
a sheaf on $R$, these values for $q_{\sst UV}\vert_{R'}$ come from a
unique $q_{\sst UV}\in H^0(S^2i^*(N_{\sst UV}^*))$ if and only if
they agree on overlaps $R'\cap R''$ between different subsets
$R',R''$ in the open cover.

Suppose $x,U',V',\al,\be,R'$ and $x',U'',V'',\al',\be',R''$ are
alternative choices in Proposition \ref{dc2prop7}. Then \eq{dc2eq17}
gives values for $q_{\sst UV}\vert_{R'}$ and $q_{\sst
UV}\vert_{R''}$, which agree on the overlap $R'\cap R''$ if
\e
\begin{split}
i&\vert^*_{R'\cap R''}\bigl[(S^2\hat\be)(\d
z_1\ot\d z_1+\cdots+\d z_n\ot\d z_n)\bigr]\\
&=i\vert^*_{R'\cap R''}\bigl[(S^2\hat\be')(\d z_1\ot\d z_1+\cdots+\d
z_n\ot\d z_n)\bigr].
\end{split}
\label{dc6eq2}
\e
Combining \eq{dc6eq1} for $U',V',\al,\be$ and $U'',V'',\al',\be'$
gives a commutative diagram of vector bundles on $U'\cap U''$, with
exact rows:
\e
\begin{gathered}
\xymatrix@C=12pt@R=21pt{ 0 \ar[r] & TU\vert_{U'} \ar[d]^\id_\cong
\ar[rr]_(0.4){\d(\id\t 0)} && T(U\!\t\!\C^n)\vert_{U'\t 0}
\ar[d]^(0.6){\begin{subarray}{l}\Phi\vert_{U'}^*(\d((\al'\t\be')\\
{}\quad\;\ci(\al\t\be)^{-1}))\end{subarray}}_\cong
\ar[rr]_(0.55){\Pi\vert_{U'}} && \ban{\frac{\pd}{\pd
z_1},\ldots,\frac{\pd}{\pd z_n}}_{U'}
\ar[d]^{\hat\be^{\prime *}\ci\hat\be^{*-1}}_\cong \ar[r] & 0 \\
0 \ar[r] & TU\vert_{U'} \ar[rr]^(0.4){\d(\id\t 0)} &&
T(U\!\t\!\C^n)\vert_{U'\t 0} \ar[rr] && \ban{\frac{\pd}{\pd
z_1},\ldots,\frac{\pd}{\pd z_n}}_{U'} \ar[r] & 0. }
\end{gathered}
\label{dc6eq3}
\e

Here $(\al'\t\be')\ci(\al\t\be)^{-1}$ is a local biholomorphism
$U\t\C^n\ra U\t\C^n$ defined near $(U'\cap U'')\t\{0\}$, which is
the identity on $U\t\{0\}$, and preserves the function $f\boxplus
z_1^2+\cdots+z_n^2:U\t\C^n\ra\C$. So restricting to $\Crit(f\boxplus
z_1^2+\cdots+z_n^2)=\Crit(f)\t\{0\}$ where $\Hess\bigl(f\boxplus
z_1^2+\cdots+z_n^2\bigr)$ is defined, we see that
$\d((\al'\t\be')\ci(\al\t\be)^{-1})\vert_{\Crit(f\vert_{U'\cap
U''})\t\{0\}}$ preserves $\Hess\bigl(f\boxplus
z_1^2+\cdots+z_n^2\bigr)$ in
$H^0\bigl(S^2T^*(U\t\C^n)\vert_{\Crit(f\vert_{U'\cap
U''})\t\{0\}}\bigr)$. As
\begin{equation*}
\Hess\bigl(f\boxplus z_1^2+\cdots+z_n^2\bigr)=\Hess f+\d
z_1\ot\d z_1+\cdots+\d z_n\ot\d z_n,
\end{equation*}
from \eq{dc6eq3} we see that $\hat\be^{\prime
*}\ci\hat\be^{*-1}$ preserves $\d z_1\ot\d
z_1+\cdots+\d z_n\ot\d z_n$ on $\Crit(f\vert_{U'\cap U''})$. That
is,
\begin{align*}
(S^2(\hat\be^{-1}&\ci\hat\be'))(\d z_1\ot\d z_1+
\cdots+\d z_n\ot\d z_n)\vert_{\Crit(f\vert_{U'\cap U''})}\\
&=(\d z_1\ot\d z_1+\cdots+\d z_n\ot\d z_n)
\vert_{\Crit(f\vert_{U'\cap U''})}.
\end{align*}
Composing with $S^2\hat\be$ and applying $i\vert^*_{R'\cap R''}$
gives \eq{dc6eq2}. Hence, there exists a unique, nondegenerate
$q_{\sst UV}$ satisfying Proposition~\ref{dc2prop10}(a).

For the final part of the proposition, suppose
$\Psi:(S,V,g,j)\hookra(T,W,h,k)$ is another embedding, so that
$\Psi\ci\Phi:(R,U,f,i)\hookra(T,W,h,k)$ is also an embedding, and
define $N_{\sst UV},q_{\sst UV}$, $N_{\sst VW},q_{\sst VW}$ and
$N_{\sst UW},q_{\sst UW}$ from $\Phi,\Psi$ and $\Psi\ci\Phi$ as
above. Then existence of unique $\ga_{\sst UVW},\de_{\sst UVW}$
making \eq{dc2eq22} commute is immediate by exactness, and exactness
of the line in \eq{dc2eq22} including $\ga_{\sst UVW},\de_{\sst
UVW}$ also follows easily. So \eq{dc2eq23} is exact on~$R$.

We will first prove the required isomorphism \eq{dc2eq24} exists
locally. Let $x\in R$. Applying Proposition \ref{dc2prop7} to
$\Phi:(R,U,f,i)\hookra(S,V,g,j)$ at $x$ gives $i(x)\in U'\subseteq
U$, $j(x)\in V'\subseteq V$, $\al:V'\ra U$ and $\be:V'\ra\C^m$ for
$m=\dim V-\dim U$, satisfying conditions including
$g\vert_{V'}=f\ci\al+(y_1^2+\cdots+y_m^2)\ci\be$, writing
$(y_1,\ldots,y_m)$ for the coordinates on $\C^m$. Then Proposition
\ref{dc2prop10}(a) gives
\e
\begin{gathered}
\hat\be:\langle\d y_1,\ldots,\d
y_m\rangle_{U'}\,{\buildrel\cong\over\longra}\, N_{\sst
UV}^*\vert_{U'},
\qquad\text{and}\\
q_{\sst UV}\vert_{R'}=i\vert_{R'}^*\bigl[(S^2\hat\be)(\d y_1\ot\d
y_1+\cdots+\d y_m\ot\d y_m)\bigr],
\end{gathered}
\label{dc6eq4}
\e
where $R'=i^{-1}(U')\subseteq R\subseteq X$. Similarly, applying
Proposition \ref{dc2prop7} to $\Psi:(S,V,g,j)\hookra(T,W,h,k)$ at
$x$ gives $j(x)\in V''\subseteq V$, $k(x)\in W''\subseteq V$,
$\al':W''\ra V$ and $\be':W''\ra\C^n$ for $n=\dim W-\dim V$,
satisfying conditions including
$h\vert_{W''}=g\ci\al'+(z_1^2+\cdots+z_n^2)\ci\be'$, writing
$(z_1,\ldots,z_n)$ for the coordinates on $\C^n$, and with
$S''=j^{-1}(V'')$, Proposition \ref{dc2prop10}(a) gives
\e
\begin{gathered}
\hat\be':\langle\d z_1,\ldots,\d
z_n\rangle_{V''}\,{\buildrel\cong\over\longra}\, N_{\sst
VW}^*\vert_{V''},
\qquad\text{and}\\
q_{\sst VW}\vert_{S''}=j\vert_{S''}^*\bigl[(S^2\hat\be')(\d z_1\ot\d
z_1+\cdots+\d z_n\ot\d z_n)\bigr].
\end{gathered}
\label{dc6eq5}
\e

Set $U'''=U'\cap\Phi^{-1}(V'')$ and $W'''=\al^{\prime -1}(V')$,
write $(y_1,\ldots,y_m,z_1,\ldots, z_n)$ for the coordinates on
$\C^{m+n}$, and define $\al'':W'''\ra U$ and $\be'':W'''\ra\C^{m+n}$
by $\al''=\al\ci\al'\vert_{W'''}$ and
$\be''=(\be\ci\al'\vert_{W'''})\t\be'\vert_{W'''}$. Then
\begin{align*}
h\vert_{W'''}&=g\ci\al'\vert_{W'''}+(z_1^2+\cdots+z_n^2)\ci\be'\vert_{W'''}\\
&=f\ci\al\ci\al'\vert_{W'''}+(y_1^2+\cdots+y_m^2)\ci\be\ci\al'\vert_{W'''}+
(z_1^2+\cdots+z_n^2)\ci\be'\vert_{W'''}\\
&=f\ci\al''+(y_1^2+\cdots+y_m^2+z_1^2+\cdots+z_n^2)\ci\be''.
\end{align*}
The other conditions are easy to verify, so
$U''',W''',\al'',\be'',m+n$ are a possible outcome for Proposition
\ref{dc2prop7} applied to $\Psi\ci\Phi:(R,U,f,i) \hookra(T,W,h,k)$
at $x$. Hence Proposition \ref{dc2prop10}(a) with
$R'''=i^{-1}(U''')=R'\cap S''$ gives
\e
\begin{split}
\hat\be'':\langle\d y_1,\ldots,\d y_m,\d z_1,\ldots,\d
z_n\rangle_{W'''}\,&{\buildrel\cong\over\longra}\, N_{\sst
UW}^*\vert_{W'''},
\qquad\text{and}\\
q_{\sst UW}\vert_{R'''}=i\vert_{W'''}^*\bigl[(S^2\hat\be'')( \d
y_1\ot\d y_1&+\cdots+\d y_m\ot\d y_m\\
{}+\d z_1\ot\d z_1&+\cdots+\d z_n\ot\d z_n)\bigr].
\end{split}
\label{dc6eq6}
\e

The isomorphism $i^*(N_{\sst UW})\cong i^*(N_{\sst UV})\op
j^*(N_{\sst VW})\vert_R$ in \eq{dc2eq24} is now clear on the open
subset $x\in R'''=R'\cap S''\subseteq R$, from the isomorphisms
\begin{align*}
i\vert_{R'''}^*(\hat\be)&:\langle\d y_1,\ldots,\d
y_m\rangle_{R'''}\,{\buildrel\cong\over\longra}\, i^*(N_{\sst
UV})\vert_{R'''}^*,
\\
j\vert_{R'''}^*(\hat\be')&:\langle\d z_1,\ldots,\d
z_n\rangle_{R'''}\,{\buildrel\cong\over\longra}\, j^*(N_{\sst
VW})\vert_{R'''}^*,
\\
i\vert_{R'''}^*(\hat\be'')&:\langle\d y_1,\ldots,\d
y_m\rangle_{R'''}\op\langle\d z_1,\ldots,\d
z_n\rangle_{R'''}\,{\buildrel\cong\over\longra}\,
i^*(N_{\sst UW})\vert_{R'''}^*.
\end{align*}
It is easy to see this isomorphism is compatible with \eq{dc2eq23},
and \eq{dc6eq4}--\eq{dc6eq6} imply that the isomorphism induces
equation \eq{dc2eq25} on~$R'''$.

Now isomorphisms \eq{dc2eq24} compatible with \eq{dc2eq23} are in
1-1 correspondence with complementary vector subbundles to
$i^*(\ga_{\sst UVW})\bigl(i^*(N_{\sst UV})\bigr)$ in $i^*(N_{\sst
UW})$. In this case, the complementary vector subbundle is just the
orthogonal subbundle to $i^*(\ga_{\sst UVW})\bigl(i^*(N_{\sst
UV})\bigr)$ using the complex inner product $q_{\sst UW}$ on
$i^*(N_{\sst UW})$. This orthogonal subbundle is complementary
provided $q_{\sst UW}\vert_{i^*(\ga_{\sst UVW})(i^*(N_{\sst UV}))}$
is nondegenerate, which holds as this restriction is isomorphic
to~$q_{\sst UV}$.

Thus, the isomorphism \eq{dc2eq24} exists locally on $R$, and is
unique (even locally) if it exists. So we can glue local choices on
an open cover of $R$ by subsets $R'''$ to get a unique global
isomorphism \eq{dc2eq24} compatible with \eq{dc2eq23} and
\eq{dc2eq25}. This completes the proof of
Proposition~\ref{dc2prop10}.

\subsection{Proof of Proposition \ref{dc2prop11}}
\label{dc62}

Let $\Phi:(R,U,f,i)\hookra(S,V,g,j)$ be an embedding of critical
charts on a complex analytic d-critical locus $(X,s)$. Fix $x\in
R\subseteq X$. As for \eq{dc2eq15}, define the normal $\C$-vector
spaces $N_{\sst XU}\vert_x,N_{\sst XV}\vert_x$ to $X$ in $U,V$ at
$x$ by the exact sequences
\e
\begin{gathered}
\xymatrix@C=20pt@R=7pt{ 0 \ar[r] & T_xX \ar[rr]_(0.4){\d i\vert_x}
&& T_{i(x)}U \ar[rr] && N_{\sst XU}\vert_x \ar[r] & 0, \\
0 \ar[r] & T_xX \ar[rr]^(0.4){\d j\vert_x} && T_{j(x)V} \ar[rr] &&
N_{\sst XV}\vert_x \ar[r] & 0, }
\end{gathered}
\label{dc6eq7}
\e
where $T_xX$ is the Zariski tangent space of $X$ at $x$. Write $\dim
T_xX=l$, $\dim U=l+m$ and $\dim V=l+m+n$, so that $\dim N_{\sst
XU}\vert_x=m$, $\dim N_{\sst XV}\vert_x=m+n$, and $\dim N_{\sst
UV}\vert_{i(x)}=n$. As for \eq{dc2eq26}, equation \eq{dc6eq7}
induces isomorphisms
\e
\begin{split}
\rho_{\sst XU}\vert_x&:\La^lT_x^*X\ot\La^mN_{\sst XU}^*\vert_x
\longra\La^{l+m}T_{i(x)}^*U=K_U\vert_{i(x)},\\
\rho_{\sst XV}\vert_x&:\La^lT_x^*X\ot\La^{m+n}N_{\sst XV}^*\vert_x
\longra\La^{l+m+n}T_{j(x)}^*V=K_V\vert_{j(x)}.
\end{split}
\label{dc6eq8}
\e

We also have a commutative diagram with exact rows:
\e
\begin{gathered}
\xymatrix@C=23pt@R=15pt{ 0\ar[r] & T_xX \ar[d]_{\id} \ar[r]_(0.4){\d
i\vert_x} & T_{i(x)}U \ar[d]^{\d\Phi\vert_{i(x)}}
\ar[rr]_(0.55){\Hess_{i(x)}f} && T_{i(x)}^*U \ar[r]_{\d
i\vert_x^*} \ar@{.>}[d]^{} & T_x^*X \ar[d]^{\id} \ar[r] & 0 \\
0\ar[r] & T_xX \ar[r]^(0.4){\d j\vert_x} & T_{j(x)}V
\ar[rr]^(0.55){\Hess_{j(x)}g} && T_{j(x)}^*V \ar[r]^{\d j\vert_x^*}
& T_x^*X \ar[r] & 0. }\!\!\!\!\!{}
\end{gathered}
\label{dc6eq9}
\e
Since $T_xX=\Ker(\Hess_{i(x)}f)$, by \eq{dc6eq7} $\Hess_{i(x)}f$ is
the pullback to $T_{i(x)}U$ of a nondegenerate quadratic form
$\Hess_{\smash{i(x)}}'f$ on $N_{\sst XU}\vert_x$. Then
$\det(\Hess_{\smash{i(x)}}'f)$ is a nonzero element of $\La^mN_{\sst
XU}^*\vert_x^{\ot^2}$. Similarly, $\Hess_{j(x)}g$ is the pullback to
$T_{j(x)}V$ of $\Hess_{\smash{j(x)}}'g$ on $N_{\sst XV}\vert_x$, and
$0\ne\det(\Hess_{\smash{j(x)}}'g)\in \La^{m+n}N_{\sst
XV}^*\vert_x^{\ot^2}$.

In a similar way to \eq{dc2eq23}, there is a natural exact sequence
\e
\begin{gathered}
\xymatrix@C=20pt{ 0 \ar[r] & N_{\sst XU}\vert_x \ar[rr] && N_{\sst
XV}\vert_x \ar[rr] && N_{\sst UV}\vert_{i(x)} \ar[r] & 0, }
\end{gathered}
\label{dc6eq10}
\e
and as for \eq{dc2eq24}, there is a unique isomorphism $N_{\sst
XV}\vert_x\cong N_{\sst XU}\vert_x\op N_{\sst UV}\vert_{i(x)}$
compatible with \eq{dc6eq10} and identifying $\Hess_{j(x)}'g$ with
$\Hess_{i(x)}'f\op q_{\sst UV}\vert_x\op 0$ as in \eq{dc2eq25}. From
this, we deduce that the following diagram commutes:
\e
\begin{gathered}
\xymatrix@C=145pt@R=23pt{
*+[r]{(\La^{l+m}T_{i(x)}^*U)^{\ot^2}\!=\!K_U^{\ot^2}\vert_{i(x)}}
\ar[dr]_(0.45){J_\Phi\vert_x} \ar[r]_(0.55){\id\ot\det(q_{\sst
UV})\vert_x} &
*+[l]{K_U^{\ot^2}\vert_{i(x)}\ot\La^nN_{\sst
UV}^*\vert_{i(x)}^{\ot^2}}
\ar[d]_(0.45){\rho_{\sst UV}^{\ot^2}\vert_x} \\
*+[r]{(\La^lT_x^*X)^{\ot^2}\ot\La^mN_{\sst
XU}^*\vert_x^{\ot^2} } \ar[u]_{\rho_{\sst XU}^{\ot^2}\vert_x} &
*+[l]{(\La^{l+m+n}T_{j(x)}^*V)^{\ot^2}=K_V^{\ot^2}\vert_{j(x)}} \\
*+[r]{(\La^lT_x^*X)^{\ot^2}} \ar[u]_{\id\ot\det(\Hess_{\smash{i(x)}}'f)}
\ar[r]^(0.4){\id\ot\det(\Hess_{\smash{j(x)}}'g)} &
*+[l]{(\La^lT_x^*X)^{\ot^2}\ot\La^{m+n}N_{\sst XV}^*\vert_x^{\ot^2}.\!\!{}}
\ar[u]^{\rho_{\sst XV}^{\ot^2}\vert_x} }
\end{gathered}
\label{dc6eq11}
\e

Here the upper right triangle is the restriction of \eq{dc2eq27} to
$x$, and depends on $\Phi$. But the rest of the diagram depends on
$x,(R,U,f,i),(S,V,g,j)$ but not on $\Phi$. So \eq{dc6eq11} a diagram
of commuting isomorphisms implies that $J_\Phi\vert_x$ is
independent of the choice of $\Phi$. Thus, if $\Phi,\ti\Phi:
(R,U,f,i)\hookra(S,V,g,j)$ are embeddings of critical charts, then
$J_\Phi\vert_x=J_{\smash{\ti\Phi}}\vert_x$ for all $x\in R^\red$. As
$R^\red$ is a reduced complex analytic space, this implies that
$J_\Phi= J_{\smash{\ti\Phi}}$, as we want.

To prove \eq{dc2eq28}, consider the commutative diagram
\begin{equation*}
\text{\begin{footnotesize}$\displaystyle\xymatrix@!0@C=130pt@R=50pt{
&& *+[r]{\substack{\ts i^*\bigl(K_U^{\ot^2}\bigr)\ot\\
\ts i^*\bigl(\La^{\rm top}N_{\sst
UW}^*\bigr){}^{\ot^2}\vert_{R^\red}}} \ar[d]_(0.4){\text{from
\eq{dc2eq23}}} \ar@<4ex>@/^2pc/[ddd]^{\rho_{\sst UW}^{\ot^2}} \\
*+[r]{i^*\bigl(K_U^{\ot^2}\bigr)\vert_{R^\red}}
\ar[dr]_{J_\Phi} \ar@/_2.5pc/[ddrr]_{J_{\Psi\ci\Phi}}
\ar[r]^(0.6){\id\ot\det(q_{\sst UV})}
\ar@/^1pc/[urr]^(0.5){\id\ot\det(q_{\sst UW})\,\,\,\,\,\,\,{}} &
{\substack{\ts i^*\bigl(K_U^{\ot^2}\bigr)\ot\\
\ts i^*\bigl(\La^{\rm top}N_{\sst
UV}^*\bigr){}^{\ot^2}\vert_{R^\red}}} \ar[d]^(0.55){\rho_{\sst
UV}^{\ot^2}} \ar[r]^(0.4){\id\ot\det(q_{\sst VW})} &
*+[l]{\substack{\ts u^*\bigl(K_U^{\ot^2}\bigr)\ot\\
\ts i^*\bigl(\La^{\rm top}N_{\sst UV}^*\bigr){}^{\ot^2}\ot\\
\ts i^*\bigl(\La^{\rm top}N_{\sst VW}^*\bigr){}^{\ot^2}
\vert_{R^\red}}\!\!\!\!\!\!\!\!\!\!\!\!{}} \ar[d]_(0.55){\rho_{\sst
UV}^{\ot^2}\ot\id} \\
& i^*\bigl(K_V^{\ot^2}\bigr)\vert_{R^\red}
\ar[dr]_{J_\Psi\vert_{R^\red}} \ar[r]^(0.35){\id\ot\det(q_{\sst
VW})}
& *+[l]{\substack{\ts i^*\bigl(K_V^{\ot^2}\bigr)\ot\\
\ts i^*\bigl(\La^{\rm top}N_{\sst
VW}^*\bigr){}^{\ot^2}\vert_{R^\red}}\!\!\!\!\!\!\!\!\!\!\!\!{}}
\ar[d]_(0.45){\rho_{\sst VW}^{\ot^2}} \\
&& *+[l]{k^*\bigl(K_W^{\ot^2}\bigr)\vert_{R^\red},
\!\!\!\!\!\!\!\!\!\!\!\!{}}}$\end{footnotesize}}
\end{equation*}
which includes \eq{dc2eq27} for $\Phi,\Psi$ and $\Psi\ci\Phi$, where
the top left triangle commutes by \eq{dc2eq25}. This proves
Proposition \ref{dc2prop11} for complex analytic d-critical loci.

In the algebraic case, for $(X,s)$ an algebraic d-critical locus
over a field $\K$, the argument above shows that for each
scheme-theoretic point $x$ of the reduced $\K$-subscheme $R^\red$ we
have $J_\Phi\vert_x= J_{\smash{\ti\Phi}}\vert_x$, where
\eq{dc6eq7}--\eq{dc6eq11} are now diagrams of finite-dimensional
vector spaces over the residue field of $X$ at $x$, rather than over
$\C$. Since $R^\red$ is a reduced $\K$-scheme and $J_\Phi,
J_{\smash{\ti\Phi}}$ are isomorphisms of line bundles on $R^\red$,
$J_\Phi\vert_x=J_{\smash{\ti\Phi}}\vert_x$ for each $x\in R^\red$
implies that $J_\Phi= J_{\smash{\ti\Phi}}$. The rest of the proof is
as for the complex analytic case.

\subsection{Proof of Theorem \ref{dc2thm3}}
\label{dc63}

We first construct the line bundle $K_{X,s}$ in Theorem
\ref{dc2thm3}, and show it satisfies parts (i),(ii). Observe that
Theorem \ref{dc2thm3}(i),(ii) characterizing the sheaf $K_{X,s}$ are
similar in structure to Theorem \ref{dc2thm1}(i),(ii) characterizing
the sheaf $\cS_X$. We will follow the method of \S\ref{dc31} to
prove Theorem \ref{dc2thm3}(i),(ii), handling the complex analytic
and algebraic cases together. The analogues of Lemmas \ref{dc3lem1},
\ref{dc3lem2} and \ref{dc3lem3} are Proposition \ref{dc2prop11},
Theorem \ref{dc2thm2} and:

\begin{lem} Let\/ $(R,U,f,i),(S,V,g,j)$ be critical charts on
$(X,s)$. Then there exists a unique isomorphism
\e
J_{\sst R,U,f,i}^{\sst S,V,g,j}:i^*\bigl(K_U^{\ot^2}\bigr)
\vert_{R^\red\cap S^\red}\longra
j^*\bigl(K_V^{\ot^2}\bigr)\vert_{R^\red\cap S^\red}
\label{dc6eq12}
\e
such that if\/ $x,U'\subseteq U,V'\subseteq V,(T,W,h,k)$ and\/
$\Phi:(R',U',f',i')\hookra (T,W,h,k),$
$\Psi:(S',V',g',j')\hookra(T,W,h,k)$ are as in Theorem\/
{\rm\ref{dc2thm2},} and\/ $J_\Phi,J_\Psi$ are as in Definition\/
{\rm\ref{dc2def4}} for $\Phi,\Psi,$ then
\e
J_{\sst R,U,f,i}^{\sst S,V,g,j}\vert_{R^{\prime\red}\cap
S^{\prime\red}}=J_\Psi^{-1}\ci J_\Phi\vert_{R^{\prime\red}\cap
S^{\prime\red}}.
\label{dc6eq13}
\e

Also, if\/ $(T,W,h,k)$ is any other critical chart on $(X,s)$ then
\e
\begin{gathered} J_{\sst R,U,f,i}^{\sst
R,U,f,i}=\id_{i^*(K_U^2)\vert_{R^\red}}, \qquad J^{\sst
R,U,f,i}_{\sst S,V,g,j}=\bigl(J_{\sst R,U,f,i}^{\sst
S,V,g,j}\bigr)^{-1},\\
\text{and\/}\qquad J_{\sst S,V,g,j}^{\sst T,W,h,k}\ci J_{\sst
R,U,f,i}^{\sst S,V,g,j}\vert_{R^\red\cap S^\red\cap T^\red}=J_{\sst
R,U,f,i}^{\sst T,W,h,k}\vert_{R^\red\cap S^\red\cap T^\red}.
\end{gathered}
\label{dc6eq14}
\e
\label{dc6lem2}
\end{lem}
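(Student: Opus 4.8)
The plan is to mirror the construction of $I^{\sst R,U,i}_{\sst S,V,j}$ in Lemma~\ref{dc3lem3} of \S\ref{dc31}, with Proposition~\ref{dc2prop11} now playing the role of Lemma~\ref{dc3lem1} and Theorem~\ref{dc2thm2} the role of Lemma~\ref{dc3lem2}. The one genuinely new feature is that embeddings of critical charts go in only one direction, so the local formula for $J^{\sst S,V,g,j}_{\sst R,U,f,i}$ will be a composite $J_\Psi^{-1}\ci J_\Phi$ rather than a single pullback.

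First I would build $J^{\sst S,V,g,j}_{\sst R,U,f,i}$ locally. Fix $x\in R^\red\cap S^\red\subseteq R\cap S$. By Theorem~\ref{dc2thm2} there are subcharts $(R',U',f',i')\subseteq(R,U,f,i)$, $(S',V',g',j')\subseteq(S,V,g,j)$ with $x\in R'\cap S'$, a critical chart $(T,W,h,k)$, and embeddings $\Phi:(R',U',f',i')\hookra(T,W,h,k)$, $\Psi:(S',V',g',j')\hookra(T,W,h,k)$. Definition~\ref{dc2def4} supplies isomorphisms $J_\Phi,J_\Psi$ onto $k^*(K_W^{\ot^2})$ over $R'^\red$ and $S'^\red$ respectively, and using $i'^*(K_{U'}^{\ot^2})=i^*(K_U^{\ot^2})\vert_{R'}$ and the evident restriction property $J_{\Phi\vert_{U''}}=J_\Phi\vert_{R''^\red}$ for subcharts, I set $J^{\sst S,V,g,j}_{\sst R,U,f,i}:=J_\Psi^{-1}\ci J_\Phi$ on $R'^\red\cap S'^\red$; this is exactly the prescribed local form \eq{dc6eq13}.

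The substance is to show this recipe is independent of the choices of subcharts, of $(T,W,h,k)$, and of $\Phi,\Psi$, so that the local isomorphisms glue. Given a second choice $(\ti T,\ti W,\ti h,\ti k)$, $\ti\Phi$, $\ti\Psi$, I would apply Theorem~\ref{dc2thm2} once more, now to $(T,W,h,k)$ and $(\ti T,\ti W,\ti h,\ti k)$, producing a common critical chart $(Z,Y,p,q)$ with embeddings $\Theta$, $\ti\Theta$ of suitable subcharts into it. After shrinking all the open sets in sight so that every map in play is an embedding of a genuine subchart, $\Theta\ci\Phi$ and $\ti\Theta\ci\ti\Phi$ are two embeddings of one and the same subchart of $(R,U,f,i)$ into $(Z,Y,p,q)$, so Proposition~\ref{dc2prop11} (independence of the embedding, together with the composition law \eq{dc2eq28}) gives $J_\Theta\ci J_\Phi=J_{\Theta\ci\Phi}=J_{\ti\Theta\ci\ti\Phi}=J_{\ti\Theta}\ci J_{\ti\Phi}$, and likewise with $\Psi$ in place of $\Phi$; composing these two equalities yields $J_\Psi^{-1}\ci J_\Phi=J_{\ti\Psi}^{-1}\ci J_{\ti\Phi}$ on the common reduced open set. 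Since the subsets $R'\cap S'$ furnished by Theorem~\ref{dc2thm2} cover $R\cap S$, the sheaf property of isomorphisms of line bundles on $R^\red\cap S^\red$ then produces a unique global $J^{\sst S,V,g,j}_{\sst R,U,f,i}$ restricting to the local recipes, which is \eq{dc6eq12}--\eq{dc6eq13}.

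Finally, the identities \eq{dc6eq14} are formal consequences. Taking $(T,W,h,k)=(R,U,f,i)$ and $\Phi=\Psi=\id_U$ gives $J_\Phi=J_\Psi=\id$, hence $J^{\sst R,U,f,i}_{\sst R,U,f,i}=\id$; exchanging the roles of $\Phi$ and $\Psi$ gives $J_{\sst S,V,g,j}^{\sst R,U,f,i}=(J^{\sst S,V,g,j}_{\sst R,U,f,i})^{-1}$; and for the cocycle relation I would, near a point of $R^\red\cap S^\red\cap T^\red$, iterate Theorem~\ref{dc2thm2} to obtain a single critical chart $(Z,Y,p,q)$ with embeddings $\Phi,\Psi,\Xi$ of subcharts of $(R,U,f,i),(S,V,g,j),(T,W,h,k)$ into it, so that locally $J^{\sst S,V,g,j}_{\sst R,U,f,i}=J_\Psi^{-1}\ci J_\Phi$, $J^{\sst T,W,h,k}_{\sst S,V,g,j}=J_\Xi^{-1}\ci J_\Psi$, $J^{\sst T,W,h,k}_{\sst R,U,f,i}=J_\Xi^{-1}\ci J_\Phi$, whence the required composite identity; as $R^\red\cap S^\red\cap T^\red$ is reduced and covered by such neighbourhoods, it holds globally. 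The main obstacle I anticipate is purely organizational: Theorem~\ref{dc2thm2} only compares two charts at a time, so in both the well-definedness argument and the triple-overlap argument one must apply it iteratively and then shrink the various open sets consistently enough that all composites really are embeddings of honest subcharts, so that Proposition~\ref{dc2prop11} and \eq{dc2eq28} apply; keeping track of these restrictions is the only delicate bookkeeping.
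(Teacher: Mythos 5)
Your proposal is correct and follows essentially the same route as the paper: define the map locally via Theorem~\ref{dc2thm2} as $J_\Psi^{-1}\circ J_\Phi$, prove independence of all choices by pushing two competing outputs of Theorem~\ref{dc2thm2} into a yet larger common chart and invoking Proposition~\ref{dc2prop11} together with \eq{dc2eq28}, then glue by the sheaf property of line-bundle isomorphisms, and deduce \eq{dc6eq14} formally. The paper handles \eq{dc6eq14} by referring back to the template of Lemma~\ref{dc3lem3}; your explicit version of that step (building one chart receiving subcharts of all three, after suitable shrinking) is exactly what that template unwinds to, so there is no substantive difference.
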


\begin{proof} Suppose $x,$ $U',$ $V',$ $\Phi,$ $\Psi,$ $(T,W,h,k)$
and $\check x,$ $\check U',$ $\check V',$ $\check\Phi,$
$\check\Psi,$ $(\check T,\check W,\check h,\check k)$ are two
possible choices in Theorem \ref{dc2thm2}. We will show that
\e
J_\Psi^{-1}\ci J_\Phi\vert_{R^{\prime\red}\cap S^{\prime\red}\cap
\check R^{\prime\red}\cap\check
S^{\prime\red}}=J_{\check\Psi}^{-1}\ci
J_{\check\Phi}\vert_{R^{\prime\red}\cap S^{\prime\red}\cap \check
R^{\prime\red}\cap\check S^{\prime\red}}.
\label{dc6eq15}
\e
Let $\ti x\in R^{\prime\red}\cap S^{\prime\red}\cap \check
R^{\prime\red}\cap\check S^{\prime\red}\subseteq T\cap\check T$.
Then applying Theorem \ref{dc2thm2} to $\ti x,(T,W,h,k),(\check
T,\check W,\check h,\check k)$ yields open $\ti x\in T'\subseteq T$,
$\ti x\in \check T'\subseteq\check T$, a critical chart $(\ti T,\ti
W,\ti h,\ti k)$ and embeddings $\Th:(T',W',h',k')\hookra(\ti T,\ti
W,\ti h,\ti k)$ and $\Xi:(\check T',\check W',\check h',\check
k')\hookra(\ti T,\ti W,\ti h,\ti k)$.

Set $U''=\Phi^{-1}(W')\cap\check\Phi^{-1}(\check W')$,
$V''=\Psi^{-1} (W')\cap\check\Psi^{-1}(\check W')$, and let
$(R'',\ab U'',\ab f'',\ab i'')\subseteq (R,U,f,i)$,
$(S'',V'',g'',j'')\subseteq (S,V,g,j)$ be the corresponding
subcharts. Then we have a diagram of embeddings of critical charts
\begin{equation*}
\xymatrix@C=50pt@R=4pt{ (R'',U'',f'',i'') \ar[r]_{\Phi\vert_{U''}}
\ar[ddr]_(0.25){\check\Phi\vert_{U''}} &
(T',W',h',k') \ar[dr]^{\Th} \\
&& (\ti T,\ti W,\ti h,\ti k). \\
(S'',V'',g'',j'') \ar[r]^{\check\Psi\vert_{V''}}
\ar[uur]_(0.75){\Psi\vert_{V''}} & (\check T',\check W',\check h',\check k')
\ar[ur]_\Xi }
\end{equation*}
Hence we have
\e
\begin{split}
J_\Th\ci J_\Phi\vert_{R^{\prime\prime\red}}=J_{\Th\ci\Phi}
\vert_{R^{\prime\prime\red}}=J_{\Xi\ci\check\Phi}\vert_{R^{\prime
\prime\red}}=J_\Xi\ci J_{\check\Phi}\vert_{R^{\prime\prime\red}},\\
J_\Th\ci J_\Psi\vert_{S^{\prime\prime\red}}=
J_{\Th\ci\Psi}\vert_{S^{\prime\prime\red}}=
J_{\Xi\ci\check\Psi}\vert_{S^{\prime\prime\red}}=J_\Xi\ci
J_{\check\Psi}\vert_{S^{\prime\prime\red}},
\end{split}
\label{dc6eq16}
\e
using \eq{dc2eq28} in the first and third steps of each line, and
the first part of Proposition \ref{dc2prop11} in the second steps.
Restricting \eq{dc6eq16} to $R^{\prime\prime\red}\cap
S^{\prime\prime\red}$, inverting the second line, and composing with
the first line gives
\begin{align*}
J_\Psi^{-1}&\ci J_\Phi\vert_{R^{\prime\prime\red}\cap
S^{\prime\prime\red}}=(J_\Th\ci J_\Psi)^{-1}\ci(J_\Th\ci
J_\Phi)\vert_{R^{\prime\prime\red}\cap S^{\prime\prime\red}}\\
&=(J_\Xi\ci J_{\check\Psi})^{-1}\ci (J_\Xi\ci
J_{\check\Phi})\vert_{R^{\prime\prime\red}\cap S^{\prime\prime\red}}
=J_{\check\Psi}^{-1}\ci J_{\check\Phi}\vert_{R^{\prime\prime\red}\cap
S^{\prime\prime\red}}.
\end{align*}
This proves \eq{dc6eq15} on the open neighbourhood
$R^{\prime\prime\red}\cap S^{\prime\prime\red}$ of $\ti x$ in
$R^{\prime\red}\cap S^{\prime\red}\cap \check
R^{\prime\red}\cap\check S^{\prime\red}$. As this works for all such
$\ti x$, equation \eq{dc6eq15} holds.

Thus, Theorem \ref{dc2thm2} shows that for each $x\in R^\red\cap
S^\red$ we can choose an open neighbourhood $R^{\prime\red}\cap
S^{\prime\red}$ on which the restriction of $J_{\sst R,U,f,i}^{\sst
S,V,g,j}$ is defined by \eq{dc6eq13}. These open neighbourhoods
$R^{\prime\red}\cap S^{\prime\red}$ form an open cover of
$R^\red\cap S^\red$, and on overlaps $(R^{\prime\red}\cap
S^{\prime\red})\cap(\check R^{\prime\red}\cap\check S^{\prime\red})$
the corresponding values of $J_{\sst R,U,f,i}^{\sst S,V,g,j}$ agree.
Therefore by properties of sheaves there is a unique morphism
$J_{\sst R,U,f,i}^{\sst S,V,g,j}$ in \eq{dc6eq12} such that
\eq{dc6eq13} holds for all applications of Theorem \ref{dc2thm2}.
Finally, we prove \eq{dc6eq14} by the method used for
\eq{dc3eq6}--\eq{dc3eq7} in Lemma~\ref{dc3lem3}.
\end{proof}

The existence of a line bundle $K_{X,s}$, unique up to canonical
isomorphism, satisfying Theorem \ref{dc2thm3}(i),(ii) now follows
from Lemma \ref{dc6lem2} in the same way that the first part of
Theorem \ref{dc2thm1} was deduced from Lemma \ref{dc3lem3}
in~\S\ref{dc31}.

For parts (iii),(iv), defining $\ka_x$ in \eq{dc2eq31} and showing
that \eq{dc2eq33} commutes for all $(R,U,f,i)$ with $x\in R$, first
note that if $(R,U,f,i)$ is a critical chart on $(X,s)$ with $x\in
X$, and $\al_{x,R,U,f,i}$ is as in Theorem \ref{dc2thm3}(iv), then
in the notation of \S\ref{dc62} we have
\e
\al_{x,R,U,f,i}=\rho_{XU}^{\ot^2}\big\vert_x\ci \bigl[\id\ot
\det(\Hess'_{i(x)}f)\bigr].
\label{dc6eq17}
\e
Thus, if $\Phi:(R,U,f,i)\hookra(S,V,g,j)$ is an embedding of
critical charts then equation \eq{dc6eq11} shows the following
commutes:
\begin{equation*}
\xymatrix@C=150pt@R=-3pt{ & K_U^{\ot^2}\vert_{i(x)}
\ar[dd]_{J_\Phi\vert_x} \\
\bigl(\La^{\rm top}T_x^*X\bigr)^{\ot^2} \ar[ur]^{\al_{x,R,U,f,i}}
 \ar[dr]_{ \al_{x,S,V,g,j}} \\
& K_V^{\ot^2}\vert_{j(x)}. }
\end{equation*}
It is now easy to see from Theorem \ref{dc2thm3}(i)--(ii) that there
is a unique isomorphism $\ka_x$ in \eq{dc2eq31} such that
$\al_{x,R,U,f,i}\ci\ka_x= \io_{R,U,f,i}\vert_x$ for all critical
charts $(R,U,f,i)$ on $(X,s)$ with $x\in R$. This proves
Theorem~\ref{dc2thm3}(iii)--(iv).

\subsection{Proof of Proposition \ref{dc2prop12}}
\label{dc64}

Let $\phi:(X,s)\ra(Y,t)$ be a morphism of d-critical loci with
$\phi:X\ra Y$ smooth, fix $x_0\in X$ with $\phi(x_0)=y_0\in Y$, and
set $m=\dim T_{x_0}X$ and $n=\dim T_{y_0}Y$, so that $\phi$ is
smooth of relative dimension $m-n\ge 0$ near $x_0$.

As in the proof of Proposition \ref{dc2prop3} in \S\ref{dc42}, we
may choose open $y_0\in S\subseteq Y$ and $x_0\in
R\subseteq\phi^{-1}(S)\subseteq X$, closed embeddings $i:R\hookra
U$, $j:S\hookra V$ for $U,V$ complex manifolds (or smooth
$\K$-schemes) with $\dim U=m$, $\dim V=n$, a morphism $\Phi:U\ra V$
smooth of relative dimension $m-n$ with $\Phi\ci
i=j\ci\phi\vert_R:R\ra V$, and holomorphic $g:V\ra\C$ and
$f=g\ci\Phi:U\ra\C$ (or regular $g:V\ra\bA^1$ and
$f=g\ci\Phi:U\ra\bA^1$) with $\io_{R,U}(s\vert_R)=
i^{-1}(f)+I_{R,U}^2$ and $\io_{S,V}(t\vert_S)=j^{-1}(g)+I_{S,V}^2$.
Since $\dim T_{x_0}X=\dim U$ and $\dim T_{y_0}Y=\dim V$, the second
part of Proposition \ref{dc2prop2} shows that making $R,S,U,V$
smaller, we can suppose that $(R,U,f,i)$ and $(S,V,g,j)$ are
critical charts on $(X,s)$ and~$(Y,t)$.

We have a diagram of coherent sheaves on $R$ with exact rows, where
the bottom row is \eq{dc2eq34} restricted to~$R$:
\e
\begin{gathered}
\xymatrix@C=15pt@R=15pt{
0 \ar[r] & {\begin{subarray}{l}\ts (j\ci\phi\vert_R)^*(T^*V)\\
\ts =(\Phi\ci i)^*(T^*V)\end{subarray}} \ar[d]^{\phi\vert_R^*(\d
j^*)} \ar[rr]_(0.6){i^*(\d\Phi^*)} && i^*(T^*U) \ar[d]^{\d i^*}
\ar[rr] && i^*(T^*_{U/V}) \ar@{.>}[d]^\al \ar[r] & 0 \\
0 \ar[r] & \phi\vert_R^*(T^*Y) \ar[rr]^(0.6){\d\phi^*\vert_R} &&
T^*X\vert_R  \ar[rr] && T^*_{X/Y}\vert_R  \ar[r] & 0. }
\end{gathered}
\label{dc6eq18}
\e
The left hand square commutes as $\Phi\ci i=j\ci\phi\vert_R$. So by
exactness, there is a unique morphism $\al$ as shown making
\eq{dc6eq18} commute. As $\Phi,\phi\vert_R$ are both smooth of
relative dimension $m-n$, $i^*(T^*_{U/V})$ and $T^*_{X/Y}\vert_R$
are both vector bundles on $R$ of rank $m-n$. But as $i$ is an
embedding, $\d i^*$ is surjective, so $\al$ is surjective, and thus
$\al$ is an isomorphism.

Taking top exterior powers in the top line of \eq{dc6eq18} gives an
isomorphism
\e
\be: (j\ci\phi\vert_R)^*(K_V)\ot i^*\bigl(\La^{\rm
top}T^*_{U/V}\bigr) \,{\buildrel\cong\over\longra}\,i^*(K_U).
\label{dc6eq19}
\e
Define an isomorphism $\Up_{\Phi,R}:\bigl[\phi\vert_{X^\red}^*
(K_{Y,t})\!\ot\!(\La^{\rm top}T^*_{X/Y})\vert_{X^\red}^{\ot^2}\bigr]
\big\vert_{\smash{R^\red}}\!\ra\! K_{X,s}\vert_{R^\red}$ by the
commutative diagram of isomorphisms
\e
\begin{gathered}
\xymatrix@C=160pt@R=17pt{
*+[r]{\bigl[\phi\vert_{X^\red}^*(K_{Y,t})\ot(\La^{\rm top}T^*_{X/Y})
\vert_{X^\red}^{\ot^2}\bigr]\big\vert_{R^\red}}
\ar[d]^{\phi\vert_{R^\red}^*(\io_{S,V,g,j})\ot (\La^{\rm
top}\al^{-1}\vert_{R^\red})^{\ot^2}} \ar[r]_(0.7){\Up_{\Phi,R}} &
*+[l]{ K_{X,s}\vert_{R^\red}} \ar[d]_{\io_{R,U,f,i}} \\
*+[r]{(j\ci\phi\vert_{R^\red})^*\bigl(K_V^{\ot^2}\bigr)\ot
i^*\bigl(\La^{\rm top}T^*_{U/V}\bigr)^{\ot^2}}
\ar[r]^(0.7){\be\vert_{R^\red}^{\ot^2}} &
*+[l]{i^*\bigl(K_U^{\ot^2}\bigr)\vert_{R^\red},\!\!{}} }
\end{gathered}
\label{dc6eq20}
\e
for $\io_{R,U,f,i},\io_{S,V,g,j}$ as in \eq{dc2eq29}, and $\al,\be$
as in~\eq{dc6eq18}--\eq{dc6eq19}.

We now claim that for each $x\in R$, equation \eq{dc2eq36} with
$\Up_{\Phi,R}$ in place of $\Up_\phi$ commutes. To see this, compare
\eq{dc2eq33} for $\ka_x,\ka_{\phi(x)}$, \eq{dc2eq36}, \eq{dc6eq17},
the restriction of \eq{dc6eq20} to $x$, and the commutative diagram
\e
\begin{gathered}
{}\!\!\!\!\!\!\xymatrix@C=159pt@R=15pt{
*+[r]{\bigl(\La^{\rm top}T_{\phi(x)}^*Y\bigr)^{\ot^2}\ot
\bigl(\La^{\rm top}T^*_{X/Y}\vert_x\bigr)^{\ot^2}}
\ar[d]^(0.4){\id\ot \det(\Hess'_{j\ci\phi(x)}g) }
\ar[r]_(0.62){\up_x^{\ot^2}} &
*+[l]{\bigl(\La^{\rm top}T_x^*X\bigr)^{\ot^2}}
\ar[d]_(0.4){\id\ot \det(\Hess'_{i(x)}f) } \\
*+[r]{{}\,\,\,\,\,\,\begin{subarray}{l}\ts \bigl(\La^{\rm top}
T_{\phi(x)}^*Y\bigr)^{\ot^2}\ot \bigl(\La^{\rm top}T^*_{X/Y}\vert_x
\bigr)^{\ot^2}\\ \ts \ot\bigl(\La^{\rm top}N_{\sst
YV}^*\vert_{\phi(x)}\bigr)^{\ot^2}\end{subarray}}
\ar[d]^(0.6){(\rho_{YV}\vert_{\phi(x)}^{\ot^2}) \ot(\La^{\rm
top}\al\vert_x^{-1})^{\ot^2}} \ar[r]^(0.64){ \up_x^{\ot^2}\ot
(\La^{\rm top}\ga_x)^{\ot^2} } &
*+[l]{\begin{subarray}{l}\ts\quad\,\,\bigl(\La^{\rm
top}T_x^*X\bigr)^{\ot^2}\\ \ts \ot\bigl(\La^{\rm top}N_{\sst
XU}^*\vert_x\bigr)^{\ot^2}\end{subarray}\,\,\,\,\,\,{}}
\ar[d]_(0.6){\rho_{XU}\vert_x^{\ot^2}} \\
*+[r]{K_V^{\ot^2}\vert_{j\ci\phi(x)}\ot(\La^{\rm top}T^*_{U/V}
\vert_{i(x)})^{\ot^2}} \ar[r]^(0.62){\be\vert_x^{\ot^2}} &
*+[l]{K_U^{\ot^2}\vert_{i(x)}.\!\!{}} }\!\!\!\!\!\!{}
\end{gathered}
\label{dc6eq21}
\e
Here to prove \eq{dc6eq21} commutes, consider the commutative
diagram, with exact rows and columns
\e
\begin{gathered}
\xymatrix@C=15pt@R=15pt{ & 0 \ar[d] && 0 \ar[d] \\
0 \ar[r] & N_{\sst YV}^*\vert_{\phi(x)} \ar[d]
\ar@{.>}[rr]^{\ga_x}_\cong && N_{\sst
XU}^*\vert_x \ar[rr] \ar[d] && 0 \ar[d] \\
0 \ar[r] & T_{\phi(x)}^*V \ar[d]^{\d j^*\vert_{\phi(x)}}
\ar[rr]_(0.6){\d\Phi^*\vert_{i(x)}} && T_{i(x)}^*U \ar[d]^{\d
i^*\vert_x}
\ar[rr] && T^*_{U/V}\vert_{i(x)} \ar[d]^{\al\vert_x} \ar[r] & 0 \\
0 \ar[r] & T_{\phi(x)}^*Y \ar[d] \ar[rr]^(0.6){\d\phi^*\vert_x} &&
T_x^*X \ar[d] \ar[rr] && T^*_{X/Y}\vert_x \ar[d] \ar[r] & 0 \\
& 0 && 0 && {0.\!\!{}} }
\end{gathered}
\label{dc6eq22}
\e

The bottom two rows of \eq{dc6eq22} are \eq{dc6eq18} restricted to
$x$. Exactness implies that there is a unique isomorphism $\ga_x$ as
shown in the first row. One can show that $\ga_x$ identifies the
nondegenerate quadratic forms $\Hess'_{j\ci\phi(x)}g$ and
$\Hess'_{i(x)}f$ on $N_{\sst YV}^*\vert_{\phi(x)}$ and $N_{\sst
XU}^*\vert_x$. This implies the upper rectangle of \eq{dc6eq21}
commutes. As in equations \eq{dc6eq8}, \eq{dc6eq19}, and
\eq{dc2eq36}, the maps $\rho_{YV}\vert_{\phi(x)},
\rho_{XU}\vert_x,\be\vert_x,\up_x$ in \eq{dc6eq21} are obtained by
taking top exterior powers in the first column, second column,
second row, and third row of \eq{dc6eq22}, respectively. Thus,
taking top exterior powers in \eq{dc6eq22} shows that the bottom
rectangle of \eq{dc6eq21} commutes.

To summarize the proof so far: for each $x_0\in X$ we have
constructed an open neighbourhood $x_0\in R\subseteq X$ and an
isomorphism $\Up_{\Phi,R}:\bigl[\phi\vert_{X^\red}^*(K_{Y,t})\ot
(\La^{\rm top}T^*_{X/Y})\vert_{X^\red}^{\ot^2}\bigr]
\big\vert_{\smash{R^\red}} \ra K_{X,s}\vert_{R^\red}$, such that for
each $x\in R^{\red}$, equation \eq{dc2eq36} with $\Up_{\Phi,R}$ in
place of $\Up_\phi$ commutes.

Suppose $x_0,R,\Up_{\Phi,R}$ and $x_0',R',\Up_{\Phi',R'}$ are two
possible choices above. Equation \eq{dc2eq36} implies that
$\Up_{\Phi,R}\vert_x=\Up_{\Phi',R'}\vert_x$ for all $x\in R^\red\cap
R^{\prime\red}$. As $R^\red\cap R^{\prime\red}$ is reduced, this
forces $\Up_{\Phi,R}\vert_{R^\red\cap
R^{\prime\red}}=\Up_{\Phi',R'}\vert_{R^\red\cap R^{\prime\red}}$.
Since such $R^\red$ form an open cover of $X^\red$, there is a
unique isomorphism $\Up_\phi$ in \eq{dc2eq35} with
$\Up_\phi\vert_{R^\red}=\Up_{\Phi,R}$ for all $x_0,R,\Up_{\Phi,R}$
as above. Proposition \ref{dc2prop12} follows.

\section{Equivariant d-critical loci}
\label{dc7}

Next we prove Propositions \ref{dc2prop14} and \ref{dc2prop15} from~\S\ref{dc26}.

\subsection{Proof of Proposition \ref{dc2prop14}}
\label{dc71}

For part (a), suppose $(X,s)$ is an algebraic d-critical locus over $\K$ equivariant under a good action $\mu:G\t X\ra X$ of an algebraic $\K$-torus $G$, with character $\chi:G\ra\bG_m$, and let $x\in X$. As $\mu$ is good, there exists a $G$-invariant affine open neighbourhood $R'$ of $x$ in $X$. Choose a closed embedding $(y_1,\ldots,y_K):R'\hookra\bA^K$. Then $y_a\ci\mu:G\t R'\ra\bA^1$ is regular, so as $H^0(\O_{G\t R'})\cong H^0(\O_G)\ot H^0(\O_{R'})$, we may write $y_a\ci\mu(\ga,r)=\sum_{b=1}^{L_a}x_a^b(\ga)y_a^b(r)$ with $x_a^b:G\ra\bA^1$ and $y_a^b:R'\ra\bA^1$ regular, for~$a=1,\ldots,K$.

Define $V=\langle y_a^b:a=1,\ldots,K$, $b=1,\ldots,L_a\rangle_\K\subseteq H^0(\O_Y)$ to be the finite-dimensional $\K$-vector subspace of
$H^0(\O_Y)$ generated by the $y_a^b$. Then $V$ is $G$-invariant, and contains $y_1,\ldots,y_K$ as $y_a=\sum_{b=1}^{L_a}x_a^b(1)\cdot y_a^b$.
As $G$ is a torus, $V$ decomposes as a direct sum of 1-dimensional $G$-representations, so we may choose a basis $v_1,\ldots,v_M$ for $V$ with
$v_a\ci\mu(\ga,r)=\ka_a(\ga)v_a(r)$ for all $\ga\in G$, $r\in R'$ and $a=1,\ldots,M$, where $\ka_a:G\ra\bG_m$ is a character of $G$. Then $i':=(v_1,\ldots,v_M):R'\hookra\bA^M=V^*$ is a closed embedding, since $(v_1,\ldots,v_K):R'\hookra\bA^K$ is and $v_1,\ldots,v_K\in V$, and is $G$-equivariant under the obvious linear $G$-action on $\bA^M$ given by~$\ga:(z_1,\ldots,z_M)\mapsto(\ka_1(\ga)z_1,\ldots,\ka_M(\ga)z_M)$.

Let $I\subset\K[z_1,\ldots,z_M]$ be the ideal of functions vanishing on $i'(R')$. Then $I$ is $G$-invariant, as $i'(R')$ is, so as $G$ is a torus we can choose a finite set of $G$-equivariant generators $h_1,\ldots,h_N$ for $I$, with $h_a(\ga\cdot r)=\la_a(\ga)h_a(r)$ for all $\ga\in G$, $r\in R'$ and $a=1,\ldots,N$, where $\la_a:G\ra\bG_m$ is a character of $G$. Then
\begin{equation*}
T_{i'(x)}(i'(R'))=\Ker\bigl[\ts\bigop_{a=1}^N\d h_a\vert_{i'(x)}:
T_{i'(x)}\bA^M\longra\bigop_{a=1}^N\d h_a^*(T_0\bA^1)\vert_{i'(x)}
\bigr].
\end{equation*}

Choose a minimal subset $\{h_{a_1},\ldots,h_{a_P}\}\subseteq\{h_1,\ldots,h_N\}$ such that
\begin{equation*}
T_{i'(x)}(i'(R'))=\Ker\bigl[\ts\bigop_{b=1}^P\d h_{a_b}\vert_{i'(x)}:
T_{i'(x)}\bA^M\longra\bigop_{b=1}^P\d h_{a_b}^*(T_0\bA^1)\vert_{i'(x)}
\bigr].
\end{equation*}
Then $P=N-\dim T_{i'(x)}i'(R')=N-\dim T_xX$, and $\d h_{a_1},\ldots,\d h_{a_P}$ are linearly independent at $i'(x)$. Define $U'$ to be the closed $\K$-subscheme of $\bA^M$ defined by $h_{a_1}=\cdots=h_{a_P}=0$. Then $i'(R')\subseteq U'$, as $h_{a_1},\ldots,h_{a_P}\in I$, and $U'$ is $G$-invariant as the $h_{a_b}$ are $G$-equivariant, and $U'$ is smooth of dimension $N-P=\dim T_xX$ near $i'(x)$ as $\d h_{a_1},\ldots,\d h_{a_P}$ are linearly independent at~$i'(x)$.

Let $\ti U\subseteq U'$ be the open $\K$-subscheme of points where $U'$ is smooth of dimension $\dim T_xX$. Then $\ti U$ is $G$-invariant, smooth of dimension $\dim T_xX$, and quasi-affine, so that $i'(x)\in\ti U$. As $\ti U$ is smooth it is normal, so by Lemma \ref{dc2lem} we can choose a $G$-invariant affine open neighbourhood $U''$ of $i'(x)$ in $\ti U$. Define $R''=i'^{-1}(U'')$ and $i'':=i'\vert_{R''}:R''\hookra U''$. Then $R''$ is an open neighbourhood of $x$ in $X$, as $R'$ is, $i'(R')\subseteq U'$, and $U''$ is an open neighbourhood of $i'(x)$ in $U'$. Also $i''$ is a closed embedding as $i'$ is, so $R''$ is affine. And $R''$ is $G$-invariant as $U''$ is and $i'$ is $G$-equivariant, so $i''$ is $G$-equivariant.

Write $I_{R'',U''}\subset H^0(\O_{U''})$ for the $G$-invariant ideal of functions on $U''$ vanishing on the closed subscheme $i''(R'')\subset U''$. Theorem \ref{dc2thm1}(i) gives a sheaf morphism $\io_{R'',U''}$. As $R'',U''$ are affine this descends to global sections, giving
\begin{equation*}
(\io_{R'',U''})_*:H^0(\cS_X\vert_{R''})\longra H^0(\O_{U''})/
I_{R'',U''}^2.
\end{equation*}
Thus $(\io_{R'',U''})_*(s\vert_{R''})\in H^0(\O_{U''})/I_{R'',U''}^2$, so we can choose regular $f'':U''\ra\bA^1$ with $(\io_{R'',U''})_*(s\vert_{R''})=
f''+I_{R'',U''}^2$. Now by assumption $s$ is $G$-equivariant with character $\chi:G\ra\bG_m$, so $(\io_{R'',U''})_*(s\vert_{R''})$ is also $G$-equivariant with character $\chi$ as $R''$ is $G$-invariant and $i''$ $G$-equivariant. By averaging $f''$ over the $G$-action twisted by $\chi$, we can suppose that $f''$ is also $G$-equivariant with character $\chi$, that is, $f''(\ga\cdot u'')=\chi(\ga)\cdot f''(u'')$ for all $\ga\in G$ and~$u''\in U''$.

Since $\dim U''=\dim T_xX$, Proposition \ref{dc2prop2} now shows that we can choose Zariski open $U\subseteq U''$ and $R=i^{\prime\prime -1}(U)\subseteq R''$ such that $(R,U,f,i)$ is a critical chart on $(X,s)$ with $x\in R$, where $f=f''\vert_U$ and $i=i''\vert_R$. The proof of this in \S\ref{dc41} works by showing that the closed subschemes $i''(R'')\subseteq\Crit(f'')\subseteq U''$ satisfy $i''(R'')=\Crit(f'')$ near $i''(x)$ in $U''$, and restricting to an open neighbourhood $U$ of $x$ in $U''$ such that $U\cap i''(R'')=U\cap\Crit(f'')$. Take $U$ to be the largest such neighbourhood, the union of all open $V\subseteq U''$ with $V\cap i''(R'')=V\cap\Crit(f'')$. Then $U$ is $G$-invariant, as $i''(R''),\Crit(f'')$ are. Writing $\rho$ for the $G$-action on $U$, it follows that $(R,U,f,i),\rho$ is a $G$-equivariant critical chart on $(X,s)$ with $x\in R$ and $\dim U=\dim T_xX$.  This proves Proposition~\ref{dc2prop14}(a).

For part (b), suppose that for all $x\in X$ there exists a $G$-equivariant critical chart $(R,U,f,i),\rho$ on $(X,s)$ with $x\in R$. For such $x,(R,U,f,i),\rho$, note that as $U$ is smooth it is normal, so by Lemma \ref{dc2lem} there is a $G$-invariant affine open neighbourhood $U'$ of $x$ in $X$. As $i:R\hookra U$ is a $G$-equivariant closed embedding, $R'=i^{-1}(U')$ is a $G$-equivariant affine open neighbourhood of $x$ in $X$. Since such $R'$ exist for all $x\in X$, $\mu$ is a good $G$-action.

\subsection{Proof of Proposition \ref{dc2prop15}}
\label{dc72}

Proposition \ref{dc2prop15} is a $G$-equivariant version of the $\K$-scheme case of Theorem \ref{dc2thm2}, which is proved in \S\ref{dc52}. The proof relies on Proposition \ref{dc2prop6}, which is proved in \S\ref{dc51}. We will explain how to modify the proofs in \S\ref{dc51}--\S\ref{dc52} to include $G$-equivariance throughout.

Suppose $(X,s)$ is an algebraic d-critical locus over $\K$ equivariant under the action $\mu:G\t X\ra X$ of an algebraic $\K$-torus $G$, with character $\chi:G\ra\bG_m$, and let $(R,U,f,i),\rho$ be a $G$-equivariant critical chart on $(X,s)$. For the equivariant version of Proposition \ref{dc2prop6}, we must show that for each $x\in X$ there exists a $G$-equivariant subchart $(R',U',f',i')\subseteq (R,U,f,i)$ with $x\in R'$, and a $G$-equivariant embedding $\Phi:(R',U',f',i')\hookra (S,V,g,j)$ into a $G$-equivariant critical chart $(S,V,g,j),\si$ on $(X,s)$, such that $G$ acts linearly on $\bA^n$, inducing the $G$-action $\si$ on the $G$-invariant Zariski open subset~$V\subseteq\bA^n$.

To prove this, modify \S\ref{dc51} as follows. Take the affine open neighbourhood $\ti U$ of $x$ in $U$ to be $G$-invariant, which is possible by Lemma \ref{dc2lem}. Then take $\ti\Phi:\ti U\ra\bA^n$ to be $G$-equivariant under a linear $G$-action on $\bA^n$, which is possible as in \S\ref{dc71}. Choose $V\subseteq\bA^n$ to be $G$-invariant, and the \'etale coordinates $(z_1,\ldots,z_n)$ on $V$ with $\ti\Phi(\ti U)\cap V=\{z_{m+1}=\cdots=z_n=0\}$ to be $G$-equivariant, that is, they should satisfy $z_a(\ga\cdot v)=\ka_a(\ga)z_a(v)$ for all $\ga\in G$, $v\in V$ and $a=1,\ldots,n$, where $\ka_a:G\ra\bG_m$ is a character of $G$. Choose $h:V\ra\bA^1$ to be $G$-equivariant with character $\chi$, which is possible as for $f''$ in~\S\ref{dc71}.

Now in the expression \eq{dc5eq1} for $g:V\ra\bA^1$, the terms $z_a^2$ for $a=m+1,\ldots,n$ are not $G$-equivariant. To deal with this, replace $V$ by $V'=V\t\bA^{n-m}$, where $\bA^{n-m}$ has coordinates $(w_{m+1},\ldots,w_n)$ and $G$ acts on $\bA^{n-m}$ by
\begin{equation*}
\ga:(w_{m+1},\ldots,w_n)\longmapsto\bigl(\ka_{m+1}(\ga)^{-1}\chi(\ga)w_{m+1},\ldots, \ka_n(\ga)^{-1}\chi(\ga)w_n\bigr).
\end{equation*}
Replace $\Phi:U\ra V$ by $\Phi'=\Phi\t 0:U\ra V'=V\t\bA^{n-m}$, and $g:V\ra\bA^1$ in \eq{dc5eq1} by $g':V'\ra\bA^1$ given by
\e
\begin{split}
g'\bigl(&v,(w_{m+1},\ldots,w_n)\bigr)=\ts h(v)-\sum_{a=m+1}^nz_a(v)\cdot\frac{\pd h}{\pd z_a}(v)\\
&+\ts\ha\sum_{a,b=m+1}^nz_a(v)z_b(v)\cdot\frac{\pd^2 h}{\pd z_a\pd
z_b}(v)+\sum_{a=m+1}^nz_a(v)w_a.
\end{split}
\label{dc7eq1}
\e
Then each term in \eq{dc7eq1} is $G$-equivariant with character $\chi$. The rest of the proof in \S\ref{dc51} may be made $G$-equivariant in a similar way to~\S\ref{dc71}.

To modify the $\K$-scheme case of \S\ref{dc52} to include $G$-equivariance, we choose $i(x)\in U'\subseteq U$ to be $G$-invariant and $\Th:U'\ra V$ with $\Th\ci i'=j\vert_{R'}$ to be $G$-equivariant, which is possible as $V\subseteq\bA^m$ is open and $G$-invariant with $G$-action induced from a linear $G$-action on $\bA^m$. Then $f'-g\ci\Th\in (I'_{R',U'})^2$ is $G$-equivariant with character $\chi$. So when as in \eq{dc5eq6} we write $f'=g\ci\Th+r_1s_1+\cdots+r_ns_n$, we choose the $r_a$ to be $G$-equivariant with some character $\la_a:G\ra\bG_m$, and the $s_a$ to be $G$-equivariant with the complementary character $\la_a^{-1}\chi$, for $a=1,\ldots,n$. Then $W=V\t\bA^{2n}$ has $G$-action
\begin{align*}
\ga:\,&\bigl(v,(r_1,\ldots,r_n,s_1,\ldots,s_n)\bigr)\longmapsto\\
&\bigl(\ga\cdot v,(\la_1(\ga)r_1,\ldots,\la_n(\ga)r_n,\la_1^{-1}(\ga)\chi(\ga)s_1,\ldots, \la_n^{-1}(\ga)\chi(\ga)s_n)\bigr),
\end{align*}
and the rest of the proof in \S\ref{dc52} may be made $G$-equivariant in a similar way to \S\ref{dc71}. This proves Proposition~\ref{dc2prop15}.

\section{Extension to Artin stacks}
\label{dc8}

Finally we prove Proposition \ref{dc2prop17} and Theorem
\ref{dc2thm4} from~\S\ref{dc28}.

\subsection{Proof of Proposition \ref{dc2prop17}}
\label{dc81}

The first part of Proposition \ref{dc2prop17} is standard stack
theory. Part (i) is a general property of global sections of sheaves
on Artin stacks, applied to the sheaf $\cS_X$. For (ii), the `only
if' and `and then' parts are immediate from Definition
\ref{dc2def10}. To prove the `if' part, suppose $s\in H^0(\cSz_X)$
with $\bigl(T,t^*(s)\bigr)$ an algebraic d-critical locus, and let
$v:V\ra X$ be smooth. Then $T\t_{t,X,v}V$ is equivalent to a
$\K$-scheme $W$ as $t$ is representable, so we have a $2$-Cartesian
diagram in~$\Art_\K$:
\begin{equation*}
\xymatrix@C=90pt@R=11pt{ *+[r]{W} \ar[d]^{\pi_T}
\ar[r]_(0.3){\pi_V} \drtwocell_{}\omit^{}\omit{^{\ze}} &
*+[l]{V} \ar[d]_v \\
*+[r]{T} \ar[r]^(0.7)t & *+[l]{X,\!\!{}} }
\end{equation*}
where $\pi_T:W\ra T$ and $\pi_V:W\ra V$ are smooth as $v,t$ are, and
$\pi_V:W\ra V$ is surjective as $t$ is.

We have $\pi_T^\star(s(T,t))=s(W,t\ci\pi_T)=s(W,v\ci\pi_V)=
\pi_V^\star(s(V,v))$ as $s\in H^0(\cS_X)$. Since
$\bigl(T,t^*(s)\bigr)=\bigl(T,s(T,t)\bigr)$ is an algebraic
d-critical locus and $\pi_T:W\ra T$ is smooth, the first part of
Proposition \ref{dc2prop3} shows that $\bigl(W,\pi_T^\star
(s(T,t))\bigr)$ is an algebraic d-critical locus. Thus
$\bigl(W,\pi_V^\star(s(V,v))\bigr)$ is an algebraic d-critical
locus, and $\pi_V:W\ra V$ is smooth and surjective, so the second
part of Proposition \ref{dc2prop3} shows that $\bigl(V,s(V,v)\bigr)$
is an algebraic d-critical locus. As this holds for all smooth
$v:V\ra X$, $(X,s)$ is a d-critical stack, proving the `if' part.

\subsection{Proof of Theorem \ref{dc2thm4}}
\label{dc82}

Let $u:U\!\ra\!X$ be a smooth atlas for $X$. Then $
U\!\t_{u,X,u}\!U$ and $\smash{U\!\t_{u,X,u}\! U\!\t_{u,X,u}\! U}$
are represented by $\K$-schemes $V,W$, with smooth projections
$\pi_1,\pi_2:V\ra U$ and $\pi_{12},\pi_{23}, \pi_{31}:W\ra V$ in
$\Sch_\K$ in a 2-commutative diagram in~$\Art_\K$:
\e
\begin{gathered}
\xymatrix@!0@C=50pt@R=25pt{ &  V \ar[drr]^(0.7){\pi_2}
\ar[dd]^{\pi_1} \\
W \ar[ur]^{\pi_{23}} \ar[drr]^(0.7){\pi_{31}}
\ar[dd]^(0.4){\pi_{12}} &&&  U \ar[dd]^u \\
&  U \ar[drr]^(0.7){u}  &  V \ar[ur]^{\pi_1}
\ar[dd]^{\pi_2} \\
V \ar[ur]^{\pi_2} \ar[drr]^(0.7){\pi_1} &&&  X. \\
&&  U \ar[ur]^u }
\end{gathered}
\label{dc8eq1}
\e
In $\Sch_\K$ we have $\pi_1\ci\pi_{12}=\pi_2\ci\pi_{31}$,
$\pi_1\ci\pi_{31}=\pi_2\ci\pi_{23}$,
$\pi_1\ci\pi_{23}=\pi_2\ci\pi_{12}$, so the same holds in $\Art_\K$
(with equalities of 1-morphisms, not just 2-isomorphisms), as
$F_\Sch^\Art$ is a strict 2-functor. From the fibre product $W=
U\t_{u,X,u}U$ we have a 2-morphism $\eta:u\ci\pi_1\Ra u\ci\pi_2$.

{\it Points\/} in $\K$-schemes and Artin $\K$-stacks were discussed
in \S\ref{dc27}. Points of $V$ correspond to triples
$(p_1,p_2,\th_{12})$ for $p_1,p_2\in U$, so that $u(p_1):=u\ci p_1$
and $u(p_2):=u\ci p_2$ are points in $X$, and $\th_{12}:u(p_1)\Ra
u(p_2)$ is a 2-isomorphism. On points we have
$\pi_1:(p_1,p_2,\th_{12})\mapsto p_1$,
$\pi_2:(p_1,p_2,\th_{12})\mapsto p_2$. Similarly, points of $W$
correspond to quintuples $(p_1,p_2,p_3,\th_{12},\th_{23})$ for
$p_1,p_2,p_3\in U$ and $\th_{12}:u(p_1)\Ra u(p_2)$,
$\th_{23}:u(p_2)\Ra u(p_3)$ 2-isomorphisms, and
$\pi_{12}:(p_1,p_2,p_3,\th_{12},\th_{23})\mapsto
(p_1,p_2,\th_{12})$,
$\pi_{23}:(p_1,p_2,p_3,\th_{12},\th_{23})\mapsto
(p_2,p_3,\th_{23})$, and $\pi_{31}:(p_1,p_2,p_3)\mapsto
(p_3,p_1,\th_{12}^{-1}\ci\th_{23}^{-1})$.

Taking reduced $\K$-subschemes and $\K$-substacks in \eq{dc8eq1}
gives another 2-commutative diagram in $\Art_\K$:
\e
\begin{gathered}
\xymatrix@!0@C=50pt@R=25pt{ &  V^\red \ar[drr]^(0.7){\pi_2^\red}
\ar[dd]^{\pi_1^\red}
\\
W^\red \ar[ur]^{\pi_{23}^\red} \ar[drr]^(0.7){\pi_{31}^\red}
\ar[dd]^(0.4){\pi_{12}^\red} &&&  U^\red \ar[dd]^{u^\red}
\\
&  U^\red \ar[drr]^(0.7){u^\red}  &  V^\red \ar[ur]^{\pi_1^\red}
\ar[dd]^{\pi_2^\red}
\\
V^\red \ar[ur]^{\pi_2^\red} \ar[drr]^(0.7){\pi_1^\red} &&& X^\red,
\\
&&  U^\red \ar[ur]^{u^\red} }
\end{gathered}
\label{dc8eq2}
\e
where again $u^\red:U^\red\ra X^\red$ is a smooth atlas,
$\pi_1^\red,\pi_2^\red:V^\red\ra U^\red$ are smooth, and so on.

Now line bundles on $X^\red$ are examples of {\it Cartesian\/}
(quasi-coherent) sheaves on $X^\red$ in the lisse-\'etale topology
\cite[Def.~12.3]{LaMo}, which in the notation of Proposition
\ref{dc2prop16} means that $\cA(\phi,\eta)$ is an isomorphism for
all diagrams \eq{dc2eq39}. As in \cite[Prop.~12.4.5]{LaMo},
Cartesian sheaves can be described completely in terms of the
diagram \eq{dc8eq2} for $u^\red:U^\red\ra X^\red$ a smooth atlas.
For line bundles  on $X^\red$, this means that the following functor
is an equivalence of categories:
\e
\begin{split}
F&:\bigl(\text{category of line bundles $\cL$ on
$X^\red$}\bigr)\longra \\
\bigl(&\text{category of pairs $(L,\la)$, where $L\ra U^\red$ is a
line bundle and}\\
&\text{$\la:(\pi^\red_1)^*(L)\ra (\pi_2^\red)^*(L)$ an isomorphism
of line bundles on $V^\red$,}\\
&\text{with $(\pi_{31}^\red)^*(\la)\ci(\pi_{23}^\red)^*(\la)\ci
(\pi_{12}^\red)^*(\la)=\id_{(\pi_1^\red\ci\pi_{12}^\red)^*(L)}$ on
$W^\red$}\bigr),\\
&\text{mapping}\;\> F:\cL\longmapsto
\bigl(\cL(U,u),\cL(\pi_2,\eta)^{-1}\ci\cL(\pi_1,\id_{u\ci\pi_1})\bigr)
\;\>\text{on objects}.
\end{split}
\label{dc8eq3}
\e

Fix a particular smooth atlas $u:U\ra X$ for the rest of the proof,
and use the notation $\pi_1,\pi_2,\pi_{12},\pi_{23},\pi_{31},\eta$
above. Define a line bundle $L$ on $U^\red$ by
\e
L=K_{U,s(U,u)}\ot \bigl(\La^{\rm top}T^{u\,*}_{
U/X}\bigr)\big\vert_{U^\red}^{\ot^{-2}},
\label{dc8eq4}
\e
as in \eq{dc2eq44}. In an analogue of \eq{dc2eq46}, for each point
$p\in U^\red\subseteq U$ define an isomorphism $\mu_p$ by the
commutative diagram
\e
{}\!\!\!\!\!\!\begin{gathered} \xymatrix@C=148pt@R=15pt{
*+[r]{L\vert_p} \ar[d]^{\mu_p} \ar@{=}[r] &
*+[l]{K_{U,s(U,u)}\vert_p\ot\! \bigl(\La^{\rm top}T^*_{
U/X}\bigr) \big\vert_p^{\ot^{-2}}} \ar[d]_{\ka_p\ot\id} \\
*+[r]{\bigl(\La^{\rm top}T_{u(p)}^*X\bigr)^{\ot^2}\!\!\ot\!\bigl(\La^{\rm
top}\fIso_{u(p)}(X)\bigr)^{\ot^2}} \ar[r]^(0.53){\al_p^2} &
*+[l]{\bigl(\La^{\rm top}T^*_pU\bigr)^{\ot^2}\!\!\ot\!
\bigl(\La^{\rm top}T^*_{U/X}\bigr)\big\vert_p^{\ot^{-2}},} }
\end{gathered}\!\!\!\!\!{}
\label{dc8eq5}
\e
where $\ka_p$ is as in \eq{dc2eq31}, and $\al_p$ as in
Theorem~\ref{dc2thm4}(c).

By Remark \ref{dc2rem10}(i)--(v) we have exact sequences of vector
bundles of mixed rank on $V,$ and an isomorphism~$\eta_*$:
\e
\begin{gathered}
\xymatrix@C=30pt@R=15pt{ 0 \ar[r] & \pi_1^*(T^{u\,*}_{U/X}) \ar[r] &
T^{u\ci\pi_1\,*}_{V/X} \ar[d]_\cong^{\eta_*}
\ar[r] & T^{\pi_1\,*}_{V/U} \ar[r] & 0 \\
0 \ar[r] & \pi_2^*(U^{u\,*}_{U/X}) \ar[r] & T^{u\ci\pi_2\,*}_{V/X}
\ar[r] & T^{\pi_2\,*}_{V/U} \ar[r] & 0.\!\!{} }
\end{gathered}
\label{dc8eq6}
\e
Define an isomorphism $\la:(\pi^\red_1)^*(L)\ra (\pi_2^\red)^*(L)$
by the commutative diagram of line bundles on $V^\red$: \vskip -15pt
\e
\begin{gathered}
{}\!\!\!\xymatrix@!0@C=135pt@R=55pt{
*+[r]{(\pi^\red_1)^*(L)}
\ar@{=}[d] \ar[r]_(0.6)\la & (\pi^\red_2)^*(L) \ar@{=}[r] &
*+[l]{\raisebox{-30pt}{$\begin{subarray}{l}\ts(\pi_2^\red)^*
\bigl(K_{U,s(U,u)}\bigr)\ot\\
\ts(\pi_2^\red)^*\bigl((\La^{\rm top}T^{u\,*}_{
U/X})^{\ot^{-2}}\bigr)\end{subarray}$\quad}}
\ar[d]_(0.55){\id\ot(\De_{\pi_2}\vert_{V^\red}^{\ot^{-2}})} \\
*+[r]{\begin{subarray}{l}\ts(\pi_1^\red)^*\bigl(K_{U,s(U,u)}\bigr)\ot\\
\ts(\pi_1^\red)^*\bigl((\La^{\rm top}T^{u\,*}_{
U/X})^{\ot^{-2}}\bigr)\end{subarray}}
\ar[d]^{\id\ot(\De_{\pi_1}\vert_{V^\red}^{\ot^{-2}})} &&
*+[l]{\begin{subarray}{l}\ts (\pi_2^\red)^*
\bigl(K_{U,s(U,u)}\bigr)\ot \\
\ts(\La^{\rm top}T^{\pi_2\,*}_{V/U})\vert_{V^\red}^{\ot^2}\ot
(\La^{\rm top}T^{u\ci\pi_2\,*}_{V/X})
\vert_{V^\red}^{\ot^{-2}}\end{subarray}}
\ar[d]_{\Up_{\pi_2}\ot\id} \\
*+[r]{{}\,\,\,\,\,\,\begin{subarray}{l}\ts (\pi_1^\red)^*
\bigl(K_{U,s(U,u)}\bigr)\ot \\
\ts(\La^{\rm top}T^{\pi_1,*}_{V/U})\vert_{V^\red}^{\ot^2}\ot
(\La^{\rm top}T^{u\ci\pi_1\,*}_{V/X})
\vert_{V^\red}^{\ot^{-2}}\end{subarray} }
\ar[rr]^(0.65){\Up_{\pi_1}\ot(\La^{\rm top}\eta_*)} &&
*+[l]{\begin{subarray}{l} \ts \qquad\,\,\, K_{V,s(V,u\ci\pi_1)}\ot \\
\ts \bigl(\La^{\rm top}T^{u\ci\pi_2\,*}_{V/X}
\bigr)\big\vert_{V^\red}^{\ot^{-2}},\end{subarray}\,\,\,\,{}}
}\!\!\!\!\!\!\!{}
\end{gathered}
\label{dc8eq7}
\e
where $\Up_{\pi_1},\Up_{\pi_2}$ are as in \eq{dc2eq35}, and $\eta_*$
as in \eq{dc8eq6}, and\/ $\De_{\pi_i}:\pi_i^*(\La^{\rm
top}T^{u\,*}_{U/X})\ra (\La^{\rm
top}T^{\pi_i,*}_{V/U})^{-1}\ot\La^{\rm top}T^{u\ci\pi_i\,*}_{V/X}$
for $i=1,2$ are induced by taking top exterior powers in the rows
of~\eq{dc8eq6}.

Let $(p_1,p_2,\th_{12})$ be a point in $V^\red\subseteq V$, so that
$p_1,p_2\in U$ with $\th_{12}:u(p_1)\Ra u(p_2)\in X$. We claim that
the following diagram commutes:
\e
\begin{gathered}
\xymatrix@C=43pt@R=11pt{*+[r]{(\pi^\red_1)^*(L)\vert_{(p_1,p_2)}}
\ar@{=}[r] \ar[d]^{\la\vert_{(p_1,p_2,\th_{12})}} & L\vert_{p_1}
\ar[rr]_(0.12){\mu_{p_1}} && *+[l]{\bigl(\La^{\rm
top}T_{u(p_1)}^*X\bigr)^{\ot^2}\!\!\ot\!\bigl(\La^{\rm
top}\fIso_{u(p_1)}(X)\bigr)^{\ot^2}} \ar[d]_{(\th_{12})_*} \\
*+[r]{(\pi^\red_2)^*(L)\vert_{(p_1,p_2)}}
\ar@{=}[r] & L\vert_{p_2} \ar[rr]^(0.12){\mu_{p_2}} &&
*+[l]{\bigl(\La^{\rm top}T_{u(p_2)}^*X\bigr)^{\ot^2}\!\!\ot\!
\bigl(\La^{\rm top}\fIso_{u(p_2)}(X)\bigr)^{\ot^2}.} }
\end{gathered}
\label{dc8eq8}
\e
To see this, combine \eq{dc8eq5}, the restriction of \eq{dc8eq7} to
$(p_1,p_2,\th_{12})$, equation \eq{dc2eq36} applied to give
expressions for $\Up_{\pi_1}\vert_{(p_1,p_2,\th_{12})}$ and
$\Up_{\pi_2}\vert_{(p_1,p_2,\th_{12})}$, and natural compatibilities
between the isomorphisms obtained by taking top exterior powers in
the sequences \eq{dc2eq34}, \eq{dc2eq45}, and~\eq{dc8eq6}.

Now let $(p_1,p_2,p_3,\th_{12},\th_{23})\in W^\red$. Using
\eq{dc8eq8} at $(p_1,p_2,\th_{12}),(p_2,p_3,\th_{23})$ and
$(p_3,p_1,\th_{12}^{-1}\ci\th_{23}^{-1})$ shows that
\begin{equation*}
\la\vert_{(p_3,p_1,\th_{12}^{-1}\ci\th_{23}^{-1})}\ci
\la\vert_{(p_2,p_3,\th_{23})}\ci\la\vert_{(p_1,p_2,\th_{12})}=
\id:L\vert_{p_1}\longra L\vert_{p_1}.
\end{equation*}
This is the restriction of
$(\pi_{31}^\red)^*(\la)\ci(\pi_{23}^\red)^*(\la)\ci
(\pi_{12}^\red)^*(\la)=\id_{(\pi_1^\red\ci\pi_{12}^\red)^*(L)}$ in
\eq{dc8eq3} to $(p_1,p_2,p_3,\th_{12},\th_{23})$. Since $W^\red$ is
reduced, the equation is implied by its restriction to each point of
$W^\red$. Thus $(\pi_{31}^\red)^*(\la)\ci(\pi_{23}^\red)^*(\la)\ci
(\pi_{12}^\red)^*(\la)=\id_{(\pi_1^\red\ci\pi_{12}^\red)^*(L)}$. So
by \eq{dc8eq3}, there exists a line bundle $K_{X,s}$ on $X^\red$,
unique up to canonical isomorphism, with an
isomorphism~$\chi:F(K_{X,s})\,{\smash{\buildrel\cong\over\longra}}\,
(L,\la)$.

We claim that $K_{X,s}$ is independent up to canonical isomorphism
of the choice of smooth atlas $u:U\ra X$ above. To see this, note
that if $u':U'\ra X$ is another atlas, yielding $K_{X,s}'\ra
X^\red$, then setting $U'':=U\amalg U'$ and $u'':=u\amalg u'$ gives
a third atlas $u'':U''\ra X$, yielding $K_{X,s}''\ra X^\red$. Now
$K_{X,s}''$ satisfies properties on the analogue of \eq{dc8eq2} for
$U''=U\amalg U'$. Restricting to the subdiagram generated by
$U\subseteq U''$ shows $K_{X,s}''$ satisfies the same properties as
$K_{X,s}$, and restricting to the subdiagram generated by
$U'\subseteq U''$ shows $K_{X,s}''$ satisfies the same properties as
$K_{X,s}'$. So we have canonical isomorphisms $K_{X,s}\cong
K_{X,s}''\cong K_{X,s}'$.

For Theorem \ref{dc2thm4}(a), let $x\in X$. As $u:U\ra X$ is
surjective, there exists $p\in U$ and a 2-isomorphism $\vp:x\Ra
u(p)$. Define an isomorphism $\ka_x$ as in \eq{dc2eq43} by the
commutative diagram of isomorphisms
\e
\begin{gathered}
\xymatrix@C=36pt@R=11pt{
*+[r]{K_{X,s}\vert_x} \ar[d]^{\vp_*} \ar[rrrr]_(0.39){\ka_x} &&&&
*+[l]{\bigl(\La^{\rm top}T_x^*X\bigr)^{\ot^2}\!\!\ot\!\bigl(\La^{\rm
top}\fIso_x(X)\bigr)^{\ot^2}} \ar[d]_{\vp_*} \\
*+[r]{K_{X,s}\vert_{u(p)}} \ar[r]^(0.7){\chi\vert_p} & L\vert_p
\ar[rrr]^(0.2){\mu_p} &&&
*+[l]{\bigl(\La^{\rm top}T_{u(p)}^*X\bigr)^{\ot^2}\!\!\ot\!\bigl(\La^{\rm
top}\fIso_{u(p)}(X)\bigr)^{\ot^2},} }
\end{gathered}
\label{dc8eq9}
\e
for $\mu_p$ as in \eq{dc8eq5}. To see $\ka_x$ is independent of the
choice of $p,\vp$, suppose $p',\vp'$ are alternate choices. Then
$(p,p',\vp'\ci\vp^{-1})\in V^\red$. Compare \eq{dc8eq9} for $p,\vp$
and $p',\vp'$ with \eq{dc8eq8} for $(p,p',\vp'\ci\vp^{-1})$, and use
the commutative diagram
\begin{equation*}
\xymatrix@C=160pt@R=13pt{
*+[r]{K_{X,s}\vert_{u(p)}} \ar[d]^{(\vp'\ci\vp^{-1})_*}
\ar[r]_(0.55){\chi\vert_{p}} & *+[l]{L\vert_{p}}
\ar[d]_{\la\vert_{(p,p',\vp'\ci\vp^{-1})}} \\
*+[r]{K_{X,s}\vert_{u(p')}} \ar[r]^(0.55){\chi\vert_{p'}} &
*+[l]{L\vert_{p'},\!\!\!{}} }
\end{equation*}
as $\chi:F(K_{X,s})\,{\smash{\buildrel\cong\over\longra}}\, (L,\la)$
is an isomorphism in the lower category in~\eq{dc8eq3}.

By the same argument, we can show that $\ka_x$ is independent of the
choice of atlas $u:U\ra X$ above. For if $u':U'\ra X$ is another
atlas, and $p\in U$, $\vp:x\Ra u(p)$ and $p'\in U'$, $\vp':x\Ra
u'(p')$ may be used to define $\ka_x$ using the atlases $u:U\ra X$,
$u':U'\ra X$, then passing to the atlas $u'':U''\ra X$ for
$U''=U\amalg U'$ and $u''=u\amalg u'$ as above, defining $\ka_x$
using $U,u,p,\vp$ or using $U',u',p',\vp'$ is equivalent to defining
$\ka_x$ using $p,\vp$ or $p',\vp'$ in the single atlas $U'',u''$,
which give the same answer as above. This proves
Theorem~\ref{dc2thm4}(a).

For part (b), suppose $t:T\ra X$ is a smooth 1-morphism, and set
$U':=T\amalg U$ and $u':=t\amalg u$. Then $u':U'\ra X$ is another
smooth atlas for $X$, so as above replacing $U,u$ by $U',u'$ yields
a canonically isomorphic line bundle $K_{X,s}'\cong K_{X,s}$, with
an isomorphism $\chi':F(K_{X,s}')\ra (L',\la')$ for $L',\la'$ given
by the analogues of \eq{dc8eq4}, \eq{dc8eq7} for $U',u'$, so that in
line bundles on $T^\red\amalg U^\red$ we have
\begin{align*}
\chi'&:K_{X,s}'(T\amalg U,t\amalg u)=
K_{X,s}'(T,t)\amalg K_{X,s}'(U,u)
\,{\buildrel\cong\over\longra}\\
&\bigl[K_{T,s(T,t)}\ot (\La^{\rm top}T^{t\,*}_{
T/X})\vert_{T^\red}^{\ot^{-2}}\bigr]\amalg
\bigl[K_{U,s(U,u)}\ot(\La^{\rm top}T^{u\,*}_{
U/X})\vert_{U^\red}^{\ot^{-2}}\bigr].
\end{align*}
Since $K_{X,s}$ was only determined up to canonical isomorphism
anyway, we may take $K_{X,s}'=K_{X,s}$, and
$\chi'\vert_{U^\red}=\chi$. Define $\Ga_{T,t}:=\chi'\vert_{T^\red}$,
as in \eq{dc2eq44}. It is a natural isomorphism, proving
Theorem~\ref{dc2thm4}(b).

For part (c), continuing to use the same notation, let $p\in
T^\red\subseteq T$, so that $t(p):=t\ci p\in X$. As above the
definition of $\ka_{t(p)}$ is independent of the choice of atlas
$u:U\ra X$, so we can define it using $u':U'\ra X$ for $U'=T\amalg
U$ and $u'=t\amalg u$, for the point $p\in T^\red\subseteq
T^\red\amalg U^\red=U^{\prime\red}$ and 2-morphism
$\vp=\id_{t(p)}:t(p)\Ra t(p)$. Combining \eq{dc8eq9} at $x=t(p)$
with $\Ga_{T,t}:=\chi'\vert_{T^\red}$ and the definition \eq{dc8eq5}
of $\mu_p$ shows that \eq{dc2eq46} commutes. This completes the
proof of Theorem~\ref{dc2thm4}.

\medskip

\noindent{\small\sc The Mathematical Institute, Radcliffe Observatory Quarter, Woodstock Road, Oxford, OX2 6GG, U.K.}

\noindent{\small\sc E-mail: \tt joyce@maths.ox.ac.uk}

\end{document}